\title{Pixton's formula and Abel-Jacobi theory \\on the Picard stack}
\newcommand{\MWvone}[1]{}
\newcommand{\MWref}[1]{#1}
\newcommand*{\doublerightarrow}[2]{\mathrel{
  \settowidth{\@tempdima}{$\scriptstyle#1$}
  \settowidth{\@tempdimb}{$\scriptstyle#2$}
  \ifdim\@tempdimb>\@tempdima \@tempdima=\@tempdimb\fi
  \mathop{\vcenter{
    \offinterlineskip\ialign{\hbox to\dimexpr\@tempdima+1em{##}\cr
    \rightarrowfill\cr\noalign{\kern.5ex}
    \rightarrowfill\cr}}}\limits^{\!#1}_{\!#2}}}
\newcommand*{\triplerightarrow}[1]{\mathrel{
  \settowidth{\@tempdima}{$\scriptstyle#1$}
  \mathop{\vcenter{
    \offinterlineskip\ialign{\hbox to\dimexpr\@tempdima+1em{##}\cr
    \rightarrowfill\cr\noalign{\kern.5ex}
    \rightarrowfill\cr\noalign{\kern.5ex}
    \rightarrowfill\cr}}}\limits^{\!#1}}}
\newcommand{\on}[1]{\operatorname{#1}}
\newcommand{\bb}[1]{{\mathbb{#1}}}
\newcommand{\cl}[1]{{\mathscr{#1}}}
\newcommand{\ca}[1]{{\mathcal{#1}}}
\newcommand{\bd}[1]{{\mathbf{#1}}}
\newcommand{\ul}[1]{{\underline{#1}}}
\def\D{\mathrm{D}}
\def\log{\mathrm{log}}
\def\virt{\mathrm{vir}}
\def\P{\mathsf{P}}
\def\PP{\mathbb{P}}
\def\DR{\mathsf{DR}}
\def\CP{{{\mathbb {CP}}}}
\def\cO{\mathcal{O}}
\def\oM{\overline{\mathcal{M}}}
\def\cM{{\mathcal{M}}}
\def\C{{\mathcal{C}}}
\def\Z{\mathbb{Z}}
\def\C{\mathbb{C}}
\def\qed{{\hfill $\Diamond$}}
\def\Aut{{\rm Aut}}
\def\E{\mathrm{E}}
\def\n{\mathrm{n}}
\def\L{\mathrm{L}}
\def\V{\mathrm{V}}
\def\H{\mathrm{H}}
\def\HH{\mathcal{H}}
\def\g{\mathrm{g}}
\def\G{\mathsf{G}}
\def\rarr{\rightarrow}
\def\D{\mathsf{D}}
\def\nice{\displaystyle}
\DeclareMathAlphabet{\mymathbb}{U}{BOONDOX-ds}{m}{n}
\def\sheafhom{\mathcal{H}om}
\newcommand{\hra}{\hookrightarrow}
\newcommand{\sub}{\subseteq}
\theoremstyle{definition}
\newtheorem{definition}{Definition} 
\newtheorem{remark}[definition]{Remark}
\theoremstyle{plain}
\newtheorem*{conjectureA}{Conjecture A}
\newtheorem{proposition}[definition]{Proposition}
\newtheorem{lemma}[definition]{Lemma}
\newtheorem{theorem}[definition]{Theorem}
\newtheorem*{theorem*}{Theorem}
\newtheorem{corollary}[definition]{Corollary}
\newcommand{\MbarX}{\Mbar_{g,n}(X, \beta)}
\LetLtxMacro{\phiorig}{\phi}
\renewcommand{\phi}{\varphi}
\newcommand{\aj}{{\scriptstyle \mathsf{AJ}}}
\newcommand{\vdim}{\mathrm{vdim}}
\newcommand{\Picabs}{\mathfrak{Pic}}
\newcommand{\Picrel}{\mathfrak{Pic}^{\mathrm{rel}}}
\newcommand{\Chow}{\mathsf{CH}}
\newcommand{\CHop}{\Chow_{\mathsf{op}}}
\newcommand{\invnrI}{I}
\newcommand{\invnrn}{II}
\newcommand{\invnrA}{III}
\newcommand{\invnrII}{IV}
\newcommand{\invnrIII}{V}
\newcommand{\invnrIV}{VI}
 \newcommand{\jocomment}[1]{}
 \newcommand{\Dcomment}[1]{}
\author{Y. Bae, D. Holmes, R. Pandharipande, J. Schmitt, R. Schwarz}
\date{May 2021}
\newcounter{nootje}
\newcommand{\beq}{\begin{equation}}
\newcommand{\eeq}{\end{equation}}
\newcommand{\beqs}{\begin{equation*}}
\newcommand{\eeqs}{\end{equation*}}
\renewcommand{\k}{k}
\newcommand{\DRop}{\mathsf{DR}^{\mathsf{op}}}
\tikzset{
  symbol/.style={
    draw=none,
    every to/.append style={
      edge node={node [sloped, allow upside down, auto=false]{$#1$}}}
  }
}
\tikzset{
    labl/.style={anchor=south, rotate=-90, inner sep=.5mm}
}
\begin{document}
\maketitle
\begin{abstract} 
Let $A=(a_1,\ldots,a_n)$ be a vector of integers with $d=\sum_{i=1}^n a_i$.
By partial resolution of the classical Abel-Jacobi map, we construct a universal
twisted double ramification cycle $\mathsf{DR}^{\mathsf{op}}_{g,A}$ as an operational Chow class
on the Picard stack $\Picabs_{g,n,d}$
of $n$-pointed genus $g$ curves carrying a degree $d$ line bundle. The method of construction
follows the log (and b-Chow) approach to the standard
double ramification cycle with canonical twists on the moduli space of curves \cite{Holmes2017Extending-the-d, Holmes2017Multiplicativit, Marcus2017Logarithmic-com}.

Our main result is a calculation of $\mathsf{DR}^{\mathsf{op}}_{g,A}$ on 
the Picard stack $\Picabs_{g,n,d}$
via an appropriate interpretation of Pixton's formula in the tautological ring. The basic
new tool used in the
proof is the theory of double ramification cycles for target varieties \cite{Janda2018Double-ramifica}.
The formula on the Picard stack is obtained from \cite{Janda2018Double-ramifica} for
target varieties $\mathbb{CP}^n$ in
the limit $n\rightarrow \infty$. The result may be viewed as a
universal calculation in Abel-Jacobi theory.

As a consequence of the calculation of  $\mathsf{DR}^{\mathsf{op}}_{g,A}$ on the Picard stack
$\Picabs_{g,n,d}$, we prove that
the fundamental classes of the moduli spaces of twisted meromorphic differentials
in $\overline{\mathcal{M}}_{g,n}$
are exactly given by Pixton's formula (as conjectured in \cite[Appendix]{Farkas2016The-moduli-spac} and \cite{Schmitt2016Dimension-theor}).
The comparison result  of fundamental classes proven in \cite{Holmes2019Infinitesimal-s} plays a crucial
role in our argument. 
We also prove the set of relations in the
tautological ring of the Picard stack $\Picabs_{g,n,d}$  associated
to Pixton's formula.

\end{abstract}


\tableofcontents

\newcommand{\Mtildes}{ \widetilde{\ca M}^\Sigma}
\newcommand{\sch}[1]{\textcolor{blue}{#1}}

\newcommand{\Mbar}{\overline{\ca M}}
\newcommand{\MD}{\ca M^\blacklozenge}
\newcommand{\Md}{\ca M^\lozenge}
\newcommand{\DRL}{\operatorname{DRL}}
\newcommand{\DRC}{\operatorname{DRC}}
\newcommand{\isom}{\stackrel{\sim}{\longrightarrow}}
\newcommand{\Ann}[1]{\on{Ann}(#1)}
\newcommand{\fm}{\mathfrak m}
\newcommand{\Mdk}{\Mbar^{\m, 1/\k}}
\newcommand{\field}{K}
\newcommand{\Mdm}{\Mbar^\m}
\newcommand{\m}{{\bd m}}
\newcommand{\cat}[1]{\mathbf{#1}}
\newcommand{\targetmap}{\ell}
\newcommand{\sourcemap}{\ell'}
\newcommand{\rel}{\mathsf{rel}}
\newcommand{\pre}{\mathsf{pre}}
\setcounter{section}{-1}
\section{Introduction}

\parskip=5pt

\subsection{Double ramification cycles}
\label{Ssec:DRclassic}

Let $A = (a_1, \dots, a_n)$ be a vector of $n$ integers satisfying
$$\sum_{i=1}^n a_i = 0\,. $$ In the moduli space $\cM_{g,n}$ of 
nonsingular curves of genus $g$ with $n$ marked points,
 consider the substack defined by the following classical condition: 
\begin{equation}\label{g445}
\left\{ (C, p_1, \dots, p_n) \in \cM_{g,n} \; \rule[-0.8em]{0.05em}{2em} \; 
\cO_C\Big(\sum_{i=1}^n a_i p_i\Big) \simeq \cO_C \right\}\, . 
\end{equation}
From the point of view of relative Gromov-Witten theory, 
the most natural compactification of the substack \eqref{g445} 
is the space $\oM^{\sim}_{g,A}$ of stable maps to
{\em rubber}\,: stable maps to
 $\CP^1$ relative to $0$ and $\infty$ modulo the $\C^*$-action on $\CP^1$.
The rubber moduli space carries a natural virtual fundamental class $\left[\oM^{\sim}_{g,A}\right]^\virt$ of (complex)
dimension $2g-3+n$. The pushforward 
via the canonical morphism
$$ \epsilon:\oM^{\sim}_{g,A} \rightarrow  \oM_{g,n}$$
is the {\em double ramification cycle} 
$$\epsilon_*\left[\oM^{\sim}_{g,A}\right]^\virt\, =\, \mathsf{DR}_{g,A}\ \in 
{\mathsf{CH}}_{2g-3+n}(\overline{\cM}_{g,n})\,. $$

Double ramification cycles have been studied intensively for the 
past two decades.
Examples of early results can be found in \cite{BSSZ,Cavalieri2011Polynomial-fami,cj,Faber2005Relative-maps-a,Grushevsky2012The-double-rami,Hain2013Normal-function, Marcus2013Stable-maps-to-}.
A complete formula was conjectured by
Pixton in 2014 and proven in \cite{Janda2016Double-ramifica}. 
For subsequent 
study and applications, see \cite{Bae,BGR,CGJZ,FanWu,Farkas2016The-moduli-spac,Holmes2017Extending-the-d,
Holmes2017Jacobian-extens,
Holmes2017Multiplicativit,MPS20,ObPix,Pix18,Schmitt2016Dimension-theor,
Tseng2,Tseng1}. Essential for our work is the formula for double ramification
cycles for target varieties in \cite{Janda2018Double-ramifica}.

We refer the
reader to \cite[Section 0]{Janda2016Double-ramifica} and \cite[Section 5]{RPSLC} for introductions to the subject. For a classical perspective from the point of view
of Abel-Jacobi theory, see 
\cite{Holmes2017Extending-the-d}.

\subsection{Twisted double ramification cycles}
We develop here a theory which extends the study of double ramification
cycles from the moduli space of stable  curves $\overline{\cM}_{g,n}$ to the Picard stack of
curves with line bundles $\Picabs_{g,n}$.
An object of $\Picabs_{g,n}$ over $\mathcal{S}$ is
a flat family $$\pi:\mathcal{C} \rightarrow \mathcal{S}$$ of prestable{\footnote{A prestable $n$-pointed
curve is a connected nodal curve with markings at distinct nonsingular points.
For the entire paper, we avoid the $(g,n)=(1,0)$ case because of 
non-affine stabilizers.}}
    $n$-pointed genus $g$ curves together with a line bundle
    $$\mathcal{L} \rightarrow \mathcal{C}\,. $$
The Picard stack $\Picabs_{g,n}$ is an algebraic (Artin) stack
which is locally of finite type, see \cref{sec:pic_stacks} for a treatment
of foundational issues.

Since the degree of a line bundle is constant in flat families, there is a
disjoint union
$$\Picabs_{g,n} =\bigcup_{d\in \mathbb{Z}}  \Picabs_{g,n,d}\, ,$$
where $\Picabs_{g,n,d}$ is the Picard stack of curves with degree $d$
line bundles.
Let $$A = (a_1, \dots, a_n)\,,\ \ \ \ \  \sum_{i=1}^n a_i=d \,$$
be a vector of integers.
The first result of the paper is the construction of a 
{\em universal twisted double ramification
cycle} in the operational Chow theory{\footnote{All
Chow theories in the paper will be taken with $\mathbb{Q}$-coefficients.
}} of $\Picabs_{g,n,d}$,
$$\mathsf{DR}^{\mathsf{op}}_{g,A} \in {\mathsf{CH}}^g_{\mathsf{op}}(\Picabs_{g,n,d})\, .$$

The geometric intuition behind the construction is simple.
Let 
$$\pi:\mathcal{C}\rightarrow \mathcal{S}\, ,
\ \ \ \ p_1,\ldots,p_n: \mathcal{S} \rightarrow \mathcal{C}\, ,
\ \ \ \ \mathcal{L}\rightarrow \mathcal{C}$$
be an object of $\Picabs_{g,n,d}$.
The class
$\mathsf{DR}^{\mathsf{op}}_{g,A}$ should operate as the 
locus in the base $\mathcal{S}$ heuristically determined by the condition
$$
\cO_C\Big(\sum_{i=1}^n a_i p_i\Big) \simeq \mathcal{L}|_{C} \,.$$


To make the above idea precise, we do {\em not} use the virtual class of the moduli space of stable
maps in Gromov-Witten theory, but rather an alternative approach by
partially resolving the classical Abel-Jacobi map.
The method follows the path 
of \cite{Holmes2017Extending-the-d, Holmes2017Multiplicativit}
and
may be viewed  as a universal Abel-Jacobi construction
over the Picard stack. 
Log geometry based on the stack of tropical divisors constructed in \cite{Marcus2017Logarithmic-com}
plays a crucial role.
Our construction is presented in Section \ref{Sect:AJextension}.

The basic compatibility of our new operational class $$\mathsf{DR}^{\mathsf{op}}_{g,A} \in {\mathsf{CH}}^g_{\mathsf{op}}(\Picabs_{g,n,d})$$ with the
standard double ramification cycle is as follows.
Let 
$$A = (a_1, \dots, a_n)\,,\ \ \ \ \  \sum_{i=1}^n a_i=0 \,,$$
be given. 
The universal data 
\begin{equation}\label{fftt66677}
\pi: \mathcal{C}_{g,n} \rightarrow \overline{\cM}_{g,n}\, , \ \ \ \ \cO \rightarrow \mathcal{C}_{g,n}
\end{equation}
determine a map $\phi_{\ca O} : \overline{\cM}_{g,n} \to \Picabs_{g,n,0}$.
The action of $\mathsf{DR}^{\mathsf{op}}_{g,A}$ on the fundamental class
of $\overline{\mathcal{M}}_{g,n}$ corresponding to the family \eqref{fftt66677} then equals the previously defined
double ramification cycle
$$\mathsf{DR}^{\mathsf{op}}_{g,A}(\phi_{\ca O})\Big( [\overline{\mathcal{M}}_{g,n}]\Big)= \mathsf{DR}_{g,A}
\in {\mathsf{CH}}_{2g-3+n}(\overline{\mathcal{M}}_{g,n})
\, .$$

More generally, 
for a vector $A = (a_1, \dots, a_n)$ of integers satisfying
$$\sum_{i=1}^n a_i = k(2g-2)\,, $$ 
canonically twisted double ramification cycles,
$$\mathsf{DR}_{g,A, \omega^k}
\in {\mathsf{CH}}_{2g-3+n}(\overline{\mathcal{M}}_{g,n})\, ,$$
related to the classical loci
\begin{equation*}
\left\{ (C, p_1, \dots, p_n) \in \cM_{g,n} \; \rule[-0.8em]{0.05em}{2em} \; 
\cO_C\Big(\sum_{i=1}^n a_i p_i\Big) \simeq  \omega^k_C \right\},\,  
\end{equation*}
have been constructed in
\cite{Guere2016A-generalizatio} for $k=1$ and in \cite{Holmes2017Extending-the-d,Holmes2017Jacobian-extens,Marcus2017Logarithmic-com} for all $k\geq 1$.
The universal data 
\begin{equation}\label{fftt66672}
\pi: \mathcal{C}_{g,n} \rightarrow \overline{\cM}_{g,n}\, , \ \ \ \ \omega_\pi^k \rightarrow \mathcal{C}_{g,n}
\end{equation}
determine a map 
$\phi_{\omega_\pi^k} : \overline{\cM}_{g,n} \to \Picabs_{g,n,k(2g-2)}$. 
Here,
$\omega_\pi$ is the relative dualizing sheaf
of the morphism $\pi$.

The action of $\mathsf{DR}^{\mathsf{op}}_{g,A}$ on the fundamental class
of $\overline{\mathcal{M}}_{g,n}$ corresponding to the family \eqref{fftt66672} is compatible with the constructions of \cite{Guere2016A-generalizatio,Holmes2017Extending-the-d,Holmes2017Jacobian-extens,Marcus2017Logarithmic-com},
$$\mathsf{DR}^{\mathsf{op}}_{g,A} (\phi_{\omega_\pi^k})\Big( [\overline{\mathcal{M}}_{g,n}]\Big)= \mathsf{DR}_{g,A, \omega^k}
\in {\mathsf{CH}}_{2g-3+n}(\overline{\mathcal{M}}_{g,n})
\, $$
for all $k\geq 1$.

The above compatibilities of $\mathsf{DR}^{\mathsf{op}}_{g,A}$ with
the standard and canonically twisted double ramification cycles are proven in Section \ref{proof1111}.

\begin{theorem} \label{cccc}
Let $g\geq 0$ and $d\in \mathbb{Z}$.
Let $A=(a_1,\ldots,a_n)$ be a vector of integers satisfying
$$\sum_{i=1}^n a_i=d\, .$$
Logarithmic compactification of the Abel-Jacobi map yields a universal twisted double
ramification 
cycle
$$\mathsf{DR}^{\mathsf{op}}_{g,A} \in {\mathsf{CH}}^g_{\mathsf{op}}(\Picabs_{g,n,d})$$
which is compatible with the standard double ramification
cycle 
$$\mathsf{DR}_{g,A, \omega^k}\in \mathsf{CH}_{2g-3+n}(\overline{\cM}_{g,n})\, $$
in case $d=k(2g-2)$ for $k\geq 0$.
\end{theorem}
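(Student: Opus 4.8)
The plan is to carry out, universally over $\Picabs_{g,n,d}$, the resolution of the Abel--Jacobi map used in \cite{Holmes2017Extending-the-d, Holmes2017Multiplicativit, Marcus2017Logarithmic-com} over $\oM_{g,n}$. The universal curve $\pi\colon\cC\to\Picabs_{g,n,d}$ carries the sections $p_1,\dots,p_n$ and the universal line bundle $\cL$, so $\cN:=\cL\bigl(-\sum_i a_i p_i\bigr)$ has relative degree $0$ and determines a rational section $\sigma$ of the relative Jacobian $J\to\Picabs_{g,n,d}$, regular on the open substack where $\cC\to\Picabs_{g,n,d}$ is smooth. On that locus the heuristic condition $\cO_C(\sum a_i p_i)\simeq\cL|_C$ is the preimage $\sigma^{-1}(e)$ of the unit section $e$; but $J$ is only a semiabelian relative group algebraic space (the ambiguity being the multidegree of $\cN$), so this preimage cannot naively be closed up. Following loc.\ cit.\ I would produce a log blowup $\rho\colon\widetilde{\Picabs}\to\Picabs_{g,n,d}$ --- a proper, birational, log \'etale modification built from the stack of tropical divisors / piecewise-linear functions of \cite{Marcus2017Logarithmic-com} --- over which $\sigma$ extends to an honest morphism $\widetilde\sigma$ into (the smooth, separated) $J$, so that the refined Gysin pullback along the codimension-$g$ regular embedding $e$, applied to $\widetilde\sigma_*[\widetilde{\Picabs}]$, is a well-defined class in $\Chow^{g}$ of $\widetilde{\Picabs}$. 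Because $\Picabs_{g,n,d}$ is a non-quasicompact Artin stack, one performs all of this one test morphism at a time, using the operational Chow theory of Artin stacks set up in the foundational section.

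Concretely, I would then define the bivariant class $\DRop_{g,A}\in\CHop^{g}(\Picabs_{g,n,d})$ by the recipe: given $\xi\colon T\to\Picabs_{g,n,d}$ and $\alpha\in\Chow_*(T)$, pull back the family and the rational Abel--Jacobi section to $T$, choose a log blowup $\rho_T\colon\widetilde T\to T$ resolving it, and set the value of $\DRop_{g,A}(\xi)$ on $\alpha$ to be $\rho_{T*}\bigl(e^{!}\,\widetilde\sigma_*\rho_T^{*}\alpha\bigr)\in\Chow_{*-g}(T)$. The two points to check are: (i) independence of the chosen resolution --- any two log blowups are dominated by a common refinement, on which the two extended sections agree, and the projection formula gives equality after pushforward to $T$; and (ii) bivariance, i.e.\ compatibility with proper pushforward, flat and lci pullback, and refined Gysin maps in the $T$-variable, which follows from the functoriality of log blowups (compatibility of the stack of tropical divisors with strict base change) together with base change for $J$ and for refined Gysin homomorphisms. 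The codimension is exactly $g$ since $e\subset J$ has codimension equal to the relative dimension $g$ of the Jacobian. This establishes the first assertion.

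For the stated compatibility, I would pull $\DRop_{g,A}$ back along $\phi_{\cO}$ and $\phi_{\omega_\pi^k}$ and match the output with the constructions of \cite{Guere2016A-generalizatio, Holmes2017Extending-the-d, Holmes2017Jacobian-extens, Marcus2017Logarithmic-com}. Under $\phi_{\omega_\pi^k}$ the universal line bundle on $\cC$ pulls back to $\omega_\pi^k$, so the pulled-back Abel--Jacobi section over $\oM_{g,n}$ is precisely the one resolved in loc.\ cit.\ to define $\DR_{g,A,\omega^k}$; by the universal property of the stack of tropical divisors the log blowup of $\oM_{g,n}$ used there is the pullback of a log blowup of $\Picabs_{g,n,k(2g-2)}$, so the two refined pullbacks of the unit section coincide, and evaluating on $[\oM_{g,n}]$ gives $\DR_{g,A,\omega^k}$ --- the case $k=0$ recovering the classical $\DR_{g,A}$.

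I expect the main obstacle to be step one: proving that the Abel--Jacobi section genuinely becomes regular after a single log blowup of the Picard stack (the Picard-stack analogue of the semistable-reduction / toric-resolution statement of \cite{Holmes2017Extending-the-d}), and establishing this compatibly with arbitrary base change so that the bivariance checks in step two go through. Keeping control of the non-finite-type and non-separated geometry of $\Picabs_{g,n,d}$ --- infinitely many boundary strata both from prestable (rather than stable) curves and from the multidegree of $\cL$ --- by systematically reducing to finite-type test schemes is where the care is needed; the independence-of-resolution argument and the dimension/transversality bookkeeping are comparatively routine once the log-geometric framework of \cite{Marcus2017Logarithmic-com} is in place over the Picard stack.
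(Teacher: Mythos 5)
Your proposal takes a genuinely different route from the paper's own construction, and the step you flag as the main obstacle is in fact left as a real gap rather than as a routine verification.

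\emph{What the paper does.} The paper does not resolve the rational Abel--Jacobi section by a log blowup of $\Picabs_{g,n,d}$ (or of the test scheme $T$) and then take a refined Gysin pullback of the unit section. Instead it defines $\DRop_{g,A}$ as the operational class associated, via the general Construction \ref{defbivariantclass} in Section \ref{sec:constructing_chow_class}, to a \emph{proper representable morphism into $\Picabs_{g,n,d}$}. Three incarnations are given and shown equivalent (Section \ref{sec:proof_of_equiv_defs}): the schematic image $\bar\sigma$ of the Abel--Jacobi section $\sigma$; the Marcus--Wise log divisor stack $\cat{Div}_{g,A}$ with its Abel--Jacobi map $\aj$; and the rubber variant $\cat{Rub}_{g,A}$. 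Crucially, properness of $\aj\colon\cat{Div}_{g,A}\to\Picabs_{g,n,d}$ is a result imported from \cite{Marcus2017Logarithmic-com}, so no ``resolution of $\sigma$ by a log blowup'' statement needs to be proved; the operational-class machinery of Section \ref{sec:constructing_chow_class} already delivers a well-defined element of $\CHop^g$ from any proper DM-type morphism between pure-dimensional stacks. The compatibility with $\mathsf{DR}_{g,A,\omega^k}$ is then obtained in Section \ref{proof1111} by forming the fibre product $\cat{Div}_{g,A}\times_{\Picabs_{g,n,d}}\Mbar_{g,n}$ and observing that the resulting Gysin expression is, by definition, the class of \cite{Marcus2017Logarithmic-com}.

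\emph{Where your proposal diverges.} You propose the ``b-Chow'' route of \cite{Holmes2017Extending-the-d,Holmes2017Multiplicativit}: exhibit a log blowup over which $\sigma$ extends to an honest section of the multidegree-zero relative Jacobian $J$, pull back the unit section by a refined Gysin map, and push down. The paper explicitly mentions this approach (Section \ref{Sect:AJextension}) but defers it to future work; it is not the route taken. The approach is sound in principle and would give an a priori finer object (a b-cycle on $\Picabs$), but as you yourself flag, it requires the Picard-stack analogue of the main resolution theorem of \cite{Holmes2017Extending-the-d}: that over any finite-type test scheme $T$ there is a log blowup $\widetilde T\to T$ on which the pulled-back Abel--Jacobi section extends, functorially enough to make the bivariance checks go through. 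This is a substantive lemma, not a ``comparatively routine'' bookkeeping step, and your proposal does not establish it. In contrast the paper's route never confronts this: properness of $\aj$ plus the operational-class construction from proper representable morphisms (Proposition \ref{prop:invarianceproperbirat}, Lemma \ref{lem:pushpullcommutes}) does the entire job, with the purely-dimensional-ness of $\cat{Div}_{g,A}$ coming from log regularity (Lemma \ref{lem:smooth_dense}). If you want to make your route rigorous, the missing input is precisely a statement of the form ``$\cat{Rub}_{g,A}\to\Picabs_{g,n,d}$ is, \'etale-locally on a finite-type base, dominated by a log blowup of the base,'' which is equivalent to the resolution claim you assume; proving it would essentially reprove the relevant parts of \cite{Marcus2017Logarithmic-com} rather than cite them.

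For the compatibility assertion itself your argument and the paper's agree in spirit: both pull back along $\phi_{\omega_\pi^k}$ and match the definitions over $\Mbar_{g,n}$. The paper's version is shorter because, once the class is defined via $\aj_{\mathsf{op}}[\cat{Div}_{g,A}]$, the fibre-product diagram plus \cite[Example 7.6]{Behrend1997The-intrinsic-n} immediately identifies the output with $\mathsf{DR}_{g,A,\omega^k}$ as defined by Marcus--Wise.
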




\subsection{Pixton's formula}

\subsubsection{Prestable graphs}\label{xvalsg}
We define the set $\G_{g,n}$ of {\em prestable graphs} as
follows. A prestable graph $\Gamma \in \G_{g,n}$ consists of the data
$$\Gamma=(\V\, ,\ \H\, ,\ \L\, , \ \mathrm{g}:\V \rarr \Z_{\geq 0}\, ,
\ v:\H\rarr \V\, , 
\ \iota : \H\rarr \H)$$
satisfying the properties:
\begin{enumerate}
\item[(i)] $\V$ is a vertex set with a genus function $\g:\V\to \Z_{\geq 0}$,
\item[(ii)] $\H$ is a half-edge set equipped with a 
vertex assignment $v:\H \to \V$ and an involution $\iota$,
\item[(iii)] $\E$, the edge set, is defined by the
2-cycles of $\iota$ in $\H$ (self-edges at vertices
are permitted),
\item[(iv)] $\L$, the set of legs, is defined by the fixed points of $\iota$ and is
placed in bijective correspondence with a set of $n$ markings,
\item[(v)] the pair $(\V,\E)$ defines a {\em connected} graph
satisfying the genus condition 
$$\nice \sum_{v \in \V} \g(v) + h^1(\Gamma) = g\, .$$
\end{enumerate}
To emphasize $\Gamma$, the notation $\V(\Gamma)$, $\H(\Gamma)$, $\L(\Gamma)$, and $\E(\Gamma)$ will also be used for the vertex, half-edges, legs, and
edges of $\Gamma$. 

An isomorphism between $\Gamma$, $\Gamma' \in \G_{g,n}$ 
consists of bijections $\V \to \V'$ and $\H \to \H'$ respecting the
structures $\L$, $\mathrm{g}$, $v$, and $\iota$.
Let $\Aut(\Gamma)$ denote the automorphism group of $\Gamma$.


While the set of isomorphism classes of prestable graphs is infinite, the set of isomorphism classes of prestable graphs with prescribed bounds on the number of edges is finite.

Let $\mathfrak{M}_{g,n}$ be the algebraic (Artin) stack of prestable curves of genus $g$ with $n$ marked points.
A prestable graph $\Gamma$ determines an algebraic stack $\mathfrak{M}_\Gamma$ of 
curves
with degenerations forced by the graph,
$$
\mathfrak{M}_\Gamma = \prod_{v \in \V} \mathfrak{M}_{\g(v), \n(v)}\, $$
together with a 
canonical\footnote{To define the map, we choose an ordering on the half-edges at each vertex. } map
$$j_\Gamma : \mathfrak{M}_\Gamma \to \mathfrak{M}_{g,n}\, .$$
Since $\mathfrak{M}_{g,n}$ is smooth and
the morphism $j_\Gamma$ is proper, representable, and lci, we obtain an operational Chow class on the algebraic
stack of curves,
$$j_{\Gamma*}[\mathfrak{M}_\Gamma] \in \mathsf{CH}_{\mathsf{op}}^{|\E(\Gamma)|}(\mathfrak{M}_{g,n})\, .$$
Via the morphism of algebraic stacks,
$$\epsilon: \Picabs_{g,n,d} \rightarrow \mathfrak{M}_{g,n}\, ,$$
$j_{\Gamma*}[\mathfrak{M}_\Gamma]$ also defines an operational Chow class
on the Picard stack,
$$\epsilon^* j_{\Gamma*}[\mathfrak{M}_\Gamma] \in \mathsf{CH}_{\mathsf{op}}^{|\E(\Gamma)|}(\Picabs_{g,n,d})\, .$$

\subsubsection{Prestable graphs with degrees} \label{Sect:Prestgrwdeg}
We will require a refinement of the prestable graphs of Section \ref{xvalsg} which includes degrees
of line bundles. 

We define the set $\G_{g,n,d}$ of {\em prestable graphs of degree $d$} as
follows: $$\Gamma_\delta=(\Gamma,\delta) \in \G_{g,n,d}$$ consists of the data
\begin{enumerate}
    \item[$\bullet$] a prestable graph $\Gamma\in \mathsf{G}_{g,n}$,
    \item[$\bullet$] a function $\delta: \V \rightarrow \mathbb{Z}$
    satisfying the degree condition
$$\sum_{v\in V} \delta(v) = d\, .$$
\end{enumerate}
The function $\delta$ is  often called the {\em multidegree}.

An automorphism of $\Gamma_\delta \in \G_{g,n,d}$ 
consists of an automorphism of $\Gamma$
leaving $\delta$ invariant.
Let $\Aut(\Gamma_\delta)$ denote the automorphism group of $\Gamma_\delta$.


For $\Gamma_\delta\in \mathsf{G}_{g,n,k}$, let $\mathfrak{M}_\Gamma$ be the algebraic stack of
curves defined in Section \ref{xvalsg} with respect to the underlying prestable graph $\Gamma$. 
Let
$\Picabs_{\Gamma_\delta}$ be the Picard stack,
$$\epsilon:\Picabs_{\Gamma_\delta} \rightarrow \mathfrak{M}_{\Gamma}\, ,$$
parameterizing curves with degenerations forced by $\Gamma$ and with  line bundles which
have 
degree $\delta(v)$ restriction to the components corresponding to the vertex $v\in \V$.
We have a
canonical map
$$j_{\Gamma_\delta} : \Picabs_{\Gamma_\delta} \to \Picabs_{g,n,d}\, .$$
Since $\Picabs_{g,n,d}$ is smooth and
the morphism $j_{\Gamma_\delta}$ is proper, representable, and lci,
we obtain an operational Chow class,
$$j_{\Gamma_\delta *}[\Picabs_{\Gamma_\delta}] \in \mathsf{CH}_{\mathsf{op}}^{|\E(\Gamma)|}(\Picabs_{g,n,d})\, .$$
As operational Chow classes, the following formula holds:
\begin{equation}\label{5599w}
\epsilon^*j_{\Gamma*}[\mathfrak{M}_\Gamma] = \sum_{\delta} j_{\Gamma_\delta *}[\Picabs_{\Gamma_\delta}]
\ \in \mathsf{CH}_{\mathsf{op}}^{|\E(\Gamma)|}(\Picabs_{g,n,d})
\, ,
\end{equation}
where the sum{\footnote{The sum is infinite, but only finitely many terms
are nonzero in any operation.}} is over all functions $\delta:\V \rightarrow \mathbb{Z}$ satisfying the degree condition.
Equivalently, we may write \eqref{5599w} as
\begin{equation*}
\frac{1}{|{\text{Aut}}(\Gamma)|}\, \epsilon^*j_{\Gamma*}[\mathfrak{M}_\Gamma] = 
\sum_{\Gamma_\delta\in \mathsf{G}_{g,n,d}} 
\frac{1}{|{\text{Aut}}(\Gamma_\delta)|}\, 
j_{\Gamma_\delta *}[\Picabs_{\Gamma_\delta}]\ 
\in \mathsf{CH}_{\mathsf{op}}^{|\E(\Gamma)|}(\Picabs_{g,n,d})
\, ,
\end{equation*}
where the sum on the right side is now over all isomorphism classes of
prestable graphs of degree $d$ with underlying prestable graph $\Gamma$.

\subsubsection{Tautological \texorpdfstring{$\psi$}{psi}, \texorpdfstring{$\xi$}{xi}, and \texorpdfstring{$\eta$}{eta} classes} \label{Ssec:PsiXiEta}

The universal curve $$\pi:\mathfrak{C}_{g,n} \to \Picabs_{g,n}$$ 
carries two natural line bundles: the relative dualizing sheaf $\omega_{\pi}$ 
and the universal line bundle
$$\mathfrak{L} \rightarrow \mathfrak{C}_{g,n}\, .$$
Let $p_i$ be the $i$th section of the universal curve, let 
$$\mathfrak{S}_i\subset \mathfrak{C}_{g,n}$$
be  the corresponding divisor, and let
$$\omega_\log = \omega_\pi\Big(\sum_{i=1}^n \mathfrak{S}_i\Big)$$ be the relative log-canonical line bundle 
with first Chern class $c_1 (\omega_\log)$. 
Let $$\xi = c_1(\mathfrak{L})$$ be the first Chern class of $\mathfrak{L}$.

\begin{definition} \label{Not:PsiXiEta}
The following operational classes on $\Picabs_{g,n}$ are obtained from the
universal structures:
\begin{itemize}
    \item $\psi_i = c_1(p_i^* \omega_\pi)\in \mathsf{CH}^1_{\mathsf{op}}(\Picabs_{g,n})\, $,
    \item $\xi_i = c_1(p_i^* \mathfrak{L})\in \mathsf{CH}^1_{\mathsf{op}}(\Picabs_{g,n})
    \, $,
    \item $\eta_{a,b} = \pi_*\left(c_1(\omega_\log)^a \xi^b\right)
    \in \mathsf{CH}^{a+b-1}_{\mathsf{op}}(\Picabs_{g,n})
    \, $.
\end{itemize}
\end{definition}
For simplicity in the formulas, 
we will use the
notation
$$\eta= \eta_{0,2} =\pi_*(\xi^2)\, .$$
The standard $\kappa$ classes are defined by the $\pi$ pushforwards
of powers of $c_1(\omega_\log)$, 
$$\eta_{a,0} = \kappa_{a-1}\, .$$

\begin{definition}
A {\em decorated prestable graph} $[\Gamma_\delta,\gamma]$ {\em of degree $d$} 
is a prestable graph $\Gamma_\delta \in \G_{g,n,d}$ of degree $d$ together with the following
decoration data $\gamma$:
\begin{itemize}
\item each leg $i\in \L$ is decorated with a monomial $\psi_i^a \xi_i^b$,
\item each half-edge $h\in \H\setminus \L$ is 
decorated with a monomial $\psi^a_h$,
\item each edge $e\in \E$ is decorated with a monomial $\xi^a_e$,
\item each vertex in $\V$ is decorated with a monomial in the variables 
$\{\eta_{a,b}\}_{a+b \geq 2}$.
\end{itemize}
In all four cases, the monomial may be be trivial.
\end{definition}

Let $\D\G_{g,n,d}$ be the set of decorated prestable graphs of degree $d$.
To each decorated graph of degree $d$, 
$$[\Gamma_\delta,\gamma]\in \D\G_{g,n,d}\,,$$ we assign the 
operational class 
$$j_{\Gamma_\delta*}[\gamma] \in \mathsf{CH}^*_{\mathsf{op}}(\Picabs_{g,n,d})$$
obtained
via the proper representable morphism
$$j_{\Gamma_\delta} : \Picabs_{\Gamma_\delta} \rightarrow \Picabs_{g,n,d}$$
and the action of the decorations.

The action of decorations is described
as follows.
Given $\Gamma_\delta\in \mathsf{G}_{g,n,k}$, the stack $\Picabs_{\Gamma_\delta}$ admits a morphism{\footnote{The fibers of the map are torsors under the group $\mathbb{G}_m^{h^1(\Gamma)}$.}}
\[\Picabs_{\Gamma_\delta} \to \prod_{v \in \V(\Gamma_\delta)} \Picabs_{\g(v),\n(v),\delta(v)}\]
which sends a line bundle $\mathcal{L}$ on a prestable curve $\mathcal{C}$ to its restrictions on the various components,
$$ \mathcal{L}|_{\mathcal{C}_v}\, , \  \ \  v\in \V(\Gamma_\delta)\, . $$
For $v \in \V(\Gamma_\delta)$, we define the operational
class $\eta(v)$ on $\Picabs_{\Gamma_\delta}$ as the pullback of the operational class $\eta$ on the factor $\Picabs_{\g(v),\n(v),\delta(v)}$ above. The operational
classes $\psi$ at the markings and $\xi$ at the half-edges are
defined similarly.

\begin{definition} The {\em tautological  classes} in
 $\mathsf{CH}^*_{\mathsf{op}}(\Picabs_{g,n,d})$ consist of the 
 $\mathbb{Q}$-linear span
 of the operational classes associated to 
 all $[\Gamma_\delta,\gamma]\in \D\G_{g,n,d}$.
\end{definition}

By standard analysis \cite{GraberPandharipande}, the tautological classes
have a natural ring structure.
Our formula for $\mathsf{DR}_{g,A}^{\mathsf{op}}$ 
will be a sum of operational classes
determined by decorated prestable graphs of degree $d=\sum_{i=1}^n a_i$
(and hence will be tautological).

\subsubsection{Weightings mod \texorpdfstring{$r$}{r}} \label{Ssec:weightings}
Let $\Gamma_\delta\in \G_{g,n,d}$ be a prestable graph of degree $d$, and
let $r$ be a positive integer.

\begin{definition} A {\em weighting mod~$r$} of $\Gamma_\delta$ is a function on the set of half-edges,
$$ w:\H(\Gamma_\delta) \rightarrow \{0,1, \ldots, r-1\}\, ,$$
which satisfies the following three properties:

\begin{enumerate}
\item[(i)] $\forall i\in \L(\Gamma_\delta)$, corresponding to
 the marking $i\in \{1,\ldots, n\}$,
$$w(i)=a_i  \mod r\, ,$$
\item[(ii)] $\forall e \in \E(\Gamma_\delta)$, corresponding to two half-edges
$h,h' \in \H(\Gamma_\delta)$,
$$w(h)+w(h')=0 \mod r\, ,$$
\item[(iii)] $\forall v\in \V(\Gamma_\delta)$,
$$\sum_{v(h)= v} w(h)= \delta(v) \mod r\, ,$$ 
where the sum is taken over {\em all} $\mathsf{n}(v)$ half-edges incident 
to $v$.\end{enumerate}
\end{definition}

We denote by $\mathsf{W}_{\Gamma_\delta,r}$ the finite set of all possible weightings
mod $r$
of
$\Gamma_\delta$. The set $\mathsf{W}_{\Gamma_\delta,r}$ has cardinality $r^{h^1(\Gamma_\delta)}$.
We view $r$ as a {\em regularization parameter}.

\subsubsection{Calculation of the twisted double ramification cycle} \label{Ssec:MainFormula}
We denote by
$\P_{g,A,d}^{c,r}\in \mathsf{CH}^c_{\mathsf{op}}(\Picabs_{g,n,d})$
the codimension{\footnote{Codimension here is usually
called degree. But since we already have line bundle degrees,
we use the term codimension for clarity.}}
 $c$ component of the tautological operational class 
\begin{multline*}
\hspace{-10pt}\sum_{
\substack{\Gamma_\delta\in \G_{g,n,d} \\
w\in \mathsf{W}_{\Gamma_\delta,r}}
}
\frac{r^{-h^1(\Gamma_\delta)}}{|\Aut(\Gamma_\delta)| }
\;
j_{\Gamma_\delta*}\Bigg[
\prod_{i=1}^n \exp\left(\frac12 a_i^2 \psi_i + a_i \xi_i \right)
\prod_{v \in \V(\Gamma_\delta)} \exp\left(-\frac12 \eta(v) \right)
\\ \hspace{+10pt}
\prod_{e=(h,h')\in \E(\Gamma_\delta)}
\frac{1-\exp\left(-\frac{w(h)w(h')}2(\psi_h+\psi_{h'})\right)}{\psi_h + \psi_{h'}} \Bigg]\, .
\end{multline*} 
Several remarks about the formula are required:
\begin{enumerate}
    \item[(i)]
    The sum is over all isomorphism classes of prestable graphs of degree $d$ in the
    set $\mathsf{G}_{g,n,d}$. 
    Only finitely many underlying prestable graphs $\Gamma\in \mathsf{G}_{g,n}$ can
    contribute in fixed codimension $c$. However, for each such prestable graph, the above formula
    has infinitely many summands corresponding to the infinitely many functions
    $$\delta: \V \rightarrow \mathbb{Z}$$
    which satisfy the degree condition.
    The operational Chow class $\P_{g,A,d}^{c,r}$ is nevertheless well-defined since
      only finitely many summands have nonvanishing operation on any given family of curves
       carrying a degree $d$ line bundle over a base $\mathcal{S}$ of finite type.
    \item[(ii)] 
    Once the prestable graph $\Gamma_\delta$ is chosen, we sum over all $r^{h^1(\Gamma_\delta)}$ different 
    weightings $w\in \mathsf{W}_{\Gamma_\delta,r}$.
    \item[(iii)]
Inside the pushforward in the above formula, the first product 
$$\prod_{i=1}^n \exp\left(\frac{1}{2}a_i^2 \psi_{h_i}+a_i \xi_{h_i}\right)\, $$
is over $h\in \L(\Gamma)$
via the correspondence of legs and markings.
\item[(iv)]The class $\eta(v)$ is the $\eta_{0,2}$ class 
of Definition \ref{Not:PsiXiEta} associated to the vertex.
\item[(v)]
The third product is over all $e\in \E(\Gamma_\delta)$.
The factor 
$$\frac{1-\exp\left(-\frac{w(h)w(h')}{2}(\psi_h+\psi_{h'})\right)}{\psi_h + \psi_{h'}}$$
is well-defined since 
\begin{enumerate}
\item[$\bullet$]
the denominator formally divides
the numerator,
\item[$\bullet$] the factor is symmetric in $h$ and $h'$.
\end{enumerate}
No edge orientation is necessary.
\end{enumerate}

The following fundamental polynomiality property of $\P_{g,A,d}^{c,r}$ 
 is
parallel to Pixton's polynomiality in \cite[Appendix]{Janda2016Double-ramifica} and
is a consequence of \cite[Proposition $3''$]{Janda2016Double-ramifica}.

\begin{proposition} \label{pply} For fixed $g$, $A$, $d$, and $c$ and a decorated graph $[\Gamma_\delta, \gamma]$ of
degree $d$, the coefficient of $j_{\Gamma_\delta*} [\gamma]$ in the
tautological class
$$\P_{g,A,d}^{c,r} \in \mathsf{CH}^c_{\mathsf{op}}(\Picabs_{g,n,d})$$
is polynomial in $r$ (for all sufficiently large $r$).
\end{proposition}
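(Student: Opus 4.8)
The plan is to reduce the polynomiality of $\P_{g,A,d}^{c,r}$ to Pixton's original polynomiality statement for the (untwisted) double ramification cycle on $\overline{\cM}_{g,n}$, essentially by tracking how the $\xi$- and $\delta$-data only enter in a controlled, $r$-independent way. First I would fix a decorated prestable graph $[\Gamma_\delta,\gamma]$ and examine the coefficient of $j_{\Gamma_\delta*}[\gamma]$ in the big sum defining $\P_{g,A,d}^{c,r}$. The only place where $r$ appears is (a) the prefactor $r^{-h^1(\Gamma_\delta)}$, (b) the range $\{0,1,\dots,r-1\}$ of the weighting function $w$, and (c) the edge factors, which depend on $w$ only through the products $w(h)w(h')$ over the two half-edges of each edge. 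Crucially, the exponential factors at legs and vertices, $\exp(\tfrac12 a_i^2\psi_i + a_i\xi_i)$ and $\exp(-\tfrac12\eta(v))$, are completely independent of $r$ and of $w$. So, after expanding all exponentials and extracting the codimension-$c$ part, the coefficient of $j_{\Gamma_\delta*}[\gamma]$ is a product of an $r$-independent combinatorial factor (coming from the legs, vertices, and $|\Aut(\Gamma_\delta)|$) times $r^{-h^1(\Gamma_\delta)}$ times a sum over $w\in\rmW_{\Gamma_\delta,r}$ of a polynomial in the quantities $w(h)w(h')$.

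The heart of the matter is then the sum over weightings. I would argue exactly as in \cite[Appendix]{Janda2016Double-ramifica}: the set $\rmW_{\Gamma_\delta,r}$ of weightings mod $r$ is the set of $\Z/r\Z$-points of an affine lattice (a torsor under $H^1(\Gamma;\Z/r\Z)$, of size $r^{h^1(\Gamma_\delta)}$), and the edge factor, after expanding the exponential and truncating at codimension $c$, is a polynomial of bounded degree in the variables $w(h)w(h')$. Summing a fixed polynomial over the $\Z/r\Z$-points of such an affine lattice yields, after multiplication by $r^{-h^1}$, a quantity that is polynomial in $r$ for $r$ sufficiently large — this is precisely the content invoked via \cite[Proposition $3''$]{Janda2016Double-ramifica}. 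Concretely, one chooses a basis for the lattice of weightings and writes $w(h)$ as an affine-linear function of free parameters $k_1,\dots,k_{h^1}\in\{0,\dots,r-1\}$ modulo $r$; the products $w(h)w(h')$ become (reductions mod $r$, lifted to $\{0,\dots,r-1\}$) of quadratic expressions, and power sums $\sum_{k=0}^{r-1} k^m$ are polynomial in $r$ by Faulhaber. The representability of each $j_{\Gamma_\delta}$ and smoothness of $\Picabs_{g,n,d}$ (used earlier to define $j_{\Gamma_\delta*}[\gamma]$) guarantee these coefficients are well-defined elements of $\mathsf{CH}^c_{\mathsf{op}}$, so the polynomiality statement is meaningful term by term.

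One subtlety I would treat carefully is the infinitude of the sum over $\delta$: for fixed underlying $\Gamma$ there are infinitely many multidegrees $\delta$, and correspondingly infinitely many graphs $\Gamma_\delta$. However, the Proposition fixes a single decorated graph $[\Gamma_\delta,\gamma]$, so only the summands with that exact underlying $\Gamma_\delta$ contribute to its coefficient; the infinitely many other multidegrees are irrelevant to this particular coefficient. (The well-definedness of $\P_{g,A,d}^{c,r}$ as an operational class — that only finitely many summands act nonzero on any finite-type family — is already asserted in remark (i) above and is not re-proven here.) I would also note that the weighting conditions (i)–(iii) are compatible: condition (iii) forces $\sum_{v(h)=v} w(h)\equiv\delta(v)$, and summing over all $v$ gives $\sum_h w(h)\equiv d$ using that edge half-edges cancel in pairs by (ii) and legs contribute $\sum a_i = d$ by (i) — so $\rmW_{\Gamma_\delta,r}$ is nonempty and has the stated cardinality, exactly as in the classical case.

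The main obstacle, I expect, is not conceptual but bookkeeping: carefully isolating which factors depend on $r$ and verifying that the dependence is through a polynomial in the $w(h)w(h')$ of degree bounded solely in terms of $c$ (so that the "sufficiently large $r$" threshold is uniform), and then correctly invoking \cite[Proposition $3''$]{Janda2016Double-ramifica} in this slightly enriched setting where the exponents also carry $\xi_i$, $\xi_e$, and $\eta(v)$ decorations. Since those extra decorations are $r$-independent and simply multiply the classical structure, the reduction should go through without difficulty, but writing it cleanly requires care with the multilinearity of the pushforward $j_{\Gamma_\delta*}$ and the fact that extracting a fixed codimension $c$ commutes with the (finite, once $c$ is fixed) expansion of all the exponential series.
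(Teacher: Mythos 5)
Your proposal is correct and follows essentially the same route as the paper, which simply observes that the claim is a consequence of \cite[Proposition $3''$]{Janda2016Double-ramifica}: the exponential leg and vertex factors are $r$-independent, the $\delta$-multidegree conditions only shift the affine lattice of weightings (leaving its structure and cardinality $r^{h^1(\Gamma_\delta)}$ unchanged), and the edge contribution is a polynomial of bounded degree in the products $w(h)w(h')$, so the cited polynomiality lemma applies verbatim. Your additional remarks — that only the fixed $\Gamma_\delta$ contributes to the coefficient in question and that the edge decorations $\xi_e$ never arise (hence get coefficient $0$, trivially polynomial) — are correct and sensible bookkeeping, not deviations from the paper's argument.
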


We denote by $\P_{g,A,d}^c$ the value at $r=0$ 
of the polynomial associated to $\P_{g,A,d}^{c,r}$ by Proposition~\ref{pply}. In other words, $\P_{g,A,d}^c$ is the {\em constant} term of the associated polynomial in $r$. 

The main result of the paper is a 
 formula for the universal twisted double ramification cycle in operational Chow.{\footnote{Our
handling of the prefactor $2^{-g}$ in \cite[Theorem 1]{Janda2016Double-ramifica} differs here.
The factors of $2$ are now placed in the definition of $\P_{g,A,d}^{c,r}$
as in \cite{Janda2018Double-ramifica}}

\begin{theorem} \label{Thm:main}
Let $g\geq 0$ and $d\in \mathbb{Z}$.
Let $A=(a_1,\ldots,a_n)$ be a vector of integers with
$\sum_{i=1}^n a_i=d\,$.
The universal twisted double ramification cycle is calculated by Pixton's formula:
$$\mathsf{DR}^{\mathsf{op}}_{g,A} =\P_{g,A,d}^g\
\in {\mathsf{CH}}^g_{\mathsf{op}}(\Picabs_{g,n,d})\, .$$
\end{theorem}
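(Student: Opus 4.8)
The plan is to pull the statement back, via moduli of stable maps to projective space, to the double ramification formula for target varieties proved in \cite{Janda2018Double-ramifica}, and then to let the dimension of the target grow; since we work in a single fixed codimension it suffices to use $\CP^N$ with $N$ large rather than a literal limit. For each $N\geq 0$ the tautological bundle on $\CP^N$ produces a morphism of algebraic stacks
$$ b_N\colon \overline{\cM}_{g,n}(\CP^N,d)\longrightarrow \Picabs_{g,n,d}\, ,\qquad (C,p_\bullet,f)\longmapsto (C,p_\bullet,f^*\cO_{\CP^N}(1))\, . $$
Writing $F\colon \cC'\to\CP^N$ for the universal map, $b_N$ carries the universal line bundle $\mathfrak{L}$ to $F^*\cO(1)$, and hence $b_N^*\xi_i=\mathrm{ev}_i^*H$, $b_N^*\eta=\pi'_*\big(c_1(F^*\cO(1))^2\big)$ and $b_N^*\psi_i=\psi_i$, where $H=c_1(\cO_{\CP^N}(1))$. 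These are precisely the classes entering Pixton's formula for the target $\CP^N$ in \cite{Janda2018Double-ramifica}.

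The first step is to identify the pullback of the operational double ramification cycle with the target cycle: we wish to show
$$ b_N^*\,\mathsf{DR}^{\mathsf{op}}_{g,A}\ \cap\ \big[\overline{\cM}_{g,n}(\CP^N,d)\big]^{\vir}\ =\ \mathsf{DR}^{\CP^N}_{g,A}\, , $$
the double ramification cycle for the target $\CP^N$ with its tautological polarization. Both sides cut out the locus where $\cO_C\big(\sum_i a_i p_i\big)\otimes F^*\cO(-1)$ is trivial: the left side through the logarithmic Abel--Jacobi resolution of Section~\ref{Sect:AJextension}, and one checks that $b_N$ intertwines this resolution with the analogous construction over $\overline{\cM}_{g,n}(\CP^N,d)$ (matching stacks of tropical divisors and the associated modifications), so that the identity follows from the comparison of the logarithmic and the virtual descriptions of double ramification cycles, extended from the absolute case \cite{Marcus2017Logarithmic-com,Holmes2017Extending-the-d} to the target $\CP^N$. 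By the main theorem of \cite{Janda2018Double-ramifica}, $\mathsf{DR}^{\CP^N}_{g,A}$ equals Pixton's formula for $\CP^N$. Comparing it term by term with $b_N^*\P^g_{g,A,d}\cap[\overline{\cM}_{g,n}(\CP^N,d)]^{\vir}$: the multidegree $\delta(v)$ matches the degree of $F$ on $C_v$, the mod~$r$ weighting conditions (i)--(iii) on $\Gamma_\delta$ are exactly those of the $\CP^N$-formula after identifying $\xi$ with the hyperplane class, and the decorations match the $\psi$- and $H$-insertions; the only discrepancy is the relation $H^{N+1}=0$ in $H^*(\CP^N)$, which concerns only monomials of hyperplane-degree exceeding $N$ and hence is invisible in codimension $g$ once $N\geq g$. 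Thus for all $N\gg0$ the operational class $\mathsf{DR}^{\mathsf{op}}_{g,A}-\P^g_{g,A,d}$ pulls back along $b_N$ to a class annihilating $\big[\overline{\cM}_{g,n}(\CP^N,d)\big]^{\vir}$.

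To deduce $\mathsf{DR}^{\mathsf{op}}_{g,A}=\P^g_{g,A,d}$ in $\mathsf{CH}^g_{\mathsf{op}}(\Picabs_{g,n,d})$ we argue by reduction and induction. Assume first $n\geq 1$ and $d\geq 2g$, and let $\Picabs^{\circ}_{g,n,d}\subset\Picabs_{g,n,d}$ be the open substack of irreducible curves; on it the universal bundle is relatively globally generated with vanishing $R^1\pi_*$, every stable map $F$ over it is nonconstant and unobstructed, and the locus $U_N\subset\overline{\cM}_{g,n}(\CP^N,d)$ of unobstructed maps is smooth, contains $b_N^{-1}(\Picabs^\circ_{g,n,d})$, and (after passing to an atlas trivializing $\pi_*\mathfrak{L}$) surjects smoothly onto $\Picabs^\circ_{g,n,d}$ for $N$ large. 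Over $U_N$ the virtual class is the ordinary fundamental class and operational Chow agrees with ordinary Chow, so the vanishing of the previous paragraph upgrades to $(b_N|_{U_N})^*(\mathsf{DR}^{\mathsf{op}}_{g,A}-\P^g_{g,A,d})=0$; smoothness and surjectivity of $b_N|_{U_N}$ then give $\mathsf{DR}^{\mathsf{op}}_{g,A}=\P^g_{g,A,d}$ on $\Picabs^\circ_{g,n,d}$. The complement of $\Picabs^\circ_{g,n,d}$ is the union of the images of the maps $j_{\Gamma_\delta}$ with $|\E(\Gamma)|\geq 1$; pulling the difference back along each $j_{\Gamma_\delta}$ and using \eqref{5599w} together with the self-similar shape of both the Pixton formula and the Abel--Jacobi construction (and the compatibility recorded in \cref{cccc}) expresses the boundary contributions in terms of universal twisted double ramification cycles and Pixton classes on the smaller Picard stacks $\Picabs_{\g(v),\n(v),\delta(v)}$, so that induction on $\sum_v\g(v)+|\E(\Gamma)|$ kills all of them; since $\Picabs_{g,n,d}$ is smooth, an operational class whose restriction to $\Picabs^\circ_{g,n,d}$ and all of whose restrictions along the boundary strata vanish is zero. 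Finally, the case $d<2g$ follows from the case $d\geq 2g$ via the twisting isomorphism $\Picabs_{g,n,d}\xrightarrow{\ \sim\ }\Picabs_{g,n,d'}$, $\cL\mapsto\cL(\sum_i m_i p_i)$ with $\sum_i m_i=d'-d$, under which $\mathsf{DR}^{\mathsf{op}}_{g,A}$ and $\P^g_{g,A,d}$ are carried to $\mathsf{DR}^{\mathsf{op}}_{g,A+m}$ and $\P^g_{g,A+m,d'}$ — the former tautologically, the latter by a short computation using $p_i^*[\mathfrak{S}_j]=-\delta_{ij}\psi_i$ — and the case $n=0$ (so $d=0$) follows by pullback along the marking-forgetting morphism $\Picabs_{g,1,0}\to\Picabs_{g,0,0}$.

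The step I expect to be the main obstacle is precisely the passage from the open substack $\Picabs^\circ_{g,n,d}$ to all of $\Picabs_{g,n,d}$: operational Chow classes on a smooth Artin stack are not detected by a single test family nor by restriction to a dense open, so genuine care is needed both to see that the maps $b_N$, supplemented by the boundary stratification, determine $\mathsf{CH}^g_{\mathsf{op}}(\Picabs_{g,n,d})$, and to control how the logarithmic Abel--Jacobi resolution interacts with the boundary substacks $\Picabs_{\Gamma_\delta}$ in the inductive step (the argument is cleaner if one first checks, from the construction, that $\mathsf{DR}^{\mathsf{op}}_{g,A}$ is a tautological class, reducing everything to reading off coefficients stratum by stratum). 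The other technically delicate point is the identification, in the first step, of the pulled-back operational cycle with the target cycle of \cite{Janda2018Double-ramifica}, which requires matching the two logarithmic constructions across $b_N$.
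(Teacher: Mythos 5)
Your opening step — pulling back along $b_N\colon \Mbar_{g,n}(\CP^N,d)\to\Picabs_{g,n,d}$, restricting to the unobstructed locus where the virtual and ordinary fundamental classes agree, and feeding in the target formula of \cite{Janda2018Double-ramifica} together with the logarithmic/rubber comparison (the paper's Section~\ref{sec:comparing_stacks}, culminating in \cref{prop:vfc_comparison_summary}) — is essentially the paper's first step (Lemmas~\ref{lem:fun_eq_vfc} and \ref{cor:operational_equality_projective}). You also correctly identify the role of weight translation (Invariance~\invnrA) and the marking-forgetting map (Invariance~\invnrn) in relating different $A$, $d$, and $n$. But the central reduction step has a genuine gap.

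You claim that since $\Picabs_{g,n,d}$ is smooth, an operational class whose restriction to the open substack $\Picabs^\circ_{g,n,d}$ of irreducible curves and to all boundary strata $\Picabs_{\Gamma_\delta}$ vanishes must itself be zero. This is unjustified, and it cannot be justified by the excision sequence for Chow groups: $\Picabs_{g,n,d}$ is a smooth Artin stack that is neither Deligne--Mumford nor of finite type, and the paper explicitly records (end of Section~\ref{ssec:Chopstacks}) that it does not know whether the natural map $\Chow_*(\frak Y)\to\CHop^*(\frak Y)$ is injective or surjective for such stacks. An operational class is defined by its action on Chow groups of arbitrary finite-type schemes $B\to\Picabs_{g,n,d}$, and a general $B$ need not be supported on the open part nor on any single boundary stratum; you need an argument that works for an arbitrary such $B$. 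Similarly, surjectivity and smoothness of $b_N|_{U_N}$ onto $\Picabs^\circ$ does not by itself give injectivity of pullback of operational classes on an Artin stack.

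The paper's actual route around this is quite different and is the real content of the proof. It fixes an arbitrary $\phi_{\ca L}\colon B\to\Picabs_{g,n,d}$ with $B$ a finite-type scheme and proves the equality $\DRop_{g,A}(\phi_{\ca L})=\P^g_{g,A,d}(\phi_{\ca L})$ on $\Chow_*(B)$ directly, in three steps: (a) when $\ca L$ is \emph{relatively sufficiently positive}, form the vector bundle $E_l=\bigoplus^{l+1}\pi_*\ca L$ over $B$, take the open $U_l$ of base-point-free systems, use that $\psi^*\colon\Chow_*(B)\to\Chow_{*+r}(U_l)$ is injective for $l>\dim B$ (an honest codimension bound on the complement), and map $U_l$ into $\Mbar_{g,n}(\CP^l,d)'$ (\cref{sec:very_ample}); (b) when the markings dominate the fibres in the sense of \cref{lem:many_markings}, twist up by a large $A'$ using Invariance~\invnrA{} to reach case (a); and (c) in general, invoke the alteration/destabilization result \cref{ALTt} to produce, after a proper surjective generically finite base change $B'\to B$, a partial destabilization $C'\to C_{B'}$ with enough sections, and then apply Invariance~\invnrIV{} (partial stabilization) to reduce to case (b). Step (c) --- the alteration plus Invariance~\invnrIV{} --- is entirely absent from your proposal, and it is precisely what replaces the unprovable stratification claim. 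Your inductive reduction on $\sum_v g(v)+|\E(\Gamma)|$ would at best establish the statement on each locally closed stratum, not on the whole Picard stack in the operational sense required.

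A smaller point: your "term-by-term comparison modulo $H^{N+1}=0$" of $b_N^*\P^g_{g,A,d}$ with the $\CP^N$ Pixton formula is not how the paper handles the degree growth; the paper simply works on the unobstructed locus where the virtual class is the fundamental class (\cref{lem:fun_eq_vfc}) and then relies on the definition \eqref{eqn:pixton_vs_uni_pixton} of the Pixton formula with targets, which matches the universal formula on the nose after pullback. This is a more robust mechanism than discarding high hyperplane-degree monomials by hand.
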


Theorem \ref{Thm:main} is the most fundamental formulation of the relationship
between Abel-Jacobi theory and Pixton's formula that we know. Certainly,  
Theorem \ref{Thm:main} 
implies the double ramification cycle and $X$-valued
double ramification cycle results of \cite{Janda2016Double-ramifica,Janda2018Double-ramifica} }. But since we will
use \cite{Janda2018Double-ramifica} in the proof of Theorem \ref{Thm:main}, we provide no new approach to
these older results. However, the additional depth of Theorem \ref{Thm:main} immediately
allows new applications.

\subsection{Vanishing}
From his original double
ramification cycle formula,
Pixton conjectured an associated
vanishing property in the
tautological ring of the moduli
space of curves which
was proven by Clader and Janda \cite{cj}.
The parallel vanishing statement
in the tautological ring of the
moduli space of stable maps to $X$
was proven in \cite{Bae}.
The most general vanishing
statement is the following
result.

\begin{theorem} \label{Thm:mainvan}
Let $g\geq 0$ and $d\in \mathbb{Z}$.
Let $A=(a_1,\ldots,a_n)$ be a vector of integers with
$\sum_{i=1}^n a_i=d$.
 Pixton's vanishing holds
 in operational Chow:
$$ \P_{g,A,d}^c\ = 0 \
\in {\mathsf{CH}}^c_{\mathsf{op}}(\Picabs_{g,n,d})\, \ \ \ 
{\text{for all}} \ \ c>g\, .
$$ 
\end{theorem}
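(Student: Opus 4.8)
The plan is to deduce the vanishing $\P_{g,A,d}^c = 0$ for $c > g$ from the analogous vanishing for the $X$-valued double ramification cycle established in \cite{Bae}, by taking the limit $X = \CP^N$ with $N \to \infty$, exactly as Theorem \ref{Thm:main} is obtained from \cite{Janda2018Double-ramifica}. First I would recall that the class $\P_{g,A,d}^{c,r}$ on $\Picabs_{g,n,d}$ is assembled from the same tautological building blocks ($\psi$, $\xi$, $\eta$, and boundary pushforwards indexed by prestable graphs with multidegrees and weightings mod $r$) that appear in Pixton's $\CP^N$-valued formula of \cite{Janda2018Double-ramifica}; under the classifying morphism $\overline{\cM}_{g,n}(\CP^N,\beta) \to \Picabs_{g,n,d}$ associated to the pullback of $\cO_{\CP^N}(1)$, the class $\xi$ pulls back to the relevant divisor and the graph sums match up term by term once one sorts the contributions by multidegree. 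Thus a polynomiality-in-$r$ statement and its constant term are compatible with this pullback.

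The key steps, in order: (1) Fix $c > g$ and a decorated prestable graph $[\Gamma_\delta,\gamma]$ of degree $d$; by Proposition \ref{pply} the coefficient of $j_{\Gamma_\delta *}[\gamma]$ in $\P_{g,A,d}^{c,r}$ is a polynomial $p(r)$ in $r$ for $r \gg 0$, and $\P_{g,A,d}^c$ is its constant term. It suffices to show $p(0) = 0$. (2) Realize the operational identity as an identity of honest Chow classes on a sufficiently large test space: for each $N$, pull back along the map $\overline{\cM}_{g,n}(\CP^N,\beta)\to \Picabs_{g,n,d}$ (for appropriate $\beta$ with $\beta\cdot H = d$, $H$ the hyperplane class) and cap with the virtual class. (3) Invoke the vanishing of \cite{Bae}, which says precisely that Pixton's $\CP^N$-valued cycle in codimension $c > g$ vanishes against the virtual class of the stable map space; this forces the corresponding polynomial-in-$r$ coefficient, evaluated at $r=0$, to act as zero on $[\overline{\cM}_{g,n}(\CP^N,\beta)]^{\vir}$. (4) Let $N \to \infty$: because operational classes on $\Picabs_{g,n,d}$ are detected by their action on finite-type families of curves with line bundles, and any such family (after a suitable twist/Segre embedding) factors through some $\overline{\cM}_{g,n}(\CP^N,\beta)$-type test object, vanishing against all these test objects implies $\P_{g,A,d}^c = 0$ as an operational class. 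Alternatively — and perhaps more cleanly — one can bypass \cite{Bae} and argue directly on $\Picabs_{g,n,d}$ by combining Theorem \ref{Thm:main} (which identifies $\P_{g,A,d}^g$ with $\DRop_{g,A}$) with the invertibility/cohomological-degree structure already built into Pixton's formula: the vanishing for $c > g$ is a formal consequence of the same combinatorial cancellation (the ``lower-genus'' manipulation of Clader--Janda \cite{cj}) once it is phrased in terms of the universal formula, since that manipulation never uses the $\sum a_i = 0$ normalization and is insensitive to the presence of the extra $\xi$, $\xi_i$, $\eta$ variables.

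I would proceed with the $\CP^N$-limit route as the primary argument, mirroring the structure of the proof of Theorem \ref{Thm:main}. The main obstacle is step (4): making precise that the operational class $\P_{g,A,d}^c$ on the non-finite-type Artin stack $\Picabs_{g,n,d}$ is determined by its action on the (limit of the) stable-map test spaces. This requires knowing that every finite-type family $\cC \to \cS$ with a degree $d$ line bundle $\cL$ admits, after replacing $\cL$ by $\cL \otimes \pi^*(\text{ample})$ and using relative very ampleness on the fibers, a classifying map to some $\overline{\cM}_{g,n}(\CP^N,\beta)$ (or at least to a space on which the \cite{Bae} vanishing applies) through which the operation factors — together with the check that twisting $\cL$ shifts $\P_{g,A,d}^{c,r}$ in a controlled, $r$-polynomial way whose $r=0$ value is unchanged in codimension $c > g$. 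The bookkeeping of multidegrees $\delta$ under the Segre/twisting operations, and the verification that only finitely many $\delta$ contribute on $\cS$ at each stage so that the limit is well-defined, is the technical heart; once that compatibility is in place, the vanishing is immediate from \cite{Bae}.
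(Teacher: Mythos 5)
Your overall strategy (pull back to $\overline{\cM}_{g,n}(\CP^N,\beta)$, invoke the vanishing of \cite{Bae}, and argue that this determines the operational class on $\Picabs_{g,n,d}$) is exactly the route the paper takes: the proof retraces Section \ref{sec:proof_of_main_theorem}, replacing the final appeal to \cite{Janda2018Double-ramifica} with Theorem~3.2 of \cite{Bae}. However, there are two genuine errors in your writeup.

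First, step (1) is wrong as stated. You claim that ``it suffices to show $p(0)=0$'' for the coefficient $p(r)$ of each individual $j_{\Gamma_\delta*}[\gamma]$. But the classes $j_{\Gamma_\delta*}[\gamma]$ satisfy many relations in $\CHop^c(\Picabs_{g,n,d})$, so the vanishing of the linear combination $\P^c_{g,A,d}$ does \emph{not} reduce to, and certainly does not follow from, the vanishing of the individual coefficients --- and in fact those coefficients are typically nonzero. The vanishing is a nontrivial relation in the tautological ring, not a combinatorial cancellation. You do not actually use this framing in steps (2)--(4), so it is a stray false claim rather than a load-bearing one, but it is still an error.

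Second, and more seriously, your proposed fix to the ``technical heart'' in step (4) does not work. You suggest replacing $\cL$ by $\cL \otimes \pi^*(\text{ample})$ to obtain relative very ampleness. But twisting by a line bundle pulled back from the base $\cS$ changes nothing on the fibers of $\pi$: it does not alter the multidegree or the positivity of $\cL|_{C_s}$, and indeed Invariance~\invnrII{} says $\DRop$ and $\P^\bullet_{g,A,d}$ are literally unchanged by such a twist. To make $\cL$ relatively sufficiently positive one must twist by the \emph{markings}, $\cL(\sum_i a'_i p_i)$ with $a'_i \gg 0$, which is Invariance~\invnrA{}. The catch is that an arbitrary finite-type family $C \to B$ has no markings to begin with, so one first needs the alteration-and-partial-stabilization construction of \cref{ALTt} to produce enough sections, and then the invariance of Pixton's class under partial stabilization (\cref{lem:N_invariance_P}, which requires separate proof). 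Only after that chain of reductions can one form the space $U_l \to B$ of base-point-free linear systems with $\psi^*$ injective, factor through the unobstructed locus $\overline{\cM}_{g,n}(\PP^l,d)'$ (where, by \cref{lem:op_eq_ch_for_smooth}, the operational class is detected by its action on the fundamental class, which coincides with the virtual class), and apply \cite{Bae}. Your proposal correctly identifies where the difficulty lies but supplies a mechanism (pullback twist) that cannot resolve it.
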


Theorem \ref{Thm:mainvan} may be viewed
as providing relations among
tautological classes in the operational
Chow ring of the Picard stack -- a new
direction of study with many open questions.{\footnote{See \cite{RandalWilliams2012,Yin}
for tautological relations on the Picard variety over
the moduli space of smooth curves.}}
While Theorem \ref{Thm:mainvan} 
implies the
vanishings of
\cite{Bae, cj}, 
we will use these results in our
proof.

\subsection{Twisted holomorphic and meromorphic differentials} \label{Sect:Twistdiffintro}
\subsubsection{Fundamental classes}
Let $A=(a_1,\ldots,a_n)$ be a vector of zero and pole multiplicities
satisfying
$$\sum_{i=1}^n a_i= 2g-2\, .$$
Let $\HH_g(A) \subset \mathcal{M}_{g,n}$ be the quasi-projective locus 
of pointed curves $(C,p_1,\ldots,p_n)$ satisfying the condition
$$\mathcal{O}_C\Big(\sum_{i=1}^n a_i p_i\Big) \simeq \omega_C\, .$$
In other words, $\HH_g(A)$ is the locus of meromorphic differentials{\footnote{If all
the parts of $A$ are non-negative, then $\HH_g(A)$ is the locus of
holomorphic differentials.}}
with zero and pole multiplicities prescribed by $A$.
In \cite{Farkas2016The-moduli-spac}, a compact moduli space of twisted canonical divisors
$$\widetilde{\HH}_g(A) \subset \overline{\mathcal{M}}_{g,n}$$
is constructed which contains $\HH_g(A)$ as an open set.

In the strictly meromorphic
case, where $A$ contains at least one strictly negative part,
$\widetilde{\HH}_g(A)$ is of pure codimension $g$ in
$\overline{\mathcal{M}}_{g,n}$ by \cite[Theorem 3]{Farkas2016The-moduli-spac}. 
A weighted fundamental cycle of $\widetilde{\HH}_g(A)$, 
\begin{equation}\label{wwww4}
\mathsf{H}_{g,A}\in \mathsf{CH}_{2g-3+n}(\oM_{g,n})\ ,
\end{equation}
is constructed in
\cite[Appendix A]{Farkas2016The-moduli-spac} with explicit nontrivial 
weights on the irreducible components.
In the strictly meromorphic case,
$\HH_{g}(A)\subset \oM_{g,n}$ 
is also of pure codimension $g$.
The closure
$$\overline{\HH}_{g}(A)\subset \oM_{g,n}$$
contributes to the fundamental class $\mathsf{H}_{g,A}$ with
multiplicity 1, but there are additional boundary contributions, see
\cite[Appendix A]{Farkas2016The-moduli-spac}.

The  
universal family over the moduli space of
stable curves together with the 
relative dualizing sheaf,
\begin{equation}\label{rrrr}
\pi: \mathcal{C}_{g,n} \rightarrow \overline{\cM}_{g,n}\, , \ \ \ \ \omega_\pi \rightarrow \mathcal{C}_{g,n}\, ,
\end{equation}
determine an object of
$\Picabs_{g,n,2g-2}$.
By \cite{Holmes2019Infinitesimal-s} and
the compatibility of Theorem \ref{cccc},
the action of $\mathsf{DR}^{\mathsf{op}}_{g,A}$ on the fundamental class
of $\overline{\mathcal{M}}_{g,n}$ equals the  weighted
fundamental class of $\widetilde{\HH}_g(A)$,
$$\mathsf{DR}^{\mathsf{op}}_{g,A}(\phi_\omega)\Big( [\overline{\mathcal{M}}_{g,n}]\Big)= \mathsf{H}_{g,A}
\in {\mathsf{CH}}_{2g-3+n}(\overline{\mathcal{M}}_{g,n})
\, .$$
We can now apply Theorem \ref{Thm:main} to prove the following result.
 
\begin{theorem}\label{FPA1}
In the strictly meromorphic case,
$$\mathsf{H}_{g,A} = \mathsf{P}_{g,A,2g-2}^g[\oM_{g,n}]$$
for the universal family
$$\pi: \mathcal{C}_{g,n} \rightarrow \overline{\cM}_{g,n}\, ,\ \ \ \
\omega_\pi \rightarrow \mathcal{C}_{g,n}\,. $$
\end{theorem}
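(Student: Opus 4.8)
The plan is to deduce \cref{FPA1} by combining three results already in place: Pixton's formula for the universal twisted double ramification cycle (\cref{Thm:main}), the compatibility of $\mathsf{DR}^{\mathsf{op}}_{g,A}$ with the canonically twisted double ramification cycle in the case $k=1$ (\cref{cccc}, so that $d=2g-2$), and the comparison of the logarithmic/b-Chow canonically twisted double ramification cycle with the Farkas--Pandharipande weighted fundamental cycle of twisted canonical divisors proven in \cite{Holmes2019Infinitesimal-s}. Throughout, set $d=2g-2$ and let $\phi_{\omega_\pi}\colon \oM_{g,n}\to\Picabs_{g,n,d}$ be the classifying morphism of the universal family $(\pi\colon\mathcal{C}_{g,n}\to\oM_{g,n},\,\omega_\pi)$; with this morphism understood, the notation $\mathsf{P}^g_{g,A,d}[\oM_{g,n}]$ means $\phi_{\omega_\pi}^*\mathsf{P}^g_{g,A,d}\cap[\oM_{g,n}]\in\mathsf{CH}_{2g-3+n}(\oM_{g,n})$.

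First I would apply \cref{Thm:main} with $d=2g-2$, giving the equality of operational classes $\mathsf{DR}^{\mathsf{op}}_{g,A}=\mathsf{P}^g_{g,A,d}$ in $\mathsf{CH}^g_{\mathsf{op}}(\Picabs_{g,n,d})$. Evaluating both sides on $\phi_{\omega_\pi}$ and capping with $[\oM_{g,n}]$ produces
\[
\mathsf{DR}^{\mathsf{op}}_{g,A}(\phi_{\omega_\pi})\big([\oM_{g,n}]\big)\;=\;\mathsf{P}^g_{g,A,d}[\oM_{g,n}]\;\in\;\mathsf{CH}_{2g-3+n}(\oM_{g,n}).
\]
Next, \cref{cccc} in the case $k=1$ identifies the left-hand side with the canonically twisted double ramification cycle, $\mathsf{DR}^{\mathsf{op}}_{g,A}(\phi_{\omega_\pi})([\oM_{g,n}])=\mathsf{DR}_{g,A,\omega}$. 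It then remains only to identify $\mathsf{DR}_{g,A,\omega}$ with the weighted fundamental class $\mathsf{H}_{g,A}$ of $\widetilde{\HH}_g(A)$.

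This last identification is exactly the comparison theorem of \cite{Holmes2019Infinitesimal-s}: in the strictly meromorphic case, where $A$ has at least one strictly negative part, the canonically twisted double ramification cycle $\mathsf{DR}_{g,A,\omega}$ coincides with the weighted fundamental cycle $\mathsf{H}_{g,A}\in\mathsf{CH}_{2g-3+n}(\oM_{g,n})$ of the moduli space of twisted canonical divisors constructed in \cite[Appendix A]{Farkas2016The-moduli-spac}. Chaining the three equalities yields $\mathsf{H}_{g,A}=\mathsf{P}^g_{g,A,2g-2}[\oM_{g,n}]$, which is the assertion of \cref{FPA1}.

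I do not expect a genuine obstacle beyond the cited inputs, but two bookkeeping points want care. The first is the normalization of the $2$-power prefactors: the factors of $2$ are absorbed into the definition of $\mathsf{P}^{c,r}_{g,A,d}$ (cf.\ the footnote to \cref{Thm:main}), following \cite{Janda2018Double-ramifica}, so one should verify these match the conventions under which \cref{Thm:main} is proved, lest a spurious $2^{-g}$ survive the capping with $[\oM_{g,n}]$. The second is the role of the hypothesis: the strictly meromorphic assumption enters at, and only at, the final step, being needed both for $\widetilde{\HH}_g(A)$ to be of pure codimension $g$ and for the comparison of \cite{Holmes2019Infinitesimal-s} to apply; the intermediate equality $\mathsf{DR}_{g,A,\omega}=\mathsf{P}^g_{g,A,2g-2}[\oM_{g,n}]$ holds with no restriction on $A$.
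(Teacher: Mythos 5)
Your proposal is correct and takes essentially the same route as the paper: the paper's proof of \cref{FPA1} (sketched before its statement and carried out in the $k=1$ case of Theorem \ref{FPA}) combines exactly these three ingredients — the comparison $\mathsf{H}_{g,A}=\mathsf{DR}_{g,A,\omega}$ from \cite{Holmes2019Infinitesimal-s}, the compatibility $\mathsf{DR}_{g,A,\omega}=\mathsf{DR}^{\mathsf{op}}_{g,A}(\phi_{\omega_\pi})([\oM_{g,n}])$ from \cref{cccc}, and \cref{Thm:main} — in the same order. The only cosmetic difference is that the paper packages the proof under the more general Conjecture~A (all $k\geq 1$) and hence appeals in addition to \cref{Prop:PixtonPullbackCompatibility} to translate $\P^g_{g,A,d}[\oM_{g,n}]$ into Pixton's original formula $2^{-g}P^{g,k}_g(\widetilde A)$, a step your version does not need since \cref{FPA1} is already stated in terms of $\mathsf{P}^g_{g,A,2g-2}[\oM_{g,n}]$.
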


Theorem \ref{FPA1} is exactly equivalent to Conjecture A of 
\cite[Appendix A]{Farkas2016The-moduli-spac}.
Since both the moduli space  $\widetilde{\HH}_g(A)$ and the weighted
fundamental cycle 
$\mathsf{H}_{g,A}$
have explicit geometric definitions, 
the result provides a geometric representative
of Pixton's cycle class in terms of twisted differentials. 
Theorem \ref{FPA1} is proven in Section \ref{Sect:ProofConjA} where the parallel conjectures \cite{Schmitt2016Dimension-theor}
for higher differentials  are also proven (by parallel  arguments).

\subsubsection{Closures}
Let $A=(a_1,\ldots, a_n)$ be a vector of integers satisfying
$$\sum_{i=1}^n a_i =2g-2\, .$$
A careful investigation of the closure
$$\HH_g(A) \subset \overline{\HH}_g(A) \subset \oM_{g,n}$$
is carried out in \cite{Bainbridge2018Compactificatio} in
both the holomorphic and meromorphic cases.
By a simple procedure presented in \cite[Appendix]{Farkas2016The-moduli-spac},
Theorem \ref{FPA1} determines the cycle classes of the 
closures
$$ [\overline{\HH}_g(A)] \in \mathsf{CH}_{*} (\oM_{g,n})\, .$$
for $A$ in both the holomorphic and meromorphic cases. 

A similar procedure determines the corresponding classes for $k$-differentials, see \cite[Section 3.4]{Schmitt2016Dimension-theor} for an explanation. In particular, 
our results imply that the cycle classes of the closures 
are tautological{\footnote{The precise statement is given in
Corollary \ref{kk333} of Section \ref{clozzz}.}} 
for all $k$ (as was previously known only for $k=1$ due to \cite{Sauvaget2017Cohomology-clas}).

In the case of holomorphic differentials, another approach to the class of
the closure $\overline{\HH}_g(A) \subset \oM_{g,n}$ is provided
by Conjecture A.1 of the Appendix of \cite{Pandharipande2019Tautological-re} via a limit of
Witten's $r$-spin class. A significant first step in the proof of \cite[Conjecture A.1]{Pandharipande2019Tautological-re} by 
Chen, Janda, Ruan, and Sauvaget can be found in \cite{Chen2018Towards-a-Theor}. Further progress
requires 
a virtual localization analysis for moduli spaces of 
stable log maps. An approach to 
Theorem \ref{FPA1} using log stable maps, virtual localization in the log context, and
the strategy of \cite{Janda2016Double-ramifica} also appears possible (once the
required moduli spaces and localization formulas are established).


\subsection{Invariance properties} \label{Sect:introinvariance}


The 
universal twisted double ramification cycle has several
basic invariance properties 
which play 
an important role in our study. 

Recall that an object of $\Picabs_{g,n,d}$ over $\mathcal{S}$ is
a flat family  of prestable
    $n$-pointed genus $g$ curves together with a line bundle
    of relative degree $d$,
    \begin{equation}
    \label{ff449911}    
    \pi:\mathcal{C} \rightarrow \mathcal{S}\, , \ \ \ \ p_1,\ldots,p_n: \mathcal{S} \rightarrow \mathcal{C}\, , \ \ \ \
        \mathcal{L} \rightarrow \mathcal{C}\, .
        \end{equation}
    Let $\mathsf{DR}^{\mathsf{op}}_{g,A,\mathcal{L}}\in \mathsf{CH}^g_{\mathsf{op}}(\mathcal{S})$
    be the twisted double ramification cycle associated to the above
    family \eqref{ff449911} and
     the vector
    $$A=(a_1,\ldots, a_n)\, , \ \ \ \ d =\sum_{i=1}^n a_i\, .$$

        \noindent{\bf{Invariance \invnrI: {\rm Dualizing}.}}
\vspace{10pt}

A new object of $\Picabs_{g,n,-d}$  over $\mathcal{S}$ is
obtained from \eqref{ff449911} by dualizing $\mathcal{L}$:
    \begin{equation}
    \label{ff44992}    
    \pi:\mathcal{C} \rightarrow \mathcal{S}\, , \ \ \ \ p_1,\ldots,p_n: \mathcal{S} \rightarrow \mathcal{C}\, , \ \ \ \
        \mathcal{L}^* \rightarrow \mathcal{C}\, .
        \end{equation}
 Let $\mathsf{DR}^{\mathsf{op}}_{g,-A, \mathcal{L}^*}\in \mathsf{CH}^g_{\mathsf{op}}(\mathcal{S})$ be the twisted double ramification cycle associated to the new family \eqref{ff44992} and the vector $-A=(-a_1,\ldots,-a_n)$. We have the invariance
        $$\mathsf{DR}^{\mathsf{op}}_{g,-A, \mathcal{L}^*}
        =\epsilon^* \mathsf{DR}^{\mathsf{op}}_{g,A, \mathcal{L}}
        \, , $$
where $\epsilon: \Picabs_{g,n,-d} \rightarrow \Picabs_{g,n,d}$
is the natural map obtained via dualizing the line bundle.

 \vspace{10pt}
        \noindent{\bf{Invariance \invnrn: {\rm Unweighted markings}.}}
\vspace{10pt}

Assume we have an additional section $p_{n+1}: \mathcal{S} \rightarrow \mathcal{C}$ of $\pi$ which yields an object of $\Picabs_{g,n+1,d}$,
    \begin{equation}
    \label{ff44993}    
    \pi:\mathcal{C} \rightarrow \mathcal{S}\, , \ \ \ \ p_1,\ldots,p_n,p_{n+1}: \mathcal{S} \rightarrow \mathcal{C}\, , \ \ \ \
        \mathcal{L} \rightarrow \mathcal{C}\, .
        \end{equation}
 Let $A_0 \in \mathbb{Z}^{n+1}$ be the vector obtained by appending $0$ (as the last coefficient) to $A$.
 Let $\mathsf{DR}^{\mathsf{op}}_{g,A_0, \mathcal{L}}\in \mathsf{CH}^g_{\mathsf{op}}(\mathcal{S})$ be the twisted double ramification cycle associated to the new family \eqref{ff44993} and the vector $A_0$. We have the invariance
        $$\mathsf{DR}^{\mathsf{op}}_{g,A_0, \mathcal{L}}
        =F^*\mathsf{DR}^{\mathsf{op}}_{g,A, \mathcal{L}}
        \, , $$
    where $F:\Picabs_{g,n+1,d} \rightarrow \Picabs_{g,n,d}$
    is the map forgetting the last marking.
 
 \vspace{10pt}
        \noindent{\bf{Invariance \invnrA: {\rm Weight translation}.}}
\vspace{10pt}

Let $B=(b_1,\ldots,b_n) \in \bb Z^n$ satisfy $\sum_{i=1}^n b_i=e$, then the family 
    \begin{equation}
    \label{ff44994}    
    \pi:\mathcal{C} \rightarrow \mathcal{S}\, , \ \ \ \ p_1,\ldots,p_n: \mathcal{S} \rightarrow \mathcal{C}\, , \ \ \ \
        \mathcal{L}\big(\sum_{i=1}^n b_i p_i\big) \rightarrow \mathcal{C}\, .
        \end{equation}
 defines an object of $\Picabs_{g,n,d+e}$. 
 Let $\mathsf{DR}^{\mathsf{op}}_{g,A+B, \mathcal{L}(\sum_i b_i p_i)}\in \mathsf{CH}^g_{\mathsf{op}}(\mathcal{S})$ be the twisted double ramification cycle associated to the new family \eqref{ff44994} and the vector $A+B$. We have the invariance
        $$\mathsf{DR}^{\mathsf{op}}_{g,A+B, \mathcal{L}(\sum_i b_i p_i)}
        =\mathsf{DR}^{\mathsf{op}}_{g,A, \mathcal{L}}
        \, . $$
    \vspace{10pt}   

        \noindent{\bf{Invariance \invnrII: {\rm Twisting by pullback.}}}
\vspace{10pt}
    
    Let $\mathcal{B} \rightarrow \mathcal{S}$ be any line bundle on the
    base. By tensoring \eqref{ff449911} with $\pi^*\mathcal{B}$, we obtain a new object of $\Picabs_{g,n,d}$ over $\mathcal{S}$:
    \begin{equation}
    \label{ff4499}    
    \pi:\mathcal{C} \rightarrow \mathcal{S}\, , \ \ \ \ p_1,\ldots,p_n: \mathcal{S} \rightarrow \mathcal{C}\, , \ \ \ \
        \mathcal{L}\otimes \pi^*\mathcal{B} \rightarrow \mathcal{C}\, .
        \end{equation}
        Let $\mathsf{DR}^{\mathsf{op}}_{g,A, \mathcal{L}\otimes\pi^*\mathcal{B}}\in \mathsf{CH}^g_{\mathsf{op}}(\mathcal{S})$ be the twisted double ramification cycle associated to the new family \eqref{ff4499} and the vector $A$. We have the invariance
        $$\mathsf{DR}^{\mathsf{op}}_{g,A, \mathcal{L}\otimes\pi^*\mathcal{B}}
        =\mathsf{DR}^{\mathsf{op}}_{g,A, \mathcal{L}}
        \, . $$
    
 \vspace{10pt}
        \noindent{\bf{Invariance \invnrIII: {\rm Vertical twisting}.}}
\vspace{10pt}
    
    Consider a partition of the genus, marking, and degree data,
    \begin{equation}\label{ss445}
    g_1+g_2=g\, , \ \ \ N_1\sqcup N_2 = \{1,\ldots,n\}\,, \ \ \ d_1+d_2=d\, ,
    \end{equation}
    which is {\em not} symmetric.{\footnote{We require
    $(g_1,N_1,d_1)\neq (g_2,N_2,d_2)$ so that the two sides of a separating node
    with separating data \eqref{ss445} can be distinguished.}}
    Such a partition defines a divisor 
    $$ \Delta_1 \in \mathsf{CH}^1(\mathfrak{C}_{g,n,d})$$
    in the universal curve over $\Picabs_{g,n,d}$
    by twisting by the $(g_1,N_1,d_1)$-component of a 
    curve with a separating node with separating data \eqref{ss445}.

    By tensoring \eqref{ff449911} with $\mathcal{O}_{\mathcal{C}}(\Delta_1)$, we obtain a new object of $\Picabs_{g,n,d}$ over $\mathcal{S}$:
    \begin{equation}
    \label{ff44999}    
    \pi:\mathcal{C} \rightarrow \mathcal{S}\, , \ \ \ \ p_1,\ldots,p_n: \mathcal{S} \rightarrow \mathcal{C}\, , \ \ \ \
        \mathcal{L}(\Delta_1) \rightarrow \mathcal{C}\, .
        \end{equation}
        Let $\mathsf{DR}^{\mathsf{op}}_{g,A, \mathcal{L}(\Delta_1)}\in \mathsf{CH}^g_{\mathsf{op}}(\mathcal{S})$ be the twisted double ramification cycle associated to the new family \eqref{ff44999} and the vector $A$. We have the invariance
        \begin{equation}\label{k55k9}
        \mathsf{DR}^{\mathsf{op}}_{g,A, \mathcal{L}(\Delta_1)} =
        \mathsf{DR}^{\mathsf{op}}_{g,A, \mathcal{L}}
        \, . \end{equation}
        
    For \emph{symmetric} separating data \eqref{ss445}, equality \eqref{k55k9}
    holds with $\Delta_1\subset \mathfrak{C}_{g,n,d}$ defined as the full preimage of the locus $\Delta \subset \Picabs_{g,n,d}$ of curves with a separating node \eqref{ss445}. Then, equality \eqref{k55k9}
    follows from Invariance \invnrII{} with $\mathcal{B}=\mathcal{O}(\Delta)$.
    
\vspace{10pt}
        \noindent{\bf{Invariance \invnrIV: {\rm Partial stabilization}.}}
\vspace{10pt}
    
    Consider a second family 
    of prestable
    $n$-pointed genus $g$ curves over $\mathcal{S}$,
    \begin{equation*}
    \pi':\mathcal{C}' \rightarrow \mathcal{S}\, , \ \ \ \ p'_1,\ldots,p'_n: \mathcal{S}
    \rightarrow \mathcal{C}'\, ,\end{equation*}
    together with a birational $\mathcal{S}$-morphism 
    $$f: \mathcal{C}' \rightarrow \mathcal{C}\, , \ \ \ \ f\circ p'_i = p_i\, .$$
    A line bundle of relative degree $d$ is defined on $\mathcal{C}'$ by
    $$f^*\mathcal{L} \rightarrow \mathcal{C}'\, .$$
    We require the following property to hold: {\em if the section $p'_i$ meets the
    exceptional locus of $f$, then $a_i=0$}. 
        
        Let $\mathsf{DR}^{\mathsf{op}}_{g,A, f^*\mathcal{L}}\in \mathsf{CH}^g_{\mathsf{op}}(\mathcal{S})$ be the twisted double ramification cycle associated to the new family
    \begin{equation} \label{eqn:primefamily}    \pi':\mathcal{C}' \rightarrow \mathcal{S}\, , \ \ \ \ p'_1,\ldots,p'_n: \mathcal{S}
    \rightarrow \mathcal{C}'\, , \ \ \ \ f^*\mathcal{L} \rightarrow \mathcal{C}' \end{equation}
    and the vector $A$.
        We have the invariance
        $$\mathsf{DR}^{\mathsf{op}}_{g,A, f^*\mathcal{L}}=
        \mathsf{DR}^{\mathsf{op}}_{g,A, \mathcal{L}}\, . $$

\vspace{10pt}
Theorem \ref{Thm:main} provides two paths to viewing 
the  above  invariance properties. The invariances can be 
seen
either from
formal
properties of the
geometric construction of the universal twisted double ramification
cycle or from formal symmetries of
Pixton's formula. In fact, all invariances hold not only for the codimension $g$ part $\P^g_{g,A,d}$ which computes the double ramification cycle, but for the full mixed-degree class $\P_{g,A,d}^\bullet$.

For example, Invariance \invnrIV{} on the 
formula side says that for the maps 
\[\phi_{\ca L}, \phi_{f^* \ca L} : \ca S \to \Picabs_{g,n,d}\]
obtained from the families  \eqref{ff449911} and \eqref{eqn:primefamily}, we have  
\[\phi_{f^* \ca L}^* \P^g_{g,A,d}=
\phi_{\ca L}^* \P^g_{g,A,d}  \in \mathsf{CH}^g_{\mathsf{op}}(\mathcal{S})\,  \] 
 for
$\P^g_{g,A,d} \in \CHop^g(\Picabs_{g,n,d})$.

Proofs of all of the invariances
will be presented in Section \ref{Sect:proofInvariance}.
The above invariances (together with geometric definitions
when transversality to the Abel-Jacobi map holds)  do not
characterize{\footnote {Further geometric properties are
required, see Section 1.6 of \cite{HolSch} for a discussion.}} 
$\DRop$. 

\subsection{Universal formula in degree 0}\label{sec:intro_deg_0}
The most efficient statement of the double ramification cycle formula
on the Picard stack of curves occurs in the degree $d=0$ case with {\em no} markings. 
In order to avoid{\footnote{For $g=1$, a parallel formula holds for $n=1$ and $A=(0)$.}} the unpointed genus 1 case, let $g\neq 1$.

The specialization of Theorem \ref{Thm:main} to $d=0$ calculates $\DRop_{g,\emptyset}$ as
the value at $r=0$ of the degree $g$ part of
\begin{align*} 
\exp\left(-\frac12 \eta \right) \sum_{
\substack{\Gamma_\delta\in \G_{g,0,0} \\
w\in \mathsf{W}_{\Gamma_\delta,r}}
}
\frac{r^{-h^1(\Gamma_\delta)}}{|\Aut(\Gamma_\delta)| }
\;
j_{\Gamma_\delta*}\Bigg[&
\prod_{e=(h,h')\in \E(\Gamma_\delta)}
\frac{1-\exp\left(-\frac{w(h)w(h')}2(\psi_h+\psi_{h'})\right)}{\psi_h + \psi_{h'}} \Bigg]\,  
\end{align*}
as an operational Chow class on 
$$\Picabs_g=\Picabs_{g,0,0}\, .$$
The full
statement of Theorem \ref{Thm:main} can 
be recovered from the above $d=0$ specialization via pullback under the map
\[\Picabs_{g,n,d} \to \Picabs_{g}\, ,\ \ \  (C,p_1, \ldots, p_n, \ca L) \mapsto \left(C, \ca L\left(-\sum_{i=1}^n a_i p_i\right)\right).\]
Indeed, the above map is the composition of the morphism 
\[\tau_{-A} : \Picabs_{g,n,d} \to \Picabs_{g,n,0}, (C,p_1, \ldots, p_n, \ca L) \mapsto \left(C, p_1, \ldots, p_n, \ca L\left(-\sum_{i=1}^n a_i p_i\right)\right)\]
with the morphism $$F : \Picabs_{g,n,0} \to \Picabs_{g}$$ forgetting the markings $p_1, \ldots, p_n$. 

\vspace{5pt}
\noindent $\bullet$
For the $\DRop_{g,A}$ side of Theorem \ref{Thm:main}, 
Invariance \invnrn{} 
implies $$F^* \DRop_{g,\emptyset} = \DRop_{g,\bd 0}$$ for the zero vector $\bd 0 \in \mathbb{Z}^n$. Furthermore, Invariance \invnrA{} implies $$\tau_{-A}^*\DRop_{g,\bd 0} = \DRop_{g,A}\, .$$ 

\vspace{5pt}
\noindent $\bullet$ For the $\mathsf{P}^g_{g,A,d}$
side of Theorem \ref{Thm:main}, 
the corresponding invariance properties of Pixton's formula
(discussed in Section \ref{Sect:proofInvariance}) 
yield the parallel transformation 
$$\tau_{-A}^*F^*\mathsf{P}^g_{g,\emptyset,0} = \mathsf{P}^g_{g,A,d}\, .$$ 

\vspace{5pt}
\noindent Therefore, the equality in Theorem \ref{Thm:main} for general $A$ and $d$ follows from the specialization to $A=\emptyset$ and $d=0$.
For certain steps in our proof of Theorem \ref{Thm:main}, the 
$A=\emptyset$ and $d=0$
geometry is
advantageous and is used.


By restricting to suitable open subsets of $\Picabs_{g}$, we can simplify the $d=0$ formula even further. Let $$\Picabs_{g}^\textup{ct} \subset \Picabs_{g}$$
be the locus where the curve $C$ is of compact type. 
We obtain 
\begin{equation}\label{ccttt}
 \DRop_{g,\emptyset}|_{\Picabs_{g}^\textup{ct}} 
 = \frac{\theta^g}{g!} \,, \ \ \  
 \text{for}\ \ \theta = -\frac12 \left(\eta +  \sum_{\Delta} d_\Delta^2 [\Delta] \right),
\end{equation}
where the sum is over the boundary divisors $\Delta \subset \Picabs_g$ on which generically the curve splits into two components carrying line bundles of degrees $d_\Delta$ and $-d_\Delta$. 
The class $\theta$ here may be viewed as a universal theta divisor on $\Picabs_{g}^\textup{ct}$. 

Formula \eqref{ccttt} was first written on the moduli space of stable curves of compact type in 
\cite{Grushevsky2012The-double-rami,Hain2013Normal-function}. The operational Chow class  $\DRop_{g,\emptyset}$ on $\Picabs_g$,
however, is {\em not} the  power of a divisor. 


\subsection{Acknowledgements}
We thank D. Chen, A. Chiodo, F. Janda, G. Farkas, T. Graber, S. Grushevsky, A. Kresch,
S. Molcho, M. M\"oller, A. Pixton, D. Ranganathan, A. Sauvaget, H.-H. Tseng,
J. Wise, and D. Zvonkine
for  
many discussions about Abel-Jacobi theory, double ramification cycles,
and meromorphic differentials. The AIM workshop on {\em Double ramification cycles and integrable systems} 
played a key role at the start of the paper. We are grateful to the organisers A. Buryak, R. Cavalieri, E. Clader, and P. Rossi. We thank the referees for several 
improvements in the presentation.

Y.~B. was supported by ERC-2017-AdG-786580-MACI and the
Korea Foundation for Advanced Studies.
D.~H. was partially supported by NWO grant 613.009.103. 
R.~P. was partially supported by 
SNF-200020-182181, SwissMAP, and
the Einstein Stiftung. 
J.~S. was supported by the SNF Early Postdoc.Mobility grant 184245 and thanks
the Max Planck Institute for Mathematics in Bonn for its hospitality. 
R.~S. was supported by  NWO grant 613.009.113.

The project has received funding from the European Research
Council (ERC) under the European Union Horizon 2020 research and
innovation program (grant agreement No. 786580).

\section{Notation, conventions, and the plan}

\subsection{Ground field}

In the Introduction, the ground field
was the complex numbers $\bb C$. However, for the remainder of the paper,
we will work more generally over a field ${\field}$ of characteristic zero. 
We will make essential use of the results of \cite{Janda2018Double-ramifica} which are stated over $\bb C$, 
but also
hold over $\bb Q$ by the following
standard argument:
\begin{enumerate}
\item[(i)] Both the $\mathsf{DR}$ cycle  (via the b-Chow approach \cite{Holmes2017Extending-the-d}) and Pixton's class are defined over $\bb Q$. 
\item[(ii)] Rational equivalence of cycles uses finitely many subschemes and rational functions and  hence descends to a finitely generated 
$\bb Q$-subalgebra of $\bb C$. A non-empty scheme of finite type over $\bb Q$ has points over some finite extension, hence the rational equivalence descends to a finite extension of $\bb Q$. 
\item[(iii)]
The rational equivalence descends (via a Galois argument)
further to $\bb Q$ since we work in Chow with rational coefficients. 
\end{enumerate}
By similar arguments,
our results are in fact true over $\bb Z[1/N]$ for a positive integer $N$ depending on the ramification data. Understanding what happens at small primes (or integral Chow groups)
is an interesting question.

\subsection{Basics}\label{sec:prestable_vs_log_curves}

Let ${\field}$ be a ground field of characteristic zero. When we work in the logarithmic category, we assume $\on{Spec} {\field}$ to be equipped with the trivial log structure. 

We write $\overline{\mathcal M}$ for the stack of all stable (ordered) marked curves over ${\field}$ 
and $\mathfrak M$ for the stack of all prestable curves with ordered marked points. Both come with natural log structures, and the universal marked curves over these spaces are naturally log curves.
For $\Mbar$, the log structure is described in \cite{Kato2000Log-smooth-defo}. 
The same construction applies unchanged to $\frak M$, see
\cite[Appendix A]{Gross2013Logarithmic-Gro}.
The natural open immersion $$\overline{\mathcal M} \to \frak M$$ is strict (though, in contrast to  \cite{Gross2013Logarithmic-Gro,Kato2000Log-smooth-defo},
we order the markings of our log curves). We use subscripts $g$ and $n$ to fix the genus and number of markings when necessary. 

Let $\frak C$ be
the universal prestable curve over $\frak M$. 
For efficiency of notation, we will also denote by $\frak C$ the universal curve over the various other moduli stacks
of curves with additional structure which
will appear in the paper. 
These universal curves are always obtained by 
pulling-back $\frak C$ over $\frak M$.

For the convenience of the reader, we provide here a table of the key symbols.

{\renewcommand{\arraystretch}{1.5}
\begin{table}[htp]
\caption{Key notation}
\begin{center}
\begin{tabular}{|c|l|}
\hline
$\frak M_{g,n}$ & stack of prestable marked curves of genus $g$ with $n$ markings\\
$\Mbar_{g, n}$ & stack of stable marked curves of genus $g$ with $n$ markings\\
$\frak C$ & universal prestable curve over $\frak M$ \\
$A = (a_1, \dots, a_n)$ & $A\in \bb Z^n$ with $\sum_{i=1}^n a_i$  denoted by $d$\\
$\Picabs_{g,n}$ &  Picard stack,  Section \ref{sec:pic_stacks}  \\ 
$\Picrel_{g,n}$ & relative universal Picard stack,  Section \ref{sec:pic_stacks} \\
$\CHop$ & operational Chow group, Section \ref{ssec:Chopstacks}    \\
$\xi_i$ &  tautological class on $\Picabs_{g,n}$, Section \ref{Ssec:PsiXiEta}\\
$\eta_{a,b}$ &  tautological class on $\Picabs_{g,n}$, Section \ref{Ssec:PsiXiEta}\\
$\psi_i$ &  tautological class on $\Picabs_{g,n}$, Section \ref{Ssec:PsiXiEta}\\
$\mathsf{P}_{g,A,d}^{c}$ & Pixton's cycle, Section \ref{Ssec:MainFormula}\\
$c(\phi)$& homomorphisms $c(\phi): \Chow_*(B) \to \Chow_{*-p}(B)$ given by an operational \\
~& class $c \in \CHop^p(\frak X)$ and $\phi: B \to \frak X$ with $B$ finite type scheme, \cref{ssec:Chopstacks}\\
$\DRop_{g,A}$&  operational DR cycle, Section \ref{sec:uni_DR_defs}\\
\hline
\end{tabular}
\end{center}
\label{default}
\end{table}%
}


\subsection{Plan of the paper}

Section \ref{sec:pic_and_chow}  concerns
 the treatment
of several technical issues related to operational Chow groups
of 
the Picard stack $\Picabs$.
In fact,
we develop a general theory of operational Chow groups of algebraic stacks which are
locally of finite type over ${\field}$. 
The theory is certainly known to experts, but for our later results, we will require
the precise definitions. In particular, to a proper representable morphism of algebraic stacks, we associate an operational class, which will be the key to constructing the operational double ramification cycle. 

The core  of the
paper starts in Section \ref{sec:uni_DR_defs} where
we give three equivalent definitions of the universal double ramification cycle on $\Picabs$. Our first definition is by taking a closure in the spirit of \cite{Holmes2017Jacobian-extens}
which is simple, but rather difficult to work with. The second is via logarithmic geometry following \cite{Marcus2017Logarithmic-com}. The third is a  b-Chow definition along the path of
\cite{Holmes2017Multiplicativit}. 
In  Section \ref{sec:image_of_aj}, we give an explicit description of the 
set-theoretic image of the double ramification cycle in $\Picabs$.
We prove Theorem \ref{cccc} in Section \ref{proof1111}.

In Section \ref{sec:univeral_pixton}, we discuss properties 
of Pixton's  cycle $\P_{g,A,d}^{c}$ defined  Section \ref{Ssec:MainFormula}. In particular, 
formal properties of Pixton's cycle parallel to the invariances of the double ramification cycles
are proven. Compatibilities with definitions in previously studied
cases are also proven.

\Cref{sec:proof_of_main_theorem} contains the proof of Theorem \ref{Thm:main}, the main
result of the paper, by an eventual
reduction to the formula of \cite{Janda2018Double-ramifica} in the case of target $\bb P^n$ for large $n$.  
A crucial step in the proof is the matching of the double ramification cycle defined in \cite{Janda2018Double-ramifica} via rubber maps with
 our new universal definition on $\Picabs$ in a suitable sense when the target is $\bb P^n$. 
 The matching is verified in Section \ref{sec:comparing_stacks} where we follow the pattern of the proof given in \cite{Marcus2017Logarithmic-com} in case the target is a point. 

In Section \ref{Sect:proofInvariance}, we prove the  invariance properties of
Section \ref{Sect:introinvariance}. Theorems \ref{Thm:mainvan} and \ref{FPA1} are
proven as a consequence of Theorem \ref{Thm:main} in Section
 \ref{sec:applications}. The connections between the vanishing result of
 Theorem \ref{Thm:mainvan} and past (and future) work is discussed in Section \ref{pasfut}.
 

\section{Picard stacks and operational Chow}\label{sec:pic_and_chow}

\subsection{The Picard stack and relative Picard space} \label{sec:pic_stacks}
Our stacks will be with respect to the fppf topology (\cite[Definition 9.1.1]{OlssonAlgebraic-spaces}). 
We define the Picard stack $\Picabs_{g,n}$ as the fibred category over $\frak M_{g,n}$ whose fibre over a scheme $T \to \frak M_{g,n}$ is the groupoid of line bundles on $\frak C_{g,n} \times_{\frak M_{g,n}} T$ with morphisms given by isomorphisms of line bundles, see \cite[14.4.7]{Laumon2000Champs-algebriq}. We define the relative Picard space $\Picrel_{g,n}/\frak M_{g,n}$ to be the quotient of $\Picabs_{g,n}$ by its relative inertia over $\frak M_{g,n}$. Equivalently $\Picrel_{g,n}$ is the fppf-sheafification of the fibred category of \emph{isomorphism classes} of line bundles on $\frak C_{g,n} \times_{\frak M_{g,n}} T$,
see \cite[Chapter 8]{Bosch1990Neron-models} and \cite[Chapter 9]{Fantechi2005Fundamental-alg}. 

Relative representability of $\Picrel_{g,n}/\frak M_{g, n}$ by smooth algebraic spaces can be checked locally on $\frak M_{g,n}$. It then follows from \cite[Appendix]{Artin1974Versal-deform} as the curve $$\frak C_{g,n} \to \frak M_{g,n}$$
is flat, proper, relatively representable by algebraic spaces, and cohomologically flat in dimension 0 (reduced and connected geometric fibers). The Picard stack $\Picabs_{g,n}$ is a $\bb G_m$-gerbe over $\Picrel_{g,n}$, hence is a (smooth) algebraic stack. In particular, $\Picabs_{g,n}$ is smooth over $\field$ of pure dimension $4g-4 + n$, and $\Picrel_{g,n}$ is smooth over $\field$ of pure dimension $4g - 3 + n$. 

\begin{remark} \label{Rmk:genus1n0}
We will moreover assume  $(g,n) \neq (1,0)$. Then, $\frak M_{g,n}$ and hence $\Picrel$, $\Picabs$, and anything of Deligne-Mumford type over them, has affine stabilisers, and so is therefore stratified by global quotient stacks in the sense of \cite{Kresch1999Cycle-groups-fo}. The latter property 
will be important for some intersection-theoretic computations, in particular the proof of Proposition \ref{prop:invarianceproperbirat}. 
\end{remark}

\subsection{Operational Chow groups of algebraic stacks}\label{ssec:Chopstacks}
Our goal here is to define the operational Chow group of $\Picabs_{g,n}$, following \cite[Chapter 17]{Fulton1984Intersection-th}. 
In fact, we construct an operational Chow group for any algebraic stack locally of finite type over a field. 
The definition is a simple generalisation of  \cite{Fulton1984Intersection-th}. 

\begin{definition}
    Let $\frak Y$ be a locally finite type algebraic stack over ${\field}$. Let $p$ be an integer. A bivariant class $c$ in the $p$th operational Chow group $\CHop^p(\frak Y)$ is a collection of  homomorphisms
\[c(\phi)^{m}\colon \Chow_m(B) \to \Chow_{m-p}(B)\]
for all maps $\phi \colon B \to \frak Y$ where $B$ is a scheme of finite type over ${\field}$, and for all integers $m$, compatible with proper pushforward, flat pullback, and Gysin homomorphisms for regular embeddings (conditions (C1)-(C3) in \cite[Section 17.1]{Fulton1984Intersection-th}). 
\end{definition}
For a class $\alpha \in \Chow_m(B)$, we will sometimes write $c(\alpha)$ in place of $c(\phi)^m(\alpha)$, if the morphism $\phi$ is clear. 


Such a definition for the operational Chow group of a Deligne-Mumford stack is given in \cite{Edidin2017Towards-an-inter}. To be able to use Chow groups on algebraic stacks as defined in \cite{Kresch1999Cycle-groups-fo} for algebraic stacks of finite type over a field, we will use the following result. 
\begin{lemma}\label{lem:factorisationopenquasi}
Let $f\colon \frak A \to \frak B$ be a morphism over a field ${\field}$ where $\frak B$ is an algebraic stack 
locally of finite type over ${\field}$ and $\frak A$ is an algebraic stack of finite type over ${\field}$. Then there exists a factorisation of $f$ via a commutative diagram

\begin{center}
\begin{tikzcd}
\frak A \arrow[dr] \arrow[rr, "f"]
&~
& \frak B \\
~
&\frak B' \arrow[ur]
&~
\end{tikzcd}
\end{center}
where $\frak B' \to \frak B$ is an open immersion and $\frak B'$ is quasi-compact (and hence of finite type).  
\end{lemma}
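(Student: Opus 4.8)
The plan is to reduce $f$ to a finite part of a nice atlas of $\frak B$. Since $\frak B$ is an algebraic stack locally of finite type over $\field$, it admits a smooth surjection from a scheme which is locally of finite type over $\field$; covering that scheme by its affine opens and composing, one obtains a smooth surjection $V = \coprod_{i \in I} V_i \to \frak B$ in which every $V_i$ is an affine scheme of finite type over $\field$. As smooth morphisms of algebraic stacks are open, the image $U_i \subseteq |\frak B|$ of each $|V_i|$ is open, and $\{U_i\}_{i \in I}$ is an open cover of the topological space $|\frak B|$.

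Next I would invoke quasi-compactness. Being of finite type over $\field$, $\frak A$ is quasi-compact, so $|f|(|\frak A|) \subseteq |\frak B|$ is a quasi-compact subset and is therefore contained in a finite union $U_{i_1} \cup \dots \cup U_{i_k}$. Let $j \colon \frak B' \hookrightarrow \frak B$ be the open substack with underlying space $|\frak B'| = U_{i_1} \cup \dots \cup U_{i_k}$. The scheme $V' = V_{i_1} \sqcup \dots \sqcup V_{i_k}$ is of finite type over $\field$, and by construction the induced morphism $V' \to \frak B'$ is smooth and surjective. A stack admitting a smooth surjection from a quasi-compact scheme is quasi-compact, so $\frak B'$ is quasi-compact; being moreover locally of finite type over $\field$, it is of finite type.

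It remains to factor $f$ through $j$. Forming the base change $\frak A \times_{\frak B, j} \frak B' \to \frak A$, we obtain an open immersion which is surjective on points, since $|f|$ maps $|\frak A|$ into $|\frak B'|$; an open immersion that is surjective is an isomorphism, so this base change admits an inverse, and composing it with the projection to $\frak B'$ yields the desired factorisation of $f$ through the open immersion $j$.

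The whole argument is formal, and I do not expect a genuine obstacle; the only points that need a moment of care are the construction of the affine, finite-type smooth atlas of $\frak B$ (this is exactly where the hypothesis ``locally of finite type'' enters) and the openness of smooth morphisms, both of which are standard facts about algebraic stacks.
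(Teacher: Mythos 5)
Your argument is essentially the paper's: both cover $\frak B$ by an atlas of finite-type affines, pass to the open images $U_i$, use quasi-compactness of $\frak A$ to extract a finite subfamily, and take $\frak B'$ to be the corresponding open substack. The only cosmetic differences are that the paper uses an fppf (affine flat finite-presentation) atlas where you use a smooth one, and the paper phrases the finiteness step via a finite subcover of $\{f^{-1}(U_i)\}$ rather than via quasi-compactness of the image $|f|(|\frak A|)$ — these are equivalent.
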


\begin{proof}
We cover $\frak B$ by affine flat finite presentation morphisms $\{V_i \to \frak B\}_{i \in I}$.  
Let $U_i$ be the image of the $V_i$ in $\frak B$. The $U_i$ are open, and the $f^{-1}(U_i)$ cover $\frak A$. As $\frak A$ is quasi-compact, there is a finite subset $J \subset I$ such that $\{f^{-1}(U_i)\}_{i \in J}$ covers $\frak A$. Then $\frak B' = \cup_{i \in J} U_i$ 
defines the required factorisation.
\end{proof}

For representable morphisms (representable by algebraic spaces) $f\colon \frak X \to \frak Y$, we can also define the operational Chow group $\CHop^p(\frak X \to \frak Y)$ as a collection of morphisms 
\[c(\phi)^{m}\colon \Chow_{m}(B) \to  \Chow_{m-p}(B \times_{\frak Y} \frak X)\]
for all maps $\phi \colon B \to \frak Y$ where $B$ is a scheme of finite type over ${\field}$, and for all integers $m$, compatible with proper pushforward, flat pullback, and Gysin homomorphisms. We have
$$\CHop^p(\frak X) = \CHop^p(\on{id}\colon \frak X \to \frak X)\,. $$

We have products, pullbacks, and proper representable pushforwards on these operational Chow groups of algebraic stacks as described in \cite[chapter 17.2]{Fulton1984Intersection-th} satisfying the properties described there.

\vspace{8pt}                              
\begin{remark}\label{remark:opclassactonalgspaces}         
Even for $B$ a scheme and $\pi: \frak X \to \frak Y$ representable, the fibre product $B \times_{\frak Y}\frak X$ can be an algebraic space. Therefore, some care is needed when generalizing classical constructions such as the product
\[\CHop^{a}(\frak X) \times \CHop^{b}(\pi\colon \frak X \to \frak Y) \to \CHop^{a+b}(\pi\colon \frak X \to \frak Y).\]
Indeed, for $c \in \CHop^{a}(\frak X)$ and  $d \in \CHop^{b}(\pi\colon \frak X \to \frak Y)$  we want to define $$c \cdot d \in \CHop^{a+b}(\pi\colon \frak X \to \frak Y)\, .$$
For a map $\phi: B \to \frak Y$ with $B$ a finite type scheme fitting in a pullback diagram
\[
\begin{tikzcd}
B \times_{\frak Y}\arrow[d] \arrow[r,"\phi'"] \frak X  & \frak X \arrow[d,"\pi"]\\
B \arrow[r,"\phi"] & \frak Y
\end{tikzcd}
\]
we want to define the induced map
\[(c \cdot d)(\phi)^m : \Chow_m(B) \to \Chow_{m-a-b}(B \times_{\frak Y} \frak X)\]
as the composition
\[\Chow_m(B) \xrightarrow{d(\phi)^m} \Chow_{m-b}(B \times_{\frak Y} \frak X) \xrightarrow{c(\phi')^{m-b}} \Chow_{m-a-b}(B \times_{\frak Y} \frak X)\, .\]
But, a priori, the map $c(\phi')^{m-b}$ does not make sense since the domain $B \times_{\frak Y} \frak X$ of $\phi'$ is an algebraic space.
However, given a collection 
$$c= c(\phi')^{n}\colon \Chow_{n}(B') \to  \Chow_{n-a}(B') $$ for all finite type schemes $B'$ with $\phi'\colon B' \to \frak X$, we can construct  a collection of maps $${c}(\phi')^n\colon \Chow_{n}(B') \to \Chow_{n-a}(B')$$ for $\phi'\colon B' \to \frak X$ with $B'$ a finite type algebraic space via \cite[Section 5.1]{Vistoli1989Intersection-th}. Indeed, for each integral closed substack $Z \subset B'$, \cite[Section 5.1]{Vistoli1989Intersection-th} defines an action of $c$ on $[Z]$ which is independent of the chosen cover of the algebraic space by a scheme. This action induces a map 
$\Chow_{n}(B') \to \Chow_{n-a}(B')$
which commutes with proper pushforward and flat morphisms via \cite[Lemma 5.3]{Vistoli1989Intersection-th} and is compatible with Gysin homomorphisms. Applying this to $$\phi' : B' = B \times_{\frak Y} \frak X \to \frak X$$
with $n=m-b$ gives the desired map ${c}(\phi')^{m-b}$.
\end{remark}
\vspace{8pt}

For a proper representable morphism $\pi\colon \frak X \to \frak Y$ which is flat of relative dimension $q$, the pushforward 
\[\CHop^*(\pi\colon \frak X \to \frak Y) \to \CHop^*(\frak Y)\]
can be extended to a pushforward
$$\CHop^{p}(\frak X)\to \CHop^{p-q} (\frak Y) $$
as follows. Because $\pi$ is flat, the pullback $\pi^*$ gives a natural element in $$\CHop^{-q}(\pi\colon \frak X \to \frak Y)$$ 
and then we can compose
\[\CHop^{p}(\frak X)\to \CHop^{p}(\frak X) \times \CHop^{-q}(\pi\colon \frak X \to \frak Y) \]
given by $c \mapsto  (c,\pi^*)$ with the product and the pushforward maps 
\[\CHop^{p}(\frak X) \times \CHop^{-q}(\pi\colon \frak X \to \frak Y) \to \CHop^{p-q}(\pi\colon \frak X \to \frak Y) \to \CHop^{p-q}(\frak Y),\]
yielding the desired pushforward map $\pi_*\colon \CHop^{p}(\frak X)\to \CHop^{p-q} (\frak Y)$. This may for example be applied to the universal curve $\pi\colon \frak C_{g,n} \to \Picabs_{g,n}$. A similar construction also works for $\pi$ proper representable and lci. This pushforward map commutes with pullback of operational classes.

\subsection{Relationship to usual Chow groups}

Let $\frak Y$ be a locally finite type algebraic stack over ${\field}$, and $(\frak U_i)_{i \in \bb N}$ an increasing sequence of finite type open substacks of $\mathfrak Y$ with 
$$\bigcup_i \frak U_i =  \frak Y\, .$$
In particular, for a finite type scheme $B/{\field}$, every
map $B \to \frak Y$ factors via some $\frak U_i$. We have pullback maps 
\begin{equation}
\CHop^*(\frak Y) \to \CHop^*(\frak U_i)\, 
\end{equation}
which induce a map 
\begin{equation}\label{eq:chop_lim}
\Phi\colon \CHop^*(\frak Y) \to \lim_i \CHop^*(\frak U_i)
\end{equation}
to the inverse limit of the $\CHop^*(\frak U_i)$, with transition maps given by pullback of operational classes along open immersions. 

\begin{lemma}\label{lem:chop_and_limits}
The map $\Phi\colon \CHop^*(\frak Y) \to \lim_i \CHop^*(\frak U_i)$ is an isomorphism of abelian groups. 
\end{lemma}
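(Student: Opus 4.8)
The plan is to construct an inverse to $\Phi$ directly, using the fact that every finite type scheme mapping to $\frak Y$ factors through some $\frak U_i$. Given a compatible system $(c_i)_{i \in \bb N} \in \lim_i \CHop^*(\frak U_i)$, I want to define a bivariant class $c \in \CHop^*(\frak Y)$. For a map $\phi\colon B \to \frak Y$ with $B$ a finite type scheme, choose $i$ so that $\phi$ factors as $\phi\colon B \to \frak U_i \to \frak Y$, and set $c(\phi)^m := c_i(\phi|_{\frak U_i})^m\colon \Chow_m(B) \to \Chow_{m-p}(B)$. The first step is to check this is well-defined, i.e.\ independent of the choice of $i$: if $\phi$ also factors through $\frak U_j$, then (enlarging if necessary, since the $\frak U_i$ are increasing we may assume $j \geq i$ and $\frak U_i \subset \frak U_j$) the compatibility $c_i = c_j|_{\frak U_i}$ under the pullback along the open immersion $\frak U_i \hookrightarrow \frak U_j$, together with the fact that operational pullback along an open immersion restricts bivariant classes in the evident way, forces the two definitions to agree.

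Next I would verify that the collection $(c(\phi)^m)$ so defined satisfies the axioms (C1)--(C3) of \cite[Section 17.1]{Fulton1984Intersection-th}: compatibility with proper pushforward, flat pullback, and Gysin homomorphisms for regular embeddings. Each of these is a condition involving at most two finite type schemes $B, B'$ over $\frak Y$ and a morphism between them; since both map to $\frak Y$ through finite type open substacks, and the $\frak U_i$ are increasing with union $\frak Y$, there is a single $\frak U_i$ through which everything in the diagram factors. The required identity then becomes exactly the corresponding axiom for $c_i \in \CHop^*(\frak U_i)$, which holds by hypothesis. This shows $c \in \CHop^*(\frak Y)$, and by construction $\Phi(c) = (c_i)$, so $\Phi$ is surjective.

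For injectivity, suppose $c \in \CHop^*(\frak Y)$ has $\Phi(c) = 0$, i.e.\ $c|_{\frak U_i} = 0$ for all $i$. Given any $\phi\colon B \to \frak Y$ with $B$ finite type, factor it through some $\frak U_i$; then $c(\phi)^m$ equals $(c|_{\frak U_i})(\phi|_{\frak U_i})^m = 0$. Since the bivariant class $c$ is determined by its values $c(\phi)^m$ on all such $\phi$, we get $c = 0$. Additivity of $\Phi$ is immediate from the definitions, so $\Phi$ is an isomorphism of abelian groups.

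I do not expect a serious obstacle here: the only point requiring a little care is that the open substacks $\frak U_i$ form an \emph{increasing} sequence with union $\frak Y$, which is what lets us find, for any finite diagram of finite type schemes over $\frak Y$, a single index $i$ absorbing the whole diagram --- and this is exactly the hypothesis in the statement. One should also note at the outset that such a sequence $(\frak U_i)$ exists for $\frak Y$ locally of finite type over a field (e.g.\ for $\Picabs_{g,n}$, using the open substacks of bounded number of components or bounded degree), and that the resulting limit is, up to canonical isomorphism, independent of the choice of exhaustion --- though strictly speaking the lemma as stated fixes one such $(\frak U_i)$ and only that version is needed below.
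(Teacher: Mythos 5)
Your proposal is correct and follows essentially the same approach as the paper: both arguments exploit that any finite type scheme (or finite diagram of such schemes) over $\frak Y$ factors through a single $\frak U_i$, so that the value of a bivariant class on $\frak Y$ is determined by its restrictions to the $\frak U_i$, giving injectivity, and a compatible system of $c_i$ assembles to a bivariant class on $\frak Y$ by the same factorization, giving surjectivity. The only difference is cosmetic — you build the inverse first and then check injectivity, while the paper does the reverse — and your verification of axioms (C1)--(C3) is a touch more explicit than the paper's one-line remark.
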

\begin{proof}
We first show injectivity. Let $c$ in $\CHop^*(\frak Y)$ with $\Phi(c) = 0$. For every $B \to \frak Y$ with $B/{\field}$ of
finite type, we get a map $$c(B/\frak Y):\Chow_*(B) \to \Chow_*(B)\, . $$
There exists $i$ such that the map $B \to \frak Y$ factors via $\frak U_i$. Then $\Phi(c) = 0$ implies that
$$c(B/\frak U_i):\Chow_*(B) \to \Chow_*(B)$$ is the zero map. By definition of the pullback, $c(B/\frak Y) = c(B/\frak U_i)$.

Next we show surjectivity. Suppose we have a compatible collection 
$$c_i \in \CHop^*(\frak U_i)\, .$$
We will build $c \in \CHop^*(\frak Y)$ as follows.
Let $B \to  \frak Y$ with $B/{\field}$ of finite type. There exists $N$ such that for all $i\ge N$,
the map $B \to \frak Y$ factors via $\frak U_i$. Then for all $i \ge N$,
we have maps $$c(B/\frak U_i):\Chow_*(B) \to \Chow_*(B)\, ,$$
and the compatibly means $c_i(B/\frak U_i) = c_j(B/\frak U_i)$ for all $i$, $j \ge N$. 
We define $c = c_N$, which clearly is sent by $\Phi$ to
the $c_i$.

To conclude, we must check that $c$ satisfies the axioms of an operational class. This follows easily from the fact that each $B \to \frak Y$ factors via some $\frak U_i$. 
\end{proof}

\begin{lemma}[Proposition 5.6 of \cite{Vistoli1989Intersection-th}]\label{lem:op_eq_ch_for_smooth}
Let $\frak Y$ be a smooth finite type Deligne-Mumford stack over ${\field}$ of pure dimension $d$, and 
let $\iota\colon \frak Y \to \frak Y$ be the identity. Then for $m \geq 0$ the map 
\begin{equation}\label{eq_chop_to_chow}
\CHop^m(\frak Y) \to \Chow_{d-m}(\frak Y)\,, \ \ \alpha \mapsto \alpha(\iota)([\frak Y])
\end{equation}
is an isomorphism.
\end{lemma}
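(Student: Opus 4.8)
The statement is Proposition 5.6 of \cite{Vistoli1989Intersection-th}, so in principle one could simply cite it; but let me sketch the argument in the present setting, since the only subtlety is that we are working with Deligne--Mumford stacks rather than schemes. The plan is to construct an inverse to the map \eqref{eq_chop_to_chow}. Given a class $\beta \in \Chow_{d-m}(\frak Y)$, I want to produce an operational class $c_\beta \in \CHop^m(\frak Y)$ whose value on $[\frak Y]$ recovers $\beta$. Since $\frak Y$ is smooth of pure dimension $d$, the diagonal $\frak Y \to \frak Y \times \frak Y$ is a regular embedding, and for any $\phi\colon B \to \frak Y$ with $B$ a finite type scheme I would define $c_\beta(\phi)^k\colon \Chow_k(B) \to \Chow_{k-m}(B)$ by the ``intersect with $\beta$ via the graph'' recipe: send $\alpha \in \Chow_k(B)$ to $\delta^!_{\frak Y}\big(\alpha \times \beta\big)$, where $\alpha \times \beta \in \Chow_{k + d - m}(B \times \frak Y)$ is the exterior product and $\delta^!_{\frak Y}$ is the refined Gysin homomorphism for the regular embedding $\frak Y \to \frak Y \times \frak Y$ applied along the map $B \times \frak Y \to \frak Y \times \frak Y$ (equivalently, this computes the intersection of the graph $\Gamma_\phi \subset B \times \frak Y$ with $B \times \beta$). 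Here I use that refined Gysin homomorphisms for regular embeddings of smooth DM stacks are available by \cite{Vistoli1989Intersection-th} (this is exactly the content of its Section 3).

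The key steps are then: \emph{(1)} check that $c_\beta$ is a well-defined operational class, i.e.\ that it commutes with proper pushforward, flat pullback, and Gysin maps for regular embeddings --- this is formal from the standard compatibilities of refined Gysin homomorphisms (commutativity of Gysin maps, the projection formula, compatibility of exterior products with pushforward and pullback), all of which hold for DM stacks by Vistoli's intersection theory; \emph{(2)} verify that $\beta \mapsto c_\beta$ is inverse to \eqref{eq_chop_to_chow}. For one composite, $c_\beta(\iota)([\frak Y])$: taking $B = \frak Y$, $\phi = \iota$ and $\alpha = [\frak Y]$, the class $[\frak Y] \times \beta$ is just the pullback of $\beta$ under the second projection $\frak Y \times \frak Y \to \frak Y$, and applying $\delta^!_{\frak Y}$ gives $\beta$ back by the self-intersection/excess formula for the diagonal (the diagonal of a smooth stack has trivial normal bundle contribution against a class pulled back from the second factor). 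For the other composite, I must show that an operational class $c$ on a smooth DM stack is determined by $c(\iota)([\frak Y])$, i.e.\ that $c = c_{c(\iota)([\frak Y])}$; this follows because for any $\phi\colon B \to \frak Y$ and $\alpha \in \Chow_k(B)$ one can write $\alpha$ in terms of the graph $\Gamma_\phi \subset B \times \frak Y$, use that $\Gamma_\phi$ is the pullback of the diagonal, and then move the operational class $c$ past the regular embedding $\delta_{\frak Y}$ using compatibility condition (C3) --- this is the standard ``an operational class on a smooth variety is intersection with a fixed cycle'' argument, carried out on stacks.

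The step I expect to be the genuine obstacle is \emph{(1)} combined with the stacky bookkeeping in the second composite of \emph{(2)}: one has to be careful that the fibre products $B \times_{\frak Y} \frak Y$, graphs, and the target $\frak Y \times \frak Y$ behave well when $\frak Y$ is a DM stack (in particular the diagonal is representable, finite and unramified but not a closed immersion unless $\frak Y$ is separated as a scheme), so the ``regular embedding'' in Vistoli's sense is the relevant local-complete-intersection diagonal, and one needs his machinery rather than Fulton's. Since all of this is already assembled in \cite{Vistoli1989Intersection-th}, the cleanest route is to cite Proposition 5.6 there directly, noting that the hypotheses (smooth, finite type, pure dimension $d$, DM) are exactly what we have; the sketch above is only to indicate why the cited result applies verbatim. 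No $\bb Q$-coefficient subtlety intervenes: the isomorphism holds integrally for smooth DM stacks, though we only need it rationally.
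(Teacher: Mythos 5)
Your proposal is correct and mirrors what the paper does: the paper offers no in-text proof, citing Vistoli's Proposition 5.6 directly (precisely the route you conclude is cleanest), and the inverse map $\beta\mapsto c_\beta$ you construct via the graph and diagonal Gysin map is exactly the map $\Psi$ the paper writes down a few lines later in greater generality (for smooth algebraic stacks stratified by quotient stacks) and identifies as the inverse in the DM case. The one blemish is your closing claim that the isomorphism holds integrally for smooth DM stacks: Vistoli's intersection theory on DM stacks (pushforward along finite morphisms, hence the Gysin machinery you invoke) is set up with $\mathbb{Q}$-coefficients, so the integral assertion is unsupported — but this is immaterial here, as the paper works with $\mathbb{Q}$-coefficients throughout.
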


Combining \cref{lem:chop_and_limits,lem:op_eq_ch_for_smooth}, we immediately obtain the following
result.
\begin{corollary}
Let $\frak Y$ be a smooth Deligne-Mumford stack over ${\field}$ of pure dimension $d$, and let $\frak U_i$ be a sequence of finite type open substacks with $\bigcup_I \frak U_i = \frak Y$. Then the natural map 
\begin{equation}
\Phi\colon \CHop^m(\frak Y) \to \lim_I \Chow_{d-m}(\frak U_i)
\end{equation}
obtained by combining \eqref{eq:chop_lim} and \eqref{eq_chop_to_chow} is an isomorphism. 
\end{corollary}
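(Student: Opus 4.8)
The plan is to deduce the corollary formally by combining \cref{lem:chop_and_limits,lem:op_eq_ch_for_smooth} and passing to the inverse limit over the index set. First I would observe that each $\frak U_i$, being an open substack of the smooth Deligne--Mumford stack $\frak Y$ of pure dimension $d$, is itself a smooth Deligne--Mumford stack of finite type over $\field$ of pure dimension $d$ (irreducible components of $\frak U_i$ are opens in those of $\frak Y$). Thus \cref{lem:op_eq_ch_for_smooth} applies to each $\frak U_i$ and produces an isomorphism
$$\Psi_i \colon \CHop^m(\frak U_i) \iso \Chow_{d-m}(\frak U_i)\, , \qquad \alpha \mapsto \alpha(\on{id})([\frak U_i])\, .$$

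Next I would check that the collection $(\Psi_i)_i$ is a morphism of inverse systems. For $i \le j$ (so that $\frak U_i \subseteq \frak U_j$), the transition map $\CHop^m(\frak U_j) \to \CHop^m(\frak U_i)$ is restriction of bivariant classes along the open immersion $\iota_{ij}\colon \frak U_i \hookrightarrow \frak U_j$, while the transition map $\Chow_{d-m}(\frak U_j) \to \Chow_{d-m}(\frak U_i)$ is flat pullback along $\iota_{ij}$. Commutativity of the square relating $\Psi_i$ and $\Psi_j$ is precisely the compatibility of operational classes with flat pullback --- axiom (C2) of \cite[Section 17.1]{Fulton1984Intersection-th} --- applied to the fundamental class $[\frak U_j]$, together with the trivial identity $\iota_{ij}^*[\frak U_j] = [\frak U_i]$. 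Hence the $\Psi_i$ induce an isomorphism $\lim_i \CHop^m(\frak U_i) \iso \lim_i \Chow_{d-m}(\frak U_i)$ of abelian groups.

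Finally, \cref{lem:chop_and_limits} gives that $\Phi\colon \CHop^m(\frak Y) \to \lim_i \CHop^m(\frak U_i)$ is an isomorphism. Composing it with $\lim_i \Psi_i$ yields an isomorphism $\CHop^m(\frak Y) \iso \lim_i \Chow_{d-m}(\frak U_i)$, and unwinding the definitions of \eqref{eq:chop_lim} and \eqref{eq_chop_to_chow} shows that this composite is exactly the natural map $\Phi$ of the statement. There is no real obstacle here beyond bookkeeping; the only point deserving a moment's care is the compatibility of the $\Psi_i$ with the transition maps of the inverse system, which I expect --- as indicated above --- to follow formally from the defining axioms of operational classes.
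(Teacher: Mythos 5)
Your proof is correct and is exactly the argument the paper has in mind: the paper gives no proof beyond the phrase ``combining [the two lemmas], we immediately obtain,'' and your write-up supplies precisely the bookkeeping being elided — applying \cref{lem:op_eq_ch_for_smooth} on each $\frak U_i$, checking compatibility with the transition maps via axiom (C2) and $\iota_{ij}^*[\frak U_j]=[\frak U_i]$, and composing with the isomorphism of \cref{lem:chop_and_limits}.
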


As a final remark{\footnote{We thank A. Kresch for related
discussion.}}, we note that there exists a map of Chow groups in the opposite direction of \eqref{eq_chop_to_chow} in greater generality. Let $\frak Y$ be a smooth algebraic stack of finite type over $\field$ and of pure dimension $d$
which has a stratification\footnote{See \cite{Kresch1999Cycle-groups-fo} for the precise definition. The property is always satisfied for Deligne-Mumford stacks.}
by quotient stacks. Then, there exists a map
\[\Psi: \Chow_{d-m}(\frak Y) \to \CHop^m(\frak Y)\]
from the Chow group $\Chow_*(\frak Y)$ constructed in \cite{Kresch1999Cycle-groups-fo} to the operational Chow group of $\frak Y$ 
defined as follows. Given $\phi : B \to \frak Y$ with $B$ a finite type scheme, let $$\phi_B : B \to B \times \frak Y$$ be 
the diagonal morphism. Since $\frak Y$ is smooth, $\phi_B$
is representable and  is a local complete intersection of codimension $d$. For $\beta \in \Chow_{d-m}(\frak Y)$, we define
\begin{equation*}
    \Psi(\beta)(\phi) : \Chow_*(B) \to \Chow_{*-m}(B)\, \ \
    \alpha \mapsto \phi_B^!(\alpha \times \beta)\, ,
\end{equation*}
where $\alpha \times \beta \in \Chow_{*+d-m}(B \times \frak Y)$ is the exterior product of $\alpha$ and $\beta$ as defined in \cite[Section 3.2]{Kresch1999Cycle-groups-fo}. The collection of maps $\Psi(\beta)(\phi)$ defines an element $$\Psi(\beta) \in \CHop^m(\frak Y)\, .$$
For $\frak Y$ a Deligne-Mumford stack, the map $\Psi$ is the inverse of the map \eqref{eq_chop_to_chow}. 
However, for an arbitrary smooth algebraic stack $\frak Y$,
we do {\em not} know whether  $\Psi$ is injective or surjective.

\subsection{Constructing an operational Chow class} \label{sec:constructing_chow_class}
\subsubsection{Overview}
Given a vector $A \in \bb Z^n$ of ramification data satisfying
$$\sum_{i=1}^n {a_i} =d\, ,$$
we will construct in \cref{sec:log_def} 
a stack $\cat{Div}_{g,A}$ together with a proper representable  Abel-Jacobi map 
$$\cat{Div}_{g,A} \to \Picabs_{g,n,d}\, .$$
We wish to define the twisted universal double ramification cycle $\DRop_{g,A}$ as the pushforward of the fundamental class of $\cat{Div}_{g,A}$ to $\Picabs_{g,n,d}$. 
However, two basic issues must be settled
to carry out the construction:
\begin{itemize}
\item[$\bullet$] The stack $\cat{Div}_{g,A}$ is not Deligne-Mumford and is not quasi-compact, so the existence of a well-behaved fundamental class in the operational Chow group is not clear. 
\item[$\bullet$] The proper representable pushforward of \cite[Appendix B]{BaeSchmitt1} is only defined between finite-type stacks, and so cannot be applied directly. \footnote{Chow theory of non-finite type algebraic stacks will be developed in \cite[Appendix A]{BaeSchmitt1}.}
\end{itemize}

To solve these problems, we provide here a very general construction which associates to a suitable proper morphism $$a\colon \frak X \to \frak Y$$ an operational Chow class on $\frak Y$ which plays the role of the pushforward of the fundamental class of $\frak X$. 
In \cref{sec:log_def}, we will apply the result to construct the universal twisted double ramification cycle $\DRop_{g,A}$. We also verify certain basic properties such as invariance of the class under proper birational maps which will be important in \cref{sec:comparing_stacks}. 

\subsubsection{Construction}\label{c44c}
Let $\frak X$ and $\frak Y$ be algebraic stacks locally of finite type over a field ${\field}$ and suppose we have a proper morphism $$a\colon \frak X \to \frak Y$$
of Deligne-Mumford type. 
Suppose further that $\frak Y$ is smooth of pure dimension $\dim \frak Y$ over the field, and $\frak X$ is of pure dimension $\dim \frak X$. Let 
$$e = \dim \frak Y-  \dim \frak X\, .$$
We  will construct an operational Chow class associated to $\frak X$ in $\CHop^e(\frak Y)$. 

For all finite type schemes $B$ with a morphism $\phi\colon B \to \frak Y$, and for all integers $m$, we must define maps
\[c(\phi)^{m}\colon \Chow_m(B) \to \Chow_{m-e}(B) \]
which are compatible under proper pushforward and flat pullback and satisfy commutativity (properties (C1), (C2), (C3) of \cite[Section 17.1]{Fulton1984Intersection-th}).


Let $[V] \in \Chow_m(B)$ be an irreducible cycle, and let $i_V\colon V \to B$ be the inclusion. Let $V \to B \to \frak Y$ be factored as in \cref{lem:factorisationopenquasi} into $V \to \frak Y' \to \frak Y$ where $\frak Y'$ is of finite type and $\frak Y' \to \frak Y$ is an open immersion. 

We form the diagram
\begin{equation} \label{eq:definingdiagram}
 \begin{tikzcd}
  \frak X' \times_{\frak Y'} V \arrow[r] \arrow[d, "\psi_V"] & \frak X' \times V \arrow[d, "a \times \on{id}"]  \\
  V \arrow[r, "\phi_V"] & \frak Y' \times V 
\end{tikzcd}
\end{equation}
where $\frak X'$ is the inverse image of $\frak Y'$ under $a$. Each stack in this diagram is of finite type, and therefore has a Chow group in the sense of \cite{Kresch1999Cycle-groups-fo}. Since $\frak Y$ is  smooth over ${\field}$, the map $\phi_V$ is a regular embedding of codimension $\dim \frak Y$. Also $\phi_V$ is unramified and hence a regular local embedding, so Kresch's contruction yields a map $$\phi_V^!\colon \Chow_m(\frak X' \times V) \to \Chow_{m-\dim \frak Y}(\frak X' \times_{\frak Y'} V)\, .$$ 
In particular, $[\frak X' \times V]$ is a class in dimension $\dim \frak X + m$, so the class $\phi_V^! ([\frak X \times V])$ lies in $\Chow_{m-e} (\frak X \times_{\frak Y} V)$. 
The morphism $\psi_V$ is proper and of Deligne-Mumford type, and so by \cite[Appendix B]{BaeSchmitt1} we have a pushforward ${\psi_V}_*$. \begin{definition}\label{defbivariantclass}
We define a class $a_{\on{op}}[\frak X] \in \CHop(\frak Y)$ 
via the formula
\begin{align}
    c(\phi)^{m}\colon Z_m(B) &\to \Chow_{m-e}(B) \\
    [V] &\mapsto {i_V}_* {\psi_V}_* \phi_V^! ([\frak X' \times V])\, . \nonumber
\end{align}
\end{definition}
We must verify that this construction passes to rational equivalence, is independent of the choices made, and satisfies the properties (C1), (C2), and (C3). 
After verifying in Lemma \ref{lem:classindepfact} independence on the choice of factorisation, we follow the logic in \cite{Fulton1984Intersection-th}: we verify in Lemmas \ref{lem:C1proper}, \ref{lem:C2flat}, and \ref{lem:C3regimb} of Section \ref{c45c} that the properties (C1), (C2), and (C3) hold on the level of cycles, and finally in \cref{lem:ratequiv} we use these to show that the construction passes to rational equivalence. 



\begin{lemma}\label{lem:classindepfact}
The class $c(\phi)^{m}([V])$ defined above is independent of the chosen factorisation $V \to \frak Y' \to \frak Y$. 
\end{lemma}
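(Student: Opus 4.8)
The plan is to show that if $V \to \frak Y'_1 \to \frak Y$ and $V \to \frak Y'_2 \to \frak Y$ are two factorisations as in \cref{lem:factorisationopenquasi}, then the two resulting classes in $\Chow_{m-e}(B)$ agree. First I would reduce to a single comparison by using a common refinement: since $\frak Y'_1$ and $\frak Y'_2$ are both finite-type open substacks of $\frak Y$ through which $V \to \frak Y$ factors, the open substack $\frak Y'_{12} = \frak Y'_1 \cap \frak Y'_2$ is again finite type, is open in $\frak Y$, and $V \to \frak Y$ still factors through it. By transitivity of this argument, it suffices to treat the case of an open immersion $\frak Y'_{12} \hookrightarrow \frak Y'_1$ of finite-type stacks through which $V$ factors, and show that the class computed via $\frak Y'_{12}$ equals the one computed via $\frak Y'_1$.

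\textbf{Key steps.} So let $u\colon \frak Y'' \hookrightarrow \frak Y'$ be an open immersion of smooth finite-type stacks, with $V \to \frak Y''$ given, and write $\frak X'' = a^{-1}(\frak Y'')$, an open substack of $\frak X'$, with $\frak X'' \to \frak X'$ also an open immersion. One forms the two defining diagrams \eqref{eq:definingdiagram} for $\frak Y''$ and for $\frak Y'$; since $V \times_{\frak Y''} \frak X'' = V \times_{\frak Y'} \frak X'$ (the fibre products agree because $V$ already maps into the open $\frak Y''$), the target spaces $\frak X'_{V} := \frak X' \times_{\frak Y'} V = \frak X'' \times_{\frak Y''} V =: \frak X''_V$ are literally the same, and likewise the maps $\psi_V$ coincide. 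What must be compared are $\phi_{V,\frak Y''}^!([\frak X'' \times V])$ and $\phi_{V,\frak Y'}^!([\frak X' \times V])$, both landing in $\Chow_{m-e}(\frak X'_V)$. The point is that the regular-local-embedding refined Gysin map of \cite{Kresch1999Cycle-groups-fo} is compatible with open immersions: there is a commutative diagram relating the embedding $\phi_{V,\frak Y''}$ of $V$ into $\frak Y'' \times V$ with the embedding $\phi_{V,\frak Y'}$ of $V$ into $\frak Y' \times V$ via the open immersion $u \times \on{id}\colon \frak Y'' \times V \hookrightarrow \frak Y' \times V$, and since flat pullback along an open immersion sends $[\frak X' \times V]$ to $[\frak X'' \times V]$, the two refined Gysin classes agree. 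Concretely, one invokes the compatibility of Kresch's Gysin maps with flat (in particular open) pullback — this is among the standard properties established in \cite[Section 3.2]{Kresch1999Cycle-groups-fo} — applied to the fibre square expressing $\phi_{V,\frak Y''}$ as the base change of $\phi_{V,\frak Y'}$ along $u \times \on{id}$. Applying ${\psi_V}_*$ and then ${i_V}_*$ to the equal classes yields the desired equality of $c(\phi)^m([V])$.

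\textbf{Main obstacle.} The only real subtlety is making precise the claim that Kresch's refined Gysin homomorphism for the regular local embedding $\phi_V$ commutes with restriction to an open substack of the ambient space $\frak Y' \times V$. This is not a deep fact, but it requires citing the correct compatibility property of the construction in \cite{Kresch1999Cycle-groups-fo} (base change of refined Gysin maps along flat, or specifically open, morphisms), and checking that the relevant fibre diagram is Cartesian — which it is, precisely because $V$ maps into the open substack $\frak Y''$, so no cycle is lost under the open restriction. Everything else (transitivity, reduction to a single open immersion, agreement of the auxiliary spaces and of $\psi_V$) is formal. I would therefore organise the proof as: (i) reduce to one open immersion; (ii) observe the fibre products and $\psi_V$ are unchanged; (iii) invoke open-pullback compatibility of the refined Gysin map to conclude the two Gysin classes coincide; (iv) push forward.
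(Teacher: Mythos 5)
Your proposal is correct and follows essentially the same route as the paper: reduce to a single open immersion $\frak Y'' \hookrightarrow \frak Y'$, observe that the fibre products and the map $\psi_V$ are unchanged, compare the two Gysin classes, and push forward. The one small imprecision is that the Gysin comparison actually needs two distinct facts rather than the single flat-pullback citation you propose — first that $\phi_{V,\frak Y''}^!$ and $\phi_{V,\frak Y'}^!$ agree on cycles over $\frak Y'' \times V$ (the analogue of \cite[Theorem 6.2(c)]{Fulton1984Intersection-th}, which Kresch does not state verbatim, whence the paper's ``by the same proof as''), and then compatibility of the Gysin map with flat pullback (\cite[Section 3.1]{Kresch1999Cycle-groups-fo}) to pass from $[\frak X'' \times V]$ to $[\frak X' \times V]$ — and the paper's cube diagram is set up precisely to organise these two steps.
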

\begin{proof}
Let $V \to \frak Y' \to \frak Y$ and $V \to \frak Y'' \to \frak Y$ be two such factorisations. By considering $\frak Y' \cap \frak Y''$ inside $\frak Y$, we may restrict to the case where one is contained in the other. So we 
suppose there is an open immersion $$\iota\colon \frak Y'' \to \frak Y'\, .$$
Let $j\colon \frak X'' \to \frak X'$ be the induced map. Consider the diagram
\begin{equation}
\begin{tikzcd}[column sep=small, row sep = small]
 ~ & \frak X'' \times_{\frak Y''} V 	\arrow[rr] \arrow[dd, dashed, near start, "j \times \on{id}"] \arrow[dl, swap, "\psi_V''"] & ~ & \frak X'' \times V \arrow[dl, "a \times \on{id}"] \arrow[dd, "j \times \on{id}"]\\
 V \arrow[rr, near end, swap, "\phi_V''"] \arrow[dd, "\on{id}"]& ~ & \frak Y'' \times V \arrow[dd, near start, "\iota \times \on{id}"] & ~ \\
~ & \frak X' \times_{\frak Y'} V \arrow[dl, swap, "\psi_V'"] 	\arrow[rr] & ~ & \frak X' \times V \arrow[dl, "a \times \on{id}"] \\
 V  \arrow[rr, near end, swap, "\phi_V'"] &~ & \frak Y' \times V & ~ 
\end{tikzcd}
\end{equation}
where  $\frak X'' \times_{\frak Y''} V \to \frak X' \times_{\frak Y'} V$ is an isomorphism. 
We must show 
$${\psi_V'}_* {\phi_V'}^! 
([\frak X' \times V]) = {\psi_V''}_* {\phi_V''}^! ([\frak X'' \times V])\, .$$

Because $\phi_V''$ and $\phi_V'$ are both regular embeddings of the same codimension and the front square commutes, by the same proof as for \cite[Theorem 6.2(c)]{Fulton1984Intersection-th}, we obtain  ${\phi_V''}^!([\frak X '' \times V]) = {\phi_V'}^!([\frak X '' \times V])$. Therefore,
\begin{equation}\label{kk992}
{\psi_V''}_* {\phi_V''}^! ([\frak X'' \times V]) = {\psi_V''}_* {\phi_V'}^! ([\frak X'' \times V]) = {\psi_V''}_* {\phi_V'}^! ( (j \times \on{id})^*[\frak X' \times V])
\end{equation}
as $j \times \on{id}$ is an open immersion so in particular flat, and the flat pullback of the fundamental class is the fundamental class. By the compatibility of the flat pullback and Gysin maps  \cite[Section 3.1]{Kresch1999Cycle-groups-fo}, we obtain 
that \eqref{kk992} is equal to 
\[{\psi_V''}_*(j \times \on{id})^* {\phi_V'}^! ([\frak X' \times V])\, .\]
By commutativity of the left side of the cube and because of the pullback pushforward formula, we obtain  
\[{\psi_V''}_* {\phi_V''}^! ([\frak X'' \times V]) =  {\psi_V''}_*(j \times \on{id})^* {\phi_V'}^! ([\frak X' \times V]) = {\psi_V'}_* {\phi_V'}^! ([\frak X' \times V])\, \]
as required.
\end{proof}

\subsubsection{Compatibility} \label{c45c}
We will now check that the maps $c(\phi)^{m}$ 
defined in Section \ref{c44c} are
 compatible under proper pushforward and flat pullback and satisfy commutativity (properties (C1), (C2), (C3) of \cite[Section 17.1]{Fulton1984Intersection-th}).

The proper pushforward along DM-type maps between finite type algebraic stacks over a field which are stratified by global quotient stacks is defined in  \cite[Appendix B]{BaeSchmitt1}.  We cannot use the results of \cite{Kresch1999Cycle-groups-fo} for  compatibility with the proper  pushforward
since Kresch discusses only projective pushforward. 
Nevertheless, we now show that the proper pushforward for DM-type maps of finite type algebraic stacks over a field as defined in \cite[Appendix B]{BaeSchmitt1} is compatible with the Gysin maps of \cite{Kresch1999Cycle-groups-fo}.


\begin{proposition}\label{prop:Gysinproperpush}
 For a pullback diagram of algebraic stacks of finite type over $K$,
 \begin{center}
 \begin{tikzcd}
 \frak X'' \arrow[r, "i''"] \arrow[d, "q"]
 &\frak Y'' \arrow[d, "p"] \\
 \frak X' \arrow[r, "i'"] \arrow[d, "g"] 
 &\frak Y' \arrow[d,"f"] \\
 \frak X \arrow[r, "i"]
 &\, \, \frak Y\, ,
 \end{tikzcd}
 \end{center}
 where $i$ is a regular local embedding of codimension $d$, $p$ is a proper DM-type morphism, and $\frak Y'$ is stratified by global quotient stacks, we have 
 \[i^!p_*(\alpha) = q_* (i^! \alpha)\]
 for all  $\alpha \in \Chow_*(\frak Y'')$.
\end{proposition}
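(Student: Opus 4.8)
The plan is to reduce, by linearity, to the case $\alpha = [\frak Z]$ for an integral closed substack $\frak Z \subseteq \frak Y''$, and then to run the standard deformation-to-the-normal-cone computation of $i^!$, checking at each step that the proper DM-type pushforward of \cite[Appendix B]{BaeSchmitt1} interacts correctly with the operations out of which Kresch builds the Gysin map. The point requiring genuine work — and the reason the statement cannot simply be quoted — is that \cite{Kresch1999Cycle-groups-fo} establishes these compatibilities only for \emph{projective} pushforward, whereas here $p$ (and hence $q$) is only proper of DM type.

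Since $i$ is a regular local embedding of codimension $d$, its normal cone $N = C_{\frak X}\frak Y$ is a rank-$d$ vector bundle; being base changes of $i$, the morphisms $i'$ and $i''$ are again regular local embeddings, with normal bundles $N' := g^*N$ and $N'' := q^*N'$ and bundle projections $p_{N'}\colon N' \to \frak X'$, $p_{N''}\colon N'' \to \frak X''$ (the hypothesis that $\frak Y'$ is stratified by quotient stacks propagates to $\frak X', N', \frak X'', N''$, so all Chow groups and pushforwards below are those of \cite{Kresch1999Cycle-groups-fo} and \cite[Appendix B]{BaeSchmitt1}). Recalling from \cite[Section 3]{Kresch1999Cycle-groups-fo} that $i^!$ factors as the specialization map $\sigma$ to the normal cone followed by the inverse of flat pullback along the bundle projection, I would prove two compatibilities with $p_*$:
\begin{enumerate}
\item[(1)] $\bar p_* \circ \sigma'' = \sigma' \circ p_*$, where $\bar p\colon N'' \to N'$ is the base change of $q$ along $p_{N'}$ (again a proper DM-type morphism, since $N'' = N' \times_{\frak X'}\frak X''$);
\item[(2)] $p_{N'}^* \circ q_* = \bar p_* \circ p_{N''}^*$, the base-change compatibility of flat pullback with proper DM-type pushforward, internal to the formalism of \cite[Appendix B]{BaeSchmitt1}; hence also $q_* \circ (p_{N''}^*)^{-1} = (p_{N'}^*)^{-1}\circ \bar p_*$.
\end{enumerate}
Chaining these gives
\begin{align*}
i^!(p_*[\frak Z]) &= (p_{N'}^*)^{-1}\sigma'(p_*[\frak Z]) = (p_{N'}^*)^{-1}\bar p_*\sigma''([\frak Z]) \\
&= q_*(p_{N''}^*)^{-1}\sigma''([\frak Z]) = q_*(i^![\frak Z])\, ,
\end{align*}
which is the assertion.

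For (1) I would use the functoriality of the deformation to the normal cone (cf.\ \cite[Chapter 5]{Fulton1984Intersection-th}): since $\frak X'' = \frak X' \times_{\frak Y'}\frak Y''$, the ideal of $\frak X'' \times \{0\}$ in $\frak Y'' \times \mathbb{A}^1$ is pulled back from that of $\frak X' \times \{0\}$ in $\frak Y' \times \mathbb{A}^1$, so the universal property of blowing up produces a proper morphism $\mathrm{Bl}_{\frak X''\times\{0\}}(\frak Y''\times\mathbb{A}^1) \to \mathrm{Bl}_{\frak X'\times\{0\}}(\frak Y'\times\mathbb{A}^1)$ over $p$; one checks it restricts to a proper morphism $M^\circ_{\frak X''}\frak Y'' \to M^\circ_{\frak X'}\frak Y'$ over $\mathbb{A}^1$, equal to $p$ over $\mathbb{A}^1\setminus\{0\}$ and to $\bar p$ over $0$. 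Then tracking $[\frak Z]$ through the definition of $\sigma$ needs only: compatibility of $p_*$ with flat pullback along $\mathbb{A}^1\setminus\{0\}$ and with closures of cycles, and compatibility of $p_*$ with the Gysin map for the Cartier divisor $\{0\}\subset\mathbb{A}^1$; the latter reduces, via the definition of the divisor Gysin operator as intersection with $c_1$ of the associated line bundle, to the projection formula $q_*(c_1(q^*L)\cap -) = c_1(L)\cap q_*(-)$, which is valid for the pushforward of \cite[Appendix B]{BaeSchmitt1}.

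The main obstacle is precisely compatibility (1): the delicate part is the functoriality of the deformation space — controlling strict transforms so that the blow-up morphism genuinely carries $M^\circ$ into $M^\circ$ — together with the discipline of invoking, at every turn, the proper DM-type pushforward of \cite[Appendix B]{BaeSchmitt1} and its compatibilities with $c_1$, flat pullback, and Cartier-divisor Gysin maps, rather than quoting \cite{Kresch1999Cycle-groups-fo}, where only the projective case is available. The remaining ingredients — the reduction to a single integral $\frak Z$, passing to rational equivalence, and compatibility (2) — are routine.
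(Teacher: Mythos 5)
Your plan stalls at the very first step. You propose to reduce, by linearity, to $\alpha = [\frak Z]$ for an integral closed substack $\frak Z \subseteq \frak Y''$, but that reduction is not available in the Chow theory at play. The stacks $\frak Y'$ and $\frak Y''$ are permitted to be Artin (the hypothesis is only that $\frak Y'$ is stratified by quotient stacks and that $p$ is of DM type, so $\frak Y''$ need not be Deligne--Mumford), and the Chow groups of \cite{Kresch1999Cycle-groups-fo} for such stacks are \emph{not} generated by fundamental classes of integral closed substacks. The standard example is $B\mathbb{G}_m$: its Chow group is a polynomial ring on a degree-one class, while the only integral closed substack is $B\mathbb{G}_m$ itself. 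In Kresch's framework a class is instead represented by a triple $(z\colon Z \to \frak Y'',\ E \to Z,\ [V])$ with $z$ a projective morphism from a scheme, $E$ a vector bundle on $Z$, and $[V]$ a naive cycle class on $E$; there is no fundamental class $[\frak Z]$ to feed into the specialization map or track through the deformation to the normal cone.

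The paper's proof stays entirely inside this representation. Given $\alpha$ represented by $(z'', E'', [V])$, the proper DM-type pushforward of \cite[Appendix B]{BaeSchmitt1} acts on the level of the bundles, $p''_*[V] = \deg(V/W)[W]$ with $W$ the image of $V$ under $E'' \to E'$, while Kresch's Gysin map sends $(z', [W])$ to the cone class $[C_{W\times_{\frak Y}\frak X} W]$ regarded as a cycle on $N|_{Z' \times_{\frak Y}\frak X} \oplus E'|_{Z' \times_{\frak Y}\frak X}$. Assembling the big diagram, the whole statement then collapses to one comparison, $q''_*[C_{V\times_{\frak Y}\frak X} V] = \deg(V/W)\,[C_{W\times_{\frak Y}\frak X} W]$, which is \cite[Lemma 3.15]{Vistoli1989Intersection-th} --- a purely local statement about cones under proper morphisms that applies because $Z''$, $Z'$, and the bundles over them are honest schemes. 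Your deformation-to-the-normal-cone argument is morally the content of that lemma, and your compatibility (2) is genuinely routine; what fails is the starting point, since fundamental cycles of substacks of an Artin stack do not carry its Chow group, and so the entire computation must be run one level down, on Kresch's scheme-theoretic approximations.
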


\begin{proof}
 A class $\alpha$ on $\frak Y''$ is represented by a projective map $z''\colon Z'' \to \frak Y''$, a vector bundle $E'' \to Z''$ and a class $[V]$
 in the naive Chow group of $E''$ represented  by $V \subset E$. 

We want to push $\alpha$ forward via the construction of \cite[Appendix B]{BaeSchmitt1}: by \cite[Theorem B.17]{BaeSchmitt1}, it suffices to treat the case where $E'' \to Z'' \to \frak Y''$ fits in a pullback diagram of the form 
\begin{equation}\label{diagramskowerapushf}
\begin{tikzcd}
E'' \arrow[r] \arrow[d, "p''"]
&Z'' \arrow[r, "z''"] \arrow[d, "p'"]
&\frak Y'' \arrow[d,"p"] \\
E' \arrow[r] 
&Z' \arrow[r, "z'"]
&\frak Y' 
\end{tikzcd}
\end{equation}
where $z'$ is projective. 
Then the pushforward is defined by simply pushing forward on the level of bundles, so 
$$p_*(z'',[V]) = (z',p''_*([V]))\, .$$
If $W = p''(V)$, then $p''_*([V]) = \deg(V/W)[W]$.  

Let $N$ be the normal bundle $N_{\frak X} \frak Y$.
The Gysin maps constructed in \cite[Section 3.1]{Kresch1999Cycle-groups-fo} are described explicitly on level of representatives as follows: $i^!(z',[W])$ is represented by $[C_{W\times_{\frak Y} \frak X} W]$ as a class on the bundle $N_{\mid Z' \times_{\frak Y} \frak X} \oplus E'_{\mid Z' \times_{\frak Y} \frak X}$ with the projective map $\tilde{z}'\colon Z' \times_{\frak Y} \frak X \to \frak X'$ induced by $z'$. Hence, $$i^!p_*(z'',[V]) =(\tilde{z}', \deg(V/W) [C_{W\times_{\frak Y} \frak X} W])\, .$$

Next, we study $q_*i^!(z'',[V])$. 
To start, $i^!(z'',[V])$ is represented by $[C_{V\times_{\frak Y} \frak X} V]$ as class on the bundle $N_{\mid Z'' \times_{\frak Y} \frak X} \oplus E''_{\mid Z'' \times_{\frak Y} \frak X}$   with the projective map $Z'' \times_{\frak Y} \frak X \to \frak X''$ induced by $z''$.
We push $i^!(z'',[V])$
forward via the construction of \cite[Appendix B]{BaeSchmitt1}. We complete the diagram
\begin{center}
\begin{tikzcd}
N_{\mid Z'' \times_{\frak Y} \frak X} \oplus E''_{\mid Z'' \times_{\frak Y} \frak X} \arrow[r] 
&Z'' \times_{\frak Y} \frak X \arrow[r, "\tilde{z}''"]
&\frak X'' \arrow[d,"q"] \\
&~
&~ \frak X'
\end{tikzcd}
\end{center}
to a pullback diagram so that we can pushforward on the levels of bundles: 
\begin{center}
    \begin{tikzcd}[column sep = small, row sep = small]
    ~ 
    &E'' \arrow[rr] \arrow[dd,near start, "p''"]
    &~ 
    &Z'' \arrow[rr, near end,"z''"] \arrow[dd,near start, "p'"]
    &~
    &\frak Y'' \arrow[dd, near start, "p"]\\
    N_{\mid Z'' \times_{\frak Y} \frak X} \oplus E''_{\mid Z'' \times_{\frak Y} \frak X} \arrow[rr]
    &~
    &Z'' \times_{\frak Y} \frak X \arrow[rr, near end, "\tilde{z}''"]  \arrow[dd,near start, "\tilde{p}'"] \arrow[ur]
    &~ 
    &\frak X'' \arrow[ur, "i''"] \arrow[dd,near start, "q"] \\
    ~
    &E' \arrow[rr]
    &~
    &Z' \arrow[rr, near end, "z'"] 
    &~
    &\frak Y' \arrow[dd, "f"] \\
    N_{\mid Z' \times_{\frak Y} \frak X} \oplus E'_{\mid Z' \times_{\frak Y} \frak X} \arrow[rr]
    &~ 
    & Z' \times_{\frak Y} \frak X \arrow[rr, near end, "\tilde{z}'"] \arrow[ur]
    &~ 
    &\frak X' \arrow[dd, "g"] \arrow[ur, "i'"]
    &~ \\
    ~
    &~
    &~
    &~
    &~
    & \frak Y \\
    ~
    &~
    &~
    &~
    & \frak X \arrow[ur, "i"]
    &~ 
    \end{tikzcd}
\end{center}
The map $p'\colon Z'' \to Z'$ induces a map $\tilde{p}'\colon Z'' \times_{\frak Y} \frak X  \to Z' \times_{\frak Y} \frak X $, and the square with $q, \tilde{p}', \tilde{z}'$ and $\tilde{z}''$ is a pullback (as the pullback of a pullback square). 

There is a map $q''\colon N_{\mid Z'' \times_{\frak Y} \frak X} \oplus E''_{\mid Z'' \times_{\frak Y} \frak X} \to  N_{\mid Z' \times_{\frak Y} \frak X} \oplus E'_{\mid Z' \times_{\frak Y} \frak X}$ induced by $p''$ such that it forms a pullback square, and so 
 the pushforward along $q$ is simply 
$$q_*(i^!(z'', [V])) = q_*(\tilde{z}'', [C_{V\times_{\frak Y} \frak X} V]) = (\tilde{z}', q''_*([C_{V\times_{\frak Y} \frak X} V]))\, . $$ 

The proof then reduces to comparing $q''_*([C_{V\times_{\frak Y} \frak X} V])$ and $\deg(V/W)([C_{W\times_{\frak Y} \frak X} W])$, which follows from \cite[Lemma 3.15]{Vistoli1989Intersection-th}. The result is a completely local statement and
therefore extends from the setting of schemes to the setting of Deligne-Mumford stacks which we need here. 
\end{proof}

Let $\phi\colon B \to \frak Y$ and $\phi'\colon B' \to \frak Y$ be morphisms from finite-type schemes, and let 
$$h\colon B' \to B$$ be a $\frak Y$-morphism. By \cref{defbivariantclass}, we have morphisms
\[c(\phi)^{m} \colon Z_m(B) \to \Chow_{m-e}(B)
\ \ \ \
\text{and}
\ \ \ \
c(\phi')^{m} \colon Z_m(B') \to \Chow_{m-e}(B')\, . 
\]
If $h$ is proper, we have a pushforward map{\footnote{We use the same notation for the proper pushforward on the Chow groups.}}
\[h_*\colon Z_m(B') \to Z_m(B)\, \]
which descends to Chow.
\begin{lemma}\label{lem:C1proper}
If $h$ is proper, 
 $$c(\phi)^m\circ h_* = h_*\circ c(\phi h)^m\, . $$
\end{lemma}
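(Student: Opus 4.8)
The plan is to reduce to a single integral cycle and then to reorganise the definition of $c(\phi)^m$ along the proper morphism $h$, the crucial input being the compatibility of proper pushforward with Gysin maps from Proposition~\ref{prop:Gysinproperpush}. By linearity it suffices to treat an irreducible $[V]\in Z_m(B')$, with inclusion $i_V\colon V\to B'$. Set $W=h(i_V(V))\subset B$ with its reduced structure and let $h_V\colon V\to W$ be the induced morphism; it is proper, and $h_*[V]=\deg(V/W)\,[W]$ if $\dim W=m$ while $h_*[V]=0$ if $\dim W<m$. Since $h(V)=W$, every finite type open immersion $\frak Y'\to\frak Y$ through which $W\hookrightarrow B\xrightarrow{\phi}\frak Y$ factors also receives $V\hookrightarrow B'\xrightarrow{\phi h}\frak Y$. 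Hence, by Lemma~\ref{lem:classindepfact}, I may compute both $c(\phi)^m([W])$ and $c(\phi h)^m([V])$ using one and the same such $\frak Y'$; set $\frak X'=a^{-1}(\frak Y')$.

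The next step is to record the diagrammatic input. The graph square
\[\begin{tikzcd}
V \arrow[r,"\phi_V"] \arrow[d,"h_V"'] & \frak Y'\times V \arrow[d,"\on{id}\times h_V"] \\
W \arrow[r,"\phi_W"] & \frak Y'\times W
\end{tikzcd}\]
is Cartesian, with $\phi_V$ and $\phi_W$ regular local embeddings of codimension $\dim\frak Y$ (as $\frak Y$ is smooth). Pulling diagram~\eqref{eq:definingdiagram} for $W$ back along $\on{id}\times h_V\colon\frak Y'\times V\to\frak Y'\times W$ recovers diagram~\eqref{eq:definingdiagram} for $V$: writing $\bar h_V\colon\frak X'\times_{\frak Y'}V\to\frak X'\times_{\frak Y'}W$ for the induced morphism, $\bar h_V$ is a base change of $h_V$, hence proper and of Deligne--Mumford type, and the squares
\[\begin{tikzcd}
\frak X'\times_{\frak Y'}V \arrow[r] \arrow[d,"\bar h_V"'] & \frak X'\times V \arrow[d,"\on{id}\times h_V"] \\
\frak X'\times_{\frak Y'}W \arrow[r] & \frak X'\times W
\end{tikzcd}
\qquad
\begin{tikzcd}
\frak X'\times_{\frak Y'}V \arrow[r,"\bar h_V"] \arrow[d,"\psi_V"'] & \frak X'\times_{\frak Y'}W \arrow[d,"\psi_W"] \\
V \arrow[r,"h_V"] & W
\end{tikzcd}\]
are Cartesian. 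Since $\phi_V$ is obtained from $\phi_W$ by the Cartesian base change of the graph square, the compatibility of Kresch's Gysin maps with pullback of the regular local embedding --- the argument used for \cite[Theorem~6.2(c)]{Fulton1984Intersection-th} in the proof of Lemma~\ref{lem:classindepfact} --- gives $\phi_V^!\,[\frak X'\times V]=\phi_W^!\,[\frak X'\times V]$ in $\Chow_{m-e}(\frak X'\times_{\frak Y'}V)$.

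With this in hand, the computation is short. Using $h\circ i_V=i_W\circ h_V$, $h_V\circ\psi_V=\psi_W\circ\bar h_V$, functoriality of proper pushforward along Deligne--Mumford type morphisms, and the Gysin identity above,
\[h_*\,c(\phi h)^m([V])=h_*\,{i_V}_*\,{\psi_V}_*\,\phi_V^!\,[\frak X'\times V]={i_W}_*\,{\psi_W}_*\,{\bar h_V}_*\,\phi_W^!\,[\frak X'\times V].\]
Applying Proposition~\ref{prop:Gysinproperpush} to the tower obtained by stacking the first of the two squares above on top of diagram~\eqref{eq:definingdiagram} for $W$ (regular local embedding $\phi_W$, proper Deligne--Mumford type morphism $\on{id}_{\frak X'}\times h_V$; the middle stack $\frak X'\times W$ being of finite type and stratified by global quotient stacks) gives ${\bar h_V}_*\,\phi_W^!\,[\frak X'\times V]=\phi_W^!\big((\on{id}\times h_V)_*[\frak X'\times V]\big)$. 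Finally $(\on{id}\times h_V)_*[\frak X'\times V]$ equals $\deg(V/W)\,[\frak X'\times W]$ if $\dim W=m$ and is $0$ if $\dim W<m$; in the first case one obtains $\deg(V/W)\,{i_W}_*\,{\psi_W}_*\,\phi_W^!\,[\frak X'\times W]=\deg(V/W)\,c(\phi)^m([W])=c(\phi)^m(h_*[V])$, and in the second $0=c(\phi)^m(0)=c(\phi)^m(h_*[V])$, which is the asserted identity.

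I expect the main obstacle to be the bookkeeping around the graph square: checking that it is Cartesian, that base change along $\on{id}\times h_V$ carries diagram~\eqref{eq:definingdiagram} for $W$ to the one for $V$, and --- most delicately --- that this identifies the Gysin maps $\phi_V^!$ and $\phi_W^!$ on $[\frak X'\times V]$. Everything else is a formal consequence of Proposition~\ref{prop:Gysinproperpush}, Lemma~\ref{lem:classindepfact}, and functoriality of the proper pushforward of \cite[Appendix~B]{BaeSchmitt1}; one need only keep track of the finite type and ``stratified by global quotient stacks'' hypotheses required to invoke those, which hold here because $\frak Y$ is a Picard stack and $\frak X$ is of Deligne--Mumford type over it.
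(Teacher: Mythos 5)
Your proposal is correct and follows essentially the same approach as the paper: reduce to a single integral cycle, pick a common finite-type factorisation $\frak Y'$, expand the cube over the graph square relating $\phi_V$ and $\phi_W$, and invoke Proposition~\ref{prop:Gysinproperpush} together with the base-change compatibility of Gysin maps for regular local embeddings. The only difference is cosmetic — you start from $h_*\,c(\phi h)^m$ rather than from $c(\phi)^m\circ h_*$, and you spell out the intermediate step $\phi_V^![\frak X'\times V]=\phi_W^![\frak X'\times V]$ and the degenerate case $\dim W<m$, both of which the paper leaves implicit.
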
 
\begin{proof}
Let $[V']\in Z_m(B')$ for 
an irreducible cycle $V'$. Let $V = h(V')$. By definition, $$h_*([V']) = \deg(V'/V) [V]\, .$$ Let $\frak Y'$ be a factorisation of $V' \to V \to \frak Y$. We have 
$$(\on{id}\times h)_*([\frak X' \times V']) = \deg(V'/V) [\frak X' \times V]\, . $$ 
Via the commutative diagram
\begin{equation}
\begin{tikzcd}[column sep=small, row sep = small]
~ & ~ & \frak X' \times_{\frak Y'} V' 	\arrow[rr] \arrow[dd, dashed, near start, "\on{id} \times h"] \arrow[dl, swap, "\psi_{V'}"] & ~ & \frak X' \times V' \arrow[dl] \arrow[dd, dashed, near start, "\on{id} \times h"]\\
~ & V' \arrow[dl, "i_{V'}"] \arrow[rr, near end,swap, "\phi_{V'}"] \arrow[dd, "h"]& ~ & \frak Y' \times V' \arrow[dd, dashed, near start, "\on{id} \times h"] & ~ \\
 B' \arrow[dd, "h"] & ~ & \frak X' \times_{\frak Y'} V \arrow[dl, swap,"\psi_V"] 	\arrow[rr] & ~ & \frak X' \times V \arrow[dl] \\
~ & V \arrow[dl, "i_V"] \arrow[rr, near end, swap, "\phi_V"] &~ & \frak Y' \times V & ~ \\
B \arrow[d, "\phi"] &~ & ~ &~ & ~ \\
\frak Y &~ & ~ &~ & ~ 
\end{tikzcd}
\end{equation}
we compute
\begin{eqnarray*}
    c(\phi)^{m} (h_*(\alpha)) &=& c(\phi)^{m} (\deg(V'/V) [V])\\
    &=& {i_V}_* {\psi_V}_* \phi_V^! (\deg(V'/V) [\frak X' \times V]) \\
    &=& {i_V}_* {\psi_V}_* \phi_V^! ( (\on{id} \times h)_* [\frak X' \times V']) \\
    &=& h_* {i_{V'}}_* {\psi_{V'}}_* \phi_{V'}^! ( [\frak X' \times V'])    \\
    &=& h_* c(\phi h)^m(\alpha)\, ,
\end{eqnarray*}
where the final line follows from compatibility of the Gysin map with proper representable pushforward (Proposition \ref{prop:Gysinproperpush}). 
\end{proof}

If $h:B'\to B$  is flat of relative dimension $n$, we have a pullback map
\[h^*\colon Z_m(B) \to Z_{m+n}(B') \]
which descends to Chow.

\begin{lemma}\label{lem:C2flat}
If $h$ is flat of relative dimension $n$,
$$c(\phi h)^{m+n} \circ h^* = h^* \circ c(\phi)^m. $$
\end{lemma}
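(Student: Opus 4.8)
The plan is to mimic exactly the structure of the proof of \cref{lem:C1proper}, replacing the proper pushforward $h_*$ by the flat pullback $h^*$ and the compatibility of Gysin maps with proper pushforward (\cref{prop:Gysinproperpush}) by the compatibility of Gysin maps with flat pullback, which is part of \cite[Section 3.1]{Kresch1999Cycle-groups-fo}. Concretely, let $[V] \in Z_m(B)$ be an irreducible cycle with inclusion $i_V : V \to B$, and factor $V \to B \to \frak Y$ as $V \to \frak Y' \to \frak Y$ with $\frak Y'$ of finite type. Since $h : B' \to B$ is flat, the preimage $V' := h^{-1}(V) \subseteq B'$ is pure of dimension $m+n$ and $h^*[V] = [V']$ (or, more precisely, a sum of the components of $V'$ with their multiplicities; the argument below is applied componentwise and is linear, so it suffices to treat the case where $V'$ is irreducible, or one simply carries the cycle $[V']$ through). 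The morphism $V' \to V \to \frak Y'$ gives a valid factorisation for $V'$ as well, so $c(\phi h)^{m+n}([V'])$ is computed using the same open substack $\frak Y'$ and the same $\frak X' = a^{-1}(\frak Y')$.

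Next I would assemble the cube of finite-type stacks relating the two constructions, namely
\begin{equation*}
\begin{tikzcd}[column sep=small, row sep=small]
~ & \frak X' \times_{\frak Y'} V' \arrow[rr] \arrow[dd, dashed, near start, "\on{id}\times h"] \arrow[dl, swap, "\psi_{V'}"] & ~ & \frak X' \times V' \arrow[dl] \arrow[dd, "\on{id}\times h"] \\
V' \arrow[rr, near end, swap, "\phi_{V'}"] \arrow[dd, "h"] & ~ & \frak Y' \times V' \arrow[dd, near start, "\on{id}\times h"] & ~ \\
~ & \frak X' \times_{\frak Y'} V \arrow[rr] \arrow[dl, swap, "\psi_V"] & ~ & \frak X' \times V \arrow[dl] \\
V \arrow[rr, near end, swap, "\phi_V"] & ~ & \frak Y' \times V & ~
\end{tikzcd}
\end{equation*}
in which all the vertical maps are flat (being base changes of $h$) and all the square faces are cartesian. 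In particular the left face expresses $\frak X' \times_{\frak Y'} V'$ as the fibre product $V' \times_V (\frak X' \times_{\frak Y'} V)$, so $(\on{id}\times h)^*[\frak X' \times V] = [\frak X' \times V']$ is again just the flat pullback of the fundamental class. Then the computation runs
\begin{align*}
c(\phi h)^{m+n}(h^*[V]) &= c(\phi h)^{m+n}([V']) \\
&= {i_{V'}}_* {\psi_{V'}}_* \phi_{V'}^!\,([\frak X'\times V']) \\
&= {i_{V'}}_* {\psi_{V'}}_* \phi_{V'}^!\,((\on{id}\times h)^*[\frak X'\times V]) \\
&= {i_{V'}}_* {\psi_{V'}}_* (\on{id}\times h)^* \phi_V^!\,([\frak X'\times V]) \\
&= h^* {i_V}_* {\psi_V}_* \phi_V^!\,([\frak X'\times V]) \\
&= h^*\, c(\phi)^m([V])\,,
\end{align*}
where the fourth equality is the commutation of the refined Gysin map $\phi_V^!$ with flat pullback along $\on{id}\times h$ (using that $\phi_{V'}$ is the base change of $\phi_V$ and both are regular local embeddings of the same codimension $\dim\frak Y$), and the fifth equality is the compatibility of flat pullback with proper representable pushforward $\psi_*$ applied to the cartesian left face, together with ${i_{V'}}_* = $ base change of ${i_V}_*$. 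Extending the identity from irreducible cycles to all of $Z_m(B)$ is by linearity, and the passage to rational equivalence will be handled uniformly in \cref{lem:ratequiv}.

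I do not expect a serious obstacle here: the only mild subtlety is checking that $h^*[V]$ as a cycle is literally $[h^{-1}(V)]$ with appropriate multiplicities and that the diagram above has the claimed cartesian faces, both of which are standard. The one point that deserves a sentence of care is that the flat pullback $(\on{id}\times h)^*$ is available on the finite-type stacks in the diagram and commutes with $\psi_*$: this is exactly the flat-base-change property of the proper pushforward of \cite[Appendix B]{BaeSchmitt1}, and the commutation with the Kresch Gysin map is \cite[Section 3.1]{Kresch1999Cycle-groups-fo}. So the proof is entirely parallel to \cref{lem:C1proper}, with "proper/pushforward" systematically traded for "flat/pullback".
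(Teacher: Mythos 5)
Your proof is correct and follows essentially the same route as the paper's own argument: same choice of $V'=h^{-1}(V)$, same factorization through $\frak Y'$, same cube diagram, and the same two compatibilities (flat pullback commutes with Kresch's Gysin map, and with the proper DM-type pushforward of $\psi_V$). The only difference is cosmetic: you break the final step into two separate equalities, which the paper compresses into one line.
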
 

\begin{proof}
Let $[V] \in Z_m(B)$ for an
 irreducible cycle $V$. Let $$V' = h^{-1}(V)\, ,$$ so
$[V'] = h^*([V])$. Let $\frak Y'$ be a factorisation of $V' \to V \to \frak Y$. We have 
$$(\on{id}\times h)^*([\frak X' \times V]) = [\frak X' \times V']\, .$$ 
Via the commutative diagram
\begin{equation}
    \begin{tikzcd}[column sep=small, row sep = small]
~ & ~ & \frak X' \times_{\frak Y'} V' 	\arrow[rr] \arrow[dd, dashed, near start, "\on{id} \times h"] \arrow[dl, swap, "\psi_{V'}"] & ~ & \frak X' \times V' \arrow[dl] \arrow[dd, dashed, near start, "\on{id} \times h"]\\
~ & V' \arrow[dl, "i_{V'}"] \arrow[rr, near end, , swap, "\phi_{V'}"] \arrow[dd, "h"]& ~ & \frak Y' \times V' \arrow[dd, dashed, near start, "\on{id} \times h"] & ~ \\
 B' \arrow[dd, "h"] & ~ & \frak X' \times_{\frak Y'} V \arrow[dl, swap, "\psi_V"] 	\arrow[rr] & ~ & \frak X' \times V \arrow[dl] \\
~ & V \arrow[dl, "i_V"] \arrow[rr,swap, near end, "\phi_V"] &~ & \frak Y' \times V & ~ \\
B \arrow[d, "\phi"] &~ & ~ &~ & ~ \\
\frak Y &~ & ~ &~ & ~ 
\end{tikzcd}
\end{equation}
we compute
\begin{align*}
     c(\phi h)^{m+n} ([V']) &= {i_{V'}}_* {\psi_{V'}}_* \phi_{V'}^! ([\frak X' \times V']) \\
    &= {i_{V'}}_* {\psi_{V'}}_* \phi_{V'}^! ( (\on{id} \times h)^* [\frak X' \times V]) \\
    &= h^* {i_V}_* {\psi_V}_* \phi_V^! ( [\frak X' \times V]) \\
    &=  h^* c(\phi)^m([V])\, , 
\end{align*}
where the final line follows from the compatibility of the Gysin map with flat pullback for a morphism of finite type algebraic stacks (\cite[Section 3.1]{Kresch1999Cycle-groups-fo}) and the pullback and pushforward formulas. 
\end{proof}

Let $\phi\colon B \to \frak Y$ be a morphism from a finite type scheme
as above. Let $$g\colon B \to Z$$ be a morphism of finite type schemes, and let $i\colon Z' \to Z$ be a regular embedding of codimension $f$. 
Form the fiber square 
\begin{center}
\begin{tikzcd}
B' \arrow[r] \arrow[d, "i'"] & Z' \arrow[d,  "i"] \\
B \arrow[r, "g"] \arrow[d, "\phi"] & Z \\
\, \frak Y.  &~ 
\end{tikzcd}
\end{center} 
Let $V$ be an irreducible cycle in $B$ with inverse image $V' = (i')^{-1}(V)$. 
We choose a representative $i^![V] = \sum_j n_j [V'_j]$ in $Z_{m-f}(V')$. 
\begin{lemma}\label{lem:C3regimb}
We have 
\[i^! c(\phi)^m ([V]) = c(\phi i')^{m-f}\left(\sum n_j [V'_j]\right). \]
\end{lemma}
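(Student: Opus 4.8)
The strategy mirrors the proofs of \cref{lem:C1proper,lem:C2flat}: we pull back the defining diagram \eqref{eq:definingdiagram} along $i'\colon B' \to B$ and chase the resulting cube, pushing the refined Gysin operator $i^!$ through the three pieces $\phi_V^!$, ${\psi_V}_*$, and ${i_V}_*$ out of which $c(\phi)^m$ is built. By linearity it suffices to treat a single irreducible cycle $[V]$.

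First we arrange a common factorisation. Choose $V \to \frak Y' \to \frak Y$ as in \cref{lem:factorisationopenquasi} and put $\frak X' = a^{-1}(\frak Y')$. Since $V' = (i')^{-1}(V)$ maps to $V$, each component $V'_j \subseteq V'$ maps to $V \to B \to \frak Y$, so the same $\frak Y'$ also factorises every composite $V'_j \to B' \xrightarrow{\phi i'} \frak Y$. By \cref{lem:classindepfact} we may therefore compute $c(\phi i')^{m-f}([V'_j])$ using this $\frak Y'$, so that the right-hand side of the lemma equals $\sum_j n_j\, {i_{V'_j}}_* {\psi_{V'_j}}_*\, \phi_{V'_j}^!\bigl([\frak X' \times V'_j]\bigr)$ with all the stacks involved built from the single pair $(\frak Y',\frak X')$.

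Next we form the cube obtained from \eqref{eq:definingdiagram} by base change along $i\colon Z' \to Z$, equivalently along $i'\colon B'\to B$; this produces $V' = V\times_Z Z'$, the space $\frak X'\times_{\frak Y'}V' = (\frak X'\times_{\frak Y'}V)\times_Z Z'$, and $\frak X'\times V'$, each cartesian over its counterpart for $V$. On this cube, $i^!\,c(\phi)^m([V])$ is evaluated in three steps. \emph{(a)} Both $i_V$ and $\psi_V$ are proper of Deligne--Mumford type ($\psi_V$ being a base change of $a$), so \cref{prop:Gysinproperpush} lets us commute $i^!$ past both pushforwards, reducing the computation to $i^!\,\phi_V^!\bigl([\frak X'\times V]\bigr)$ on $\frak X'\times_{\frak Y'}V'$. \emph{(b)} Since $\phi_V$ and $i$ are regular (local) embeddings, commutativity of Gysin homomorphisms (\cite[Section 3.1]{Kresch1999Cycle-groups-fo}, the analogue of \cite[Theorem 6.4]{Fulton1984Intersection-th}), applied to the two pullbacks of $[\frak X'\times V]$, gives $i^!\,\phi_V^!\bigl([\frak X'\times V]\bigr) = \phi_{V'}^!\, i^!\bigl([\frak X'\times V]\bigr)$. \emph{(c)} Because $\frak X'\times V \to V$ is flat with $[\frak X'\times V] = \on{pr}^*[V]$, compatibility of $i^!$ with flat pullback together with the chosen representative $i^![V] = \sum_j n_j[V'_j]$ gives $i^!\bigl([\frak X'\times V]\bigr) = \sum_j n_j[\frak X'\times V'_j]$; applying $\phi_{V'}^!$ componentwise and identifying the base change of $\phi_V$ over $V'_j$ with $\phi_{V'_j}$ yields $\sum_j n_j\,\phi_{V'_j}^!\bigl([\frak X'\times V'_j]\bigr)$. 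Pushing this forward along $\frak X'\times_{\frak Y'}V' \to B'$ --- the base change of $\frak X'\times_{\frak Y'}V \to V \to B$, which restricts over each $V'_j$ to $i_{V'_j}\circ\psi_{V'_j}$ --- reproduces the right-hand side in the form obtained in the previous paragraph.

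The main obstacle is organisational rather than conceptual: one must write out the full cube of fiber products, check that every face is cartesian, and match the base changes of $\psi_V$ and $\phi_V$ over the components $V'_j$ with the maps $\psi_{V'_j}$ and $\phi_{V'_j}$ appearing in \cref{defbivariantclass}. The only non-formal input is step \emph{(b)}, where the two pullbacks of $[\frak X'\times V]$ must be set up so that the commutativity of Gysin homomorphisms applies; everything else is assembled from \cref{prop:Gysinproperpush}, the Gysin/flat-pullback compatibility of \cite{Kresch1999Cycle-groups-fo}, and the projection formula, exactly as in the proofs of \cref{lem:C1proper,lem:C2flat}.
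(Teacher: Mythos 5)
Your plan is correct and matches the paper's proof: both argue by first deducing $i^![\frak X'\times V]=\sum_j n_j[\frak X'\times V'_j]$ from flat-pullback compatibility, then commuting $i^!$ past the pushforwards via \cref{prop:Gysinproperpush}, and finally invoking commutativity of Gysin homomorphisms (from \cite{Kresch1999Cycle-groups-fo}) to swap $i^!$ with $\phi_V^!$. Your write-up is somewhat more explicit about fixing a common factorisation $\frak Y'$ and about identifying the base-changed maps over each $V'_j$ with the maps $\psi_{V'_j},\,\phi_{V'_j}$ appearing in \cref{defbivariantclass}, but this is detail the paper compresses rather than a different route.
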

\begin{remark}
In particular, once we have shown that the maps $c(\phi)^m$ pass to rational equivalence, 
Lemma \ref{lem:C3regimb} will imply 
\[i^! c(\phi)^m(\alpha) = c(\phi i')^{m-f}(i^! \alpha) \]
for  $\alpha \in \Chow_m(B)$. 
\end{remark}
\begin{proof}  
From the equality $i^! [V] = \sum n_j [V'_j]$, we deduce 
$$i^![\frak X' \times V] = \sum n_j [\frak X' \times V'_j]\, .$$ 
Via the commutative diagram\footnote{We should also add another layer of the diagram for the $V'_j$. 
}


\begin{equation}
\begin{tikzcd}[column sep = small, row sep = small]
~ & ~ & ~  & \frak X' \times_{\frak Y'} {V'} 	\arrow[rr] \arrow[dd, dashed, near start, "\on{id} \times i'"] \arrow[dl, swap, "\psi_{V'}"] & ~ & \frak X' \times  {V'} \arrow[dl] \arrow[dd, dashed, near start, "\on{id} \times i'"]\\
~ & ~ & {V'}  \arrow[dll] \arrow[dl, "i_{V'}"] \arrow[rr, swap,near end, "\phi_{V'}"] \arrow[dd, "i'"]& ~ & \frak Y' \times {V'} \arrow[dd, dashed, near start, "\on{id} \times i'"] & ~ \\
Z' \arrow[dd,"i"] & B' \arrow[l] \arrow[dd, "i'"] & ~ & \frak X' \times_{\frak Y'} V \arrow[dl, swap, "\psi_V"] 	\arrow[rr] & ~ & \frak X' \times V \arrow[dl] \\
~ & ~ & V \arrow[dll] \arrow[dl, "i_V"] \arrow[rr, swap, near end, "\phi_V"] &~ & \frak Y' \times V & ~ \\
 Z & B \arrow[l, "g"] \arrow[d, "\phi"] &~ & ~ &~ & ~ \\
~ & \frak Y,  &~ & ~ &~ & ~ 
\end{tikzcd}
\end{equation}
we compute
\[i^! c(\phi)^m ([V]) = i^! {i_V}_* {\psi_V}_* \phi_V^! ( [\frak X' \times V]) \]
and
\[c(\phi i')^{m-f}(\sum n_j [V'_j]) = \sum n_j  {i_{V'_j}}_* {\psi_{V'_j}}_* \phi_{V'_j}^! ( [\frak X' \times V'_j]) =  {i_{V'}}_* {\psi_{V'}}_* \phi_{V'}^! ( i^![\frak X' \times V])\, . \]
We deduce equality of these expressions by using the compatibility of Gysin maps with proper pushforward (\cref{prop:Gysinproperpush}) and then the commutativity of Gysin maps from \cite{Kresch1999Cycle-groups-fo} to obtain $\phi_{V'}^!  i^! = i^! \phi_V^!$. 
\end{proof}

\begin{lemma}\label{lem:ratequiv}
The morphisms $c(\phi)^{m}$ from \cref{defbivariantclass} pass to rational equivalence, 
\[c(\phi)^{m}\colon \Chow_m(B) \to \Chow_{m-e}(B)\, .\]
\end{lemma}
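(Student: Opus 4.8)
The plan is to follow the classical template from \cite[Chapter 17]{Fulton1984Intersection-th} (see also the proof of \cite[Proposition 17.1]{Fulton1984Intersection-th}), using the cycle-level compatibilities (C1), (C2), (C3) already established in Lemmas \ref{lem:C1proper}, \ref{lem:C2flat}, and \ref{lem:C3regimb}. Recall that $\Chow_m(B) = Z_m(B)/\sim_{\mathrm{rat}}$, and that $Z_m(B)$ modulo rational equivalence is generated by cycles of the form $\operatorname{div}(f)$, where $f$ is a nonzero rational function on an $(m+1)$-dimensional integral closed subscheme $W \subseteq B$. So it suffices to show that for every such $W$ and $f$, the cycle $c(\phi)^m(\operatorname{div}(f)) \in \Chow_{m-e}(B)$ vanishes.

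First I would reduce to the case $B = W$: letting $\iota_W \colon W \hookrightarrow B$ be the (proper, closed) inclusion, we have $\operatorname{div}(f) = {\iota_W}_*(\operatorname{div}(f))$ as cycles on $B$, and by Lemma \ref{lem:C1proper} applied to $h = \iota_W$,
\[
c(\phi)^m\big({\iota_W}_*\operatorname{div}(f)\big) = {\iota_W}_*\, c(\phi\iota_W)^m\big(\operatorname{div}(f)\big)\,,
\]
so it is enough to show $c(\phi\iota_W)^m(\operatorname{div}(f)) = 0$ in $\Chow_{m-e}(W)$. Thus I may assume $B = W$ is integral of dimension $m+1$ and $f \in K(W)^\times$.

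Next, the standard device: the rational function $f$ gives a morphism $g \colon W \dashrightarrow \PP^1$, which after blowing up (or normalizing and resolving, harmlessly, since we only need the cycle-theoretic conclusion and everything is compatible with proper pushforward via Lemma \ref{lem:C1proper}) we may take to be an honest flat morphism $g \colon \widetilde{W} \to \PP^1$; then $\operatorname{div}(f) = g^*([0] - [\infty])$ up to the proper pushforward along $\widetilde W \to W$, and $[0], [\infty] \subset \PP^1$ are regular embeddings of codimension $1$. Now combine Lemma \ref{lem:C2flat} (to move $c$ past the flat pullback $g^*$, noting $[0]-[\infty] \in Z_0(\PP^1)$) and Lemma \ref{lem:C3regimb} (to move $c$ past the Gysin maps $[0]^!, [\infty]^!$): writing things out, $c(\phi g)^{m}(g^*(\cdot))$ applied to $[0]-[\infty]$ equals the Gysin pullback along $\{0,\infty\} \hookrightarrow \PP^1$ of $c$ applied to the fundamental class $[\PP^1_{\widetilde W}]$, which is a single class in $\Chow_*(\widetilde W \times_{\frak Y}\{0,\infty\})$ — in particular $[0]^!$ and $[\infty]^!$ are applied to the \emph{same} class on $\widetilde{W} \times \PP^1$, and since $[0]$ and $[\infty]$ are rationally equivalent divisors on $\PP^1$ (pull back from a point the difference $[0]-[\infty]=\operatorname{div}(t)$), the two Gysin pullbacks agree in the Chow group in the sense that their difference is rationally trivial — this is exactly the computation underlying \cite[Corollary 6.3]{Fulton1984Intersection-th}, valid on finite-type algebraic stacks by \cite{Kresch1999Cycle-groups-fo}. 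Hence $c(\phi\iota_W)^m(\operatorname{div}(f)) = 0$.

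The main obstacle I anticipate is bookkeeping rather than conceptual: one must check that all the manipulations above genuinely take place inside the finite-type world of \cite{Kresch1999Cycle-groups-fo} (which the factorisation Lemma \ref{lem:factorisationopenquasi} and Lemma \ref{lem:classindepfact} guarantee, since the relevant $\frak Y'$ can be chosen simultaneously for $W$ and all the $W'_j$ appearing), and that the invariance under proper pushforward used to pass from $W$ to its resolution $\widetilde W$ is legitimate — this is Lemma \ref{lem:C1proper} applied to the proper morphism $\widetilde W \to W$, combined with the fact that a proper birational morphism of integral schemes pushes the fundamental class of the source to that of the target and $\operatorname{div}(f)$ to $\operatorname{div}(f)$. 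Once these are in place, the argument is the verbatim translation of Fulton's, and the conclusion that $c(\phi)^m$ descends to $\Chow_m(B) \to \Chow_{m-e}(B)$ follows, completing the construction of the operational class $a_{\mathrm{op}}[\frak X]$ of \cref{defbivariantclass}.
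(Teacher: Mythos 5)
Your overall approach matches the paper's, which simply cites the fact that the argument of Fulton's Theorem~17.1 carries over verbatim once (C1)--(C3) hold at the cycle level; you fill this in correctly in outline. There is, however, one bookkeeping misstep: the invocation of Lemma~\ref{lem:C2flat} ``to move $c$ past the flat pullback $g^*$'' does not apply, because Lemma~\ref{lem:C2flat} requires $g$ to be a $\frak Y$-morphism, and $\PP^1$ carries no map to $\frak Y$ here. Fulton's Theorem~17.1 avoids this by never treating $g^*$ as a flat pullback in the sense of (C2). Instead one writes $\operatorname{div}(f) = {j_0}_*(\{0\}^![\widetilde W]) - {j_\infty}_*(\{\infty\}^![\widetilde W])$, where $j_0, j_\infty$ are the inclusions of $g^{-1}(0)$, $g^{-1}(\infty)$ into $\widetilde W$ and $\{0\}^!$, $\{\infty\}^!$ are the refined Gysin maps for the regular embeddings $\{0\},\{\infty\}\hookrightarrow \PP^1$ relative to $g$. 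Applying Lemma~\ref{lem:C1proper} (to the proper pushforwards ${j_0}_*$, ${j_\infty}_*$) and Lemma~\ref{lem:C3regimb} (to the Gysin maps) then yields
\[
c(\phi)^m(\operatorname{div}(f)) \;=\; \bigl({j_0}_*\{0\}^! - {j_\infty}_*\{\infty\}^!\bigr)\,c(\phi)^{m+1}([\widetilde W])\,,
\]
and the right-hand side vanishes in $\Chow_{m-e}(\widetilde W)$ because the operator ${j_0}_*\{0\}^! - {j_\infty}_*\{\infty\}^!$ on $\Chow_*(\widetilde W)$ is cap-product with the principal Cartier divisor $\operatorname{div}(g^*t)$ — exactly the point you identify as the ``computation underlying Corollary~6.3.'' So your conclusion and citations are right; only the role you assign to Lemma~\ref{lem:C2flat} needs replacing by a second use of Lemma~\ref{lem:C1proper} together with Lemma~\ref{lem:C3regimb}.
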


\begin{proof}
The proof is now completely analogous to \cite[Theorem 17.1]{Fulton1984Intersection-th}. 
\end{proof}

\subsubsection{Properties}
The class constructed in \cref{defbivariantclass} is invariant under proper birational maps in the following sense.
\begin{proposition}\label{prop:invarianceproperbirat}
Let $f\colon \frak W \to \frak X$ be a proper DM-type birational morphism of locally finite type algebraic stacks over ${\field}$ of pure dimension, with $\frak X$ stratified by global quotient stacks. Then,
\[(a\circ f)_{\on{op}}[\frak W] = a_{\on{op}}[\frak X] \in \CHop^e(\frak Y)\]
where $(a\circ f)_{\on{op}}[\frak W]$ and $a_{\on{op}}[\frak X]$ are the operational classes constructed in \cref{defbivariantclass} with respect to $a\circ f\colon \frak W \to \frak Y$ and $a\colon \frak X \to \frak Y$.
\end{proposition}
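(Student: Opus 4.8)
The plan is to test both operational classes against an arbitrary irreducible cycle $[V]\in Z_m(B)$ for a map $\phi\colon B\to\frak Y$ from a finite type scheme, and check that they produce the same element of $\Chow_{m-e}(B)$ after passing to rational equivalence (which we have already established in \cref{lem:ratequiv}). Fix a factorisation $V\to\frak Y'\to\frak Y$ with $\frak Y'$ of finite type and $\frak Y'\to\frak Y$ open, and let $\frak X'=a^{-1}(\frak Y')$ and $\frak W'=(a\circ f)^{-1}(\frak Y')=f^{-1}(\frak X')$. By \cref{lem:classindepfact} the value of the operational class does not depend on this choice, so it suffices to compare
\[
{i_V}_*\,{\psi_V}_*\,\phi_V^!\,([\frak X'\times V])
\quad\text{and}\quad
{i_V}_*\,{\psi'_V}_*\,\phi_V^!\,([\frak W'\times V])\, ,
\]
where $\psi_V$ and $\psi'_V$ are the projections from $\frak X'\times_{\frak Y'}V$ and $\frak W'\times_{\frak Y'}V$ respectively, and $\phi_V$ is the same regular embedding $V\hookrightarrow\frak Y'\times V$ in both cases.

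The key point is that $f\times\on{id}\colon \frak W'\times V\to\frak X'\times V$ is proper, DM-type, and birational (base change of $f$ along the flat map $\frak X'\times V\to\frak X'$, and birationality is preserved since $V$ is a variety over the field and birational morphisms of pure-dimensional stacks stay birational after smooth base change). Hence $(f\times\on{id})_*[\frak W'\times V]=[\frak X'\times V]$ as cycles, using that a proper birational morphism of integral — or, componentwise, pure-dimensional reduced — stacks has generic degree one; here the hypothesis that $\frak X$ is stratified by global quotient stacks (and \cref{Rmk:genus1n0}) is what allows us to invoke the proper pushforward of \cite[Appendix B]{BaeSchmitt1} and the degree formula for it. Now form the cube relating $\frak W'\times_{\frak Y'}V\to\frak X'\times_{\frak Y'}V$ (the left face) with $\frak W'\times V\to\frak X'\times V$ (the right face), both sitting over $V\xrightarrow{\phi_V}\frak Y'\times V$. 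Applying \cref{prop:Gysinproperpush} — the compatibility of the Gysin map $\phi_V^!$ with proper DM-type pushforward — gives
\[
\phi_V^!\,[\frak X'\times V]=\phi_V^!\,(f\times\on{id})_*[\frak W'\times V]=(f\times\on{id})|_{V}{}_*\,\phi_V^!\,[\frak W'\times V]\, ,
\]
where $(f\times\on{id})|_V\colon \frak W'\times_{\frak Y'}V\to\frak X'\times_{\frak Y'}V$ is the induced map. Pushing forward along $\psi_V$ and using $\psi_V\circ(f\times\on{id})|_V=\psi'_V$ together with functoriality of proper pushforward, then applying ${i_V}_*$, we conclude the two expressions agree on the level of cycles, hence in $\Chow_{m-e}(B)$ after \cref{lem:ratequiv}.

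I expect the main obstacle to be bookkeeping the birationality and the degree-one claim: one must be careful that "birational" behaves well for non-separated or non-reduced stacks and under the non-finite-type issues, so the cleanest route is to reduce to components, replace everything by its reduction (which does not change the fundamental class or the pushforward on cycles), restrict to a dense open where $f$ is an isomorphism, and invoke that the proper pushforward of \cite[Appendix B]{BaeSchmitt1} computes generic degree correctly on each component — exactly the point where the global-quotient-stratification hypothesis on $\frak X$ is used. Once $(f\times\on{id})_*[\frak W'\times V]=[\frak X'\times V]$ is in hand, the remainder is a formal diagram chase combining \cref{prop:Gysinproperpush,lem:classindepfact,lem:ratequiv}, entirely parallel to the proofs of \cref{lem:C1proper,lem:C2flat,lem:C3regimb}.
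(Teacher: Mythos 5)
Your proposal is correct and follows essentially the same route as the paper's proof: both start from the fact that the birational $f$ (and hence $f\times\on{id}$) pushes the fundamental class to the fundamental class, then invoke \cref{prop:Gysinproperpush} to commute the Gysin pullback $\phi_V^!$ past the proper pushforward, and conclude by functoriality of pushforward along $\psi_V$. Your extra remarks about reduction to components, behaviour of birationality under flat base change, and where the global-quotient-stratification hypothesis enters are sensible amplifications of the paper's more terse argument rather than a different method.
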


\begin{proof}
The proper pushforward of the fundamental class along $f$ is the fundamental class, as the map $f$ is birational and hence of degree 1. 

Choose a factorisation $V \to\frak Y' \to\frak Y$. Denote by $\frak X'$ the pullback of $\frak X$ along $a$ and by $\frak W'$ the the pullback of $\frak X'$ along $f$. As in \eqref{eq:definingdiagram}, we
form a pullback diagram 
\begin{equation} 
 \begin{tikzcd}
   \frak W' \times_{\frak Y'} V \arrow[r] \arrow[d, "\tilde{f}"] & \frak W' \times V \arrow[d, "f \times \on{id}"]  \\
  \frak X' \times_{\frak Y'} V \arrow[r] \arrow[d, "\psi_V"] & \frak X' \times V \arrow[d, "\on{a} \times \on{id}"]  \\
  V \arrow[r, "\phi_V"] & \ \frak Y' \times V\, . 
\end{tikzcd}
\end{equation}
 \cref{prop:Gysinproperpush} then yields
\begin{eqnarray*}
{\psi_V}_* \phi_V^!([\frak X' \times V]) &=&  {\psi_V}_* \phi_V^!((f \times \on{id})_*[\frak W' \times V])\\
&=& {\psi_V}_* \tilde{f}_* \phi_V^! ([\frak W' \times V])\\
&= &{(\psi_V \tilde{f})}_* \phi_V^! ([\frak W' \times V])\, ,
\end{eqnarray*}
which is the required equality. 
\end{proof}

We will also require a flat pullback property.
Let $\frak X, \frak Y, \frak Z$ be pure dimensional algebraic stacks of locally finite type over ${\field}$  with $\frak Y$ and $\frak Z$ smooth. 
Suppose we have a fibre diagram
\begin{center}
\begin{tikzcd}
\frak X \times_{\frak Y} \frak Z \arrow[r, "\tilde{a}"] \arrow[d, "\tilde{f}"] 
& \frak Z \arrow[d, "f"] \\
\frak X \arrow[r, "a"] 
&\frak Y
\end{tikzcd}
\end{center}
where  $a: \frak X \to \frak Y$ is a proper DM-type morphism and $f: \frak Z \to \frak Y$ is flat {\em and} lci\footnote{A flat and lci map is called {\em syntomic}.}, with $\frak Z$ stratified by global quotient stacks. 
\begin{lemma}\label{lem:pushpullcommutes}
In $\CHop(\frak Z)$, we have
$$\tilde{a}_{\on{op}} [\frak X \times_{\frak Y} \frak Z] = f^* a_{\on{op}}[\frak X]\, .$$
\end{lemma}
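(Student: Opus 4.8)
The plan is to reduce the statement to a single compatibility of refined Gysin maps, by unwinding both operational classes through \cref{defbivariantclass}. Since a bivariant class in $\CHop^e(\frak Z)$ is determined by its values on cycles, it suffices to check that for every morphism $\phi\colon B\to\frak Z$ with $B$ a finite type scheme and every irreducible cycle $[V]\in\Chow_m(B)$ one has
\[\tilde a_{\on{op}}[\frak X\times_{\frak Y}\frak Z](\phi)([V]) = \big(f^*a_{\on{op}}[\frak X]\big)(\phi)([V]) = a_{\on{op}}[\frak X](f\circ\phi)([V])\,,\]
the last equality being the definition of the pullback of an operational class along $f$. The first step is a bookkeeping one: choose the factorisations required by \cref{defbivariantclass} compatibly. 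Pick a finite type open substack $\frak Z'\subseteq\frak Z$ through which $V\to\frak Z$ factors; since $f(\frak Z')$ is quasi-compact it lies in a finite type open substack $\frak Y'\subseteq\frak Y$, which one may also take to contain the image of $V\to\frak Z'\to\frak Y$. By \cref{lem:classindepfact} this choice is harmless, and base change gives $\tilde a^{-1}(\frak Z') = \frak X\times_{\frak Y}\frak Z' = \frak X'\times_{\frak Y'}\frak Z'$ for $\frak X' := a^{-1}(\frak Y')$, together with a canonical identification $\frak X'\times_{\frak Y'}\frak Z'\times_{\frak Z'}V = \frak X'\times_{\frak Y'}V$.

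With these choices, \cref{defbivariantclass} writes the right-hand side as ${i_V}_* (\psi_V)_* \phi_V^!([\frak X'\times V])$, where $\phi_V\colon V\hookrightarrow\frak Y'\times V$ is the graph of $V\to\frak Y'$ and $\psi_V\colon\frak X'\times_{\frak Y'}V\to V$ is the projection, and writes the left-hand side as ${i_V}_* (\psi_V)_* \tilde\phi_V^!([(\frak X'\times_{\frak Y'}\frak Z')\times V])$, where $\tilde\phi_V\colon V\hookrightarrow\frak Z'\times V$ is the graph of $V\to\frak Z'$ and the map $\frak X'\times_{\frak Y'}\frak Z'\times_{\frak Z'}V\to V$ is the same $\psi_V$ under the identification above. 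Hence the lemma reduces to the identity
\[\phi_V^!\big([\frak X'\times V]\big) = \tilde\phi_V^!\big([(\frak X'\times_{\frak Y'}\frak Z')\times V]\big)\ \in\ \Chow_{m-e}\big(\frak X'\times_{\frak Y'}V\big)\,.\]

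To prove this identity I would use the factorisation $\phi_V = p\circ\tilde\phi_V$, where $p := (f|_{\frak Z'})\times\on{id}_V\colon\frak Z'\times V\to\frak Y'\times V$ is flat (and lci, $\frak Y'$ and $\frak Z'$ being smooth), together with the cartesian square whose lower edge is $p$ and whose upper edge is the flat morphism $h\colon(\frak X'\times_{\frak Y'}\frak Z')\times V\to\frak X'\times V$ obtained by base change along $a\times\on{id}_V$. Flatness of $h$ gives $[(\frak X'\times_{\frak Y'}\frak Z')\times V] = h^*[\frak X'\times V]$, the flat pullback of a fundamental cycle being the fundamental cycle of the preimage. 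By the functoriality of Gysin maps under the factorisation of the regular local embedding $\phi_V$ through the regular local embedding $\tilde\phi_V$ and the flat morphism $p$, combined with the compatibility of $\tilde\phi_V^!$ with flat pullback (both part of the formalism of \cite[Section 3.1]{Kresch1999Cycle-groups-fo}), one obtains
\[\phi_V^!\big([\frak X'\times V]\big) = \tilde\phi_V^!\big(p^*[\frak X'\times V]\big) = \tilde\phi_V^!\big(h^*[\frak X'\times V]\big) = \tilde\phi_V^!\big([(\frak X'\times_{\frak Y'}\frak Z')\times V]\big)\,,\]
as desired.

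The main obstacle is this last step: pinning down the functoriality $\phi_V^! = \tilde\phi_V^!\circ p^*$ for Kresch's Gysin maps in the required generality. On the level of deformation to the normal cone it reflects the fact that the normal bundle of $\phi_V$ is the flat quotient of that of $\tilde\phi_V$ by (the pullback of) the relative tangent bundle of $f$, together with the compatibility of the normal cone construction with flat base change. If the statement is not available for Artin stacks in precisely this shape, one reduces to the scheme case \cite[Chapter 6]{Fulton1984Intersection-th} (see also \cite[Section 3]{Vistoli1989Intersection-th}) by étale-localising on the smooth stacks $\frak Y'$ and $\frak Z'$, just as Kresch's construction does; the compatibility with proper pushforward recorded in \cref{prop:Gysinproperpush} is the only stacky input beyond this. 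The remaining verifications — the existence of the compatible factorisations and the identifications of fibre products above — are routine.
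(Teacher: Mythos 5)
Your proposal is correct and follows essentially the same route as the paper's proof: unwind both operational classes via \cref{defbivariantclass}, factor the regular embedding $V\hookrightarrow\frak Y'\times V$ as $(f\times\on{id})\circ(V\hookrightarrow\frak Z'\times V)$, and combine the functoriality of Gysin maps for lci morphisms with the fact that for the flat lci map $f\times\on{id}$ the refined Gysin coincides with the flat pullback $(\tilde f\times\on{id})^*$, which sends the fundamental class to the fundamental class. The paper presents these as two separate steps (citing \cite[Prop 6.6(b)]{Fulton1984Intersection-th} for the second), while you bundle them into the single identity $\phi_V^! = \tilde\phi_V^!\circ p^*$, but the decomposition and the inputs are the same.
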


\begin{proof}
Let $\frak X', \frak Y', \frak Z'$ denote appropriate finite-type factorisations as in \cref{defbivariantclass}. Then we compare the two operational classes via the following diagram:
\begin{center}
\begin{tikzcd}
~
&(\frak X' \times_{\frak Y'} \frak Z') \times_{\frak Z'} V \arrow[d, "\sim" labl] \arrow[r]
& \frak X' \times_{\frak Y'} \frak Z' \times V \arrow[dr, "\tilde{f} \times \on{id}"] \arrow[dd, near start, "\tilde{a} \times \on{id}"] \\
~
&\frak X' \times_{\frak Y'} V \arrow[rr] \arrow[d, "\psi_V"]
&~
&\frak X' \times V \arrow[d, "a \times \on{id}"] \\
~
&V \arrow[dl, "i_V"] \arrow[r, "\phi_V"] \arrow[rr, bend right, "(f \circ \phi)_V"]
& \frak Z' \times V \arrow[r, "f \times \on{id}"]
& \frak Y' \times V \\
B &~ &~ &~ .
\end{tikzcd}
\end{center}
By definition,
\[\tilde{a}_{\on{op}} [\frak X \times_{\frak Y} \frak Z] (\phi)([V]) = {i_V}_* {\psi_V}_* \phi_V^! ([\frak X'\times_{\frak Y'} \frak Z' \times V]) \]
and
\[f^* a_{\on{op}}[\frak X](\phi)([V]) = a_{\on{op}}[\frak X](f \circ \phi)([V])  = {i_V}_* {\psi_V}_* (f \circ \phi)_V^! ([\frak X'\times V])\, .\]
Since $f$ is lci, the above expression is equal to 
\[{i_V}_* {\psi_V}_* \phi_V^! (f \times \on{id})^! ([\frak X'\times V])\, .\]
Because $f$ is also flat, we see as in  \cite[Prop 6.6(b)]{Fulton1984Intersection-th} that we obtain
\[{i_V}_* {\psi_V}_* \phi_V^! (\tilde{f} \times \on{id})^* ([\frak X'\times V]) = {i_V}_* {\psi_V}_* \phi_V^! ([\frak X' \times_{\frak Y'} \frak Z' \times V])\, . \]
which yields the required equality. 
\end{proof}


\section{The universal double ramification cycle}\label{sec:uni_DR_defs}
\subsection{Overview}
We fix a genus $g$, a number of markings $n$, and a vector $A=(a_1,\ldots,a_n) \in \bb Z^n$
of ramification data satisfying
$$\sum_{i=1}^{n}a_i=d\, .$$
We define here 
the associated universal twisted double ramification cycle class
in the operational Chow group of the universal Picard stack $\Picabs_{g,n,d}$.
The operational class is the class associated to a certain proper
representable
morphism 
$$\cat{Div}_{g,A} \to \Picabs_{g,n,d}$$
using the theory of Section \ref{sec:constructing_chow_class}.
Our goal here is to define the stack $\cat{Div}_{g,A}$ over $\Picabs_{g,n,d}$. 


We will present three essentially equivalent definitions of the universal
twisted double ramification cycle
in Sections \ref{sec:naive_def}--\ref{sec:rub_log_def} which yield the same operational class: 
\begin{enumerate}
\item[$\bullet$] a definition in Section \ref{sec:naive_def} by closing
the Abel-Jacobi section
which is simple to state but difficult to
handle,
\item[$\bullet$] an intrinsic logarithmic definition in Section
\ref{sec:log_def}
following Marcus-Wise \cite{Marcus2017Logarithmic-com},
 
\item[$\bullet$] a slight variation of the log definition in
Section \ref{sec:rub_log_def}
which facilitates  comparison to the spaces of rubber maps.
\end{enumerate}
After analyzing the set-theoretic closure of the Abel-Jacobi section in
Section \ref{sec:image_of_aj},
the equality of the three resulting classes will be shown in \cref{sec:proof_of_equiv_defs}. 
In \cref{Sect:AJextension}, we briefly discuss 
the lift of universal twisted
 double ramification cycle to operational b-Chow.

\subsection{$\mathsf{DR}^{\mathsf{op}}_{g,A}$ by closure}
\label{sec:naive_def}

We define the Abel-Jacobi section $\sigma$ of $\Picabs_{g,n,d} \to \frak M_{g,n}$  by
\begin{equation}
\sigma\colon \frak M_{g,n} \to \Picabs_{g,n,d}\, ,\ \ \ \ (C, p_1, \dots, p_n) \mapsto \ca O_C\Big(\sum_{i=1}^n a_i p_i\Big)\, . 
\end{equation}
The section $\sigma$  is not a closed immersion (both because 
of the $\bb G_m$-automorphism groups of line bundles and because the
image is not closed).
However, $\sigma$ is quasi-compact and relatively representable by schemes, and hence admits a well-defined \emph{schematic image} (we use that the formation of the schematic image is compatible with flat base-change, see \cite[\href{https://stacks.math.columbia.edu/tag/081I}{Tag 081I}]{stacks-project}). 
The schematic image is the smallest closed reduced substack through which
$\sigma$ factors.

Since the schematic image $\bar\sigma$ is a closed substack of pure dimension, 
$$\iota:\bar\sigma\to \Picabs_{g,n,d}\, ,$$
we obtain an operational class $\iota_{\mathsf{op}}[\bar\sigma]$ by \cref{defbivariantclass}.  
Our first definition of the universal twisted double ramification
cycles is via the schematic image of $\sigma$:
\begin{equation}\label{ddd111}
\DRop_{g,A} = \iota_{\mathsf{op}}[\bar\sigma] \, \in\, 
\mathsf{CH}^g_{\mathsf{op}}(\Picabs_{g,n,d})
\, . 
\end{equation}

Let $\Picabs_{\ul 0} \hra \Picabs$ be the open substack consisting of line bundles having degree 0 on every irreducible component of every geometric fibre (\emph{multidegree $\ul 0$}), and let $\Picrel_{\ul 0} \hra \Picrel$
be defined analogously. We have a commutative diagram in which all squares are pullbacks: 
\begin{equation}\label{eq:multidegree0}
\begin{tikzcd}
(B \bb G_m)_{\frak M_{g,n}} \arrow[r] \arrow[d, equals] & \frak M_{g,n} \arrow[d, equals] \arrow[ddrr, "e = \ca O_C"]\\
\bar \sigma^{\ul 0} \arrow[r]\arrow[dd]  \arrow[drr]& \bar\sigma_{\mathsf{rel}}^{\ul 0}\arrow[dd]\arrow[drr]\\
& & \Picabs_{g,n, \ul 0} \arrow[r]\arrow[dd] & \Picrel_{g,n,\ul 0}\arrow[dd]\\
\bar\sigma \arrow[r]\arrow[drr] & \bar\sigma_{\mathsf{rel}} \arrow[drr] &&\\
& & \Picabs_{g,n,0} \arrow[r] & \Picrel_{g,n,0}\, .\\
\end{tikzcd}
\end{equation}

Let $(C/B, p_1, \dots, p_n)$ be a prestable curve over
a scheme $B$ of finite type over $\field$.
Let $L$ be a line bundle on $C$ such that 
$L(-\sum_{i=1}^n a_i p_i)$ is of multidegree $\ul 0$ 
for $A=(a_1,\ldots, a_n) \in \bb Z^n$.
The 
data 
$$C\to B\, , \ \ \ \ L\Big(-\sum_{i=1}^n a_i p_i\Big)\to C$$
determine a map $$\phi\colon B \to \Picabs_{g,n,\ul 0}\, ,$$
and we form a pullback diagram
\begin{equation}\label{dia:easy}
    \begin{tikzcd}
    B' \arrow[r] \arrow[d,"\psi"] & \frak M_{g,n}\arrow[d, "e"]\\
    B \arrow[r, "\phi^{\rel}"] & \Picrel_{g,n,\ul 0}\, . 
    \end{tikzcd}
\end{equation}
Since $\frak M_{g,n}$ is smooth and $\Picrel_{g,n,\ul 0}$ is separated, the map $e$ is a regular embedding.

\begin{lemma} In the multidegree $\ul 0$ case, we have
\begin{equation}
    \DRop_{g,A}(\phi)([B]) = \psi_*e^![B]\, .
\end{equation}
\end{lemma}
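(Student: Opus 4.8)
The plan is to reduce the statement, via the ``twist by $-A$'' isomorphism, to a computation with the identity section $e$ appearing in \eqref{dia:easy}, and then to recognise the operational class that appears as the associated refined Gysin homomorphism. First I would introduce the isomorphism $\tau_{-A}\colon\Picabs_{g,n,d}\to\Picabs_{g,n,0}$ sending $(C,p_\bullet,\ca L)$ to $(C,p_\bullet,\ca L(-\sum_i a_ip_i))$, together with the open substack $\Picabs^A:=\tau_{-A}^{-1}(\Picabs_{g,n,\ul 0})\subset\Picabs_{g,n,d}$. The Abel--Jacobi section $\sigma$ factors through $\Picabs^A$, since $\ca O_C(\sum_i a_ip_i)$ has the same multidegree as $\sum_i a_ip_i$, and $\tau_{-A}\circ\sigma=\tilde e$, where $\tilde e\colon\frak M_{g,n}\to\Picabs_{g,n,\ul 0}$, $(C,p_\bullet)\mapsto\ca O_C$, is the trivial line bundle section. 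The bundle $L$ of the statement has this same multidegree, so the given family also determines a map $\phi_L\colon B\to\Picabs^A\subset\Picabs_{g,n,d}$ with $\tau_{-A}\circ\phi_L=\phi$, and $\DRop_{g,A}(\phi)([B])$ means $\iota_{\mathsf{op}}[\bar\sigma](\phi_L)([B])$ by \eqref{ddd111}. Since $\Picabs^A\hookrightarrow\Picabs_{g,n,d}$ is flat and lci, and $\Picabs^A$ is stratified by global quotient stacks (\cref{Rmk:genus1n0}), \cref{lem:pushpullcommutes} gives $\iota_{\mathsf{op}}[\bar\sigma]|_{\Picabs^A}=(\iota^A)_{\mathsf{op}}[\bar\sigma\cap\Picabs^A]$; and as formation of the schematic image commutes with the flat base change $\Picabs^A\hookrightarrow\Picabs_{g,n,d}$, the substack $\bar\sigma\cap\Picabs^A$ is the schematic image of $\sigma\colon\frak M_{g,n}\to\Picabs^A$. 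Transporting everything along $\tau_{-A}$, it then remains to identify the schematic image of $\tilde e$ inside $\Picabs_{g,n,\ul 0}$ and to compute its associated operational class.

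For the identification, let $q\colon\Picabs_{g,n,\ul 0}\to\Picrel_{g,n,\ul 0}$ be the $\bb G_m$-gerbe and $e\colon\frak M_{g,n}\hookrightarrow\Picrel_{g,n,\ul 0}$ the identity section of \eqref{dia:easy}, so that $e=q\circ\tilde e$; since $\frak M_{g,n}$ and $\Picrel_{g,n,\ul 0}$ are smooth, of dimensions $3g-3+n$ and $4g-3+n$, the map $e$ is a regular closed embedding of codimension $g$. Because $q\circ\tilde e=e$, the section $\tilde e$ factors through the closed substack $\ca G:=q^{-1}(\frak M_{g,n})=\Picabs_{g,n,\ul 0}\times_{\Picrel_{g,n,\ul 0}}\frak M_{g,n}$, which is a $\bb G_m$-gerbe over $\frak M_{g,n}$, hence smooth over $\field$ and in particular reduced. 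The induced morphism $\frak M_{g,n}\to\ca G$ is a section of this gerbe, so it presents $\ca G$ as the trivial $\bb G_m$-gerbe and is fppf-locally the base change of the faithfully flat map $\on{Spec}\field\to B\bb G_m$; hence $\frak M_{g,n}\to\ca G$ is schematically dominant, and therefore the schematic image of $\tilde e$ is exactly $\ca G$. Under $\tau_{-A}$, the closed embedding $\bar\sigma\cap\Picabs^A\hookrightarrow\Picabs^A$ is thereby identified with $\iota_{\ca G}\colon\ca G\hookrightarrow\Picabs_{g,n,\ul 0}$, which is the base change of the regular embedding $e$ along the flat and lci morphism $q$. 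As $\Picabs_{g,n,\ul 0}$ is stratified by global quotient stacks, \cref{lem:pushpullcommutes} applies once more and gives $(\iota_{\ca G})_{\mathsf{op}}[\ca G]=q^*\bigl(e_{\mathsf{op}}[\frak M_{g,n}]\bigr)$. Chasing the identifications above, and using that $q\circ\phi=\phi^{\rel}$ is the lower horizontal map of \eqref{dia:easy}, one gets $\DRop_{g,A}(\phi)([B])=e_{\mathsf{op}}[\frak M_{g,n}](\phi^{\rel})([B])$.

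Finally, I would check that, for the regular embedding $e$, the operational class $e_{\mathsf{op}}[\frak M_{g,n}]$ produced by \cref{defbivariantclass} acts by $\alpha\mapsto\psi_*(e^!\alpha)$, with $e^!$ the refined Gysin homomorphism. This is simply what the defining diagram specialises to for a regular embedding: taking $a=e$ and $V\subset B$ an irreducible cycle, the morphism $\phi_V\colon V\to\frak Y'\times V$ is the graph of $V\to\frak Y'$, a regular embedding of codimension $\dim\frak Y'$ since $\frak Y'$ is smooth, and $\phi_V^!([\frak X'\times V])=e^!([V])$ by the construction of the refined Gysin homomorphism through deformation to the normal cone (compare \cite[Chapter~6]{Fulton1984Intersection-th} and \cite[Section~3]{Kresch1999Cycle-groups-fo}); pushing forward along $\psi_V$ and summing over $V$ is precisely $\psi_*\circ e^!$. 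Combined with the previous paragraph this yields $\DRop_{g,A}(\phi)([B])=\psi_*e^![B]$, as desired.

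The one step I expect to require real care is the middle one: verifying that the schematic image $\bar\sigma$, over the multidegree-$\ul 0$ locus, is equal to $\ca G$ \emph{with its reduced structure} rather than some thickening of it. This reduces to the observation that $\sigma$, after the twist by $-A$, factors as a faithfully flat surjection onto $\ca G$ followed by a closed immersion. Granting this, everything else is a formal manipulation of the operational-class formalism of \cref{sec:constructing_chow_class} together with the standard fact that the operational class attached to a regular embedding is its refined Gysin homomorphism.
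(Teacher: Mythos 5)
Your proposal is correct and rests on the same two pillars as the paper's proof: \cref{lem:pushpullcommutes} applied to the $\bb G_m$-gerbe $\Picabs\to\Picrel$, and the fact that the identity section $e$ is a regular embedding. Your first two paragraphs make explicit what the paper records silently in diagram \eqref{eq:multidegree0}, namely that $\bar\sigma$ over the multidegree-$\ul 0$ locus is the trivial gerbe $(B\bb G_m)_{\frak M_{g,n}}$, pulled back from $\bar\sigma_{\mathsf{rel}}^{\ul 0}=\frak M_{g,n}$ along $q$; the worry in your closing paragraph about picking up a thickening is harmless, since the paper's ``schematic image'' is by definition the smallest \emph{reduced} closed substack, and $\ca G$, being a smooth gerbe, is reduced. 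Conversely, where you are terse the paper is explicit: the identity $\phi_V^!([\frak X'\times V])=e^!([V])$, which you cite to Fulton Chapter~6, is precisely what the paper's chain of Gysin-map equalities establishes (flat pullback of the fundamental class, $e^![\Picrel_{g,n,\ul 0}]=[\frak M_{g,n}]$, compatibility of flat pullback with refined Gysin, and commutativity of Gysin maps). Writing out that chain is the real work of the lemma, because Fulton's comparison statements are for schemes and one must verify that they carry over to Kresch's Gysin machinery on stacks; this is exactly what the explicit chain (and \cref{prop:Gysinproperpush}) accomplish, so a bare reference would leave a gap.
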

\begin{proof}
We begin by expanding the diagram \cref{dia:easy} to
\begin{equation}
    \begin{tikzcd}
    B' \arrow[r] \arrow[d,"\psi"] & \frak M_{g,n} \times B \arrow[r,"f'"] \arrow[d] & \frak M_{g,n}\arrow[d, "e"]\\
    B \arrow[r, "\phi' "]& \Picrel_{g,n,\ul 0} \times B \arrow[r,"f"] & \Picrel_{g,n,\ul 0}\, . 
    \end{tikzcd}
\end{equation}
Since $\Picabs_{g,n} \to \Picrel_{g,n}$ is smooth, we deduce from \cref{lem:pushpullcommutes} and  diagram \cref{eq:multidegree0} that 
\begin{equation}
    \DRop_{g,A}(\phi)([B]) = \psi_*\phi'^![\frak M_{g,n} \times B]\, .
\end{equation}
We then compute
\begin{equation}
    \begin{split}
        \DRop_{g,A}(\phi)([B]) & = \psi_*\phi'^![\frak M_{g,n} \times B]\\
        & = \psi_*\phi'^! f'^*[\frak M_{g,n}]\\
        & = \psi_*\phi'^! f'^*e^![\Picrel_{g,n,\ul 0}]\\
        & = \psi_*\phi'^! e^! f^*[\Picrel_{g,n,\ul 0}]\\
        & = \psi_*\phi'^! e^! [\Picrel_{g,n,\ul 0}\times B]\\
        & = \psi_*e^! \phi'^!  [\Picrel_{g,n,\ul 0}\times B]\\
        & = \psi_*e^! [B]\, . \\
    \end{split}
\end{equation}
\end{proof}
In particular, if the intersection of $B$ with the unit section in $\Picrel_{g,n,\ul 0}$ is transversal, then we simply take the naive intersection in $\Picrel_{g,n,\ul 0}$ and push it down to $B$.

\subsection{Logarithmic definition of \texorpdfstring{$\DRop$}{DR\^{}op}}\label{sec:log_def}


\subsubsection{Overview of log divisors}

We begin by recalling various results and definitions from log geometry. We refer the reader to \cite{Kato1989Logarithmic-str} for basics on log geometry and \cite{Marcus2017Logarithmic-com} for the details of what we do here. 
While log geometry will not play a substantial role elsewhere in the paper, 
it will reappear in \cref{sec:comparing_stacks}.

Given a log scheme $S = (S, M_S)$, we write
\begin{equation*}
\mathbb G_m^{log}(S) = \Gamma(S, M_S^{gp})
\ \ \ \ 
\text{and} \ \ \ \  
\mathbb G_m^{trop}(S) = \Gamma(S, \bar M_S^{gp}), 
\end{equation*}
which we call the logarithmic and tropical multiplicative groups.  Both can naturally be extended to presheaves on the category $\cat{LSch}_S$ of log schemes over $S$,
and both
admit log smooth covers by log schemes (with subdivisions $\bb P^1$ and $[\bb P^1/\bb G_m]$ respectively). A \emph{log (tropical)} line on $S$ is a $\bb G_m^{log}$ ($\bb G_m^{trop}$) torsor on $S$, for the strict \'etale topology.

\begin{definition}(See \cite[Def. 4.6]{Marcus2017Logarithmic-com}) 
\label{Def:logdivisor}
Let $C$ be a logarithmic curve over a logarithmic scheme $S$. A \emph{logarithmic divisor} on $C$ is a tropical line $P$ over $S$ and an $S$-morphism $C \to P$. 

Let $\cat{Div}^{\rel}_g$ be the stack{\footnote{The stack $\cat{Div}^{\rel}_g$ was denoted $\cat{Div}_g$ in \cite{Marcus2017Logarithmic-com}, but we wish to reserve the
latter notation for a certain $\bb G_m$-gerbe over $\cat{Div}^{\rel}_g$ which will play a much more prominent role in our paper.}} 
in the strict \'etale topology on logarithmic schemes whose $S$-points are triples $(C, P, \alpha)$ where $C$ is a logarithmic curve of genus $g$ over $S$, $P$ is a tropical line over $S$, and 
$$\alpha \colon  C \to P$$ is an $S$-morphism. 
\end{definition}

If $S$ is a geometric log point and $C/S$ a log curve, then the set of isomorphism classes of $\cat{Div}_g^{\rel}(S)$ is given by $\pi_*\bar M_C^{gp}/\bar M_S^{gp}$. At the markings, an element of $\pi_*\bar M_C^{gp}/\bar M_S^{gp}$ determines an element of the groupified relative characteristic monoid $\bb Z$ (for those who prefer a tropical perspective, this can be viewed as the outgoing slope at the marking). 

\begin{definition}
Let $\cat{Div}^{\rel}_{g,A}$ be the (open and closed) substack of $\cat{Div}^{\rel}_g$ consisting of those triples where the curve carries exactly $n$ markings and where on each geometric fibre the outgoing slopes at the markings correspond to $A$ (our
log curves come with an ordering of their markings as explained in \cref{sec:prestable_vs_log_curves}). 
\end{definition}



\begin{remark}
It is natural to ask for a description of the functor of points of the underlying (non-logarithmic) stack of $\cat{Div}^{\rel}_{g,A}$ as a fibred category over $\frak M_{g,n}$. However, we expect that such a description will not be simple. A closely related problem is solved in \cite{Biesel2016Fine-compactifi}, and the intricacy of the resulting definition suggests that the path will not be easy. 
\end{remark}


\subsubsection{Abel-Jacobi map}\label{sec:abel-jacobi-map}

Given a log curve $\pi\colon C \to S$ of genus $g$,  the right-derived pushforward to $S$ of the standard exact sequence
\begin{equation}\label{eq:log_exact_seq}
1 \to \ca O_C^\times \to M_C^{gp} \to \bar M_C^{gp} \to 1\, ,
\end{equation}
yields a natural map 
$$\pi_*\bar M_C^{gp} \to R^1\pi_*\ca O_C^\times\, ,$$
which factors via the quotient $$\pi_*\bar M_C^{gp}/\bar M_S^{gp} = \cat{Div}^{\rel}_g(S)\, .$$ We therefore obtain
a \emph{relative Abel-Jacobi} map $$\aj^{\rel}\colon \cat{Div}^{\rel}_{g} \to \Picrel_{g}\, ,$$ 
which restricts to maps 
$$\aj^{\rel}\colon \cat{Div}^{\rel}_{g,A} \to \Picrel_{g,n,d}\, .$$ 

For a first example, suppose $S$ is a geometric log point with $\bar M_S = \bb N$. 
The data then determines
to first order a deformation of the curve over a DVR (which we take generically smooth),  and the section of $\pi_*\bar M_C^{gp}/\bar M_S^{gp}$ gives the multiplicities of components in the special fibre and the twists by the markings. 

For another example, consider what happens over the locus of (strict) smooth curves. Writing $N/\ca M^{\mathsf{log}}_{g,n}$ for the stack of markings (finite \'etale), we see $\cat{Div}^{\rel}_g$ is just the category of locally constant functions from $N$ to $\bb Z$ -- in other words, choices of outgoing slope/weight on each leg. 
The Abel-Jacobi map yields  $\ca O_C(\sum_{i=1}^n a_ip_i)$ where 
the $p_i$ are the markings and $a_i$ are the weights. In particular, we see  $$\cat{Div}^{\rel}_{g,A} \to \ca M^{\mathsf{log}}_{g,n}$$
is birational (as we fixed an ordering of the markings) and log \'etale. 

\begin{definition}\label{def:div}
Let  $\cat{Div}_{g}$ be the fibre product 
\begin{equation}
\cat{Div}^{\rel}_{g} \times_{\Picrel_{g}}\Picabs_{g}\, .
\end{equation}
\end{definition}

More concretely, an $S$-point of $\cat{Div}_{g}$ is a quadruple $(C,P,\alpha,\ca L)$ where $(C, P, \alpha)$ is an $S$-point of $\cat{Div}^{\rel}_{g}$ and $\ca L$ is a line bundle on $C$ satisfying{\footnote{Here, $[\,]$ denotes the
equivalence class under the relations of isomorphism and tensoring with the pullback of a line bundle from the base. }}
$$[\ca L] = \aj^{\rel}(C, P, \alpha)\in \Picrel_{g}(S)\, .$$ 
We will denote by $\aj$ the resulting Abel-Jacobi map 
$$\cat{Div}_g \to \Picabs_{g}\, .$$
Observe that $\cat{Div}_g$ is a $\bb G_m$-gerbe over $\cat{Div}^{\rel}_{g}$, just as $\Picabs_{g,n,d}$ is a $\bb G_m$-gerbe over $\Picrel_{g,n, d}$. 
Analogously,
we define  
\begin{equation}
\cat{Div}_{g,A} = \cat{Div}^{\rel}_{g,A} \times_{\Picrel_{g,n,d}}\Picabs_{g,n,d}
\ \ \ \ 
\text{and} 
\ \ \ \ \aj\colon \cat{Div}_{g,A} \to \Picabs_{g,n,d}\, .
\end{equation}



We summarise the key properties of the Abel-Jacobi map. These are proven
in \cite{Marcus2017Logarithmic-com} for $\aj^{\rel}$, and are stable under base-change. 
\begin{proposition}
The Abel-Jacobi map 
$$\aj\colon \cat{Div}_{g,A} \to \Picabs_{g,n,d}$$
is proper, relatively representable by algebraic spaces, and is a monomorphism of log stacks. 
\end{proposition}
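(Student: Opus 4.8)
The plan is to reduce the statement about $\aj : \cat{Div}_{g,A} \to \Picabs_{g,n,d}$ to the already-known properties of the relative Abel-Jacobi map $\aj^{\rel} : \cat{Div}^{\rel}_{g,A} \to \Picrel_{g,n,d}$ established in \cite{Marcus2017Logarithmic-com}, using the fact that all four properties (properness, relative representability by algebraic spaces, being a monomorphism, being a monomorphism of log stacks) are stable under base change, together with the explicit fibre-product descriptions
$$\cat{Div}_{g,A} = \cat{Div}^{\rel}_{g,A} \times_{\Picrel_{g,n,d}} \Picabs_{g,n,d}\, , \qquad \aj = \aj^{\rel} \times_{\Picrel_{g,n,d}} \on{id}\, .$$

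First I would recall from \cite{Marcus2017Logarithmic-com} that $\aj^{\rel}$ is proper, relatively representable by algebraic spaces, and a monomorphism of log stacks. Then, since $\aj$ is obtained from $\aj^{\rel}$ by base change along $\Picabs_{g,n,d} \to \Picrel_{g,n,d}$, properness and relative representability by algebraic spaces are immediate, as both classes of morphisms are stable under arbitrary base change. For the monomorphism property, I would note that a morphism of (log) stacks is a monomorphism if and only if its diagonal is an isomorphism; since forming the diagonal and pulling back commute, the diagonal of $\aj$ is the base change of the diagonal of $\aj^{\rel}$, hence an isomorphism. The same argument works at the level of log stacks, since the fibre product $\cat{Div}^{\rel}_{g,A} \times_{\Picrel_{g,n,d}} \Picabs_{g,n,d}$ is a fibre product in log stacks and the morphism $\Picabs_{g,n,d} \to \Picrel_{g,n,d}$ is strict (indeed, it is a $\bb G_m$-gerbe with trivial log structure contribution), so strictness and the monomorphism condition on the log structures are preserved.

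The one point requiring a little care — and what I expect to be the main (minor) obstacle — is checking that the fibre product of log stacks $\cat{Div}^{\rel}_{g,A} \times_{\Picrel_{g,n,d}} \Picabs_{g,n,d}$ agrees with the underlying fibre product of algebraic stacks in \cref{def:div}, i.e.\ that no extra log structure is introduced by the gerbe. This follows because $\Picabs_{g,n,d} \to \Picrel_{g,n,d}$ carries the (pullback) log structure and is strict, so the fibre product in log stacks has underlying stack the ordinary fibre product and log structure pulled back from $\cat{Div}^{\rel}_{g,A}$; one then simply observes that the map $\aj$ defined via the underlying fibre product is indeed the base change of $\aj^{\rel}$ as a morphism of log stacks. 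Once this identification is in place, all four assertions transfer formally. I would close by remarking that since $\Picabs_{g,n,d} \to \Picrel_{g,n,d}$ is itself representable (a $\bb G_m$-gerbe over an algebraic space–valued stack relative to $\frak M_{g,n}$), relative representability by algebraic spaces of $\aj$ can alternatively be deduced by composing the two representable morphisms in the fibre square, giving a second, purely formal, verification.
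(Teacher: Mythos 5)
Your main argument is correct and is exactly the paper's approach: the paper itself proves the proposition by a single remark, namely that the properties are established for $\aj^{\rel}$ in \cite{Marcus2017Logarithmic-com} and are stable under base change, and you elaborate this faithfully, including the (correct) observation that $\Picabs_{g,n,d} \to \Picrel_{g,n,d}$ is strict so that the fibre product in log stacks agrees with the underlying fibre product.

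One aside at the end, however, is wrong: $\Picabs_{g,n,d} \to \Picrel_{g,n,d}$ is a $\bb G_m$-gerbe, which is precisely \emph{not} representable by algebraic spaces (the fibres are $B\bb G_m$), so your proposed ``second, purely formal verification'' of relative representability by composing two representable maps does not go through. Even setting that aside, composing $\cat{Div}_{g,A} \to \cat{Div}^{\rel}_{g,A}$ with $\aj^{\rel}$ produces the map to $\Picrel_{g,n,d}$, not the map $\aj$ to $\Picabs_{g,n,d}$, so the composition is the wrong one. This is only an extraneous remark and does not affect the validity of your main base-change argument, but you should delete or correct it.
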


We obtain an operational class $\aj_{\mathsf{op}}[\cat{Div}_{g,A}]$ associated by \cref{defbivariantclass} to the Abel-Jacobi map $\aj$.
Our second definition of the universal twisted double ramification
cycles is via $\aj$:
\begin{equation}\label{ddd222}
\DRop_{g,A} = \aj_{\mathsf{op}}[\cat{Div}_{g,A}] \, \in\, 
\mathsf{CH}^g_{\mathsf{op}}(\Picabs_{g,n,d})
\, . 
\end{equation}
The equivalence of 
definitions \eqref{ddd111} and \eqref{ddd222} will be proven in \cref{sec:proof_of_equiv_defs}.



\subsubsection{Description of \texorpdfstring{$\cat{Div}_g$}{Div\_g} with log line bundles}
Our approach to $\cat{Div}_g$ in \cref{def:div} via
a fiber product is indirect.
While it will not be used in the paper, a more conceptual path is to consider the stack $\cat{Div}'_g$ whose objects are tuples
\begin{equation*}
(C/S, \ca P, \alpha)
\end{equation*}
where $C/S$ is a log curve, $\ca P$ is a \emph{logarithmic} line on $S$  (a $\bb G_m^{log}$ torsor), and $\alpha$ is a map from $C$ to the \emph{tropical} line $P$ on $S$ induced from $\ca P$ by the exact sequence \eqref{eq:log_exact_seq}. 
There is a natural map 
$$\cat{Div}'_g \to \cat{Div}^{\rel}_g\, .$$ We can see $\alpha$ as a section of the tropicalisation of the pullback of $\ca P$ to $C$. As such, by the sequence \eqref{eq:log_exact_seq}, $\alpha$ induces a $\bb G_m$-torsor on $C$, giving us an Abel-Jacobi map $\cat{Div}_g'\to \Picabs_g$. Together these maps induce a map 
\begin{equation*}
\cat{Div}_g' \to \cat{Div}_g
\end{equation*}
to the fibre product, and a local computation verifies that this is an isomorphism. 

 The above discussion points\footnote{The {\it unit section} of $\Picabs$ is given by the stack of $\bb G_m$ torsors on the base. Similarly, the {\it unit section} of the \emph{logarithmic} Picard stack $\frak{LogPic}_g$ is given by the stack of $\bb G_m^{log}$ torsors on the base. The natural map $\Picabs_g \to \frak{LogPic}_g$ is neither injective nor surjective: a logarithmic line bundle comes from a line bundle if and only if the associated tropical line bundle is trivial, and a choice of trivialisation of that tropical line bundle then determines a lift to a line bundle. Hence, we see that $\cat{Div}'_g$ is precisely the pullback of the unit section of $\frak{LogPic}_g$ to $\Picabs_g$.} towards a definition of the double ramification cycle via the \emph{logarithmic Picard functor} of \cite{Molcho2018The-logarithmic}
which we hope will be pursued in future.


\subsection{Logarithmic rubber definition of \texorpdfstring{$\DRop$}{DR\^{}op}}\label{sec:rub_log_def}
Marcus and Wise introduce a slight variant $\cat{Rub}^{\mathsf{rel}}_{g}$ of the stack $\cat{Div}^{\mathsf{rel}}_{g}$
which
parametrises pairs $(C,P, \alpha)$ where $P$ is a tropical line on $S$ and $\alpha\colon C \to P$ is an $S$-morphism \emph{such that on each geometric fibre over $S$ the values taken by $\alpha$ on the irreducible components of $C$ are totally ordered in $(\bar{M}_S^{gp})_s$}, with the ordering given by declaring the elements of $(\bar{M}_S)_s$ to be the non-negative elements. 

The space $\cat{Rub}_{g,A}$,
defined via pullback
$$
\cat{Rub}_{g,A} \, \stackrel{\sim}{=}\, 
\cat{Div}_{g,A}
\times_{\cat{Div}^{\mathsf{rel}}
_{g,A}} 
\cat{Rub}^{\mathsf{rel}}_{g,A}\, ,$$
is pure dimensional and comes with a proper birational map 
\begin{equation}\label{rrrbbb}
\cat{Rub}_{g,A} \to \cat{Div}_{g,A}\, .
\end{equation}
The stack $\cat{Rub}_{g,A}$  will 
play an important role in the comparison to classes coming from stable map spaces in \cref{sec:comparing_stacks}. 


We obtain an operational class $\aj^{\mathsf{rub}}_{\mathsf{op}}[\cat{Rub}_{g,A}]$ associated by \cref{defbivariantclass} to 
$$\aj^{\mathsf{rub}}: \cat{Rub}_{g,A} \to \Picabs_{g,n,d}$$
obtained by composing \eqref{rrrbbb} with $\aj$.
Our third defintion of the universal twisted double ramification
cycles is via $\aj^{\mathsf{rub}}$:
\begin{equation}
\DRop_{g,A} = \aj^{\mathsf{rub}}_{\mathsf{op}}[\cat{Rub}_{g,A}] \, \in\, 
\mathsf{CH}^g_{\mathsf{op}}(\Picabs_{g,n,d})
\, . 
\end{equation}
The equivalence with the first two 
definitions will be proven in \cref{sec:proof_of_equiv_defs}.

\subsection{The image of the Abel-Jacobi map}\label{sec:image_of_aj}
The set theoretic image of the Abel-Jacobi map 
$$\aj\colon \cat{Div}_{g, A} \to \Picabs_{g,n,d}$$ 
can be characterized 
in terms of a condition on twisted divisors similar to the 
conditions of \cite{Farkas2016The-moduli-spac} for the moduli spaces $\widetilde{\HH}_g(A)$
twisted
canonical divisors.

Given a prestable graph $\Gamma_{\delta}$ of degree $d$, a \emph{twist} on $\Gamma_\delta$ is a function $I: H(\Gamma) \to \mathbb{Z}$ satisfying
\begin{enumerate}
\item[(i)] $\forall j\in \L(\Gamma_\delta)$, corresponding to
 the marking $j\in \{1,\ldots, n\}$,
$$I(j)=a_j\, ,$$
\item[(ii)] $\forall e \in \E(\Gamma_\delta)$, corresponding to two half-edges
$h,h' \in \H(\Gamma_\delta)$,
$$I(h)+I(h')=0 \, ,$$
\item[(iii)] $\forall v\in \V(\Gamma_\delta)$,
$$\sum_{v(h)= v} I(h)= \delta(v) \, ,$$ 
where the sum is taken over {\em all} $\mathsf{n}(v)$ half-edges incident 
to $v$.
\item[(iv)] There is no \emph{strict cycle}\footnote{A strict cycle 
 is a sequence $\vec e_i = (h_i, h_i')$, $i=1, \ldots, \ell$ of directed edges in $\Gamma$ forming a closed path in $\Gamma$ such that $I(h_i) \geq 0$ for all $i$ and there exists at least one $i$ with $I(h_i)>0$. Condition (iv)
corresponds to the combination of the Vanishing, Sign, and Transitivity conditions for twists in \cite[Section 0.3]{Farkas2016The-moduli-spac}.} in $\Gamma$.
\end{enumerate}

Let $(C,p_1, \ldots, p_n)$ together with a line bundle $\mathcal{L} \to C$ of degree $d$ be a geometric point of $\Picabs_{g,n,d}$. Let $\Gamma_\delta$ be the prestable graph of $C$ decorated with the degrees $\delta(v)$ of the line bundle $\mathcal{L}$ restricted to the components $C_v$ of $C$. Given a twist $I$ on $\Gamma_\delta$, let
\[\eta_I : C_I \to C\]
be the partial normalization of $C$ at all nodes $q \in C$ corresponding to edges $e=(h,h')$ of $\Gamma$ with $$I(h) = -I(h') \neq 0\, .$$
Denote by $q_h, q_{h'} \in C_I$ the preimages of $q$ corresponding to the half-edges $h,h'$. Denote by $\widehat{p}_i \in C_I$ the unique preimage of the $i$th marking $p_i \in C$.

We say the point $(C,p_1, \ldots, p_n, \mathcal{L})$ of $\Picabs_{g,n,d}$ satisfies the \emph{twisted divisor condition} for the integer vector $A$ if and only if there exists a twist $I$ on $\Gamma_\delta$ such that on the partial normalization $C_I$ of $C$ there exists an isomorphism of line bundles
\begin{equation}\label{eq:twisted_diff_condition}
    \eta_I^* \mathcal{L} \cong  \mathcal{O}_C\left(\sum_{i=1}^n a_i \widehat{p}_i + \sum_{h \in H(\Gamma)} I(h) q_h \right).
\end{equation}
For $\mathcal{L}=\omega_C$ this exactly corresponds to the notion \cite[Definition 1]{Farkas2016The-moduli-spac} of a twisted canonical divisor. 

\begin{proposition}\label{prop:aj_image}
A geometric point $(C,p_1, \ldots, p_n, \mathcal{L})$ of $\Picabs_{g,n,d}$ lies in the image of the Abel-Jacobi map $\aj \colon \cat{Div}_{g,A} \to \Picabs_{g,n,d}$ if and only if the twisted divisor condition for the vector $A$ is satisfied.
\end{proposition}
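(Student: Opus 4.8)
The plan is to translate the statement into the combinatorics of the relative Abel--Jacobi map of \cite{Marcus2017Logarithmic-com} and then match it with the twisted-divisor conditions (i)--(iv). Throughout I work over a geometric point $\on{Spec} k$ with a fixed curve $C$, whose prestable graph decorated by the multidegree $\delta$ of $\ca L$ is $\Gamma_\delta$.

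\emph{Reduction.} Since $\cat{Rub}_{g,A}\to\cat{Div}_{g,A}$ is proper and birational, hence surjective, the maps $\aj$ and $\aj^{\mathsf{rub}}$ have the same image; and since $\cat{Rub}_{g,A}=\cat{Rub}^{\rel}_{g,A}\times_{\Picrel_{g,n,d}}\Picabs_{g,n,d}$ (combining the two fibre-product descriptions given above), the point $(C,p_1,\dots,p_n,\ca L)$ lies in the image of $\aj^{\mathsf{rub}}$ if and only if $(C,p_i,[\ca L])$ lies in the image of $\aj^{\rel}\colon\cat{Rub}^{\rel}_{g,A}\to\Picrel_{g,n,d}$. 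As the twisted-divisor condition \eqref{eq:twisted_diff_condition} also depends on $\ca L$ only through $[\ca L]$ over a field, it suffices to characterise the image of this $\aj^{\rel}$.

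\emph{Combinatorial picture.} By \cite{Marcus2017Logarithmic-com}, a $k$-point of $\cat{Rub}^{\rel}_{g,A}$ over $C$ is the data of a sharp fs monoid $Q$ (the characteristic of a log structure on $\on{Spec} k$), log lengths $\ell_e\in Q$ for $e\in\E(\Gamma)$, and a piecewise linear function $\alpha$ on $\Gamma$ with values in $Q^{gp}$: integer slopes $I(h)$ along half-edges with $I(h)+I(h')=0$ on each edge $e=(h,h')$, slopes $a_i$ at the legs (the defining condition of $\cat{Div}^{\rel}_{g,A}$), vertex values $\alpha(v)$ with $\alpha(v_{h'})-\alpha(v_h)=I(h)\,\ell_e$, and --- the rubber condition --- the $\alpha(v)$ totally ordered for the order on $Q^{gp}$ with positive cone $Q$. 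The slope function $I$ then satisfies (i) and (ii) automatically. Using properness one reduces to points with $\ell_e\neq 0$ whenever $I(h)\neq 0$; for such points $I$ also satisfies (iv), since along a strict cycle one would get $\sum_i I(h_i)\ell_{e_i}=0$ with all summands in the sharp integral monoid $Q$, forcing each term to vanish, a contradiction. Finally, the connecting map of \eqref{eq:log_exact_seq}, computed in \cite{Marcus2017Logarithmic-com}, identifies the pullback of the Abel--Jacobi line bundle along the partial normalisation $\eta_I\colon C_I\to C$ at the edges with $I(h)\neq0$ with $\ca O_{C_I}\big(\sum_i a_i\widehat p_i+\sum_{h}I(h)q_h\big)$.

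\emph{The two implications, and the obstacle.} If $(C,p_i,[\ca L])$ lies in the image, choose such a point with $\aj^{\rel}(C,P,\alpha)\cong\ca L$ on $C$: its twist $I$ satisfies (i), (ii), (iv); comparing degrees on the components of $C_I$ gives (iii); and pulling the isomorphism back along $\eta_I$ gives \eqref{eq:twisted_diff_condition}. Conversely, given a twist $I$ obeying (i)--(iv) together with an isomorphism $\eta_I^*\ca L\cong\ca O_{C_I}(\cdots)$, condition (iv) makes the graph obtained from $\Gamma$ by contracting the edges with $I(h)=0$ into a directed acyclic graph, and I would then choose elementarily a sharp fs monoid $Q$ with a total order (e.g.\ a lexicographically ordered free abelian group), log lengths $\ell_e$ positive on the twisted edges, and vertex values $\alpha(v)$ solving $\alpha(v_{h'})-\alpha(v_h)=I(h)\ell_e$ with totally ordered values. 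The resulting point of $\cat{Rub}^{\rel}_{g,A}$ has Abel--Jacobi image agreeing with $\ca L$ after pullback along $\eta_I$; the remaining task is to arrange equality of the two line bundles on $C$ itself, i.e.\ to match the gluing scalars at the twisted nodes. This last step is the only non-formal point, and I expect it to be the main obstacle: it follows either from a direct inspection of the logarithmic construction of \cite{Marcus2017Logarithmic-com} (the gluing contribution of the data $(Q,\ell_e,\alpha)$ at the twisted nodes can be varied freely), or from realising the configuration over a discrete valuation ring with residue field $k$ by extending $\ca L$ to a line bundle on a one-parameter smoothing of $C$ that restricts to $\ca O(\sum_i a_ip_i)$ on the smooth generic fibre, which is possible by the twisted-canonical-divisor analysis of \cite{Farkas2016The-moduli-spac}. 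Everything else is either bookkeeping or imported from \cite{Marcus2017Logarithmic-com}.
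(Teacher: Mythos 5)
Your outline is structurally close to the paper's argument (which also works over a geometric point, translates into a piecewise-linear function $\alpha$ on the dual graph, reads off the twist $I$ from the slopes, and uses sharpness to rule out strict cycles), and your routing through $\cat{Rub}^{\rel}_{g,A}$ is a legitimate alternative to the paper's direct characterisation of $\cat{Div}^{\rel}_{g,A}$. But there are two genuine gaps.

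First, you explicitly flag the step that actually requires work — matching the gluing scalars of $\eta_I^*\ca L$ at the twisted nodes so that the Abel--Jacobi image equals $\ca L$ on $C$ itself, not merely on $C_I$ — and then defer it with ``I expect it to be the main obstacle.'' That step \emph{is} the content of the converse direction. The paper resolves it by reducing to the DVR case: given the twist $I$, Suzumura's theorem (no strict cycles $\Leftrightarrow$ the partial preorder extends to a total preorder) yields a level function $d_0:\V(\Gamma)\to\bb Z$, which after scaling by $L = \mathrm{lcm}\{|I(h)|: I(h)\neq 0\}$ gives integer lengths $\ell(e)$ and levels $d(v)$ with $d(v)-d(u) = I(h)\ell(e)$. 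One then looks at \emph{all} smoothings $\ca C/\field[[t]]$ of $C$ with node thickness $\ell(e)$ at $e$; the space of such smoothings surjects onto
\[
\bigoplus_{e=\{h,h'\},\, I(h)\neq 0} \bigl(\ca O_{C_I}(h)\otimes \ca O_{C_I}(h')\bigr)^{\otimes \ell(e)},
\]
which is exactly the torsor of gluing data for descending $\eta_I^*\ca L$ to $C$. So by varying $\ca C$ one realises every such gluing, in particular $\ca L$, as $\ca O_{\ca C}(D)|_C\otimes\ca O_C(\sum_i a_ip_i)$ for the Cartier divisor $D$ built from $d$. This is the observation your proposal is missing.

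Second, the monoid you propose for the converse is not available: the positive cone of a lexicographically ordered free abelian group of rank $\geq 2$ is not finitely generated, hence not a sharp fs monoid. The paper sidesteps this entirely by building the log structure from the one-parameter smoothing above, so that $\bar M_S = \bb N$; there is no need to invoke exotic monoids, and the scaling by $L$ is precisely what makes $Q=\bb N$ suffice. (Symmetrically, in the forward direction the paper reduces a general sharp fs $\bar M_S$ to $\bb N$ via a map $\bar M_S\to\bb N$ that kills no edge label, again landing in the DVR case; your ``reduce to $\ell_e\neq 0$ by properness'' aims at the same place but is more delicate to justify as stated, since properness of $\aj$ gives lifts over DVRs, not a specialisation of the monoid per se.)

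So: the overall strategy is sound and parallel to the paper, but the proposal leaves open the one step that is not formal, and the auxiliary monoid construction offered for the converse does not work. The fixes in the paper are the Suzumura/LCM reduction to a DVR together with the surjectivity of the smoothing space onto the gluing torsor.
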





\begin{proof}
We may suppose that ${\field}$ is a separably closed field and $(C/S, p_1, \dots, p_n)$ is a prestable curve over ${\field}$. We must show
that the twisted divisor condition is equivalent to the existence of a log structure on $C/S$ satisfying the following property: {\em $C/S$  is a log curve with markings given by the $p_i$  which admits
a global section $\alpha$ of $\bar M_C^{gp}$ with outgoing slope at $p_i$ given by $a_i$}. 

Suppose that such a log structure exists. From the log structure,
we can determine a twist. To each leg we associate the outgoing 
slope of $\alpha$ on the corresponding leg. For an edge $\{ h, h'\}$,
we define $\ell(\{ h, h'\})$ to be the element of $\bar M_S$ associated via the data of the log morphism $C \to S$ to the node of $C$ corresponding to $\{h,h'\}$. If $\{ h, h'\}$ is an edge with half-edge $h$ attached to a vertex $u$ and the opposite half-edge $h'$ attached to $v$, we set the integer $I(h)$ to be the unique integer such that 
\begin{equation}\label{eq:slope}
\alpha(u) + I(h)\cdot \ell(\{h,h'\})  = \alpha(v) \in \bar M_S^{gp}\, .
\end{equation}
That such an $I(h)$ exists follows from the structure of $\bar M_C^{gp}$. 

Next, we verify that $I$ is a twist. Conditions (i) and (ii) are immediate from the construction.  We deduce condition (iv) because by following
a strict cycle starting at some vertex $u$ and applying \eqref{eq:slope} along each edge would yield 
$\alpha(u) < \alpha(u)$, 
which is impossible. Condition (iii) is immediate from the twisted divisor condition \eqref{eq:twisted_diff_condition} and the fact that isomorphic line bundles have the same degree, so this will be proven once we have checked the latter condition. 

For the latter condition,
we must work a little harder. To start,
we claim that there exists a morphism $\bar M_S \to \bb N$ which does not send the label of any edge to $0$. Indeed, $\bar M_s^{gp}$ injects into its groupification which is a finitely generated torsion-free abelian group, hence isomorphic to $\bb Z^m$. Since $\bar M_S$ is sharp\footnote{A monoid is sharp if $0$ is indecomposable: $a+b=0$ implies $a=b=0$.} and finitely generated, the non-zero elements of its image in $\bb Z^m$ land in some strict half-space of $\bb Z^m$ cut out by a linear equation with integral coefficients.  Such a half-space admits a map to $\bb N$ such that the only preimage of $0$ is $0$. 

After base changing over $S$ along such a map, we may assume 
that $\bar M_S^{gp} = \bb N$ and that all edges have non-zero label. 
We  obtain a first order map passing through our given point,
$$S = \on{Spec} {\field}[[t]] \to \cat{Div}_{g,A}\, $$
for which the induced prestable curve $C/S$ is generically smooth. On the curve $C$, we define a Weil divisor $D$ by assigning to an irreducible component $v$ the integer $\alpha(v)$. The divisor $D$ is then Cartier by \eqref{eq:slope}, which still applies after base-change, and hence determines a line bundle $\ca O_C(-D)$, which is exactly the image of the Abel-Jacobi map. In particular, the bundle $\ca L$ is (up to isomorphism) given by restricting $\ca O_C(-D)$ to the central fibre, so it suffices to verify \eqref{eq:twisted_diff_condition} for the latter bundle, which is a standard local calculation on a prestable curve over a discrete valuation ring.

%

Conversely, suppose the twisted divisor condition is satisfied. We must build a log structure and a suitable section $\alpha \in \bar M_C^{gp}(C)$. We could try equipping $(C/S, p_1, \dots, p_n)$ with its minimal log structure (see \cref{sec:prestable_vs_log_curves}), but then the section $\alpha$ is
unlikely to exist -- if there are no separating edges then there are no non-constant sections of $\bar M_C^{gp}$. Instead, we will construct a log structure by deforming the curve. 

First, we claim that there exists an assignment of a positive integer $\ell(e) \in \bb Z_{>0}$ to each edge and of an integer $d(v) \in \bb Z$ to each vertex such that the following condition is satisfied: 
  \begin{equation}
    \begin{array}{c}
         \text{\emph{if $e = \{ h, h'\}$ is an edge with $h$ attached to $u$ and $h'$ to $v$, then}} \\
          d(u) + I(h)\cdot \ell(e)  = d(v)\, .
    \end{array}\tag{*}
  \end{equation}
A twist $I$ on $\Gamma$ induces a binary relation $\preceq$ on $V(\Gamma)$ by 
\[u \preceq v \iff \text{there is an edge }e=\{h,h'\}\text{ with $h$ at $u$, $h'$ at $v$ and } I(h) \geq 0\, .\]
The fact that $\Gamma$ contains no strict cycles is equivalent by \cite{Suzumura1976Remarks-on-the-} to the existence of an extension of $\preceq$ to a total preorder on $V(\Gamma)$ (a reflexive, total, and transitive binary relation). Hence there exists a level function $d_0 : \V(\Gamma) \to \mathbb{Z}$ such that 
\begin{equation} \label{eqn:levelfunct} u \preceq v \iff u, v\text{ connected by an edge and }d_0(u) \leq d_0(v)\, .\end{equation}
We define
\[L = \mathrm{lcm}(I(h) : h \in H(\Gamma), I(h) > 0)\, .\]
Then, $d(v) = L d_0(v)$ still has property (\ref{eqn:levelfunct}), and,
for any edge $e = \{ h, h'\}$  with $h$ attached to $u$ and $h'$ to $v$, we have two cases:
\begin{itemize}
    \item $I(h) = 0$, in which case all edges $\{\tilde h, \tilde h'\}$ connecting $u,v$ must satisfy $I(\tilde h)=0$ (due to the strict cycle condition), so we can set $\ell(e)=1$,
    \item $I(h) \neq 0$, in which case the number $\ell(e) = (d(v) - d(u))/I(h)$ is indeed a positive integer (since $d$ has values in $L \cdot \mathbb{Z}$).
\end{itemize}
Clearly the functions $d$ and $\ell$ thus constructed satisfy the condition above.

Such $d$ and $\ell$ are far from unique, but we pick them. Consider then the space of all smoothings $\ca C$ of $C$ over ${\field}[[t]]$ such that the thickness\footnote{The local equation of the node is $xy = t^r$ for some positive integer $r$ which we call the \emph{thickness} of the node. } of $\ca C$ at the node corresponding to edge $e$ is $\ell(e)$. Given such a smoothing $\ca C$, we construct a vertical Weil divisor $D$ by assigning to the irreducible component corresponding to vertex $v$ the weight $d(v)$. The divisor $D$ is then Cartier by the condition (*). Set $$\ca L_{\ca C} = \ca O_{\ca C}(D)|_{C}\otimes \ca O_C(\sum_i a_i p_i)\, .$$
The smoothing $\ca C$ also induces a log structure on $C$ by  taking the divisorial log structure of the special fibre. The twist $I$ then determines an element $\alpha$ of $\pi_*\bar M_C^{gp}/\bar M_S^{gp}$. Applying the Abel-Jacobi map to $\alpha$ recovers $\ca L_{\ca C}$. 

One can readily verify that $\ca L_{\ca C}$ satisfies \eqref{eq:twisted_diff_condition} by a local computation, but we need to show more:  the smoothing $\ca C$ can be chosen so that $\ca L_{\ca C}$ is isomorphic to the line bundle $\ca L$ that we started with. The space of such smoothings $\ca C$ naturally surjects onto 
\begin{equation}
\bigoplus_{e = \{ h, h'\} : I(h) \neq 0} \left( \ca O_{C_I}(h) \otimes \ca O_{C_I}(h')\right)^{\otimes \ell(e)}, 
\end{equation}
where the 1-dimensional $\field$-vector space $\left( \ca O_{C_I}(h) \otimes \ca O_{C_I}(h')\right)^{\otimes \ell(e)}$ corresponds exactly to the ways to glueing the two branches of $\eta_I^*\ca L$ together at the points $h$ and $h'$. In other words, by moving over the space $\bigoplus_{e = \{ h, h'\} : I(h) \neq 0} \left( \ca O_{C_I}(h) \otimes \ca O_{C_I}(h')\right)^{\otimes \ell(e)}, $ we can recover \emph{all} ways of glueing $\eta_I^*\ca L$ to a line bundle on $C$. In particular, we can recover $\ca L$, hence we can realise $\ca L$ as $\ca L_{\ca C}$ for some smoothing $\ca C$, as required. 
\end{proof}

\subsection{Proof of the equivalence of the definitions}\label{sec:proof_of_equiv_defs}

The equivalence of the classes coming from $\cat{Div}_{g,A}$ and from $\cat{Rub}_{g,A}$ is immediate by applying \cref{prop:invarianceproperbirat}. We must compare 
the latter two with the class defined by  \eqref{ddd111}
via the schematic image. We will require the following two easy results. 
\begin{lemma}\label{lem:smooth_dense}
Let $U \hra \cat{Div}_{g,A}$ denote the open locus where the log curve is classically smooth. Then $U$ is schematically dense in $\cat{Div}_{g,A}$. 
\end{lemma}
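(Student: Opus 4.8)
The plan is to recognise $U$ as the open locus of $\cat{Div}_{g,A}$ where its canonical logarithmic structure is trivial, and then to exploit logarithmic smoothness over the ground field. Recall from \cref{def:div} that $\cat{Div}_{g,A}$ is a $\bb G_m$-gerbe over $\cat{Div}^{\rel}_{g,A}$, so the gerbe morphism is strict, smooth and surjective; and by \cite{Marcus2017Logarithmic-com} the morphism $\cat{Div}^{\rel}_{g,A}\to\ca M^{\mathsf{log}}_{g,n}$ is logarithmically étale and restricts to an isomorphism over the open dense substack $\ca M_{g,n}\subseteq\frak M_{g,n}$ of classically smooth curves. Since $\ca M^{\mathsf{log}}_{g,n}=\frak M_{g,n}$ carries Kato's logarithmic structure, which is trivial exactly along $\ca M_{g,n}$, the explicit description of $\cat{Div}^{\rel}_{g,A}$ in \cite{Marcus2017Logarithmic-com} shows that its log structure is trivial precisely on the preimage of $\ca M_{g,n}$; as the gerbe $\cat{Div}_{g,A}\to\cat{Div}^{\rel}_{g,A}$ is strict, the same holds for $\cat{Div}_{g,A}$, so $U$ is exactly the locus on which the log structure of $\cat{Div}_{g,A}$ is trivial. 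Finally, $\frak M_{g,n}$ is log smooth over $\on{Spec}\field$ (with trivial log structure) by \cite{Kato2000Log-smooth-defo}, and a composite of a strict smooth morphism, a logarithmically étale morphism and a log smooth morphism is log smooth, so $\cat{Div}_{g,A}$ is log smooth over $\on{Spec}\field$.

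It thus suffices to prove the following general fact: if $\frak Y$ is an fs logarithmic algebraic stack, locally of finite type and log smooth over $\on{Spec}\field$ with $\on{char}\field=0$, then the open substack $\frak Y^{\circ}\subseteq\frak Y$ where the log structure is trivial is schematically dense. Schematic density of an open is local for the smooth topology on $\frak Y$, so by Kato's structure theorem for log smooth morphisms we may assume there is a strict smooth morphism $q\colon\frak Y\to\on{Spec}\field[P]$ for a fine monoid $P$ (in our situation $P$ is moreover torsion-free, arising from the combinatorics of dual graphs of nodal curves), and then $\frak Y^{\circ}=q^{-1}(T)$ with $T=\on{Spec}\field[P^{\mathrm{gp}}]$ the big torus. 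Since $\on{char}\field=0$ the ring $\field[P^{\mathrm{gp}}]$ is reduced, hence so is its subring $\field[P]$; therefore $T$, whose complement is a proper closed subscheme of $\on{Spec}\field[P]$, is a schematically dense open. As $q$ is smooth, hence flat, the flat pullback $\frak Y^{\circ}=q^{-1}(T)$ is schematically dense in $\frak Y$. Applying this with $\frak Y=\cat{Div}_{g,A}$ and $\frak Y^{\circ}=U$ finishes the proof. (Alternatively, topological density of $U$ follows at once from the smoothing construction already used in the proof of \cref{prop:aj_image}, which places every geometric point of $\cat{Div}_{g,A}$ on the image of a trait whose generic point maps into $U$; combined with the reducedness of $\cat{Div}_{g,A}$ established above, this again yields schematic density.)

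The only genuinely non-formal ingredient is the structural input imported from \cite{Marcus2017Logarithmic-com}: that $\cat{Div}^{\rel}_{g,A}$ behaves like a logarithmic modification of $\frak M_{g,n}$ — log smooth over $\field$, logarithmically étale over $\ca M^{\mathsf{log}}_{g,n}$, and with logarithmic structure trivial exactly over the locus of smooth curves. Once this is granted, the remaining steps are the standard toric local analysis of log smooth stacks over a field of characteristic zero and the elementary fact that schematic density of an open subscheme is preserved under flat (here, smooth) pullback.
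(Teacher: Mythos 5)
Your argument is correct and follows the same essential route as the paper: both proofs hinge on the fact that $\cat{Div}_{g,A}\to\ca M^{\mathsf{log}}_{g,n}$ is log \'etale, hence $\cat{Div}_{g,A}$ is log smooth over $\on{Spec}\field$. Where the paper then simply cites the theory of log regularity (log smooth over a field implies log regular, hence reduced with dense trivial-log-structure locus), you reprove these facts from scratch by passing to Kato's toric local model $\on{Spec}\field[P]$ and observing that the dense torus $\on{Spec}\field[P^{\mathrm{gp}}]$ is schematically dense; this is a correct unpacking and buys you a self-contained argument at the cost of brevity. One small remark: in the toric step, the injectivity $\field[P]\hookrightarrow\field[P^{\mathrm{gp}}]$ already gives schematic density for any integral $P$, so invoking reducedness via characteristic zero is not strictly needed there (though it is harmless, and reducedness is of course used in your alternative argument combining topological density of $U$ with reducedness of $\cat{Div}_{g,A}$).
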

\begin{proof}
Since $\cat{Div}_{g,A} \to \ca M^{\mathsf{log}}_{g,n}$ is log \'etale, we deduce that $\cat{Div}_{g,A}$ is log regular. In particular, $\cat{Div}_{g,A}$
 is reduced,  and the locus where the log structure is trivial is dense. 
%
\end{proof}


\begin{lemma} \label{Lem:UschemdenseDiv}
The Abel-Jacobi map $\aj:\cat{Div}_{g,A} \to \Picabs_{g,n,d}$ factors through the
inclusion $\bar\sigma \to \Picabs_{g,n,d}$, and the induced map 
$$\cat{Div}_{g,A} \to \bar \sigma$$ 
is proper and birational. 
\end{lemma}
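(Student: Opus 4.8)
The plan is to prove the three required assertions about the Abel-Jacobi map $\aj:\cat{Div}_{g,A}\to\Picabs_{g,n,d}$ in turn: that it factors through $\bar\sigma$, that the induced map $\cat{Div}_{g,A}\to\bar\sigma$ is proper, and that it is birational. For the \textbf{factorization}, recall from \cref{lem:smooth_dense} that the open locus $U\hra\cat{Div}_{g,A}$ where the log curve is classically smooth is schematically dense. On $U$ the Abel-Jacobi map is computed explicitly (see Section \ref{sec:abel-jacobi-map}): it sends a smooth curve with weights $A$ on the markings to $\ca O_C(\sum_i a_i p_i)$, which is exactly the Abel-Jacobi section $\sigma$ on the smooth locus. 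Hence $\aj|_U$ factors through $\bar\sigma$. Since $\bar\sigma\hra\Picabs_{g,n,d}$ is a closed immersion and $U$ is schematically dense in $\cat{Div}_{g,A}$, the map $\aj$ itself factors through $\bar\sigma$: the preimage $\aj^{-1}(\bar\sigma)$ is a closed subscheme containing the schematically dense $U$, hence is all of $\cat{Div}_{g,A}$. (One should note that $\cat{Div}_{g,A}$ is reduced, as shown in the proof of \cref{lem:smooth_dense}, so ``closed subscheme containing a schematically dense open'' forces equality.)

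For \textbf{properness} of the induced map $q:\cat{Div}_{g,A}\to\bar\sigma$, I would use that $\aj=\iota\circ q$ is proper (this is part of the Marcus--Wise package recalled just before \eqref{ddd222}) and that $\iota:\bar\sigma\to\Picabs_{g,n,d}$ is a closed immersion, in particular separated. A proper morphism followed by a separated morphism being proper implies the first morphism is proper --- this is the standard cancellation property for proper morphisms. So $q$ is proper.

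The main content --- and the step I expect to be the genuine obstacle --- is \textbf{birationality} of $q:\cat{Div}_{g,A}\to\bar\sigma$. Both source and target are reduced and of pure dimension (the source because it is log regular, the target because the schematic image of $\sigma$ has pure dimension, which is stated in Section \ref{sec:naive_def}). It suffices to exhibit a dense open $W\subseteq\bar\sigma$ over which $q$ is an isomorphism. The natural candidate is the image of $U$, i.e.\ the locus of $\bar\sigma$ lying over smooth curves. Over this locus the Abel-Jacobi section $\sigma$ is itself defined and $q$ restricted to $U$ is identified with $\sigma$ composed with the identification of $U$ with (an open of) $\frak M_{g,n}\times_{\Picrel}\Picabs$; concretely $\cat{Div}^{\rel}_{g,A}\to\ca M^{\mathsf{log}}_{g,n}$ is birational over the smooth locus (as recalled after the Abel-Jacobi discussion: it is birational since we fixed an ordering of markings, and log \'etale), and passing to the $\bb G_m$-gerbes $\cat{Div}_{g,A}$ and $\Picabs$ over these does not change birationality. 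Thus $q$ is an isomorphism over the (open, dense) smooth-curve locus of $\bar\sigma$, provided one checks this locus is indeed dense in $\bar\sigma$ --- which follows because $\sigma$ factors through $\bar\sigma$ with schematically dense image and the smooth locus of $\frak M_{g,n}$ is dense. The subtlety to be careful about here is matching up the $\bb G_m$-rigidifications correctly and confirming that ``birational'' in the stacky sense (isomorphism over a dense open substack) is the right notion and is satisfied; once the smooth locus is handled this is a formal consequence of $\cat{Div}^{\rel}_{g,A}\to\ca M^{\mathsf{log}}_{g,n}$ being birational. I would close by remarking that properness plus birationality plus reducedness of the target is exactly the hypothesis needed to apply \cref{prop:invarianceproperbirat} in \cref{sec:proof_of_equiv_defs}.
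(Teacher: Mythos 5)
Your proposal matches the paper's proof in both structure and key ideas: factorization via schematic density of the smooth-curve locus (\cref{lem:smooth_dense}), properness by cancellation against the closed immersion $\bar\sigma\hra\Picabs_{g,n,d}$, and birationality by observing the map is an isomorphism over the smooth-curve locus. The paper's proof is essentially a two-sentence version of yours; you have correctly filled in the supporting observations (reducedness/log-regularity of $\cat{Div}_{g,A}$, density of the smooth locus on both source and target, the harmless passage through $\bb G_m$-gerbes), all of which are implicit in the paper's terse argument.
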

\begin{proof}
That $\cat{Div}_{g,a} \to \Picabs_{g,n,d}$ factors through $\bar\sigma \to \Picabs$ is immediate from \cref{lem:smooth_dense} and the definition of the schematic image. The induced map $\cat{Div}_{g,A} \to \bar \sigma$ is proper since $\cat{Div}_{g,A}$ is proper over $\Picabs_{g,n,d}$ and is birational since it is an isomorphism over the locus of smooth curves. 
\end{proof}

By another application of \cref{prop:invarianceproperbirat}, the definitions of $\DRop$ via 
$\cat{Div}_{g,A}$ and the
schematic image are equivalent. \qed

\subsection{Proof of \cref{cccc}}\label{proof1111}

Let $k\geq 0$, and let
 $A=(a_1,\ldots,a_n)$ be a vector of integers satisfying
$$\sum_{i=1}^n a_i=k(2g-2)\, .$$
There are three definitions in the literature for the classical twisted double ramification cycle
$$\mathsf{DR}_{g,A, \omega^k}\in \mathsf{CH}_{2g-3+n}(\overline{\cM}_{g,n}) $$
on the moduli space of stable curves:
\begin{enumerate}
\item[$\bullet$]via birational modifications of $\Mbar_{g,n}$ \cite{Holmes2017Extending-the-d}, 
\item[$\bullet$] via the closure of the image of the Abel-Jacobi section \cite{Holmes2017Jacobian-extens},
\item[$\bullet$]
via logarithmic geometry and the stack $\cat{Div}_g^{\mathsf{rel}}$ \cite{Marcus2017Logarithmic-com}. 
\end{enumerate}
All three are shown to be equivalent
in \cite{Holmes2017Extending-the-d,Holmes2017Jacobian-extens}.
For the proof of \cref{cccc}, we choose the definition of \cite{Marcus2017Logarithmic-com}, as this will give the shortest path. 

For $d=k(2g-2)$, let $\phi: \Mbar_{g,n} \to \Picabs_{g,n,d}$ be the morphism
associated to the data
\begin{equation}\label{fftt66673}
\pi: \mathcal{C}_{g,n} \rightarrow \overline{\cM}_{g,n}\, , \ \ \ \ \omega_\pi^k \rightarrow \mathcal{C}_{g,n}\, .
\end{equation}
To prove \cref{cccc}, we must show
$$\DRop_{g,A}(\phi)([\Mbar_{g,n}]) = \mathsf{DR}_{g,A, \omega^k}\, ,$$
where $[\Mbar_{g,n}]$ is the fundamental class.

We form the pullback diagram
\begin{equation}
 \begin{tikzcd}
  \cat{Div}_{g,A} \times_{\Picabs_{g,n,d}} \Mbar_{g,n} \arrow[r] \arrow[d, "\psi"] & \cat{Div}_{g,A} \times \Mbar_{g,n} \arrow[d, "a \times \on{id}"]  \\
  \Mbar_{g,n} \arrow[r, "\phi'=\phi\times \on{id}"] & \Picabs_{g,n,d} \times \Mbar_{g,n}\, .
\end{tikzcd}
\end{equation}
Following \cref{defbivariantclass}, we have 
$$\DRop_{g,A}(\phi)([\Mbar_{g,n}]) = \psi_*(\phi')^!([\cat{Div}_{g,A} \times \Mbar_{g,n}])\, .$$
The construction
is equivalent to the definition of the class $\mathsf{DR}_{g,A, \omega^k}$ in \cite{Marcus2017Logarithmic-com} after making the standard translation between the Gysin pullback and the virtual fundamental class as in \cite[Example 7.6]{Behrend1997The-intrinsic-n}.

\subsection{The double ramification cycle in b-Chow} 
\label{Sect:AJextension}

The construction of the double ramification cycle in \cite{Holmes2017Extending-the-d} naturally yielded a more refined object: a b-cycle\footnote{An element of the colimit of the Chow groups of smooth blowups of $\Mbar_{g,n}$ with transition maps given by pullback.} on $\Mbar_{g,n}$ which pushes down to the usual double ramification cycle on $\Mbar_{g,n}$. 
The refined cycle was shown in \cite{Holmes2017Multiplicativit} to have better properties with respect to intersection products than the usual double ramification cycle. By considering rational sections of the multidegree-zero relative Picard space over $\Picabs_{g,n,d}$, we can in an analogous way define a b-cycle on $\Picabs_{g,n,d}$ refining the universal twisted double ramification cycle introduced here. In future work, we will show that this refined universal cycle is compatible with intersection products in the sense of \cite{Holmes2017Multiplicativit} and that the \emph{toric contact cycles} of \cite{Ranganathan2019A-note-on-cycle} can be obtained by pulling back these products.

\section{Pixton's formula}\label{sec:univeral_pixton}
\subsection{Reformulation}

Recall the cycle $\P_{g,A,d}^{c} \in \mathsf{CH}^c_{\mathsf{op}}(\Picabs_{g,n,d})$
defined in Section \ref{Ssec:MainFormula}. We write
$$\P_{g,A,d}^{\bullet} =\sum_{c=0}^\infty \P_{g,A,d}^{c}
\in \prod_{c=0}^\infty \mathsf{CH}^c_{\mathsf{op}}(\Picabs_{g,n,d})\, $$
for the associated mixed dimensional class.
We will rewrite the formula for $\P_{g,A,d}^{\bullet}$ in a 
more convenient form for computation.

Several factors in the formula of Section \ref{Ssec:MainFormula} can be pulled out of the sum over graphs and weightings. 
We require the following four definitions:
\begin{enumerate}
\item[$\bullet$] Let $\G_{g,n,d}^\textup{se}$ be the set of graphs in $\G_{g,n,d}$ having exactly two vertices connected by a single edge. 
Such graphs are thus described by a partition 
\begin{equation}\label{cc993}
(g_1, I_1, \delta_1\, |\,  g_2, I_2, \delta_2)
\end{equation}
of the genus, the marking set, and the degree of the universal line bundle. 
\item[$\bullet$]
Given a vector $A =(a_1,\ldots,a_n) \in \mathbb{Z}^n$ satisfying 
$$\sum_{i=1}^n a_i = d$$ and 
$\Gamma_{\delta} \in \G_{g,n,d}^\textup{se}$ corresponding to the partition \eqref{cc993}, we define
\[c_A(\Gamma_\delta) = - (\delta_1 - \sum_{i \in I_1} a_i)^2 =  -(\delta_2 - \sum_{i \in I_2} a_i)^2\, .\]
\item[$\bullet$] For $\Gamma_\delta \in \G_{g,n,d}^\textup{se}$, we write $$[\Gamma_\delta] = \frac{1}{|\Aut(\Gamma_\delta)|} j_{\Gamma_\delta*} [\Picabs_{\Gamma_{\delta}}]$$ for the class of the boundary divisor of $\Picabs_{g,n,d}$ associated to $\Gamma_\delta$. 
\item[$\bullet$]
Let $\G_{g,n,d}^\textup{nse}$ be the set of graphs in $\G_{g,n,d}$ such that every edge is non-separating. 
\end{enumerate}

\begin{proposition} \label{Lem:Pixformulafactorization}
The class $\P_{g,A,d}^{\bullet}$ is the constant term in $r$ of
\begin{align} \label{eqn:Pixformulafactorization}
&\exp\left(\frac12 \left( -\eta + \sum_{i=1}^n 2 a_i \xi_i+ a_i^2 \psi_i  + \sum_{\Gamma_\delta \in \G_{g,n,d}^\textup{se}} c_A(\Gamma_\delta) [\Gamma_\delta] \right) \right) \\ &\sum_{
\substack{\Gamma_\delta\in \G_{g,n,d}^\textup{nse} \\
w\in \mathsf{W}_{\Gamma_\delta,r}}
}
\frac{r^{-h^1(\Gamma_\delta)}}{|\Aut(\Gamma_\delta)| }
\;
j_{\Gamma_\delta*}\Bigg[
\prod_{e=(h,h')\in \E(\Gamma_\delta)}
\frac{1-\exp\left(-\frac{w(h)w(h')}2(\psi_h+\psi_{h'})\right)}{\psi_h + \psi_{h'}} \Bigg]\, , \nonumber
\end{align}
for $r \gg 0$.
\end{proposition}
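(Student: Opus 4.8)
The plan is to peel off, one at a time, the factors in the formula of Section \ref{Ssec:MainFormula} that do not actually depend on the summation over graphs $\Gamma_\delta$ and weightings $w$, matching each against a boundary-pushforward term of \eqref{eqn:Pixformulafactorization}. First I would recall that the original formula is
\[
\sum_{\Gamma_\delta, w}
\frac{r^{-h^1(\Gamma_\delta)}}{|\Aut(\Gamma_\delta)|}\,
j_{\Gamma_\delta*}\Bigl[\textstyle\prod_i \exp(\tfrac12 a_i^2\psi_i + a_i\xi_i)\prod_v \exp(-\tfrac12\eta(v))\prod_e \tfrac{1-\exp(-\tfrac{w(h)w(h')}2(\psi_h+\psi_{h'}))}{\psi_h+\psi_{h'}}\Bigr],
\]
and the key tool is the following compatibility of the tautological generators with the gluing maps $j_{\Gamma_\delta}$: the classes $\psi_i,\xi_i$ at a leg, and the classes $\eta(v) = \eta_{0,2}(v)$ at a vertex, are all \emph{pulled back} from the corresponding factors of $\prod_v \Picabs_{\g(v),\n(v),\delta(v)}$, so by the projection formula one has
\[
j_{\Gamma_\delta*}\bigl(\gamma \cdot j_{\Gamma_\delta}^* \lambda\bigr) = \lambda \cdot j_{\Gamma_\delta*}(\gamma)
\]
for any tautological class $\lambda$ on $\Picabs_{g,n,d}$ whose restriction to each vertex factor equals the corresponding $\eta(v),\psi_i,\xi_i$. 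The upshot is that $\exp(\tfrac12\sum_i a_i^2\psi_i + a_i\xi_i)$ and $\exp(-\tfrac12\sum_v \eta(v))$ can be pulled out of the sum provided we account for the fact that $\sum_{v\in\V(\Gamma_\delta)}\eta(v)$ is \emph{not} equal to $\eta$ on $\Picabs_{g,n,d}$: there is a correction supported on the boundary, coming from the self-intersection formula for $\xi^2 = c_1(\mathfrak L)^2$ on the universal curve over a nodal locus. This is exactly where the divisor classes $[\Gamma_\delta]$ with coefficients $c_A(\Gamma_\delta)$ enter.

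Concretely, the next step is to establish the identity
\[
\eta \;=\; \sum_{v\in\V(\Gamma_\delta)}\eta(v) \;-\; \sum_{\substack{\Gamma'_{\delta'}\in\G^{\mathrm{se}}_{g,n,d}\\ \text{compatible with }\Gamma_\delta}} (\delta'_1-\delta'_2\text{-type correction}) \,j_{\Gamma_\delta}^*[\Gamma'_{\delta'}] \pmod{\text{higher strata}}
\]
after pullback along $j_{\Gamma_\delta}$; more precisely, I would prove on $\Picabs_{g,n,d}$ the clean identity that, modulo the sum over separating-edge graphs, $\pi_*(\xi^2)$ decomposes into the vertex-wise $\eta(v)$ pieces plus explicit multiples of boundary divisors, the multiple at the separating node \eqref{cc993} being precisely $c_A(\Gamma_\delta)$ once one also absorbs the $\exp(\sum a_i\xi_i)$ contribution by the weight-translation computation (this is the reason the combination $\delta_1 - \sum_{i\in I_1}a_i$ rather than $\delta_1$ appears). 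One verifies this by a direct local computation of $c_1(\mathfrak L)^2$ near a separating node, or alternatively by invoking Invariance \invnrIII{} (vertical twisting): twisting $\mathfrak L$ by $\mathcal O(\Delta_1)$ changes $\xi$ by $[\Delta_1]$ and hence changes $\eta = \pi_*\xi^2$ by an explicit boundary term, while leaving $\DRop$ — and, granting the combinatorial analogue, $\P^\bullet_{g,A,d}$ — unchanged. Having isolated these exponential factors, the remaining sum runs over graphs $\Gamma_\delta$ with the separating edges already "resolved", i.e.\ effectively over $\G^{\mathrm{nse}}_{g,n,d}$, because for a separating edge $e=(h,h')$ the weighting condition forces $w(h)=w(h')=0\bmod r$, so the edge factor $\tfrac{1-\exp(-\tfrac{w(h)w(h')}2(\psi_h+\psi_{h'}))}{\psi_h+\psi_{h'}}$ vanishes identically, killing any graph with a separating edge — except that the exponential $\exp(c_A(\Gamma_\delta)[\Gamma_\delta]/2)$ we extracted is exactly the "missing" contribution, recovered as a limit/telescoping of these terms in the spirit of Pixton's original manipulation.

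The main obstacle, I expect, is the careful bookkeeping of the boundary corrections: one must check that iterating the separating-node decomposition of $\eta$ produces precisely an exponential $\exp(\tfrac12\sum_{\Gamma_\delta\in\G^{\mathrm{se}}}c_A(\Gamma_\delta)[\Gamma_\delta])$ and not some more complicated generating series, which amounts to a compatibility between the nested gluing maps and the fact that $[\Gamma_\delta]\cdot[\Gamma'_{\delta'}]$ for two different separating-edge graphs either vanishes or lands in a deeper stratum already accounted for — so that the exponential genuinely truncates correctly in each codimension. I would handle this by an induction on codimension $c$: the codimension-$c$ part of \eqref{eqn:Pixformulafactorization} involves only finitely many prestable graphs, and on each relevant stratum one can compare the two sides by restricting and using the already-established projection-formula identities together with the vanishing of separating edge factors. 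A secondary technical point is that all sums here are infinite (over multidegrees $\delta$), so I would remark, as in item (i) of Section \ref{Ssec:MainFormula}, that all identities are to be interpreted after evaluating on an arbitrary finite-type family, where only finitely many terms survive, and then the polynomiality in $r$ of Proposition \ref{pply} lets us pass to the constant term $r=0$ on both sides simultaneously. Once the algebra is set up this way, the proof is a finite check in each codimension rather than a single closed-form manipulation.
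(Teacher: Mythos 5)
Your proposal has the right \emph{shape} — isolate the leg and vertex exponentials by the projection formula, then account separately for the separating-edge exponential — but two of the mechanisms you invoke are wrong, and they are not small slips: each one is the opposite of what actually happens, and the paper's proof hinges on getting them right.

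\textbf{Separating-edge factors do not vanish.} You claim that for a separating edge $e=(h,h')$ the weighting condition forces $w(h)=w(h')=0 \bmod r$, so the factor $\Phi_{w(h)w(h')}(\psi_h+\psi_{h'})$ kills every graph with a separating edge. This is false. A separating edge attached to a vertex with genus/marking/degree data $(g_1,I_1,\delta_1)$ forces $w(h) = \delta_1 - \sum_{i\in I_1}a_i \bmod r$ (and $w(h') = r - w(h)$), which is nonzero for generic $\delta$. In fact the constant term in $r$ of $w(h)w(h') = w(h)\,(r-w(h))$ is exactly $-(\delta_1-\sum_{i\in I_1}a_i)^2 = c_A(\Gamma_\delta)$, and this non-vanishing is precisely the source of the coefficients $c_A(\Gamma_\delta)$. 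The actual argument in the paper is a coefficient-by-coefficient comparison of the $r$-polynomials attached to terms $j_{\Gamma_\delta*}\prod_e(\psi_h+\psi_{h'})^{m(e)}$: on the one side one expands $\exp(\tfrac12\sum_{\G^{\mathrm{se}}} c_A(\Gamma_\delta)[\Gamma_\delta])$ and multiplies it against the nse graph sum, keeping track of the self-intersection formula for $[\Gamma_\delta]$ (normal bundle $-(\psi_h+\psi_{h'})$) and the multinomial/automorphism bookkeeping; on the other side the same coefficient arises from the separating-edge factors via the constant term of $w(h)w(h')$. These \emph{agree}, they do not both vanish.

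\textbf{There is no boundary correction to $\eta$.} You claim that $j_{\Gamma_\delta}^*\eta$ differs from $\sum_{v}\eta(v)$ by a boundary term, and that this difference produces the $c_A(\Gamma_\delta)[\Gamma_\delta]$ exponential. This is also wrong: the identity $j_{\Gamma_\delta}^*\bigl(-\eta + \sum_i(2a_i\xi_i+a_i^2\psi_i)\bigr) = -\sum_v\eta(v) + \sum_i(2a_i\xi_i+a_i^2\psi_i)$ is \emph{exact}. The paper proves it via Lemma \ref{Lem:vertterm} (identifying the left side with $-\pi_*c_1(\mathcal L_A)^2$ for $\mathcal L_A = \mathcal L(-\sum a_ip_i)$) and the diagram of universal curves, where the partial normalization $G\colon \frak C'_{\Gamma_\delta}\to\frak C_{\Gamma_\delta}$ is proper, representable, and birational, so $G_*G^*=\mathrm{id}$ on pushforwards and the degree-2 class $c_1(\mathcal L_A)^2$ decomposes cleanly into vertex pieces. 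No self-intersection correction appears at that stage. So the two errors (a phantom boundary correction and phantom vanishing of separating-edge factors) would have had to cancel against each other in a way you do not actually establish — the hand-wave ``recovered as a limit/telescoping'' is doing all the work, and it does not correspond to any true cancellation. The repair is to separate the two steps as the paper does: (i) a purely combinatorial $r$-polynomial coefficient comparison relating the full graph sum to $\exp\bigl(\tfrac12\sum_{\G^{\mathrm{se}}}c_A(\Gamma_\delta)[\Gamma_\delta]\bigr)$ times the nse sum, and (ii) an exact projection-formula computation for the $\mathcal L_A$ exponential.
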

In the proof of Proposition \ref{Lem:Pixformulafactorization}, we will use the following computation which provides an interpretation for parts of the formula \eqref{eqn:Pixformulafactorization} and which will be used again in \cref{sec:applications}. 
\begin{lemma} \label{Lem:vertterm}
Let $A = (a_1,\ldots,a_n) \in \mathbb{Z}^n$
with $\sum_{i=1}^n a_i=d$.
 For the line bundle $\mathcal{L}$ on the universal curve $$\pi:\mathfrak{C}_{g,n,d} \to \Picabs_{g,n,d}$$
 with universal sections $p_1, \ldots, p_n$, we define $$\mathcal{L}_A = \mathcal{L}\Big(- \sum_{i=1}^n a_i [p_i]\Big)\, .$$ Then, we have
    \[- \pi_* c_1(\mathcal{L}_A)^2=
    -\eta + \sum_{i=1}^n 2 a_i \xi_i+ a_i^2 \psi_i\, .\]
\end{lemma}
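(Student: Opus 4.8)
The plan is to expand $c_1(\mathcal{L}_A)^2$ on the universal curve and push it forward along $\pi$ term by term, using only elementary properties of the sections $p_i$ and of $\omega_\pi$. First I would reduce to a computation on finite-type substacks: by \cref{lem:chop_and_limits} the asserted identity of operational classes on $\Picabs_{g,n,d}$ can be checked after restriction to each member of an exhaustion of $\Picabs_{g,n,d}$ by finite-type open substacks. Over such an open substack the universal curve $\pi$ is proper, representable, and flat (with the section divisors $\mathfrak{S}_i$ lying in its smooth locus), so the self-intersection formula for regularly embedded Cartier divisors and the projection formula along $\pi$ (as set up in \cref{ssec:Chopstacks}) are available.

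By definition $\mathcal{L}_A = \mathcal{L}\big(-\sum_{i=1}^n a_i[\mathfrak{S}_i]\big)$, so $c_1(\mathcal{L}_A) = \xi - \sum_{i=1}^n a_i[\mathfrak{S}_i]$. Since the marked points are distinct, the sections have pairwise disjoint images, whence $[\mathfrak{S}_i]\cdot[\mathfrak{S}_j] = 0$ for $i \neq j$ and
\[
c_1(\mathcal{L}_A)^2 \;=\; \xi^2 \;-\; 2\sum_{i=1}^n a_i\, \xi\cdot[\mathfrak{S}_i] \;+\; \sum_{i=1}^n a_i^2\,[\mathfrak{S}_i]^2 \,.
\]
Now I would push forward each term. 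First $\pi_*(\xi^2) = \eta$ by the definition of $\eta = \eta_{0,2}$ in \cref{Not:PsiXiEta}. Next, using $\pi\circ p_i = \on{id}$ and the projection formula along the section divisor $\mathfrak{S}_i$, one gets $\pi_*\big(\xi\cdot[\mathfrak{S}_i]\big) = p_i^*\xi = c_1(p_i^*\mathcal{L}) = \xi_i$. Finally, by the self-intersection formula $[\mathfrak{S}_i]^2 = p_{i*}\,c_1\big(N_{\mathfrak{S}_i/\mathfrak{C}_{g,n,d}}\big)$, and the normal bundle of the section is canonically $p_i^*T_\pi = (p_i^*\omega_\pi)^\vee$, so $c_1\big(N_{\mathfrak{S}_i/\mathfrak{C}_{g,n,d}}\big) = -\psi_i$ and hence $\pi_*\big([\mathfrak{S}_i]^2\big) = -\psi_i$. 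Assembling,
\[
\pi_* c_1(\mathcal{L}_A)^2 \;=\; \eta \;-\; 2\sum_{i=1}^n a_i\,\xi_i \;-\; \sum_{i=1}^n a_i^2\,\psi_i \,,
\]
and multiplying by $-1$ gives the claimed formula.

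I do not expect a serious obstacle here. The only points requiring care are bookkeeping ones: that $\Picabs_{g,n,d}$ is not of finite type, which is precisely why one first passes to the finite-type opens above and works with the flat pushforward $\pi_*$ on operational Chow constructed in \cref{ssec:Chopstacks}; and that the self-intersection formula is being invoked for the section of a morphism which is lci (indeed smooth along the section), for which the identification $N_{\mathfrak{S}_i/\mathfrak{C}_{g,n,d}} \cong (p_i^*\omega_\pi)^\vee$ is standard. Everything else is a formal manipulation with Chern classes.
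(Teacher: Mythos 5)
Your proof is correct and follows essentially the same route as the paper: expand $c_1(\mathcal{L}_A)^2$ using disjointness of the sections, push forward term by term, and use $\pi_*(\xi^2)=\eta$, $\pi_*(\xi\cdot[\mathfrak{S}_i])=\xi_i$, and $\pi_*([\mathfrak{S}_i]^2)=-\psi_i$. The paper states this more tersely (omitting the explicit finite-type reduction, which is implicit in its operational Chow setup), but the argument is the same.
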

\begin{proof} The result follows from the definitions of the classes $\eta$ and $\xi_i$:
\begin{align*}
- \pi_* c_1(\mathcal{L}_A)^2 &= - \pi_* \left(c_1(\ca L)^2 +  \sum_{i=1}^n - 2 a_i c_1(\ca L)|_{[p_i]} + a_i^2 [p_i]^2  \right) \\
&=-\eta + \sum_{i=1}^n 2 a_i \xi_i+ a_i^2 \psi_i\,,
\end{align*}
where, for the self-intersection $[p_i]^2$, we have used that the first Chern class of the normal bundle of $p_i$ is given by $- \psi_i$.
\end{proof}

\begin{proof}[Proof of \cref{Lem:Pixformulafactorization}]
We denote by
\begin{align*} 
\Phi_{a}(x)&= \frac{1-\exp({-\frac{a}{2}}x)}{x}\\ &= \sum_{m=0}^\infty (-1)^m (\frac{a}{2})^{m+1} \frac{1}{(m+1)!}x^m = \frac{a}{2} - \frac{a^2}{8} x + \ldots 
\end{align*}
the power series appearing in the edge-terms of Pixton's formula. 

As a first step, we show that the constant term in $r$ of
\begin{equation} \label{eqn:Graphfactorization1}
    \exp\left(\frac12  \sum_{\Gamma_\delta \in \G_{g,n,d}^\textup{se}} c_A(\Gamma_\delta) [\Gamma_\delta]  \right) \cdot \sum_{
\substack{\Gamma_\delta\in \G_{g,n,d}^\textup{nse} \\
w\in \mathsf{W}_{\Gamma_\delta,r}}
}
\frac{r^{-h^1(\Gamma_\delta)}}{|\Aut(\Gamma_\delta)| }
\;
j_{\Gamma_\delta*} \prod_{e=(h,h')\in \E(\Gamma_\delta)}
 \Phi_{w(h)w(h')}(\psi_h+\psi_{h'})
\end{equation}
and the constant term in $r$ of
\begin{equation} \label{eqn:Graphfactorization2}
\sum_{
\substack{\Gamma_\delta\in \G_{g,n,d} \\
w\in \mathsf{W}_{\Gamma_\delta,r}}
}
\frac{r^{-h^1(\Gamma_\delta)}}{|\Aut(\Gamma_\delta)| }
\;
j_{\Gamma_\delta*} \prod_{e=(h,h')\in \E(\Gamma_\delta)}
 \Phi_{w(h)w(h')}(\psi_h+\psi_{h'})
\end{equation}
are equal. The formula \eqref{eqn:Graphfactorization2} is a linear combination of boundary strata decorated by edge-terms $(\psi_h + \psi_{h'})^{m(e)}$ for nonnegative integers $m(e)$, $e \in \E(\Gamma_\delta)$ -- terms of the form
\begin{equation} \label{eqn:Graphfactorization3}
 j_{\Gamma_\delta*} \prod_{e=(h,h')\in \E(\Gamma_\delta)}
 (\psi_h+\psi_{h'})^{m(e)}\, .
\end{equation}
A first consequence of the combinatorial rules for computing intersections in the tautological ring\footnote{See \cite{GraberPandharipande} for the original treatment of the tautological ring of $\overline{\mathcal{M}}_{g,n}$. A corresponding treatment for $\mathfrak{M}_{g,n}$ will be given in \cite{BaeSchmitt1, BaeSchmitt2}. See also \cite[Sections 1.1, 1.7]{Janda2018Double-ramifica}.} of $\Picabs_{g,n,d}$ is that \eqref{eqn:Graphfactorization1} is also a linear combination of such terms. The decorations $(\psi_h+\psi_{h'})^{m(e)}$ on separating edges $e=(h,h')$ appear naturally in the self-intersection formula for the boundary divisors $[\Gamma_\delta]$ since, for $\Gamma_\delta \in \G_{g,n,d}^\textup{se}$, the Chern class of the normal bundle of $j_{\Gamma_\delta}$ is given by $- (\psi_h + \psi_{h'})$.

We show that the coefficients of the term \eqref{eqn:Graphfactorization3} in \eqref{eqn:Graphfactorization1} and \eqref{eqn:Graphfactorization2} have the same constant term in $r$. In \eqref{eqn:Graphfactorization2}, the coefficient is given by
\begin{equation} \label{eqn:Graphfaccoeff1}
    \sum_{
w\in \mathsf{W}_{\Gamma_\delta,r}}
\frac{r^{-h^1(\Gamma_\delta)}}{|\Aut(\Gamma_\delta)| }
\prod_{e=(h,h')\in \E(\Gamma_\delta)}
 (-1)^{m(e)} \left(\frac{w(h)w(h')}{2}\right)^{m(e)+1} \frac{1}{(m(e)+1)!}\, .
\end{equation}
On the other hand, let $e_1, \ldots, e_\ell \in E(\Gamma_\delta)$ be the separating edges of $\Gamma_\delta$, and let $\overline{\Gamma}_{\delta} \in \G_{g,n,d}^\textup{nse}$ be the graph obtained from $\Gamma_\delta$ by contracting these separating edges. Each separating edge $e_j$ corresponds to a unique graph $(\Gamma_j)_{\delta_j} \in \G_{g,n,d}^\textup{se}$ obtained by contracting all edges of $\Gamma_\delta$ except for $e_j$. 

In the product \eqref{eqn:Graphfactorization1}, the intersection rules of the tautological ring of $\Picabs_{g,n,d}$ imply that we obtain multiples of the term \eqref{eqn:Graphfactorization3} by combining 
\begin{itemize}
    \item for $j=1, \ldots, \ell$, a total of $m(e_j)+1$ terms $[(\Gamma_j)_{\delta_j}]$ from expanding the power series \[\exp\left(\frac12  \sum_{\Gamma_\delta \in \G_{g,n,d}^\textup{se}} c_A(\Gamma_\delta) [\Gamma_\delta]  \right),\]
    \item the terms associated to $\overline{\Gamma}_{\delta} \in \G_{g,n,d}^\textup{nse}$ in the second factor.
\end{itemize}
Let $M=\sum_{j=1}^\ell (m(j)+1)$, then \eqref{eqn:Graphfactorization3} appears in \eqref{eqn:Graphfactorization1} with coefficient
\begin{align} \label{eqn:Graphfaccoeff2}
    \frac{1}{M!}\binom{M}{m(e_1)+1, \ldots, m(e_\ell)+1} \cdot \left( \prod_{j=1}^\ell \left(\frac{c_A((\Gamma_j)_{\delta_j})}{2}\right)^{m(e_j)+1} (-1)^{m(e_j)} \right)\\ \nonumber \cdot \frac{|\Aut(\overline{\Gamma}_\delta)|}{|\Aut({\Gamma}_\delta)|} \sum_{
w\in \mathsf{W}_{\overline{\Gamma}_\delta,r}}
\frac{r^{-h^1(\overline{\Gamma}_\delta)}}{|\Aut(\overline{\Gamma}_\delta)| }
\prod_{e=(h,h')\in \E(\overline{\Gamma}_\delta)}
 (-1)^{m(e)} (\frac{w(h)w(h')}{2})^{m(e)+1} \frac{1}{(m(e)+1)!}\, .
\end{align}
To show the equality of \eqref{eqn:Graphfaccoeff1} and \eqref{eqn:Graphfaccoeff2}, we combine a number of observations. First, for the multinomial coefficients, we have
\[\frac{1}{M!}\binom{M}{m(e_1)+1, \ldots, m(e_\ell)+1} = \prod_{j=1}^\ell \frac{1}{(m(e_j)+1)!}\, .\]
Second, for the graph morphism $\Gamma_\delta \to \overline{\Gamma}_\delta$ contracting the separating edges:
\begin{itemize}
    \item we have an equality of Betti numbers $h^1(\Gamma_\delta) = h^1(\overline{\Gamma}_\delta)$,
    \item for the separating edges $e_j=(h_j, h_j')$ of  $\Gamma_\delta$, splitting the graph according to the partition $(g_1, I_1, \delta_1\,  | \, g_2, I_2, \delta_2)$, the value of every weighting $w\in \mathsf{W}_{\Gamma_\delta,r}$ is uniquely determined on $h_j, h_j'$ since 
    \[w(h_j) = \delta_1 - \sum_{i \in I_1} a_i   \mod r\, ,\ \ \  w(h_j') = \delta_2 - \sum_{i \in I_2} a_i   \mod r.\] Hence, the constant term in $r$ of $w(h_j) w(h_j')$ is precisely given by $c_A((\Gamma_j)_{\delta_j})$. 
    \item concerning the non-separating edges for fixed $\Gamma_\delta$ with contraction $\Gamma_\delta \to \overline{\Gamma}_\delta$, the map $W_{\Gamma_\delta,r} \to W_{\overline{\Gamma}_\delta,r}$ given by restricting weightings $w \in W_{\Gamma_\delta,r}$ to the remaining half-edges $H(\overline{\Gamma}_\delta) \subset H({\Gamma}_\delta)$ is a bijection.
\end{itemize}
The combination of these facts proves equality of \eqref{eqn:Graphfaccoeff1} and \eqref{eqn:Graphfaccoeff2} and hence the equality of \eqref{eqn:Graphfactorization1} and \eqref{eqn:Graphfactorization2}. 

To conclude the proof, we must show that the remaining part of the exponential term of \eqref{eqn:Pixformulafactorization} can be drawn into the graph sum. Using the projection formula, this identity is equivalent to showing
\begin{multline*}
(j_{\Gamma_\delta})^* \exp \left( \frac{1}{2} \left(-\eta + \sum_{i=1}^n 2 a_i \xi_i+ a_i^2 \psi_i \right) \right) = \\ \prod_{v \in \V(\Gamma_\delta)} \exp\left(-\frac12 \eta(v) \right) \prod_{i=1}^n \exp\left(\frac12 a_i^2 \psi_i + a_i \xi_i \right)
,
\end{multline*}
which immediately reduces to showing
\[(j_{\Gamma_\delta})^*  \left(-\eta + \sum_{i=1}^n (2 a_i \xi_i+ a_i^2 \psi_i) \right) = -\sum_{v \in \V(\Gamma_\delta)} \eta(v) + \sum_{i=1}^n (2 a_i \xi_i + a_i^2 \psi_i) 
\, .\]
By \cref{Lem:vertterm},
    \[-\eta + \sum_{i=1}^n (2 a_i \xi_i+ a_i^2 \psi_i) = - \pi_* c_1(\mathcal{L}_A)^2.\]
Now consider the diagram of universal curves
\[
\begin{tikzcd}
\coprod_{v \in \V(\Gamma)}\frak C_{\g(v),\n(v),\delta(v)}  \arrow[d] &\frak C'_{\Gamma_{\delta}} \arrow[l] \arrow[r,"G"] \arrow[d,"\pi'_{\Gamma_\delta}"] &\frak C_{\Gamma_{\delta}} \arrow[r, "J_{\Gamma_{\delta}}"] \arrow[d,"\pi_{\Gamma_\delta}"] & \frak C_{g,n,d} \arrow[d, "\pi"]\\
\prod_{v \in \V(\Gamma)}\Picabs_{\g(v),\n(v),\delta(v)} & \Picabs_{\Gamma_{\delta}}\arrow[l]&\Picabs_{\Gamma_{\delta}} \ar[equal]{l}\arrow[r, "j_{\Gamma_{\delta}}"] & \Picabs_{g,n,d}
\end{tikzcd}    
\]
where the left and right square are cartesian and the map $G$ is the gluing map identifying sections of $\frak C'_{\Gamma_{\delta}} \to \Picabs_{\Gamma_\delta}$ corresponding to pairs of half-edges forming an edge. This map $G$ is proper, representable, and birational. 

The space $\frak C'_{\Gamma_{\delta}}$ is a disjoint union of universal curves 
$$\pi'_{\Gamma_\delta,v} : \frak C'_{\Gamma_{\delta},v} \to \Picabs_{\Gamma_\delta}$$
for $v \in \V(\Gamma)$ and the bundle  $G^* J_{\Gamma_\delta}^* \ca L_A$ restricted to the component $\frak C'_{\Gamma_{\delta},v}$ is equal to the pullback of the line bundle $\ca L_{v, A_v}$ from the factor $C_{\g(v),\n(v),\delta(v)}$ (where $A_v$ is the vector formed by numbers $a_i$ for $i$ a marking at $v$, extended by $0$ on the half-edges at $v$). Then using the projection formula together with \cref{prop:invarianceproperbirat}, we have
    $$(j_{\Gamma_\delta})^* \pi_* c_1(\ca L_A)^2 = (\pi_{\Gamma_\delta})_* J_{\Gamma_\delta}^* c_1(\ca L_A)^2 = (G \circ \pi_{\Gamma_\delta})_* (G \circ \pi_{\Gamma_\delta})^*c_1(\ca L_A)^2$$
    $$= \sum_{v \in \V(\Gamma_\delta)}(\pi'_{\Gamma_\delta,v})_* c_1(\ca L_{v, A_v})^2 = \sum_{v \in \V(\Gamma_\delta)} \eta(v) + \sum_{i=1}^n  a_i^2 \psi_i + 2 a_i \xi_i\, ,$$
where for the last equality we again use \cref{Lem:vertterm}.
\end{proof}

In the case $n=0$ and $d=0$, the formula $\P_{g,\emptyset,0}^{\bullet}$ takes a slightly simpler shape: it is the $r=0$ term of the formula
\begin{align} \label{eqn:P_gformula2}
\exp\left(-\frac12 \eta \right) \sum_{
\substack{\Gamma_\delta\in \G_{g,0,0} \\
w\in \mathsf{W}_{\Gamma_\delta,r}}
}
\frac{r^{-h^1(\Gamma_\delta)}}{|\Aut(\Gamma_\delta)| }
\;
j_{\Gamma_\delta*}\Bigg[&
\prod_{e=(h,h')\in \E(\Gamma_\delta)}
\frac{1-\exp\left(-\frac{w(h)w(h')}2(\psi_h+\psi_{h'})\right)}{\psi_h + \psi_{h'}} \Bigg]\, . 
\end{align}
As explained in \cref{sec:intro_deg_0}, the full formula $\P_{g,A,d}^{\bullet}$ can be reconstructed from $\P_{g,\emptyset,0}^\bullet$.

\subsection{Comparison to Pixton's \texorpdfstring{$k$}{k}-twisted  formula}\label{sec:pixton_ktwist_vs_univ}
Given $k \geq 0$ and a vector $A =(a_1,\ldots,a_n) \in \mathbb{Z}^n$ satisfying $$\sum_i a_i = k(2g-2)\, ,$$
let $\tilde A = (\tilde{a}_1,\ldots,\tilde{a}_n)$ be the vector with entries $\tilde a_i = a_i + k$. Denote by  
\[P_g^{c ,k}(\tilde A) \in \Chow^c(\overline{\mathcal{M}}_{g,n})\] Pixton's original formula  defined in \cite[Section 1.1]{Janda2016Double-ramifica}.

In the $k=0$ case, $A=\tilde{A}$, and
$2^{-g} P_g^{g,0}(\tilde{A})$ is the
class originally conjectured by Pixton to
equal the double ramification cycle associated to the vector $\tilde A$. Compatibility with the formula for the universal twisted double ramification cycle is given by the
following result.

\begin{proposition} \label{Prop:PixtonPullbackCompatibility}
Via the map $\phi_{\omega_\pi^k}: \Mbar_{g,n} \to \Picabs_{g,n,k(2g-2)}$ associated to the universal data
\[\pi: \mathcal{C}_{g,n} \rightarrow \overline{\cM}_{g,n}\, , \ \ \ \ \omega_\pi^k \rightarrow \mathcal{C}_{g,n}\, ,\]
the class $\P_{g,A,k(2g-2)}^c$ acts as
\begin{equation} \label{eqn:PixtonPullbackCompatibility}
\P_{g,A,k(2g-2)}^c(\phi_{\omega_\pi^k})([\overline{\mathcal{M}}_{g,n}])=2^{-c} P_g^{c,k}(\tilde A)
\end{equation}
for every $c \geq 0$.
\end{proposition}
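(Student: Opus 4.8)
The plan is to compare the two formulas term-by-term over the sum of decorated prestable graphs, tracking exactly how the decorations on $\overline{\mathcal M}_{g,n}$ that appear in Pixton's original formula $P_g^{c,k}(\tilde A)$ are produced by pulling back the decorations on $\Picabs_{g,n,k(2g-2)}$ that appear in $\P^c_{g,A,k(2g-2)}$ via the map $\phi = \phi_{\omega_\pi^k}$. First I would recall the shape of $P_g^{c,k}(\tilde A)$ from \cite[Section 1.1]{Janda2016Double-ramifica}: it is the constant term in $r$ of a sum over (ordinary) stable graphs $\Gamma$ and weightings $w$, with vertex contributions $\exp(-\kappa_1/2 \cdot \text{(something)})$ actually $\exp$ of $\psi$- and $\kappa$-terms, leg contributions $\exp(\tfrac12 \tilde a_i^2 \psi_i)$, and the same edge factors $\Phi_{w(h)w(h')}(\psi_h+\psi_{h'})$. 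The factor $2^{-c}$ on the right is explained by the footnote in the excerpt: the factors of $2$ have been redistributed in the definition of $\P^{c,r}_{g,A,d}$, so each codimension-$c$ term carries an extra $2^{-c}$ relative to Pixton's normalization.

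The key geometric input is the identity, valid for the universal family with line bundle $\omega_\pi^k$, that $\phi^* \xi = \phi^* c_1(\mathfrak L) = k\, c_1(\omega_\pi)$ on the universal curve, hence $\phi^*\xi_i = k\psi_i$ and, by \cref{Lem:vertterm} applied with $\mathcal L = \omega_\pi^k$ and the substitution $a_i \mapsto \tilde a_i = a_i + k$, $$\phi^*\Big(-\eta + \sum_i (2a_i\xi_i + a_i^2\psi_i)\Big) = -\phi^*\pi_* c_1(\mathcal L_A)^2 = -\pi_* c_1\big(\omega_\pi^k(-\textstyle\sum_i a_i p_i)\big)^2.$$ I would then observe that $\omega_\pi^k(-\sum a_i p_i) = \omega_{\log}^k(-\sum_i (a_i + k) p_i) = \omega_{\log}^k(-\sum_i \tilde a_i p_i)$, so the right-hand side becomes $-\pi_* c_1(\omega_{\log}^k(-\sum \tilde a_i p_i))^2$, which is exactly (after expanding and using $[p_i]^2 = -\psi_i$, $\pi_* c_1(\omega_{\log})\cdot[p_i] = \psi_i$, $\pi_* c_1(\omega_{\log})^2 = \kappa_1$) the combination $-\kappa_1 + \sum_i 2k\tilde a_i \psi_i - k^2 \cdot(\ldots) + \sum_i \tilde a_i^2 \psi_i$ appearing in Pixton's vertex/leg exponential. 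The cleanest route is probably to use the reformulation in \cref{Lem:Pixformulafactorization}: pull back formula \eqref{eqn:Pixformulafactorization} along $\phi$, use that $\phi^*[\Gamma_\delta]$ for $\Gamma_\delta \in \G^{\text{se}}_{g,n,d}$ is the corresponding boundary divisor of $\overline{\mathcal M}_{g,n}$ whenever the multidegree $\delta$ is the one forced by $\omega_\pi^k$ (i.e. $\delta_i = k(2g_i - 2 + |I_i|)$) and is zero otherwise (since no other multidegree of $\omega_\pi^k$ specializes into that boundary stratum), and check that $c_A(\Gamma_\delta)$ with this forced $\delta$ equals $-k^2$ times the relevant separating-edge constant in Pixton's formula — matching the $\sum_\Gamma (\ldots)[\Gamma]$ term. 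The non-separating graph sum pulls back directly since the edge and $\psi$-decorations live on $\mathfrak M_{g,n}$ and are pulled back from there, and the weighting sets $\mathsf W_{\Gamma_\delta, r}$ for the forced $\delta$ biject with Pixton's weighting sets mod $r$ for $\tilde A$.

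With these identifications the two constant-in-$r$ expressions agree up to the overall rescaling of $\psi$, $\xi$, $\eta$, $[\Gamma]$ by factors of $2$ that are absorbed into $2^{-c}$ in codimension $c$; I would make this precise by noting that $\P^{\bullet}_{g,A,d}$ is, by construction, "$P$ with all tautological generators $\psi_i, \psi_h, \kappa_1, \ldots$ halved" relative to the $\bb P^n$/point normalization of \cite{Janda2018Double-ramifica, Janda2016Double-ramifica}, so each monomial of codimension $c$ picks up exactly $2^{-c}$. The main obstacle I anticipate is bookkeeping the boundary-divisor term: one must verify carefully that $\phi_{\omega_\pi^k}^*$ kills all the "wrong multidegree" summands $j_{\Gamma_\delta *}[\Picabs_{\Gamma_\delta}]$ (there are infinitely many $\delta$ for each underlying $\Gamma$, but only one is compatible with $\omega_\pi^k$), and that the surviving multidegree gives precisely $c_A(\Gamma_\delta) = -k^2(\text{genus/marking defect})^2 = -(\delta_1 - \sum_{i\in I_1} a_i)^2$ with $\delta_1 = k(2g_1-2+|I_1|)$, i.e. $\delta_1 - \sum_{i\in I_1}a_i = \sum_{i\in I_1}(k) + k(2g_1 - 2) = \sum_{i\in I_1}\tilde a_i - \sum_{i\in I_1}a_i + k(2g_1-2)$ — unwinding this arithmetic and matching it to the genus-$g_1$ leg/vertex shift in Pixton's $k$-twisted formula is the one genuinely fiddly computation, and it is exactly where the relation between $A$, $\tilde A$, and the canonical twist $k$ is used.
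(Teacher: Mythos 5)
Your proposal follows essentially the same route as the paper's proof: substitute $\mathcal L = \omega_\pi^{\otimes k}$, invoke \cref{Lem:Pixformulafactorization} and \cref{Lem:vertterm} to package the vertex/leg (and separating-edge) terms into a single exponential $\exp\bigl(-\tfrac12\pi_*c_1(\mathcal L_A)^2\bigr)$, use the identity $\omega_\pi^{k}(-\sum a_ip_i)=\omega_{\log}^{k}(-\sum\tilde a_ip_i)$, expand with $[p_i]^2=-\psi_i$ and $c_1(\omega_{\log})|_{[p_i]}=0$, and observe that the non-separating graph sum and weightings correspond directly, with the $2^{-c}$ accounted for by the redistribution of powers of~$2$. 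The paper's proof is terser on the boundary-divisor bookkeeping — it just asserts that ``edge terms and weightings naturally correspond'' — so your explicit observation that only the multidegree forced by $\omega_\pi^k$ contributes (and the rest of the $\delta$-sum vanishes under $\phi^*$) is a useful elaboration of the same argument rather than a different method.

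One concrete slip worth fixing: you state the forced multidegree on a separating-edge graph as $\delta_1 = k(2g_1-2+|I_1|)$, and then manipulate $\delta_1 - \sum_{i\in I_1}a_i$ accordingly. The correct value is $\delta_1 = k\cdot\deg(\omega_C|_{C_1}) = k(2g_1 - 2 + 1) = k(2g_1-1)$ (one node on the component, markings do not affect $\deg\omega_\pi$), giving $c_A(\Gamma_\delta) = -\bigl(k(2g_1-1)-\sum_{i\in I_1}a_i\bigr)^2$. This agrees exactly — not merely ``up to a factor $-k^2$'' — with the constant term in $r$ of $w(h)w(h') = -w(h)^2$ for the forced separating-edge weight $w(h)\equiv k(2g_1-2+n_1)-\sum_{i\in I_1}\tilde a_i$ (with $n_1=|I_1|+1$) in Pixton's $k$-twisted normalization, once you substitute $\tilde a_i = a_i + k$. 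The algebra in your final displayed line also drops a $\sum_{i\in I_1}a_i$, but since you flag the computation as the ``one genuinely fiddly'' step, this is a slip in the sketch rather than in the strategy.
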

\begin{proof}
The left-hand side of \eqref{eqn:PixtonPullbackCompatibility} is obtained from $\P_{g,A,k(2g-2)}^c$ by substituting 
\begin{equation}\label{cc78}
    \mathcal{L} = \omega_\pi^{\otimes k}
    \end{equation}
in
the formula and taking the action. 
A factor $2^{-c}$ arises on the left side
since all terms in $\P_{g,A,k(2g-2)}^c$ increasing the codimension of the cycle naturally come with corresponding negative powers of $2$ (which is placed as
a prefactor on the right side in \cite[Section 1.1]{Janda2016Double-ramifica}).

Under the substitution \eqref{cc78}, 
the edge terms and weightings modulo $r$ in the two formulas 
naturally correspond to each other. 
Using \cref{Lem:Pixformulafactorization} and \cref{Lem:vertterm}, we must
show 
\[\exp \left(-\frac{1}{2} \pi_* c_1(\omega_\pi^{\otimes k}(- \sum_{i=1}^n a_i [p_i]) )^2 \right) = \exp \left(-\frac{1}{2} \left( k^2 \kappa_1 -  \sum_{i=1}^n \tilde a_i^2 \psi_i \right)\right),\]
where again $[p_i]$ denotes the class of the image of the section $p_i: \overline{\mathcal{M}}_{g,n} \to \mathcal{C}_{g,n}$. Defining $\omega_\pi^\textup{log} = \omega_\pi(\sum_i p_i)$, we see
\begin{align*}
c_1(\omega_\pi^{\otimes k}(- \sum_{i=1}^n a_i [p_i]) )^2
& = ( k c_1(\omega_{\log}) - \sum_{i=1}^n \tilde a_i [p_i] ) ^2 \\
& = k^2 c_1(\omega_{\log})^2 - 2 k \sum_{i=1}^n \tilde a_i c_1(\omega_{\log})|_{[p_i]} + \sum_{i=1}^n \tilde a_i^2 [p_i]^2
\end{align*}
After pushing forward, the first term gives $k^2 \kappa_1$, the second vanishes (since $\omega_{\log}$ restricts to zero on the section $p_i$), and the third gives $- \sum_i \tilde a_i^2 \psi_i$, as desired.
\end{proof}




\subsection{Comparison to Pixton's formula with targets}\label{sec:pixton_targets_vs_univ}
Let $X$ be a nonsingular projective variety over ${\field}$. The moduli space 
$\MbarX$ parametrizes stable maps 
$$f\colon (C,p_1,\ldots,p_n)\to X$$
from genus $g$, $n$-pointed curves $C$ to $X$ of degree $\beta \in H_2(X,\mathbb{Z})$. The moduli space carries a virtual fundamental class 
$$[\MbarX]^\textup{vir} \in \Chow_{\vdim(g,n,\beta)}(\MbarX)$$
where 
$$\vdim(g,n,\beta) = (\dim \,X -3)(1-g) + \int_{\beta} c_{1}(X) + n\, .$$
See \cite{Behrend1997The-intrinsic-n} for the construction of virtual fundamental classes.

Given the data of a line bundle $\ca L$ on $X$ and a vector $A=(a_1,\ldots, a_n) \in \mathbb{Z}^n$ satisfying
\[\int_{\beta}c_1(\ca L)=\sum_{i=1}^{n}a_i \, ,\]
a double ramification cycle 
\[\DR_{g,A}(X,\ca L) \in \Chow_{\vdim(g,n,\beta)-g}(\MbarX)\]
virtually compactifying the locus of maps $f\colon (C,p_1,\cdots,p_n)\to X$ with
\[f^* \mathcal{L} \cong \mathcal{O}_C \left(\sum_{i=1}^n a_i p_i \right)\]
is defined in \cite{Janda2018Double-ramifica}. 
Furthermore, the authors define the notion of tautological classes
inside the operational Chow ring $\CHop^*(\MbarX)$ of $\MbarX$.
The main result of \cite{Janda2018Double-ramifica} is a
Pixton  formula for a 
codimension $g$ tautological class   whose action on $[\MbarX]^\textup{vir}$ yields $\DR_{g,A}(X,\ca L)$.

We define a morphism 
\begin{equation*}
\phi_{\ca L}\colon \Mbar_{g,n,\beta}(X) \to \Picabs_{g,A,d}\, , \ \ \  f \mapsto \left(C, p_1, \ldots, p_n, f^*\ca L \right)\, . 
\end{equation*}
The compatibility result here is
\begin{equation}\label{eqn:pixton_vs_uni_pixton}
\DR_{g,A}(X,\ca L) = \phi_{\ca L}^* \P^g_{g,A,d} \Big( [\MbarX]^\textup{vir}\Big)\, .
\end{equation}
The equality follows by an exact matching of the definition of 
$\P^g_{g,\bd A,d}$ in Section \ref{Ssec:MainFormula}
(after pullback by $\phi_{\ca L}^*$) with the
Pixton formula in the main result of \cite{Janda2018Double-ramifica}.

In fact, the compatibility \eqref{eqn:pixton_vs_uni_pixton}
represented the starting point for our investigation of the 
universal twisted double ramification cycle here.

\section{Proof of \texorpdfstring{\cref{Thm:main}}{main theorem}}\label{sec:proof_of_main_theorem}
\subsection{Overview} \label{sec:proof_of_main_theorem_overview}
We prove here the main result of the paper:
for $A=(a_1,\ldots,a_n)\in \mathbb{Z}^n$  satisfying $$\sum_{i=1}^n a_i=d\, ,$$
the universal twisted double ramification cycle is calculated by Pixton's formula
$$\mathsf{DR}^{\mathsf{op}}_{g,A} =\P_{g,A,d}^g\
\in {\mathsf{CH}}^g_{\mathsf{op}}(\Picabs_{g,n,d})\, .$$

The result is an equality in the operational Chow group, and therefore
an equality on every finite type family of prestable curves. Given $C\to B$ a prestable curve and a line bundle $\ca L$ on $C$ of relative degree $d$, we obtain
a map $$\phi_{\ca L}\colon B \to \Picabs_{g,n,d}\, .$$
We must prove 
\begin{equation}\label{eq:main_conj}
\DRop_{g,A}(\phi_{\ca L})= {\P_{g,A,d}^g}(\phi_{\ca L}) : \Chow_*(B) \to \Chow_{*-g}(B)\, .
\end{equation}

As explained in \cref{sec:intro_deg_0},  the result for general $A \in \mathbb{Z}^n$ can be reduced  to the case $n=0, d=0$, though the case of arbitrary $A$ will be important in the proof as we proceed through a sequence of special cases. We recall that this reduction used the invariances \invnrn{} and \invnrA{} for the double ramification cycle and Pixton's formula. Note that these will be proved separately and independent of \cref{Thm:main} in \cref{Sect:proofInvariance}, so no circular reasoning occurs.

\subsection{On an open subset of $\Mbar_{g,n}(\mathbb{P}^l,\beta)$}\label{sec:projective_target}

\newcommand{\kk}{l}
As before, let $A=(a_1,\ldots,a_n) \in \bb Z^n$ with $$\sum_{i=1}^n a_i=d\,.$$ 
We consider here the target $X = \bb P^\kk$.
Let $\beta$ be the class of $d$ times a line in $\bb P^\kk$. 
Let 
$$\ca C \to \Mbar_{g,n}(\mathbb{P}^l,\beta)$$
be the universal curve over the moduli of stable maps to $\mathbb{P}^l$, let 
$$f\colon \ca C \to \mathbb{P}^l$$ be the universal map, and 
let $\ca L = f^*\ca O_X(1)$.

We have a tautological map
\begin{equation}
\phi_{\ca L} \colon \Mbar_{g,n}(\mathbb{P}^l,\beta) \to \Picabs_{g,n,d}. 
\end{equation}
We would like to prove an equality of operational classes 
$$\phi_{\ca L}^*\DRop_{g,A}=\phi_{\ca L}^*{\P_{g,A,d}^g}\, \in \mathsf{CH}_{\mathsf{op}}^g(\Mbar_{g,n}(\mathbb{P}^l,\beta))\, .$$
We will apply  the main result of \cite{Janda2018Double-ramifica} which
relates the double ramification cycle there to 
Pixton's formula. However, 
only the action of $\phi_{\ca L}^*{\P_{g,A,d}^g}$ on the
virtual fundamental class
$[ \Mbar_{g,n}(\mathbb{P}^l,\beta)]^\textup{vir}$ is computed in \cite{Janda2018Double-ramifica}. 
Since we are interested here in the full operational class $\phi_{\ca L}^*{\P_{g,A,d}^g}$,
our first idea is to restrict to the open locus 
$$\Mbar_{g,n}(\mathbb{P}^l,\beta)' \hra \Mbar_{g,n}(\mathbb{P}^l,\beta)$$ 
where (on each geometric fibre) we have $H^1(C, \ca L) = 0$. 

\begin{lemma}\label{lem:fun_eq_vfc}
On the smooth Deligne-Mumford stack $\Mbar_{g,n}(\mathbb{P}^l,\beta)'$, the fundamental and virtual fundamental classes coincide. 
\end{lemma}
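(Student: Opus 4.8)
The plan is to exploit the relative perfect obstruction theory of the space of stable maps together with the Euler sequence on $\mathbb{P}^l$. Recall that $\Mbar_{g,n}(\mathbb{P}^l,\beta)$ carries a perfect obstruction theory relative to the smooth Artin stack $\mathfrak{M}_{g,n}$ of prestable $n$-pointed genus $g$ curves, given by $(R\pi_* f^*T_{\mathbb{P}^l})^\vee$, where $\pi\colon \ca C\to \Mbar_{g,n}(\mathbb{P}^l,\beta)$ is the universal curve and $f\colon \ca C\to \mathbb{P}^l$ the universal stable map; the virtual fundamental class $[\Mbar_{g,n}(\mathbb{P}^l,\beta)]^{\mathrm{vir}}$ is built from this relative obstruction theory together with the fundamental class $[\mathfrak{M}_{g,n}]$ (see \cite{Behrend1997The-intrinsic-n}). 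The first step is to show that, over the open locus $\Mbar_{g,n}(\mathbb{P}^l,\beta)'$, the sheaf $R^1\pi_* f^*T_{\mathbb{P}^l}$ vanishes.

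To see this, one pulls back the Euler sequence $0\to \ca O_{\mathbb{P}^l}\to \ca O_{\mathbb{P}^l}(1)^{\oplus(l+1)}\to T_{\mathbb{P}^l}\to 0$ along $f$ to obtain the short exact sequence
\[
0 \longrightarrow \ca O_{\ca C} \longrightarrow \ca L^{\oplus(l+1)} \longrightarrow f^*T_{\mathbb{P}^l} \longrightarrow 0
\]
on $\ca C$, with $\ca L = f^*\ca O_{\mathbb{P}^l}(1)$. Applying $R\pi_*$ and using that the fibres of $\pi$ are curves, so that $R^{\geq 2}\pi_*$ annihilates every coherent sheaf, yields a surjection $R^1\pi_* \ca L^{\oplus(l+1)} \twoheadrightarrow R^1\pi_* f^*T_{\mathbb{P}^l}$. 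On $\Mbar_{g,n}(\mathbb{P}^l,\beta)'$ one has $H^1(C,\ca L)=0$ on every geometric fibre by definition, hence $R^1\pi_* \ca L=0$ there by cohomology and base change, and therefore $R^1\pi_* f^*T_{\mathbb{P}^l}=0$.

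It follows that over $\Mbar_{g,n}(\mathbb{P}^l,\beta)'$ the relative obstruction theory is (the dual of) the vector bundle $R^0\pi_* f^*T_{\mathbb{P}^l}$ placed in cohomological degree $0$; equivalently, the morphism $\Mbar_{g,n}(\mathbb{P}^l,\beta)'\to \mathfrak{M}_{g,n}$ is smooth. Since $\mathfrak{M}_{g,n}$ is smooth over ${\field}$, the stack $\Mbar_{g,n}(\mathbb{P}^l,\beta)'$ is smooth over ${\field}$, and a Riemann--Roch computation of the rank of $R\pi_* f^*T_{\mathbb{P}^l}$ relative to $\mathfrak{M}_{g,n}$ shows its dimension equals $\vdim(g,n,\beta)$. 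For a smooth Deligne--Mumford stack equipped with a perfect obstruction theory whose obstruction sheaf vanishes, the intrinsic normal cone is the zero section of the (vanishing) obstruction bundle, so the virtual class equals the ordinary fundamental class; this gives $[\Mbar_{g,n}(\mathbb{P}^l,\beta)']^{\mathrm{vir}}=[\Mbar_{g,n}(\mathbb{P}^l,\beta)']$ as desired. No serious obstacle arises here: the only real verifications are the vanishing of $R^1\pi_* f^*T_{\mathbb{P}^l}$ extracted from the Euler sequence and the routine dimension count, the remainder being a standard property of virtual classes in the unobstructed case.
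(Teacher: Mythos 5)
Your proof is correct and follows essentially the same route as the paper: both exploit the pulled-back Euler sequence to deduce the vanishing of the obstruction sheaf $R^1\pi_*f^*T_{\mathbb{P}^l}$ from $R^1\pi_*\ca L = 0$. The only cosmetic difference is that the paper phrases the vanishing fiberwise, sandwiching $H^1(C, f^*T_{\mathbb{P}^l})$ between $\bigoplus H^1(C,\ca L)=0$ and $H^2(C,\ca O_C)=0$, and leaves the passage from unobstructedness to equality of classes implicit, whereas you argue directly with $R^1\pi_*$ and spell out that step.
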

\begin{proof}
It suffices to show that $H^1(C, f^*T_{\bb P^\kk}) = 0$ on $\Mbar_{g,n}(\mathbb{P}^l,\beta)'$. Pulling back the Euler exact sequence on $\bb P^\kk$ via $f$ yields 
\begin{equation}
0 \to \ca O_C \to \oplus_{1}^{\kk+1} f^*\ca O_{\bb P^\kk}(1) \to f^*T_{\bb P^\kk} \to 0\, .
\end{equation}

 Taking cohomology yields the exact sequence
\begin{equation}
\oplus_{1}^{\kk+1} H^1(C, f^*\ca O_{\bb P^\kk}(1)) \to H^1(C, f^*T_{\bb P^\kk}) \to H^2(C, \ca O_C)\, .
\end{equation}
But $H^1(C, f^*\ca O_{\bb P^\kk}(1)) = 0$ by assumption, and $H^2(C, \ca O_C) = 0$ for dimension reasons. 
\end{proof}

The next Lemma depends on a careful comparison of the logarithmic and rubber approaches to double ramification cycles, which will be postponed to Section \ref{sec:comparing_stacks}. 
\begin{lemma}\label{cor:operational_equality_projective}\label{lem:projective_space_comparison}
Let ${\phi'}_{\ca L}$ be the restriction of ${\phi}_{\ca L}$ to $\Mbar_{g,n}(\mathbb{P}^l,\beta)'$. We have an equality of operational classes 
\begin{equation} \label{eqn:phiprimepullbackDR}
{\phi'}_{\ca L}^*\DRop_{g,A}=
{\phi'}_{\ca L}^*{\P_{g,A,d}^g}  \, \in \, \CHop^g(\Mbar_{g,n}(\mathbb{P}^l,\beta)')\, .
\end{equation}
\end{lemma}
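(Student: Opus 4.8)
### Proof of Lemma \ref{cor:operational_equality_projective}

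The plan is to use the comparison theorem between the log/rubber construction of $\DRop$ and the rubber-map construction of \cite{Janda2018Double-ramifica} (the content of Section \ref{sec:comparing_stacks}) to reduce the claimed equality to the known statement of \cite{Janda2018Double-ramifica}, exploiting that on $\Mbar_{g,n}(\bb P^\kk,\beta)'$ the virtual class equals the fundamental class (\cref{lem:fun_eq_vfc}). Since both $\DRop_{g,A}$ and $\P^g_{g,A,d}$ are operational classes on the smooth Deligne-Mumford stack $\Mbar_{g,n}(\bb P^\kk,\beta)'$, by \cref{lem:op_eq_ch_for_smooth} each is determined by its action on the fundamental class $[\Mbar_{g,n}(\bb P^\kk,\beta)']$. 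Thus it suffices to prove the single identity
\[
{\phi'}_{\ca L}^*\DRop_{g,A}\big([\Mbar_{g,n}(\bb P^\kk,\beta)']\big)
= {\phi'}_{\ca L}^*\P^g_{g,A,d}\big([\Mbar_{g,n}(\bb P^\kk,\beta)']\big)
\ \in\ \Chow_*(\Mbar_{g,n}(\bb P^\kk,\beta)')\, .
\]

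First I would treat the right-hand side. By the compatibility \eqref{eqn:pixton_vs_uni_pixton} of Section \ref{sec:pixton_targets_vs_univ}, which is an exact matching of definitions, we have ${\phi}_{\ca L}^*\P^g_{g,A,d}\big([\Mbar_{g,n}(\bb P^\kk,\beta)]^{\mathrm{vir}}\big) = \DR_{g,A}(\bb P^\kk,\ca O(1))$, the target double ramification cycle of \cite{Janda2018Double-ramifica}. Restricting to the open substack $\Mbar_{g,n}(\bb P^\kk,\beta)'$ and invoking \cref{lem:fun_eq_vfc}, the virtual class becomes the fundamental class, so ${\phi'}_{\ca L}^*\P^g_{g,A,d}\big([\Mbar_{g,n}(\bb P^\kk,\beta)']\big)$ equals the restriction of $\DR_{g,A}(\bb P^\kk,\ca O(1))$ to the open locus. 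For the left-hand side, I would use the rubber definition of $\DRop$ (Section \ref{sec:rub_log_def}): unwinding \cref{defbivariantclass} for $\aj^{\mathsf{rub}}\colon \cat{Rub}_{g,A}\to\Picabs_{g,n,d}$ and pulling back along ${\phi'}_{\ca L}$, the action on the fundamental class is computed by a Gysin pullback against the fibre product $\cat{Rub}_{g,A}\times_{\Picabs_{g,n,d}}\Mbar_{g,n}(\bb P^\kk,\beta)'$, and this fibre product is — up to the comparison of Section \ref{sec:comparing_stacks} — precisely the rubber moduli space of stable maps to $\bb P^\kk$ relative to $0,\infty$ (over the open locus), whose pushforward to $\Mbar_{g,n}(\bb P^\kk,\beta)'$ is by definition $\DR_{g,A}(\bb P^\kk,\ca O(1))$ restricted there.

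The main obstacle is exactly the matching alluded to in the statement: identifying $\cat{Rub}_{g,A}\times_{\Picabs_{g,n,d}}\Mbar_{g,n}(\bb P^\kk,\beta)$ (or its open part) with the space of rubber stable maps to $\bb P^\kk$, compatibly with the natural obstruction theories / fundamental classes, so that the Gysin-pullback description of $\DRop$ matches the pushforward-of-virtual-class description of $\DR_{g,A}(\bb P^\kk,\ca O(1))$. This is carried out in Section \ref{sec:comparing_stacks} following the pattern of \cite{Marcus2017Logarithmic-com} in the case of a point target; the new feature is the presence of the non-trivial target $\bb P^\kk$, which forces one to track the log structure on the target and the expanded degenerations of $\bb P^\kk$ alongside the tropical line/log divisor data on the source curve. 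Once that identification is in place, together with the standard translation between Gysin pullback and virtual fundamental class as in \cite[Example 7.6]{Behrend1997The-intrinsic-n} (used already in \cref{proof1111}), the two sides agree on the fundamental class, and \cref{lem:op_eq_ch_for_smooth} promotes this to the desired equality of operational classes. \qed
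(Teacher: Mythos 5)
Your proposal is correct and follows essentially the same route as the paper's own proof: reduce to an equality of cycle classes via \cref{lem:op_eq_ch_for_smooth}, identify the right side with the restriction of $\DR_{g,A}(\mathbb{P}^l,\ca L)$ using \eqref{eqn:pixton_vs_uni_pixton} and \cref{lem:fun_eq_vfc}, and identify the left side with the same cycle via the rubber/log comparison of \cref{sec:comparing_stacks} (the paper cites \cref{prop:vfc_comparison_summary} explicitly for this last step). You correctly flag that the real content is deferred to that comparison section.
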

\begin{proof}

By \cref{lem:op_eq_ch_for_smooth}, the two sides of  \eqref{eqn:phiprimepullbackDR} are equal if and only if their actions on the fundamental class $[\Mbar_{g,n}(\mathbb{P}^l,\beta)']$ are equal in $\Chow_*(\Mbar_{g,n}(\mathbb{P}^l,\beta)')$. 
By  \eqref{eqn:pixton_vs_uni_pixton}, the action of the right side of \eqref{eqn:phiprimepullbackDR} on 
$$[\Mbar_{g,n}(\mathbb{P}^l,\beta)']=[\Mbar_{g,n}(\mathbb{P}^l,\beta)']^\textup{vir}$$ 
equals the restriction of $\DR_{g,A}(\mathbb{P}^l,\ca L)$ to 
$\Mbar_{g,n}(\mathbb{P}^l,\beta)'$.

The cycle $\DR_{g,A}(\mathbb{P}^l,\ca L)$
is defined in \cite{Janda2018Double-ramifica} as the pushforward of the virtual fundamental class of the space of rubber maps{\footnote{Rubber maps will
be discussed in \cref{sec:prestable_rubber_maps}.}}.  
By \cref{prop:vfc_comparison_summary} of 
\cref{sec:comparing_virtual_classes}, the restriction of
$\DR_{g,A}(X,\ca L)$ to $\Mbar_{g,n}(\mathbb{P}^l,\beta)'$ is equal to ${\phi'}_{\ca L}^*\DRop_{g,A}( [\Mbar_{g,n}(\mathbb{P}^l,\beta)'])$. 

\end{proof}

\subsection{For sufficiently positive line bundles}\label{sec:positive_line_bundle}

Let $\pi\colon C\to B$ be an $n$-pointed prestable curve over a scheme of finite type over ${\field}$. Let
$\ca L$ on $C$ be a line bundle 
of relative degree $d$. 
Let $$A = (a_1,\ldots, a_n) \in\mathbb{Z}^n$$ with $\sum_{i=1}^n a_i = d$.  
The line bundle $\ca L$ induces a map 
$$\phi_{\ca L} \colon B \to \Picabs_{g,n,d}\, .$$

We say $\ca L$ is \emph{relatively sufficiently positive} if $\ca L$ is relatively base-point free  and satisfies $R^1\pi_*\ca L = 0$.

\begin{lemma}\label{sec:very_ample}
Let $\ca L$ be a line bundle which is relatively sufficiently positive. 
Then we have an equality  
\begin{equation}\label{eq:very_ample}
\DRop_{g,A}(\phi_{\ca L})=
\P_{g,A,d}^g(\phi_{\ca L})  \colon \Chow_*(B) \to \Chow_{*-g}(B)\, .
\end{equation}

\end{lemma}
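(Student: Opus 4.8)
The plan is to transfer the equality from \cref{lem:projective_space_comparison} to an arbitrary relatively sufficiently positive family.

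\emph{Reductions.} The asserted identity is the vanishing of the operational class $\phi_{\ca L}^*\big(\DRop_{g,A}-\P^g_{g,A,d}\big)$ acting on $\Chow_*(B)$. Using compatibility with proper pushforward (property (C1)), together with the fact that relative sufficient positivity is stable under arbitrary base change — since $R^1\pi_*\ca L=0$ forces $\pi_*\ca L$ to be a vector bundle whose formation commutes with base change — it suffices to treat the case where $B$ is an integral scheme and to check the identity after evaluating on $[B]$. I would also record that all the invariances of \cref{Sect:introinvariance} hold for $\P^\bullet_{g,A,d}$ as well as for $\DRop_{g,A}$, so the family $(C/B,p_1,\dots,p_n,\ca L)$ may be modified freely by these operations.

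\emph{Construction.} Relative sufficient positivity makes $E:=\pi_*\ca L$ a vector bundle of rank $l+1=d-g+1$, and $\pi^*E\twoheadrightarrow\ca L$ gives a $B$-morphism $f\colon C\to\mathbb{P}(E)$ with $f^*\ca O_{\mathbb{P}(E)}(1)=\ca L$. Since $f$ contracts exactly the components on which $\ca L$ has degree $0$, the pair $(C,p_i,f)$ need not be a family of stable maps; I would fix this by a bookkeeping argument using vertical twisting (Invariance V) to place a unit of degree on every unstable rational tail or bridge of degree $0$, followed if necessary by a positive weight translation (Invariance III) to restore relative sufficient positivity. After this modification $f$ is nonconstant on every unstable component, so $(C,p_i,f)$ is a family of stable maps of class $\beta=d\,[\mathrm{line}]$ to the fibres of $\mathbb{P}(E)/B$, and since $R^1\pi_*\ca L=0$ is preserved it lands in the smooth locus $\Mbar_{g,n}(\mathbb{P}(E)/B,\beta)'$ where $H^1(C,f^*\ca O(1))=0$; we obtain a section of this space through which $\phi_{\ca L}$ factors.

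\emph{Comparison and globalisation.} Over the $\mathrm{GL}_{l+1}$-frame bundle $q\colon B'\to B$ of $E$ the bundle $E$ becomes trivial, $\mathbb{P}(E)\times_B B'\cong\mathbb{P}^l\times B'$, and the pulled-back data define a map to $\Mbar_{g,n}(\mathbb{P}^l,\beta)'$ compatible with the maps to $\Picabs_{g,n,d}$; pulling back \cref{lem:projective_space_comparison} gives $q^*\phi_{\ca L}^*\big(\DRop_{g,A}-\P^g_{g,A,d}\big)=0$. The remaining task is to descend this vanishing along $q$. I expect to handle this by phrasing the comparison on the open substack $\ca U\subset\Picabs_{g,n,d}$ of relatively sufficiently positive line bundles, using that for $l$ sufficiently large (depending on $g$ and $d$) the natural morphism $\Mbar_{g,n}(\mathbb{P}^l,\beta)'\to\ca U$ is, locally over $\ca U$, the complement of a locus of codimension $\ge 2$ inside an affine bundle over $\ca U$, so that pullback of operational classes along it is an isomorphism in the relevant codimension; combined with Invariance IV (twisting $\ca L$ by the pullback of a line bundle from the base, which replaces $E$ by $E\otimes\ca B$ without affecting either side of \eqref{eq:very_ample}) this pins the difference class down to zero.

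\emph{Main obstacle.} The crux is exactly this last, globalisation step: the equality of operational classes is \emph{not} a Zariski-local statement (Chow groups do not satisfy Zariski descent), and the obvious trivialisations of $E$ are torsors under $\mathrm{GL}_{l+1}$, along which pullback on operational Chow has a kernel generated by the Chern classes of $E$. Making the comparison obtained over $\Mbar_{g,n}(\mathbb{P}^l,\beta)'$ transfer cleanly to the twisted projective bundle $\mathbb{P}(E)$ over a general base — and, relatedly, organising the reduction to honest stable maps when markings of nonzero weight lie on contracted components — is where essentially all of the real content of the lemma lies.
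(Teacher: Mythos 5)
Your route and the paper's share the same starting idea — reduce to the universal case over projective space — but you make a choice that creates the very obstacle you correctly flag at the end, whereas the paper makes a different choice that dissolves it. You take a single copy $E=\pi_*\ca L$, form $\mathbb{P}(E)$, and then try to untwist via the frame bundle $B'\to B$. As you note, pullback along a $\mathrm{GL}_{l+1}$-torsor on rational Chow has kernel generated by the Chern classes of $E$, so vanishing on $B'$ does not descend to $B$; and the fallback (a codimension argument over an open substack $\ca U\subset\Picabs_{g,n,d}$) is left unargued. This is the genuine gap.

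The paper's device is to work with several copies at once: set $E_l=\bigoplus_{1}^{l+1}R\pi_*\ca L$, a vector bundle over $B$ of rank $r=N(l+1)$, and let $U_l\subset E_l$ be the open locus of base-point-free tuples of sections. Two things happen for free. First, $E_l\to B$ is an affine bundle, so pullback on Chow is an isomorphism; restricting to $U_l$ is still injective once $\operatorname{codim}(E_l\setminus U_l)>\dim B$, which a fibrewise count gives for $l$ large (since $\ca L$ itself is base-point free, the bad locus has codimension $\ge l$ in each fibre). Second, a point of $U_l$ is a genuine $(l+1)$-tuple of global sections, not a section of a twisted bundle, so it defines a map $C\times_B U_l\to\bb P^l$ to an honest, constant $\bb P^l$ — no frame bundle, no torsor, no descent. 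The family lands in $\Mbar_{g,n}(\bb P^l,\beta)'$ since $H^1(C,\ca L)=0$, Lemma \ref{lem:projective_space_comparison} applies there, and injectivity of $\psi^*\colon\Chow_*(B)\hookrightarrow\Chow_{*+r}(U_l)$ transports the equality back to $B$. Notice also that the argument does not need the Invariance V and III gymnastics you propose for reorganising unstable components: those were part of your attempt to keep a single $\mathbb{P}(E)$ in play, and they vanish once one passes to the base-point-free locus of a high direct sum. In short, the missing idea is to replace $\pi_*\ca L$ by the direct sum $\bigoplus_{1}^{l+1}\pi_*\ca L$ and to use its base-point-free locus as the correspondence space; this is what turns "descend along a torsor" into "pull back along an open subset of a vector bundle with small complement", which is where the proof actually closes.
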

\begin{proof}

For any finite-type scheme $B$ the union of irreducible components of $B$ maps properly and surjectively to $B$. Thus the pushforward from the Chow groups of the irreducible components to that of $B$ is surjective, and hence it suffices to show the equality \eqref{eq:very_ample} of maps of Chow groups for $B$ irreducible. 

By relative sufficient positivity, 
$$R\pi_*\ca L = \pi_*\ca L$$ 
is a vector bundle on $B$ of rank $N$. 
For a positive integer $l$, we define
\begin{equation}\label{ppp12}
E_l =\bigoplus_1^{l+1} R\pi_*\ca L \, , 
\end{equation}
a vector bundle on $B$ of rank $r=N(l+1)$. 
Let $U_l \sub E_l$ denote the open locus of linear systems which are base-point free. Via pullback along $\psi\colon U_l \to B$, we obtain
 a map $$\psi^*\colon \mathsf{CH}_*(B) \to \mathsf{CH}_{*+r}(U_l)\, .$$

We claim that for $l>\dim B$, the pullback \eqref{ppp12}
is injective. 
To prove the injectivity, we show that the boundary $E_l \setminus U_l$ has codimension in $E_l$ greater than $\dim B$. Since $E_l \to B$ is flat with irreducible target, it suffices to bound the codimension  on each geometric fibre over $B$: for a  prestable curve $C/{\field}$ and a sufficiently
positive line bundle $\ca L$ on $C$, we must show that the locus in $\bigoplus_1^{l+1} H^0(C, \ca L)$ consisting of base point free linear systems has 
a complement of codimension 
greater than $\dim B$. 

Since $\ca L$ is base point free on $C$, the dimension of the locus in
$\bigoplus_1^{l+1} H^0(C, \ca L)$ where the linear system has
a base point at some given $p\in C$ is $(N-1)(l+1)$. Hence, as $p$ varies, the complement of the
base point free locus in $\bigoplus_1^ {l+1} H^0(C, \ca L)$ has dimension at
most $1 + (N-1)(l+1)$. So the codimension is at least
$$ N(l+1) - 1 - (N-1)(l+1)  = l \, .$$

We have a canonical map $g\colon C \times_B U_l \to \bb P^l$ with 
$g^*\ca O_{\bb P^l}(1) = \ca L$ 
which induces a map $$U_l \to \Mbar_{g,n}(\bb P^l,\beta)$$ which factors via the locus $$\Mbar_{g,n}(\bb P^l, \beta)' \subset \Mbar_{g,n}(\bb P^l, d)$$
where $H^1(C, f^*\ca O_{\bb P^l}(1)) = 0$. 
By construction, the composition $$U_l \xrightarrow{\psi} B \xrightarrow{\phi_{\ca L}} \Picabs_{g,n,d}$$ then factors through the map $ \Mbar_{g,n}(\bb P^l, \beta)' \to \Picabs_{g,n,d}$ induced by the line bundle $f^*\ca O_{\mathbb{P}^l}(1)$ as before. In other words, we have a commutative diagram
\[
\begin{tikzcd}
U_l \arrow[r] \arrow[d,"\psi"]& \Mbar_{g,n}(\bb P^l, \beta)' \arrow[d,"\phi_{f^*\ca O_{\mathbb{P}^l}(1)}"]\\
B \arrow[r,"\phi_{\ca L}"] & \Picabs_{g,n,d}\, . 
\end{tikzcd}
\]

\Cref{cor:operational_equality_projective} then implies that 
\begin{equation}
\left(\DRop_{g,A}-\P_{g,A,d}^g \right)(\phi_{\ca L}\circ \psi)\colon \Chow_*(U_l) \to \Chow_{* - g}(U_l)
\end{equation}
is the zero map, and we conclude the proof of the Lemma from the commutative diagram
\begin{equation}
 \begin{tikzcd}
  \Chow_{*+r}(U) \arrow[rrrr, "\left(\DRop_{g,A}-\P_{g,A,d}^g \right)(\phi_{\ca L}\circ \psi)"]&& && \Chow_{* + r-g}(U)\\
  \Chow_*(B) \arrow[rrrr, "\left(\DRop_{g,A}-\P_{g,A,d}^g\right)(\phi_{\ca L})"] \arrow[u,"\psi^*",hookrightarrow]& &&&\Chow_{* - g}(B) \arrow[u,"\psi^*",hookrightarrow]\, . 
\end{tikzcd}
\end{equation}

\end{proof}

\subsection{With sufficiently many sections}\label{sec:many_markings}

Let $\pi\colon C\to B$ be an $n$-pointed prestable curve 
with markings $p_1,\ldots, p_n$
over a scheme of finite type over ${\field}$. Let
$\ca L$ on $C$ be a line bundle 
of relative degree $d$. 
Let $$A = (a_1,\ldots, a_n) \in\mathbb{Z}^n$$ with $\sum_{i=1}^n a_i = d$.  
The line bundle $\ca L$ induces a map 
$$\phi_{\ca L} \colon B \to \Picabs_{g,n,d}\, .$$

\begin{lemma}\label{lem:many_markings}
For every geometric fibre of $C/B$, suppose the
complement of the union of irreducible components which carry markings
is a disjoint union of trees of nonsingular
rational curves on which $\ca L$ is trivial. Then we have an equality 
\begin{equation}\DRop_{g,A}(\phi_{\ca L})=
\P_{g,A,d}^g(\phi_{\ca L}) \colon \Chow_*(B) \to \Chow_{*-g}(B)\, .
\end{equation}
\end{lemma}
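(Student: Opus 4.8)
The proof strategy for Lemma \ref{lem:many_markings} is to reduce to the case of relatively sufficiently positive line bundles handled in \cref{sec:very_ample}. The hypothesis on the geometric fibres of $C/B$ says precisely that $C$ is obtained from a ``core'' curve $C'$ (the union of components carrying markings) by attaching trees of rational curves on which $\ca L$ restricts trivially. The plan is first to twist $\ca L$ to make it sufficiently positive, and then to apply Invariance \invnrIV{} (partial stabilization) to compare the twisted double ramification cycle on $C$ with the one on $C'$, while simultaneously checking the analogous statement for Pixton's formula using the invariance properties proved in \cref{Sect:proofInvariance}.

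Concretely, first I would use Invariance \invnrA{} (weight translation) and Invariance \invnrIV{} together with the factorisation discussed in \cref{sec:intro_deg_0}: after possibly adding extra markings with weight zero using Invariance \invnrn{} (which multiplies neither side, only pulls back along the forgetful map), and translating weights, we may arrange that $\ca L$ has a convenient form on the rational tails. The key geometric point is that if $f\colon C \to C''$ is the birational contraction of the rational tails, then $\ca L$ descends (up to a twist supported on the images of the tails, which are smooth points since they carry no marking) to a line bundle on $C''$, and the sections $p_i$ with $a_i \neq 0$ do not meet the exceptional locus of $f$ by hypothesis. Hence Invariance \invnrIV{} gives $\DRop_{g,A}(\phi_{f^* \ca L}) = \DRop_{g,A}(\phi_{\ca L})$, and the same formal identity holds for $\P^g_{g,A,d}$ by its corresponding invariance in \cref{Sect:proofInvariance}.

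The remaining step is to arrange that, after this contraction and a further twist by a relatively very positive line bundle pulled back from the base (which changes neither side by Invariance \invnrIV{} on the DR side and the parallel invariance on the Pixton side, or alternatively by Invariance \invnrA{} combined with adding markings), the resulting line bundle on $C''$ becomes relatively sufficiently positive. One must be slightly careful: twisting up by a power of a relatively ample line bundle on $C''$ changes the vector $A$, but this is accounted for by the weight-translation invariance \invnrA{} applied uniformly to both sides. Then \cref{sec:very_ample} applies on $C''$ to give the equality of the two maps $\Chow_*(B) \to \Chow_{*-g}(B)$, and tracing back through the invariances yields the desired equality on $C$.

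The main obstacle I anticipate is the bookkeeping of twists and weight vectors: one needs a single twisting operation that (a) kills the rational tails in a controlled way so that Invariance \invnrIV{} applies, (b) makes the descended bundle sufficiently positive, and (c) is tracked identically on both the $\DRop$ side and the $\P^g$ side. Since the invariance properties of Pixton's formula are only established in \cref{Sect:proofInvariance}, I would need to confirm no circularity arises --- but as noted in \cref{sec:proof_of_main_theorem_overview}, those invariances are proved independently of \cref{Thm:main}, so the argument is legitimate. A secondary technical point is ensuring that ``every geometric fibre'' being of the stated form is an open condition compatible with the finite-type reductions, but this is routine since the locus where a line bundle is trivial on a rational tail and the contraction exists is open in the base.
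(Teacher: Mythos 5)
Your approach is genuinely different from the paper's, and it introduces a gap the paper avoids. The paper's proof is shorter and never contracts the curve at all: it sets $\ca L' = \ca L\bigl(\sum_{i=1}^n a_i' p_i\bigr)$ with each $a_i' \gg 0$ and observes that $\ca L'$ is already relatively sufficiently positive \emph{on $C$ itself}. The point is that the only components where twisting at markings does nothing are precisely the unmarked rational tails, and there $\ca L'$ is trivial by hypothesis; a trivial bundle on a tree of nonsingular rational curves has vanishing $H^1$ and is globally generated, so (by a Mayer--Vietoris/Riemann--Roch argument) the whole bundle $\ca L'$ on $C$ is base-point free with $R^1\pi_*\ca L' = 0$. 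Then \cref{sec:very_ample} applies directly, and the only invariance needed to undo the twist is Invariance \invnrA{} (weight translation), applied identically to both $\DRop$ and $\P^g$. No partial stabilisation, no Invariance \invnrIV{}, no passage to a contracted curve.

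The gap in your approach is the existence of the family-level contraction $f\colon C \to C''$ over the base $B$. The hypothesis of the lemma is purely fibre-wise, and contracting unmarked rational tails fibre by fibre does not automatically assemble into a morphism of families of prestable curves over $B$; obtaining one in general requires flattening and alteration arguments of exactly the kind carried out in \cref{ALTt}. You flag this as ``routine because the locus ... is open in the base,'' but openness of a condition on fibres is not what is needed --- what is needed is a global curve $C''/B$ receiving a birational morphism from $C$, which is a family-level statement. If you did introduce the alteration to fix this, you would be reproducing the content of \cref{ALTt} inside the proof of \cref{lem:many_markings}, whereas in the paper's logical flow \cref{ALTt} is proved afterwards and \emph{uses} \cref{lem:many_markings} as input, so the argument would become at best redundant and at worst convoluted. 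A secondary smaller inaccuracy: the phrase ``$\ca L$ descends up to a twist supported on the images of the tails'' is not quite right --- because $\ca L$ is trivial on the tails, $\ca L$ is in fact pulled back from $C''$ on the nose (no extra twist at the image points); but this is a minor point compared with the missing family-level contraction.
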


\begin{proof}
We can choose $A' = (a'_1,\dots, a'_n)$ with entries 
$$a'_i \gg 0\, , \ \ \ \sum_{i=1}^na_i' =d'\, $$
large enough so that $$\ca L' = \ca L\Big(\sum_{i=1}^n a'_ip_i\Big)$$
is relatively sufficiently positive (by Riemann-Roch for singular curves). 

We obtain an associated map 
$$\phi_{{\ca L}'} \colon B \to \Picabs_{g,n,d+d'}\, .$$
By \cref{sec:very_ample},
\begin{equation} \label{eqn:DRPequalityL42} \DRop_{g,A+A'}(\phi_{{\ca L}'})=
\P_{g,A+A',d+d'}^g(\phi_{{\ca L}'}) \colon \Chow_*(B) \to \Chow_{*-g}(B).
\end{equation}
Invariance \invnrA{} of \cref{Sect:introinvariance} (proven
in \cref{Sect:proofInvariance}) implies 
\begin{align*}
\DRop_{g,A+A'}(\phi_{{\ca L'}})
&=\DRop_{g,A}(\phi_{{\ca L}})\, ,\\ \P^g_{g,A+A',d+d'}(\phi_{{\ca L'}})
&=\P^g_{g,A,d}(\phi_{{\ca L}})\, ,     
\end{align*} 
which together with \eqref{eqn:DRPequalityL42} finishes the proof.
\end{proof}

\subsection{Proof in the general case}\label{sec:general_case}

To conclude the proof of \cref{Thm:main},
will use the invariances  of \cref{Sect:introinvariance}
(proven in \cref{Sect:proofInvariance}). As discussed in \cref{sec:proof_of_main_theorem_overview}, we can reduce to showing the result in the case $n=0, d=0$.\footnote{In genus $g=1$, we  follow a slightly modified strategy since there we must avoid the case $n=0$ for technical reasons (see Remark \ref{Rmk:genus1n0}). Instead, we can use the invariances to reduce to the case $g=1$, $n=1$, and $A=(0)$. All the proofs below generalize in a straightforward way since the vector $A=(0)$ does not affect the line bundles involved.}

Let $B$ be an irreducible scheme of finite type over ${\field}$.
Let $\pi\colon C\to B$ a prestable curve, and let
$\ca L$ on $C$ be a line bundle 
of relative degree $0$. 
The line bundle $\ca L$ induces a map 
$$\phi_{\ca L} \colon B \to \Picabs_{g,0,0}\, .$$

 \begin{lemma} There exists an \label{ALTt}
 alteration{\footnote{An alteration here is a proper, surjective, generically finite morphism between irreducible schemes.}}
 $B' \to B$ and a destabilisation
 \begin{equation}\label{kk3355}
 C' \to C \times_B B'
 \end{equation}
 such that $C'$ admits sections $p_1, \ldots, p_m$ and satisfies the
 following property:
 $$(C'/B', p_1, \ldots, p_m)$$ is a family of $m$-pointed prestable curves 
 and 
 for every geometric fibre of $C'/B'$,  the
complement of the union of irreducible components which carry markings
is a disjoint union of trees of nonsingular
rational curves which are contracted by the morphism \eqref{kk3355}.

\end{lemma}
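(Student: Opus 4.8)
The plan is to construct the alteration $B' \to B$ and the destabilisation $C'$ by a standard argument in two stages: first find enough sections to ``see'' every component of every fibre, then blow up along these sections to create the desired trees of rational curves.

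\textbf{Step 1: finding sections after an alteration.} Consider the smooth locus $C^{\mathrm{sm}} \to B$ of the family $\pi$. I would first enlarge the base to ensure that $C^{\mathrm{sm}} \to B$ admits sections hitting every irreducible component of every geometric fibre. By Noetherian and constructibility arguments, the number of irreducible components of the fibres of $C/B$ is bounded; after stratifying $B$ into finitely many locally closed pieces and working on each piece (which suffices since the union of the closures of the strata maps properly and surjectively to $B$), we may assume the combinatorial type of the fibres is constant. Then the locus in $C^{\mathrm{sm}}$ of points lying on a fixed component is open and surjects onto $B$; taking a generically finite multisection of this map and resolving, i.e. passing to an alteration $B' \to B$ (using de Jong's theorem on alterations, or just a suitable generically finite cover followed by normalisation, since we only need \emph{some} alteration), we obtain sections $p_1, \dots, p_k$ of $C^{\mathrm{sm}} \times_B B' \to B'$ such that for every geometric fibre, each irreducible component contains at least one of the $p_i$, and the $p_i$ have distinct images in each fibre (shrinking/further altering $B'$ to separate the sections, using that they are generically disjoint). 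After this step, every component of every fibre of $C \times_B B' \to B'$ carries a marking, but the resulting pointed curve need not be prestable because markings may collide at nodes or several markings may sit on one component.

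\textbf{Step 2: destabilisation to create rational trees.} Now I would perform a sequence of blow-ups of $C \times_B B'$ at the (closures of the) sections and at the nodes, to separate the marked points onto their own rational bridges. Concretely, each time two sections $p_i, p_j$ have the same image in some fibre, or a section passes through a node, blow up $C \times_B B'$ along that section (or along the node); this introduces a $\mathbb P^1$ into the fibre on which $\ca L$ pulls back trivially (the exceptional locus of a blow-up of a regular point on a surface fibred over a base). Iterating finitely many times — the process terminates because each blow-up strictly decreases a suitable measure of ``non-prestability'' — yields $C' \to C \times_B B'$ with sections $p_1, \dots, p_m$ ($m \geq k$, after renaming) forming a prestable family, and such that the components of each fibre not carrying markings are exactly the exceptional rational curves introduced, which form disjoint trees of $\mathbb P^1$'s contracted by $C' \to C\times_B B'$. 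The hypothesis that $B$ is irreducible is preserved by taking $B'$ irreducible in the alteration.

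\textbf{Main obstacle.} The genuinely delicate point is the bookkeeping in Step 2: one must arrange the blow-up centres so that (a) the result is a \emph{family} of prestable curves flat over $B'$ (so the blow-up centres should be the smooth sections and the relative-node loci, which are regularly embedded and of the right codimension), (b) the newly-created rational components are \emph{contracted} by the map back to $C \times_B B'$ and carry the trivial line bundle, and (c) the process terminates. All three are standard but require care; I expect the termination and flatness to be the part needing the most attention, handled by an induction on the number of pairs of coincident sections plus the number of sections lying at nodes, exactly as in the construction of semistable models and Kontsevich's stable-maps destabilisation. The compatibility with $\ca L$ is automatic since $\ca L$ pulls back to a line bundle of relative degree $0$ that is trivial on each contracted rational tree (degree $0$ on a tree of $\mathbb P^1$'s with trivial restriction to each component forced by the contraction).
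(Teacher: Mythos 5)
Your proposal follows the natural intuition, but the key technical device the paper uses is different and neatly sidesteps the difficulties that make your Step 2 problematic.

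Your Step 1 is close in spirit to the paper's first two stages (find a multisection, flatten it via Raynaud--Gruson, then split it into honest sections by a further alteration as in de Jong's Lemma 5.6). However, the paper imposes a stronger condition: every irreducible component of every geometric fibre must carry \emph{at least two} distinct smooth multisection points, not just one. This is not an incidental strengthening --- it is what makes the rest of the argument work, because with two or more markings on each component the family $(C_{\widetilde B}, \widetilde p_1,\dots,\widetilde p_m) \to \widetilde B$ is \emph{generically stable}. The paper then obtains $C'$ not by hand-crafted blow-ups of the surface, but by resolving the indeterminacy of the induced rational map $\widetilde B \dashrightarrow \Mbar_{g,m}$ and pulling back the universal curve. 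This packages all of the flatness, prestability, and termination issues into the single statement that $\Mbar_{g,m}$ has a universal curve; and the "two markings" condition is then used once more at the end to verify that every non-contracted component of $C'_b$ actually inherits a marking.

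Your Step 2 has a genuine gap. "Blow up $C\times_B B'$ along the section" is problematic: a section has image of codimension one in the total space (a Cartier divisor), so blowing it up is the identity. What you presumably intend is to blow up the locus where two sections coincide or a section meets the relative singular locus, but that locus has codimension $\geq 2$, and the blow-up of the total space of the family along such a centre is in general \emph{not} flat over $B'$, nor are its fibres prestable curves, without substantial further work (base change by a suitable modification, semistable reduction, etc.). This is exactly the hard part, and an induction on "number of coincidences plus number of sections at nodes" does not obviously terminate or stay flat across a higher-dimensional base. You also note that the sections can only be made \emph{generically} disjoint, which is correct --- but this is precisely why the direct blow-up route is delicate, and it is the point the paper's $\Mbar_{g,m}$ trick is designed to avoid.

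So: your Step 1 is essentially right but should secure two smooth markings per component rather than one; your Step 2 should be replaced by the observation that two markings per component gives a rational map to $\Mbar_{g,m}$, whose resolution of indeterminacy plus the universal curve hands you $C'$ and the destabilisation map for free.
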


\begin{proof}
We first claim, after an alteration 
$\widehat{B}\to B$,
there exists a multisection\footnote{By a multisection of $C_{\widehat{B}}\to \widehat{B}$,
we mean a closed substack $Z \subset C_{\widehat{B}}$ such that $Z \to \widehat{B}$ is finite and flat.} 
$$Z \subset \, C_{\widehat{B}}=C\times_B \widehat{B}\,  \to \widehat{B}$$ satisfying the following
two conditions:
\begin{enumerate}[label=\roman*)]
    \item[(i)] Over the generic point of $\widehat{B}$, $Z$ is contained in the smooth locus of $C_{\widehat{B}} \to \widehat{B}$.
    \item[(ii)] Every component of every geometric fibre of $C_{\widehat{B}}\to \widehat{B}$  carries at least two  distinct \'etale multisection points in the smooth locus. In other words, the \'etale locus of $Z \to \widehat{B}$ meets the smooth locus of every component of every geometric fibre of $C_{\widehat{B}} \to \widehat{B}$ in at least two points.
\end{enumerate}
To prove the above claim, we observe that for every geometric point $b$ of $B$ there exists an \'etale map $U_p \to B$ and a factorisation $U_p \to C$ whose image meets
the smooth locus of every irreducible component of every geometric fibre in some Zariski neighbourhood $V_b\sub B$ of $b$ at least twice. Choose a finite set of $b$ such that the $V_b$ cover $B$, define $U$ to be the union of the $U_b$, and define $Z'$ to be the closure of the image of $U$ in $C$.  Then $Z' \to B$ is proper and generically finite. 
Let $$\widehat B \to B$$ 
be a modification which flattens $Z'$ (see \cite{Raynaud-Gruson}), and let $Z$ be the strict transform of $Z'$ over $\widehat B$. Then $$Z \to \widehat B$$
is proper, flat, and generically finite, and hence  finite -- so condition (i) is
satisfied. Moreover, $U$ already satisfies condition (ii), and the strict transform of a flat map is just the fibre product, hence $Z$ also satisfies condition (ii). 

Let $\widetilde B \to \widehat B$ be an alteration such that over $\widetilde B$ the multisection $Z$ becomes a disjoint union of sections. 
In other words the pullback 
$$C_{\widetilde{B}}= C \times_B \widetilde B \to \widetilde B$$
has sections $\widetilde p_1, \ldots, \widetilde p_m$ such that, as a set, 
the preimage of $Z$ is given by the union of the images of sections $\widetilde p_1, \ldots, \widetilde p_m$. Such a $\widetilde B$ exists{\footnote{The base $\widehat B$ is excellent since it is finite type over a field.}}
by \cite[Lemma 5.6]{Jong1996Smoothness-semi}.
We can assume that the sections $\widetilde p_i$ are pairwise disjoint over the generic point of $\widetilde B$.

By assumption (i) above, the family $C_{\widetilde B} \to \widetilde B$ with sections $\widetilde p_1, \ldots, \widetilde p_m$ is generically a stable $m$-pointed curve (since 
every component has at least \emph{two} of the sections). We therefore obtain
a rational map $$\widetilde B \dashrightarrow \Mbar_{g,m}\, .$$
Let $B' \to \widetilde B$ be a blow-up resolving the indeterminacy of this map\footnote{As usual, this blowup is constructed by taking the closure of the graph and flattening. Then we check that this ensures the existence of the map to $C_{B'}$ as written below.}

\begin{equation}
\begin{tikzcd}
B' \arrow[d] \arrow[r] & \Mbar_{g,m} \\
\widetilde B \arrow[ru, dashed]&
\end{tikzcd}    
\end{equation}
 and let $C' \to B'$ with sections $p_1, \ldots, p_m : B' \to C'$
 be the pullback of the universal curve over $\Mbar_{g,n}$ to $B'$. Let 
 $$C_{B'} = C \times_B B'$$ 
 be the pullback of $C/B$ under $B' \to \widetilde B \to \widehat B \to B$. Then we have a map $f: C' \to C_{B'}$ fitting in a commutative diagram
 \begin{equation} \label{eqn:partstabilizationalteration}
 \begin{tikzcd}
 C' \arrow[rr,"f"] \arrow[rd] & & C_{B'} \arrow[ld]\\
 & B' \arrow[ul,"p_i", bend left] \arrow[ur, "\widetilde p_i", bend right, swap] &
 \end{tikzcd}
 \end{equation}
 such that $f$ is a partial destabilization. On geometric fibers of $C'\to B'$,
 $f$ collapses trees of rational curves to either nodes or coincident
  sections $\widetilde p_i$ on
 the geometric fibers of $C_{B'}$.
 
 To conclude, we must show that for every geometric point $b \in B'$ and every irreducible component $D \subset C'_b$ which is not contracted by $f$, we can 
 find a marking 
 $$p_i(b)\in D \, .$$ 
 The image of $D$ under $f$ is a component of $(C_{B'})_b$.
 By condition (ii) above, $f(D)$
 has at least one $\widetilde p_i(b)$ in the smooth locus of $f(D)$
 pairwise distinct from all other $\widetilde p_j(b)$.
 Since there are no components of $C'_b$ which collapse to $\widetilde p_i(b)$, we
 must have $p_i(b) \in D$.
 \end{proof}

\begin{lemma} \label{Lem:DR_alteration_invar}
We have 
$\DRop_{g,\emptyset}(\phi_{\ca L})=
\P_{g,\emptyset,0}^g(\phi_{\ca L}) :\Chow_*(B) \to \Chow_{*-g}(B)$. 
\end{lemma}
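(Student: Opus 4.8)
The plan is to exploit the alteration $q\colon B'\to B$ together with the destabilisation $f\colon C'\to C\times_B B'$ supplied by \cref{ALTt}, so as to reduce the desired identity on $B$ to the many-markings case already settled in \cref{lem:many_markings}, and then to descend the resulting equality from $B'$ back to $B$ by proper pushforward. First I would set $C_{B'}=C\times_B B'$, let $\ca M$ be the pullback of $\ca L$ to $C_{B'}$, and put $\ca L'=f^*\ca M$ on $C'$. Then $\ca L'$ is a line bundle of relative degree $0$, and it is trivial on every tree of rational curves contracted by $f$ (a bundle pulled back along the contraction of a curve to a point is trivial). By \cref{ALTt}, on each geometric fibre of $C'/B'$ the complement of the components carrying the sections $p_1,\dots,p_m$ is precisely a disjoint union of such contracted rational trees, so \cref{lem:many_markings} applies to $(C'/B',p_1,\dots,p_m,\ca L')$ with the zero weight vector $\bd 0\in\bb Z^m$ and yields
\[
\DRop_{g,\bd 0}(\phi_{\ca L'})=\P^g_{g,\bd 0,0}(\phi_{\ca L'})\colon\Chow_*(B')\to\Chow_{*-g}(B')\,.
\]

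Next I would transport this equality through two of the invariance properties of \cref{Sect:introinvariance} (which, as recalled in \cref{sec:proof_of_main_theorem_overview}, are proved independently of \cref{Thm:main}). Forgetting the $m$ markings one at a time --- each carries weight $0$, since every entry of $\bd 0$ is zero --- and applying Invariance \invnrn{} for $\DRop$ and for $\P^g$ identifies $\DRop_{g,\bd 0}(\phi_{\ca L'})$ with $\DRop_{g,\emptyset}(\phi_{\ca L'}^{(0)})$, and likewise $\P^g_{g,\bd 0,0}(\phi_{\ca L'})$ with $\P^g_{g,\emptyset,0}(\phi_{\ca L'}^{(0)})$, where $\phi_{\ca L'}^{(0)}\colon B'\to\Picabs_{g,0,0}$ is the map classifying the unpointed prestable curve $(C',\ca L')$. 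Then, since $\ca L'=f^*\ca M$ and $f\colon C'\to C_{B'}$ is a birational morphism of \emph{unpointed} prestable curves (so the weight hypothesis of Invariance \invnrIV{} is vacuous), Invariance \invnrIV{} for $\DRop$ and for $\P^g$ gives $\DRop_{g,\emptyset}(\phi_{\ca L'}^{(0)})=\DRop_{g,\emptyset}(\phi_{\ca L}\circ q)$ and the analogue for $\P^g$, because $\phi_{\ca L}\circ q$ is exactly the map classifying $(C_{B'},\ca M)$. Chaining these identities, one obtains
\[
\DRop_{g,\emptyset}(\phi_{\ca L}\circ q)=\P^g_{g,\emptyset,0}(\phi_{\ca L}\circ q)\colon\Chow_*(B')\to\Chow_{*-g}(B')\,.
\]

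Finally I would descend to $B$. Put $c=\DRop_{g,\emptyset}-\P^g_{g,\emptyset,0}\in\CHop^g(\Picabs_{g,0,0})$, so the last display reads $c(\phi_{\ca L}\circ q)=0$. Compatibility of any operational class with proper pushforward (condition (C1); cf.\ \cref{lem:C1proper}) gives $c(\phi_{\ca L})\circ q_*=q_*\circ c(\phi_{\ca L}\circ q)=0$, so $c(\phi_{\ca L})$ annihilates the image of $q_*\colon\Chow_*(B')\to\Chow_*(B)$. Since $q$ is proper and surjective onto the irreducible scheme $B$, the pushforward $q_*$ is surjective with $\bb Q$-coefficients --- every integral closed subscheme $W\subseteq B$ is dominated, with positive generic degree, by an integral closed subscheme of $B'$ --- hence $c(\phi_{\ca L})=0$, which is the assertion of the lemma. (In genus $1$ the unpointed case is replaced throughout by $g=1$, $n=1$, $A=(0)$, and the argument is the same.)

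The individual steps are all light, the substantive work having been invested in \cref{ALTt} and \cref{lem:many_markings}; the step requiring the most care will be the bookkeeping. One must check that $\ca L'=f^*\ca M$ is genuinely trivial on the contracted trees (so that \cref{lem:many_markings} is applicable), that the forgetful morphisms between the Picard stacks of \emph{prestable} (not stable) pointed curves exist and are compatible with the classifying maps $\phi$, and --- crucially --- that Invariance \invnrIV{} is invoked only \emph{after} the markings have been forgotten, since the sections pulled back to $C_{B'}$ may collide, so that $C_{B'}$ need not be a prestable \emph{pointed} curve. The final surjectivity of $q_*$ on rational Chow groups is the same elementary fact already used in the proof of \cref{sec:very_ample} and in the ground-field descent of the introduction.
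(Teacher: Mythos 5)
Your proposal is correct and takes essentially the same route as the paper's own proof of this lemma: apply \cref{ALTt} to produce the alteration $B'\to B$ and destabilisation $C'\to C_{B'}$, invoke \cref{lem:many_markings} with the zero weight vector, transport the resulting identity on $B'$ through Invariance \invnrn{} (forgetting all $m$ markings, valid since they carry weight $0$) and then Invariance \invnrIV{} (applied to the unpointed partial destabilisation $C'\to C_{B'}$, where the weight hypothesis is vacuous), and finally push forward along the proper surjection $B'\to B$ using compatibility of operational classes with $h_*$ and surjectivity of $h_*$ on rational Chow groups. The only difference is cosmetic: you first propagate the equality on $B'$ through the invariances and then push forward, whereas the paper first observes that proper pushforward reduces the task to showing the difference vanishes on $B'$ and then applies the invariances; the two orderings are logically equivalent.
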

\begin{proof}
We apply Lemma \ref{ALTt} to the family $C/B$ to obtain
$$h\colon B' \to B\, , \ \ \ \ C' \to C_{B'}\, .$$ 
Let $\ca L'$ be the pullback of $\ca L$ to $C'$.
After applying \cref{lem:many_markings} with $A = \bd 0 \in \mathbb{Z}^m$, we obtain
\begin{equation}\label{eq:an_equality}
\DRop_{g,\bd 0}(\phi_{\ca L'})=\P_{g,\bd 0,d}^g(\phi_{\ca L'}):
\Chow_*(B') \to \Chow_{*-g}(B')\, .
\end{equation}

Since $h$ is proper and surjective, for any $\alpha \in \Chow_*(B)$ there exists $\alpha'\in \Chow_*(B')$ satisfying $h_*\alpha'=\alpha$. If any operational class maps $\alpha'$ to $0$, then it maps $\alpha$ to $0$ because the operation 
commutes with $h_*$. 

It therefore suffices to prove  
\begin{equation}
\left( \DRop_{g,\emptyset}-\P_{g,\emptyset,0}^g \right) (\phi_\ca L) \circ h_*
\end{equation}
is the zero map on $\Chow_*(B')$. By the compatibilities of operational classes we have
\begin{equation*}
\left( \DRop_{g,\emptyset}-\P_{g,\emptyset,0}^g \right) (\phi_\ca L) \circ h_* = h_* \left( \DRop_{g,\emptyset}-\P_{g,\emptyset,0}^g \right) (\phi_\ca L \circ h)
\end{equation*}
and the proof below will in fact show $\left( \DRop_{g,\emptyset}-\P_{g,\emptyset,0}^g \right) (\phi_\ca L \circ h)=0$.

By \eqref{eq:an_equality}, we need only show 
\begin{equation}\label{eq:pullback_Div}
\DRop_{g, \emptyset}(\phi_\ca L\circ h)  =  \DRop_{g,\bd 0}(\phi_{\ca L'})
:\Chow_*(B') \to \Chow_{*-g}(B'),
\end{equation}
\begin{equation}\label{eq:pullback_pixton}
\P_{g,\emptyset, 0}^g(\phi_\ca L \circ h) = \P_{g,\bd 0, 0}^g(\phi_{\ca L'})
:\Chow_*(B') \to \Chow_{*-g}(B')
\, .
\end{equation}
For the map $F : \Picabs_{g,n,0} \to \Picabs_{g,0,0}$ forgetting the markings, Invariance \invnrn{} from \cref{Sect:introinvariance} for the double ramification cycle and the Pixton formula shows that
we have
\begin{align*}
\DRop_{g,\bd 0}(\phi_{\ca L'})
&= \DRop_{g,\emptyset}(F \circ \phi_{\ca L'})  \\
\P_{g,\bd 0, 0}^g(\phi_{\ca L'}) 
&= \P_{g,\emptyset, 0}^g(F \circ \phi_{\ca L'})
\end{align*}
So we are reduced to showing
\begin{equation} \label{eqn:InvIVapplication}
    \DRop_{g, \emptyset}(\phi_\ca L\circ h) = \DRop_{g, \emptyset}(F \circ \phi_{\ca L'})\ \ \text{ and }\ \ \P_{g,\emptyset, 0}^g(\phi_\ca L\circ h) = \P_{g,\emptyset, 0}^g(F \circ \phi_{\ca L'})\ .
\end{equation}

The claims \eqref{eqn:InvIVapplication} follow from
 Invariance \invnrIV{} of \cref{Sect:introinvariance}. As before, let
 $C_{B'}$ be the pullback of $C$ under $h$,  and let
 $\ca L_{B'}$ be the pullback of $\ca L$ to $C_{B'}$. The map 
 $$\phi_\ca L\circ h  : B' \to \Picabs_{g,0,0}$$
 is induced by the data
\[C_{B'} \to B'\, ,\ \ \ \ca L_{B'} \to C_{B'}\, ,\]
whereas $F \circ \phi_{\ca L'} : B' \to \Picabs_{g,0,0}$ is induced by
\[C' \to B'\, ,\ \ \  \ca L'\to C'\,.\]
By construction, we have a partial destabilization $C' \to C_{B'}$ over $B'$, and the line bundle $\ca L'$ is the pullback of $\ca L_{B'}$ under this map. Hence the equalities \eqref{eqn:InvIVapplication} follow from Invariance \invnrIV{} of \cref{Sect:introinvariance}.
\end{proof}

\section{Comparing rubber and log spaces}\label{sec:comparing_stacks}

\subsection{Overview}
Our goal here is to compare the stack of stable rubber maps 
associated to a line bundle $\ca L$ on a target $X$   (introduced by Li \cite{Li2002A-degeneration-} and studied by Graber-Vakil \cite{Graber2005Relative-virtua}) to
the stack $\cat{Rub}_{g,A}$ of Marcus-Wise (see \cref{sec:log_def}) and our operational class $\DRop_{g,A}$. 
Rubber maps are reviewed in Section \ref{sec:prestable_rubber_maps} and connected to
the logarithmic space in Section \ref{clog}.
The relationship between  the construction
of Marcus-Wise and $\DRop_{g,A}$ is 
 \cref{lem:DRop_eq_MW} of Section \ref{kkkddd3}. The comparison to the class of Graber-Vakil is carried out in \cite{Marcus2017Logarithmic-com} in the case where the target $X$ is a point. We require the case where
$$X = \bb P^\kk  \ \ \ \text{and}\ \ \  \ca L = \ca O(1)\, ,$$
but only over the unobstructed locus 
$$\Mbar_{g,n}(\bb P^l, d)' \subset \Mbar_{g,n}(\bb P^l, d)\, ,$$
see \cref{sec:projective_target}. 
We will treat the case of a general nonsingular 
projective target $X$ since restricting to $\bb P^\kk$ provides no simplification (though the unobstructed locus may be rather small for general $X$). 
The final comparison result is Proposition \ref{prop:vfc_comparison_summary} in Section \ref{mwgv}.

\subsection{Refined definition of the logarithmic rubber space}

As described in Section \ref{sec:rub_log_def}, Marcus and Wise define $\cat{Rub}^{\mathsf{rel}}_{g}$ to be the moduli space of pairs $(C,P, \alpha)$ where $P$ is a tropical line on $S$ and $$\alpha\colon C \to P$$ is an $S$-morphism such that on each geometric fibre over $S$ the values taken by $\alpha$ on the irreducible components of $C$ are totally ordered in $(\bar{M}_S^{gp})_s$. However, with the above definition, certain key results of their paper (in particular concerning the comparison to spaces of rubber maps) are not correct as stated. 

To explain the problem, we restrict to the case where the base $C$ is a geometric log point. Subdividing $P$ at the images of the vertices of $C$ under the map $\alpha$ yields a \emph{divided tropical line} $Q$ (in the language of \cite{Marcus2017Logarithmic-com}). It is asserted in the discussion above \cite[\MWref{Proposition 5.5.2}\MWvone{Proposition 5.14}]{Marcus2017Logarithmic-com} that the fibre product $C \times_P Q$ is again a log curve over $S$, which, in general,
is not true. For example, take $\bar M_S$ to be the sub-monoid of $\mathbb Z^2$ generated by $(1,1)$, $(1,0)$, and $(1,-1)$, and $C$, $\alpha$ to be as illustrated in Figure \ref{fig:1}. In the fibre product,
the edge with length $(1,0)$ must be subdivided into two shorter edges, but $(1,0)$ 
is an irreducible element of $\bar M_S$. 
In fact, {\em failure of divisibility} is the only thing that can go wrong.

\begin{lemma}\label{lem:divisibility}
Let $(C, P, \alpha)$ be a point of $\cat{Rub}_g^{\mathsf{rel}}$ over a geometric log point $B$, and let $Q$ be obtained from $P$ by subdividing at the image of $\alpha$. Then the following are equivalent:
\begin{enumerate}
\item[(i)]
The fibre product $C \times_P Q$ is a log curve over $B$.
\item[(ii)] Let $e$ be an edge of $\Gamma_C$ between vertices $u$ and $v$ (satisfying $\alpha(v) \ge \alpha (u)$) with length $\ell_e\in \bar M_S$ and slope $\kappa_e = \frac{\alpha(v) - \alpha(u)}{\ell_e}$. Then, for every $y \in \on{Image}(\alpha)$ with $\alpha(u) < y < \alpha(v)$, the monoid $\bar M_{B}$ contains the element $\frac{y - \alpha(u)}{\kappa_e}$. 
\end{enumerate}
\end{lemma}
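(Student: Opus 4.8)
The plan is to unwind both conditions into explicit combinatorial statements about the monoid $\bar M_B$ and the slopes and lengths attached to the edges of $\Gamma_C$, and then observe they are literally the same. First I would recall that a log curve over a geometric log point $B$ is, \'etale-locally at each node, of the form $\on{Spec} {\field}[x,y]/(xy - t)$ with $t \in M_B$ mapping to some $\ell \in \bar M_B$, so being a log curve is equivalent to: at every node the ``smoothing parameter'' is a genuine element of $\bar M_B$ (not merely of $\bar M_B^{gp}$), together with the (automatic) condition that away from nodes the log structure is pulled back from $B$. The point of subdividing $P$ at $\on{Image}(\alpha)$ to get $Q$ and forming $C \times_P Q$ is that the new curve $\tilde C = C\times_P Q$ has the same components as $C$ except that each edge $e$ of $\Gamma_C$ gets subdivided by inserting one new vertex for each value $y\in \on{Image}(\alpha)$ strictly between $\alpha(u)$ and $\alpha(v)$. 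So I would first carefully describe $\tilde C$ as a nodal curve: its dual graph $\Gamma_{\tilde C}$ is $\Gamma_C$ with each edge $e$ subdivided into $k_e + 1$ sub-edges, where $k_e = \#\{y\in\on{Image}(\alpha) : \alpha(u) < y < \alpha(v)\}$.

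Next I would compute the candidate log structure on $\tilde C$ coming from the fibre product. On the sub-edge of $e$ lying between consecutive values $y' < y''$ in the chain $\alpha(u) = y_0 < y_1 < \dots < y_{k_e} < y_{k_e+1} = \alpha(v)$, the natural ``length'' forced by the map to $Q$ (whose $j$-th segment has tropical length $y_{j+1} - y_j$, an element of $\bar M_B^{gp}$) and by the constraint that the slope of $\alpha$ along $e$ is the constant $\kappa_e$, is exactly $(y_{j+1}-y_j)/\kappa_e \in \bar M_B^{gp}$. The fibre product $C\times_P Q$ is a log curve over $B$ if and only if each of these pieces is an honest node, i.e. each $(y_{j+1}-y_j)/\kappa_e$ actually lies in $\bar M_B$ (and not just in $\bar M_B^{gp}$); at the original vertices and along the rational tails nothing new happens, since there the log structure is already the pullback from $B$. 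This is exactly the content of condition (ii), once one notes the quantities $\frac{y - \alpha(u)}{\kappa_e}$ for $y$ running over the intermediate values, together with the total length $\ell_e = \frac{\alpha(v)-\alpha(u)}{\kappa_e}\in \bar M_B$ already assumed, encode precisely the finitely many consecutive differences $\frac{y_{j+1}-y_j}{\kappa_e}$: indeed $\frac{y_{j+1}-y_j}{\kappa_e} = \frac{y_{j+1}-\alpha(u)}{\kappa_e} - \frac{y_j - \alpha(u)}{\kappa_e}$, so membership of all $\frac{y-\alpha(u)}{\kappa_e}$ in $\bar M_B$ gives membership of all the differences, and conversely the partial sums of the differences (starting from $0$) recover each $\frac{y-\alpha(u)}{\kappa_e}$.

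So the proof reduces to the following two implications, which I would state as the two directions. For (ii) $\Rightarrow$ (i): assuming each $\frac{y-\alpha(u)}{\kappa_e}\in\bar M_B$, I would write down the log structure on $\tilde C$ sub-edge by sub-edge using these elements as smoothing parameters, check it is a fine saturated log structure making $\tilde C \to B$ log smooth of the shape required for a log curve (the marked points and genus are unchanged since subdivision inserts only genus-$0$ two-valent vertices), and verify it agrees with the one pulled back via $\tilde C = C\times_P Q$ — the last point being a local computation at each inserted vertex comparing the two presentations. For (i) $\Rightarrow$ (ii): if $C\times_P Q$ is a log curve, then at each inserted two-valent vertex the local ring has the form of a node with a genuine smoothing parameter in $\bar M_B$, and chasing through the definition of the fibre product identifies that parameter with $\frac{y_{j+1}-y_j}{\kappa_e}$; summing these from $\alpha(u)$ up to a given intermediate $y$ yields $\frac{y-\alpha(u)}{\kappa_e}\in\bar M_B$, since $\bar M_B$ is closed under addition.

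I expect the main obstacle to be purely bookkeeping: writing down the fibre product log structure on the subdivided curve explicitly and correctly, and matching the tropical data (segment lengths of $Q$, slopes $\kappa_e$) with the scheme-theoretic data (smoothing parameters of nodes), especially keeping track of the direction conventions (which of $u,v$ has larger $\alpha$-value, so that $\kappa_e > 0$ and the quotients make sense in $\bar M_B^{gp}$). The conceptual content is small — it is the observation that subdividing an edge forces its length to be divisible by the ``step size'' of the subdivision — but the log-geometric formalization of ``forces'' is where care is needed. Once the local picture at an inserted vertex is pinned down, both implications are immediate, and I would phrase the whole argument so that the Figure~\ref{fig:1} example becomes the obvious failure case: there $\kappa_e = 1$, $\alpha(u) = 0$, the intermediate value is $(1,0)$ hmm — rather, the intermediate value $y$ is such that $\frac{y - \alpha(u)}{\kappa_e} = (1,0)$ is irreducible and the relevant difference is not in $\bar M_B$, so (ii) fails and correspondingly $C\times_P Q$ is not a log curve.
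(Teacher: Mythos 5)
Your overall strategy mirrors the paper's: describe $C\times_P Q$ as the edge-by-edge subdivision of $C$, identify the sub-edge lengths $\frac{y_{j+1}-y_j}{\kappa_e}$ as the candidate smoothing parameters of the new nodes, and observe that the fibre product is a log curve precisely when each of these lies in $\bar M_B$. That part is right. But the step where you identify this last condition with (ii) has a genuine gap. You write that $\frac{y_{j+1}-y_j}{\kappa_e}=\frac{y_{j+1}-\alpha(u)}{\kappa_e}-\frac{y_j-\alpha(u)}{\kappa_e}$, ``so membership of all $\frac{y-\alpha(u)}{\kappa_e}$ in $\bar M_B$ gives membership of all the differences.'' That inference is false: a monoid is not closed under differences of its elements, so $a,b\in\bar M_B$ does not give $b-a\in\bar M_B$. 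This is exactly the kind of subtlety the lemma exists to control, so it cannot be waved away. The same false inference recurs implicitly at the last sub-edge, where you would need $\ell_e-s_k\in\bar M_B$ but (ii) only directly gives $s_k\in\bar M_B$ and $\ell_e\in\bar M_B$.

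The equivalence is nonetheless true, but it needs two hypotheses you did not invoke: that $\bar M_B$ is \emph{saturated}, and that the $\alpha$-values are \emph{totally ordered} in the sense of $\cat{Rub}$ (i.e.\ $y_{j+1}-y_j\in\bar M_B$ and $\alpha(v)-y_k\in\bar M_B$). With these, the argument is: $\kappa_e(s_{j+1}-s_j)=y_{j+1}-y_j\in\bar M_B$ by total ordering, and $s_{j+1}-s_j\in\bar M_B^{gp}$ since $s_j,s_{j+1}\in\bar M_B$, so by saturation $s_{j+1}-s_j\in\bar M_B$; likewise for $\ell_e-s_k$. The converse direction (sub-edge lengths in $\bar M_B$ implies each $s_j\in\bar M_B$) is the trivial one and your ``partial sums'' remark handles it correctly. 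Once the saturation argument is inserted, your proof is complete and agrees with the paper's, which computes the local characteristic monoid at the node as $\{(a,b)\in\bar M_B^2:\ell_e\mid a-b\}$ and observes that the fibre product subdivides it at $\frac{y-\alpha(u)}{\kappa_e}$, declaring the log-curve conclusion ``easily seen'' when that element lies in $\bar M_B$. (Your closing sentence about Figure~\ref{fig:1} also has a slip: on the troublesome edge of length $(1,0)$ the slope is $\kappa_e=2$, not $1$, and the subdivision point $(1,1)/2$ is not even in $\bar M_B^{gp}$, which is the paper's point about non-reducedness; this is cosmetic and not a logical gap.)
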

\begin{proof}
The characteristic monoid at a singular point with length $\ell_e$ is given by the monoid
\begin{equation}
\{(a,b) \in \bar M_b^2 : \ell_e \mid a-b\}\, . 
\end{equation}
Taking the fibre product over $P$ with $Q$ subdivides the characteristic monoid at the element $$\frac{y - \alpha(u)}{\kappa_e} \in \bar M_B^{\mathsf gp} \otimes_{\bb Z} \bb Q\, .$$
If $\frac{y - \alpha(u)}{\kappa_e}$ lies in $\bar M_B$, then the fiber product is easily seen to be a log curve. If not, then the subdivision is not even reduced. 
\end{proof}

\newcommand{\newRub}{\widetilde{\cat{Rub}}}
\newcommand{\newRubrel}{\newRub^{\mathsf{rel}}}
\begin{definition}
We define $\newRubrel$ to be the full subcategory of $\cat{Rub}^{\mathsf{rel}}$ consisting of objects $(C, P, \alpha)$ which, on each geometric fibre over $B$, satisfy the equivalent conditions of Lemma \ref{lem:divisibility}. We define $\newRub$ to be the fibre product of $\newRubrel$ over $\Picrel$ with $\Picabs$. 
\end{definition}

\begin{remark}
The double ramification cycle $\DRop$ can be defined as the operational class induced by the map $\cat{Rub} \to \Picabs$ following Definition \ref{defbivariantclass}. Applying the same definition to the composite map $\widetilde{\cat{Rub}} \to \Picabs$ yields the same operational class, by \Cref{prop:invarianceproperbirat}. 
\end{remark}



\begin{figure}
\begin{tikzpicture}
\begin{scope}[every node/.style={circle,thick,draw}]
    \node (A) at (0,0) {};
    \node (B) at (0,6) {};
    \node (C) at (3,3) {};
    \node (D) at (9,0) {};
    \node (E) at (9,3) {};
    \node (F) at (9,6) {} ;
\end{scope}
    \node (G) at (4,3) {} ;
        \node (H) at (7,3) {} ;
     \node at (10,0) {$(0,0)$} ;
\node at (10,3) {$(1,1)$} ;
\node at (10,6) {$(2,0)$} ;
\begin{scope}[>={Stealth[black]},
              every node/.style={fill=white,circle},
              every edge/.style={draw=black ,very thick}]
    \path [-] (A) edge node {$(1,0)$} (B);
    \path [-] (B) edge node {$(1,-1)$} (C);
    \path [-] (C) edge node {$(1,1)$} (A);
  \path [->] (G) edge node {$\alpha$} (H);
   \path [-] (D) edge  (E);
      \path [-] (F) edge  (E);
\end{scope}
\end{tikzpicture}
\caption{A point of $\cat{Rub}$}\label{fig:1}
\end{figure}

\subsection{The stack of prestable rubber maps}\label{sec:prestable_rubber_maps}
Let $\frak M(X)$ be the stack of maps from marked prestable curves to $X$. An $S$-point of $\frak M(X)$ is a pair 
$$(C/S,\, f\colon C \to X)  $$ 
where $C/S$ is prestable with markings. To simplify notation, we will often
suppress the markings.

The space of rubber maps associated to a line bundle $\ca L$ on $X$ is summarised in \cite{Janda2018Double-ramifica}:
\emph{a map to rubber with target $X$ is a map to a rubber chain of $\bb C\bb P^1$-bundles $\bb P(\ca O_X \oplus \ca L)$ over $X$ attached along their 0  and $\infty$ divisors.}

\newcommand{\R}{R}
To facilitate our comparison, we begin by writing the definition explicitly. 
Let $\mathbf{P}$ denote the projective bundle $\bb P(\ca O_X \oplus \ca L)$.
The map collapsing the fibers,
\begin{equation}\label{55772}
\rho: \mathbf{P} \to X\ 
\end{equation}
admits two sections $r_0,r_\infty:X\to \mathbf{P}$
corresponding to $\ca O_X$ and $\ca L$ respectively.

\begin{definition}\label{def:rubber_target} An $(X,\ca L)$-rubber target $(R/S,\rho, r_0,r_\infty)$ is flat,
proper, and finitely presented
$$ R \to S$$
and a collapsing map $\rho: R \to X_S$
with two sections 
$$r_0,r_\infty: X_S \to R\, $$
satisfying the following properties:
\begin{enumerate}
    \item [(i)]
    Every geometric fiber  $R_s$ is isomorphic over $X_s$ to
    a finite chain  
    \begin{equation}\label{v477}
    \mathbf{P}\cup \mathbf{P} \cup \ldots \cup \mathbf{P}
    \end{equation}
    with the components attached successively along the respective
    $0$ and $\infty$ divisors. The collapsing maps \eqref{55772} on the
    components together define $$\rho_s: R_s \to X_s\,.$$
    The $0$ and $\infty$ sections of $\rho_s$ are determined
    by the $0$ section of first component  and the $\infty$ section of
    last components of the chain \eqref{v477}.
    \item[(ii)] \'Etale locally near every point $s\in S$, the data 
    of $(R/S,\rho, r_0,r_\infty)$ is pulled back from
    a versal deformation space described by Li \cite{Li2001-Stable}
    with one dimension for every component of the singular locus of \eqref{v477}. 
\end{enumerate}
\end{definition}
\begin{definition}\label{def:prestable_rubber_maps}
The stack $\cat{Rub}^{\pre}(X, \ca L)$ of prestable rubber maps to $\ca L$ is a fibred category over $\frak M(X)$ whose fibre over a map $S \to \frak M(X)$ consists of 
three pieces of data: 
\begin{enumerate}
\item[(i)]
a prestable curve $\tilde C /S$ and a partial
stabilisation{\footnote{$C_S$ is not
necessarily a stable curve.}}
map $\tau\colon \tilde C \to C_S$ which is allowed to contract
genus 0 components with 2 special points,
\item[(ii)] an $(X,\ca L)$-rubber target $(R/S,\rho,r_0,r_\infty)$,

 

\item[(iii)] a map $\tilde{f}: \tilde C \to \R$ for which 
the following diagram commutes: 
\begin{equation}
\begin{tikzcd}
\tilde C \arrow[r,"\tilde{f}"] \arrow[d,"\tau"] & R \arrow[d, "\rho"]\\
C_S \arrow[r, "f"] & \, X_S\, .
\end{tikzcd}
\end{equation}
\end{enumerate}
The map $\tilde{f}$ in (iii) is finite over the singularities of $R/X_S$ and
predeformable{\footnote{See \cite{Li2001-Stable}.}}.
Moreover, over each geometric point $s\in S$, the image $\tilde{f}(\tilde{C}_s)$
meets
every component of $R_s$.

\end{definition}

An isomorphism between two objects
\begin{equation*}
(\tilde C \to C_S, R, r_0, r_\infty, \tilde C \to R) \,\,\, \text{ and } \,\,\, (\tilde C' \to C_S, R', r'_0, r'_\infty, \tilde C' \to R')
\end{equation*}
over $S \to \frak M(X)$ is given by the data of isomorphisms
\begin{equation*}
\tilde C' \isom \tilde C
\end{equation*}
over $C_S$ and
\begin{equation*}
R' \isom R
\end{equation*}
over $X_S$, compatible with the markings and such that the diagram
\begin{equation*}
\begin{tikzcd}
\tilde C' \arrow[r, "\sim"] \arrow[d] & \tilde C \arrow[d]\\
R' \arrow[r,"\sim"] & R
\end{tikzcd}
\end{equation*}
commutes. We leave the definition of the cartesian morphisms to the careful reader. 

Suppose now that we fix a genus $g$ and a vector of integers $A$ of length $n$. We define the stack $\cat{Rub}^{\pre}_{g,A}(X, \ca L)$ with objects being tuples 
\begin{equation}\label{vtt5}
(\tau\colon (\tilde C, p_1,\ldots,p_n) \to C_S\, , \R/X_S\, , \tilde{f}\colon \tilde C \to \R)
\end{equation}
where $(\tilde{C},p_1,\ldots,p_n)$ is a prestable curve of genus $g$ with $n$
markings.
The data \eqref{vtt5} are as for $\cat{Rub}^{\pre}(X, \ca L)$. Moreover, 
\begin{itemize}

\item[$\bullet$]
if $a_i >0$,  $p_i\in \tilde{C}$ is mapped to the $0$-divisor with
 ramification degree $a_i$,
 \item[$\bullet$]
if $a_i <0$,  $p_i\in \tilde{C}$ is mapped to the $\infty$-divisor with
 ramification degree $-a_i$,
\item[$\bullet$] if $a_i=0$, $p_i\in \tilde{C}$ is mapped to the smooth locus of $R$
away from the $0$ and $\infty$-divisors.
\end{itemize}


\subsection{Comparison to the logarithmic space}\label{clog}


The pullback of $\ca L$ from $X$ to the universal curve over $\frak M(X)$ induces a map $\frak M(X) \to \frak{Pic}$. The key comparison result is the following. 


\begin{proposition}\label{lem:prestable_stack_comparison}
The stack $\cat{Rub}_{g,A}^{\pre}(X, \ca L)$ is naturally isomorphic to the fibre product of $\newRub_{g,A}$ over $\Picabs$ with $\frak M(X)$ along the map induced by $\ca L$,
$$\cat{Rub}_{g,A}^{\pre}(X, \ca L)\,  \stackrel{\sim}{=}\,  \newRub_{g,A} \times_{\Picabs} {\frak{M}(X)} \, .$$
\end{proposition}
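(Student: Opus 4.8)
The plan is to construct mutually inverse functors between the two fibred categories over $\frak M(X)$. Both sides are stacks over $\frak M(X)$, and a morphism over $\frak M(X)$ encodes the data of a prestable curve $C_S/S$ together with a map $f \colon C_S \to X_S$, so it suffices to produce, for each such base datum, an equivalence of the corresponding fibre groupoids that is natural in $S$. First I would unpack the right-hand side: a point of $\newRub_{g,A} \times_{\Picabs} \frak M(X)$ over $S \to \frak M(X)$ is a point $(\tilde C \to C_S, f \colon C_S \to X_S)$ of $\frak M(X)$ (after allowing $\tilde C$ the log structure and remembering that the log curve $\tilde C$ may be a destabilisation of the minimal one) together with a tropical line $P$ on $S$, an $S$-morphism $\alpha \colon \tilde C \to P$ satisfying the total-ordering and divisibility conditions of Lemma \ref{lem:divisibility}, a line bundle $\ca L'$ on $\tilde C$ with $[\ca L'] = \aj^{\rel}(\tilde C, P, \alpha)$, and an identification of $\ca L'$ with the pullback of $\ca L$ along $\tilde C \to C_S \xrightarrow{f} X_S$. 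The main construction is then: from such a tropicalisation-plus-line-bundle datum, build an $(X,\ca L)$-rubber target $R/X_S$ and a predeformable map $\tilde f \colon \tilde C \to R$ over $X_S$ lifting $f \circ \tau$.

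The key geometric input is exactly the passage, already recalled in the excerpt, from a tropical line with a section to a chain of $\bb P^1$'s: subdividing $P$ at the images of the vertices of $\tilde C$ under $\alpha$ produces a divided tropical line $Q$, and the divisibility condition (Definition of $\newRubrel$) guarantees that $\tilde C \times_P Q$ is again a log curve over $S$ — this is the precise purpose of the refinement introduced in Lemma \ref{lem:divisibility}, and it is what makes the Marcus--Wise comparison work. Twisting $\ca O_X \oplus \ca L$ along the levels of $Q$ gives the rubber target $R = \bb P(\ca O_X \oplus \ca L) \cup \cdots \cup \bb P(\ca O_X \oplus \ca L)$ as a chain over $X_S$ with the correct versal deformation behaviour (Definition \ref{def:rubber_target}(ii) is local and matches Li's deformation space, as the node-lengths of $R$ are read off from the edge-lengths of $Q$). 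The map $\alpha$, viewed through the exact sequence \eqref{eq:log_exact_seq} as in the ``log line bundle'' description $\cat{Div}_g'$, furnishes a section of the tropicalisation of $f^*\ca P$ on $\tilde C$, hence a $\bb G_m$-torsor, hence (after the subdivision $Q$) a genuine map $\tilde f \colon \tilde C \to R$ over $X_S$ whose relative slopes at the nodes and markings are governed by $I(h)$ and by $A$; predeformability and finiteness over the singular locus of $R$ follow because the slope of $\alpha$ on each edge of $\tilde C$ maps to the multiplicity, exactly as in the point-target case of \cite{Marcus2017Logarithmic-com}. The marking conditions ($a_i > 0 \mapsto$ ramification $a_i$ over the $0$-divisor, etc.) translate directly into the outgoing-slope conditions defining $\newRub_{g,A}$.

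For the inverse functor, given a point $(\tau \colon \tilde C \to C_S, R/X_S, \tilde f)$ of $\cat{Rub}_{g,A}^{\pre}(X,\ca L)$, I would read off the tropical line $P$ and section $\alpha$ from the combinatorics of the chain $R$: the totally ordered sequence of components of $R$ over a geometric point $s$ is a chain of $\bb P^1$'s, whose node-lengths (the versal-deformation parameters of Definition \ref{def:rubber_target}(ii)) assemble into a tropical line, and the component of $R$ hit by each component of $\tilde C_s$ defines $\alpha$ on that component; predeformability of $\tilde f$ gives the edge-slopes, so $\alpha$ lands in $\cat{Rub}^{\rel}$, and the divisibility condition holds because $R \to X_S$ is an honest flat family (the subdivision it induces is reduced). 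The line bundle $f^*\ca L$ on $C_S$ pulls back to $\tilde C$ and is identified with $\aj^{\rel}(\tilde C, P, \alpha)$ by the standard local computation on a smoothing (the same computation used in the proof of Proposition \ref{prop:aj_image}). Then I would check the two composites are naturally isomorphic to the identity — this is a diagram-chase using that $\tilde C \times_P Q$ recovers $\tilde C$ after contracting the rational bridges introduced by subdivision, and that the versal deformation data on both sides match the lengths of $Q$. I expect the main obstacle to be the careful bookkeeping that the destabilisation $\tau$ on the rubber side corresponds precisely to the extra log-curve structure on $\newRub$ (i.e. the fact that $\tilde C$ there may be a non-minimal log curve, forced by the subdivision $Q$), and checking that morphisms — in particular the cartesian morphisms left ``to the careful reader'' in Definition \ref{def:prestable_rubber_maps} — match up; everything else is a fibre-wise unwinding of the correspondence between chains of $\bb P^1$-bundles and tropical/divided tropical lines, exactly parallel to \cite{Marcus2017Logarithmic-com} in the point-target case, with $X$ carried along as an inert factor via the collapsing map $\rho$.
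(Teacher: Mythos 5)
Your overall strategy — construct mutually inverse functors, with the forward direction resting on the subdivision $\tilde C \times_{\ca P} \ca Q$ being a log curve (the divisibility condition of \cref{lem:divisibility}) and the backward direction reading off a divided tropical line from the chain $R$ — is the same as the paper's, and you correctly identify the role of the refinement $\newRub$. But there are two gaps that are not mere bookkeeping.

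First, in the forward direction you write $R = \bb P(\ca O_X\oplus\ca L)\cup\cdots\cup\bb P(\ca O_X\oplus\ca L)$ by "twisting $\ca O_X\oplus\ca L$ along the levels of $Q$". Fibrewise that is the right picture, but in families it does not quite work: the datum on the $\newRub$ side gives a line bundle $\ca L'$ on $\tilde C$ whose class in $\Picrel$ equals $\aj^{\rel}(\tilde C,\ca P,\alpha)$, which only determines $\ca L'$ from $\alpha$ up to a line bundle pulled back from $S$. Equivalently, the canonical $\bb G_m$-torsor $\alpha^*P$ on $C$ and $f^*\ca L^\vee$ agree only up to a twist by a line bundle $\ca M$ on $S$. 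The paper defines $\ca M$ explicitly as the descent of $\alpha^*P\otimes f^*\ca L^\vee$ and then sets $R=\on{Hom}((\ca L\otimes\ca M)^*,Q)$. Without carrying this $\ca M$ through, the chain you write down will not receive a map from $\tilde C$ over $X_S$ compatible with the given isomorphism of line bundles, so the functor as you have sketched it is not well defined on objects over a general base.

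Second, and more seriously, in the backward direction you say the node-lengths of the chain $R$ "assemble into a tropical line" and then handle divisibility because "$R\to X_S$ is an honest flat family". The missing step is that the natural log structure coming from the prestable curve $R/X_S$ lives on $X_S$, not on $S$, and it is not automatic that it, or the quotient $[R/\bb G_m]$, descends to $S$. The paper isolates this as \cref{bbll}, whose proof is a genuine argument: after a finite base extension one picks a section $x\colon S\to X_S$, rigidifies the characteristic monoid at each point by the canonical total order on nodes (proximity to the $r_0$-marking), shows that this gives a canonical identification $\pi^*x^*\bar M_{X_S}\simeq\bar M_{X_S}$, and then promotes this to an isomorphism of log structures using that the divisors $D_i$ cutting out persistence of the $i$th node are unions of fibres (here regularity and atomicity of $S$ enter). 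The same lemma is what lets $[R/\bb G_m]$ descend to a divided tropical line on $S$, which is the object you need to land in $\newRub$. Dismissing this as "fibre-wise unwinding" and "careful bookkeeping" undersells a real technical obstruction. Apart from these two points, your account of the subdivision picture, the role of \cref{lem:divisibility}, and the verification that no component of $\tilde C$ lands in the singular locus of $R$ matches the paper.
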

\begin{proof}
The right hand side comes with a built-in log structure, but the left side
does not. Our isomorphism will be between the underlying stacks. Our proof is based on the discussion above \cite[\MWref{Proposition 5.5.2}\MWvone{Proposition 5.14}]{Marcus2017Logarithmic-com}, and we will use the language of \emph{divided tropical lines} of \cite{Marcus2017Logarithmic-com}.

We begin by building a map from the right to the left. We are given a log curve $C/S$, a tropical line $\ca P$ on $S$, a map $\alpha\colon C\to \ca P$ whose image is totally ordered, and a map $f\colon C \to X$, such that $f^*\ca L$ lies in the isomorphism class $\ca O_C(\alpha)$. 

The $\bb G_m^{\mathsf{trop}}$-torsor $\ca P$ is rigidified by the least element among the images of the irreducible components of $C$ (here we use the total ordering condition), and hence comes with a canonical $\bb G_m$-torsor $P \to \ca P$ (if we use the rigidification to identify $\ca P = \bb G_m^{\mathsf{trop}}$ then $P = \bb G_m^{\mathsf{log}}$). The pullback $\alpha^*P$ gives a \emph{canonical} $\bb G_m$-torsor on $C$, which is isomorphic to $f^*\ca L^*$ up to pullback from $S$. In other words, the bundle $\alpha^*P \otimes f^*\ca L^\vee$ descends to a line bundle on $S$ which we denote $\ca M$. 

The images of the irreducible components of $C$ yield a subdivision $\ca Q$ of $\ca P$, and we define a destabilisation $\tilde C = C \times_{\ca P}\ca Q$ of $C$, which is a log curve over $S$ by Lemma \ref{lem:divisibility}. This $\ca Q$ comes with a canonical $\bb G_m$-torsor $Q$ by pulling back $P$ from $\ca P$; this $Q$ is then a 2-marked semistable genus 0 curve by \cite[\MWref{Proposition 5.2.4}\MWvone{Proposition 5.7}]{Marcus2017Logarithmic-com}. We define an $(X, \ca L)$-rubber target $R$ over $S$ by the formula
$$ R = \on{Hom}((\ca L \otimes \ca M)^*, Q)\,. $$
Here, we pull back 
and take $\bb G_m$-equivariant homomorphisms over $X_S$. 


Write $\tilde f\colon \tilde C \to X$. 
We need a predeformable map $\tilde C \to R$, equivalently an equivariant logarithmic map $\tilde f^* {\ca L}^* \to Q$ over $X_S$. It is enough to give a map $f^*\ca L \to P$ (since then we can tensor over $\ca P$ with $\ca Q$), which reduces to writing down an element of 
\begin{equation}
\begin{split}
\on{Hom}_C(f^*(\ca L \otimes \ca M), \alpha^*P) &= 
\on{Hom}_C(f^*\ca L \otimes f^*\ca L^\vee \otimes \alpha^*P, \alpha^*P)\\
& = \on{Hom}_C(\alpha^*P, \alpha^*P), 
\end{split}
\end{equation}
which contains the identity. The scheme-theoretic map is predeformable as it comes from a logarithmic map, see \cite{BumsigKim2008LOGARITHMICSTABLEMAPS}. 


Finally we check that no component of $\tilde C$ is mapped to a non-smooth point of $R$ and that every component is hit. The target $R$ is  constructed by subdividing $f^*\ca L$ at images of components of $C$, and then $\tilde C$ is constructed by subdividing $C$ at points lying over these divisions, so both assertions are clear. 

Now we construct a map from left to right. Given a prestable rubber map to $\ca L$ over a base $S$, we first need to equip $S$ with a suitable log structure. 

The curve $R/X_S$ is a map $X_S \to \frak M_{0,2}^{ss}$, giving a (minimal) log structure on $X_S$ by pullback.
Lemma \ref{bbll} below shows that this log structure descends to $S$. The curve $R/X_S$ now carries the structure of a log curve, and similarly the quotient $[R/\bb G_m]$ descends to $S$ (again by the Lemma \ref{bbll}), determining our tropical line $\ca P$ --- which evidently satisfies the divisibility condition in Lemma \ref{lem:divisibility}.  

It remains to verify that the map $\tilde C \to R$ descends to a map $C \to \ca P$ 
and that the total ordering condition is satisfied. Write
$$\tau\colon \tilde C \to C\, .$$ 
By the proof of \cite[\MWref{Proposition 5.5.2}\MWvone{Proposition 5.14}]{Marcus2017Logarithmic-com},
we see $R \to \tau_*\tau^*R$ is an isomorphism, hence the map descends as required. The condition that no components are mapped to the nodes implies that the values of $\alpha$ on the irreducible of $C$ are a subset of the irreducible components of $R$, in particular are totally ordered. 
\end{proof}

\begin{lemma}\label{bbll}
Let $(R/S, \rho, r_0, r_\infty)$ be an $(X, \ca L)$-rubber target. Then there exists a (minimal) log structure on $S$ such that $R/X_S$ can be equipped with the structure of  a log curve making $X_S$ strict over $S$. The quotient log stack $[R/\bb G_m]$ descends to a divided tropical line on $S$.  
\end{lemma}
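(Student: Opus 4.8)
The statement is essentially a descent problem: an $(X,\ca L)$-rubber target $(R/S,\rho,r_0,r_\infty)$ carries a lot of structure, and we want to show that all the log-theoretic information it determines ``over $X_S$'' in fact lives over $S$. The plan is to proceed in three steps. First, I would recall from Definition \ref{def:rubber_target} that, \'etale-locally on $S$, the data $(R/S,\rho,r_0,r_\infty)$ is pulled back from Li's versal deformation space with one smoothing parameter for each component of the singular locus of the chain \eqref{v477}. Since the chain structure (and hence its singular locus, which is a disjoint union of copies of $X_s$) is constant along the fibres of $\rho$, these smoothing parameters are functions on $S$, not on $X_S$: concretely, the local model near the $j$-th node is $xy = t_j$ with $t_j \in \ca O_S$, the coordinates $x,y$ being fibre coordinates on the two adjacent $\mathbf P$-components. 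This gives a candidate minimal log structure on $S$, namely the one associated to the divisors $\{t_j = 0\}$ glued along the strata, exactly as in Kato's construction \cite{Kato2000Log-smooth-defo} for nodal curves; I would assemble it from the \'etale-local models and check the gluing is canonical because the $t_j$ are intrinsic (they are the images of the nodes under the structure map to $\frak M_{0,2}^{\mathrm{ss}}$, or equivalently the ``thicknesses'' of the chain).

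Second, with this log structure $M_S$ on $S$ in hand, I would equip $R$ with the divisorial log structure coming from the union of the $0$- and $\infty$-divisors of each component of the chain together with the singular locus, and verify the three required properties: (a) $R \to X_S$ with the pulled-back log structure $M_{X_S} = \mathrm{pr}^* M_S$ is a log curve — this is a purely local statement at the nodes, where the chart is the standard $\mathbb N^2 \to \ca O_R$, $(e_1,e_2)\mapsto(x,y)$ over the chart $\mathbb N \to \ca O_S$, $1 \mapsto t_j$ of $M_S$, and away from the nodes it is strict or the free monoid on the $0/\infty$ divisor; (b) $X_S$ is strict over $(S,M_S)$ by construction, since we pulled the log structure back along $\mathrm{pr}: X_S \to S$; (c) minimality, meaning $M_S$ is the universal such log structure — this follows because the log structure is generated \'etale-locally by precisely the smoothing parameters $t_j$, which any other log structure making $R$ a log curve must receive via the versal property of Li's space. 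The map $S \to \frak M_{0,2}^{\mathrm{ss}}$ obtained from $R/X_S$ factoring through $S$ rather than merely $X_S$ is exactly the content of this descent, and I would phrase the argument as: the classifying map $X_S \to \frak M_{0,2}^{\mathrm{ss}}$ is $\mathrm{pr}$-constant (its restriction to each fibre $X_s$ is the constant map to the point of $\frak M_{0,2}^{\mathrm{ss}}$ recording the chain of length $\ell(s)$), hence factors through $S$, and pulling back the universal minimal log structure from $\frak M_{0,2}^{\mathrm{ss}}$ along $S \to \frak M_{0,2}^{\mathrm{ss}}$ gives $M_S$.

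Third, for the quotient statement, I would observe that the $\bb G_m$-action on $R$ relative to $X_S$ (rescaling each component of the chain, compatibly with the gluing along $0$ and $\infty$) is fibrewise over $X_S$ and commutes with the collapsing map $\rho$, so the stack quotient $[R/\bb G_m]$ — which fibrewise is a chain of copies of $[\mathbb P^1/\bb G_m]$, i.e.\ a divided tropical line — is again pulled back from $S$. Concretely, $[R/\bb G_m] \to X_S$ is the pullback under $\mathrm{pr}$ of a divided tropical line $\ca Q$ on $S$, and $\ca Q$ is obtained by applying the quotient to the chain $S \to \frak M_{0,2}^{\mathrm{ss}}$; the associated tropical line $\ca P$ on $S$ is then the one appearing in the right-hand side of Proposition \ref{lem:prestable_stack_comparison}, and one checks directly that it satisfies the divisibility condition of Lemma \ref{lem:divisibility} because it came from an honest subdivision of a chain with integral thicknesses $t_j$.

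\textbf{Main obstacle.} The routine parts are the local chart computations; the genuine difficulty is the descent of the log structure from $X_S$ to $S$ when $X$ (equivalently, $X_S \to S$) is not a point. One must be careful that the minimal log structure genuinely descends — a priori the versal deformation is only \'etale-local on $S$ and one needs the gluing data (which node of which \'etale chart corresponds to which node of another) to match up canonically, which in turn relies on the fact that $\rho: R \to X_S$ has geometrically connected chains as fibres so that the ``chain position'' of each node is a well-defined locally constant function on $S$. The cleanest way around this, which I would adopt, is to invoke the universal property of $\frak M_{0,2}^{\mathrm{ss}}$ (its minimal log structure) and the $\mathrm{pr}$-constancy of the classifying map, so that descent becomes a formal consequence of factoring $X_S \to \frak M_{0,2}^{\mathrm{ss}}$ through $S$, rather than something to be checked chart by chart.
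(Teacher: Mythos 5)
Your high-level structure is close to the paper's, and you correctly identify the fibrewise constancy of the chain combinatorics as the driving mechanism, but the crucial descent step contains a genuine error. You assert that the classifying map $X_S \to \frak M_{0,2}^{ss}$ is constant on the fibres of $X_S \to S$ (``its restriction to each fibre $X_s$ is the constant map\ldots recording the chain of length $\ell(s)$'') and hence factors through $S$. This is false in general. As the paper recalls at the start of \cref{sec:comparing_virtual_classes}, $\frak M_{0,2}^{ss} \cong \cl T \times B\bb G_m$, where the $B\bb G_m$ factor records the normal bundle at the first marking. For a rubber target, each component of the chain $R_s$ is $\bb P(\ca O \oplus \ca L)|_{X_s}$, so the normal bundle of $R_s$ along $r_0(X_s)$ is the restriction of $\ca L$ (up to dualising), which is nontrivial whenever $\ca L$ is. Thus the restriction of the classifying map to $X_s$ is \emph{not} constant, and the full map $X_S \to \frak M_{0,2}^{ss}$ does not factor through $S$. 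Only the composite $X_S \to \cl T$ factors, which is exactly why the lemma claims only that the log structure descends (the log structure of $\frak M_{0,2}^{ss}$ is pulled back from $\cl T$ and is blind to the $B\bb G_m$ component). Your argument for the descent of $[R/\bb G_m]$ inherits the same gap, since it relies on the same factoring claim.

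The paper's proof avoids this trap by never asserting descent of the classifying map; it works with the log structure directly. After a finite extension of $\field$ to obtain a section $x\colon S \to X_S$ of $\pi\colon X_S \to S$, it sets $M_S = x^* M_{X_S}$ and constructs an isomorphism $\pi^* M_S \cong M_{X_S}$ in two steps: on characteristic monoids, using the rigidification of $\bb N^\ell$ by proximity to $r_0$; and on the underlying line bundles (in the Borne--Vistoli picture, after reducing to Li's versal space and shrinking $S$ to be atomic), by observing that the Cartier divisor $D_i \subset X_S$ where the $i$-th node persists is set-theoretically a union of $\pi$-fibres, hence $D_i = \pi^* x^* D_i$ by smoothness of $X_S/S$ and regularity of $S$. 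To repair your proposal, replace the appeal to $\frak M_{0,2}^{ss}$ by the corresponding statement for $\cl T$ alone (or adopt the divisor argument outright); as written, your argument does not establish descent of the minimal log structure, and the intrinsicness of the ``smoothing parameters $t_j$'' that you invoke needs precisely the $D_i = \pi^* x^* D_i$ observation to be made rigorous.
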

\begin{proof}
The curve $R/X_S$ with markings $r_i$ is prestable and hence admits a minimal log structure. We must verify that the resulting log structure on $X_S$ descends to $S$. After a finite extension of $\field$ we may assume $X$ has a $\field$ point, so that $$\pi\colon X_S \to S$$
admits a section $x\colon S \to X_S$, and we can equip $S$ with the pullback log structure. It remains to construct an isomorphism $\pi^*x^* M_{X_S} \to M_{X_S}$. We start by building a map from left to right. 

We first build a map on the level of characteristic monoids. The characteristic monoid at a geometric point $t \in X_S$ is given by $\bb N^\ell$, where $\ell$ is the length of the chain of projective lines of $R$ over $t$. Crucially, the irreducible elements of $\bb N^\ell$ come with a total order, given by proximity of the corresponding singularity to the $r_0$ marking. This rigidifies the characteristic monoid, so as we move along the fibre over $\pi(t)$ the characteristic monoids are \emph{canonically} identified. We
obtain canonical identifications
$$(\overline{x^*M_{X_S}})_{\pi(t)} \isom (\bar{M}_{X_S})_t\, ,$$
which give an isomorphism
$$\pi^*x^* \bar M_{X_S} \isom \bar{M}_{X_S}\, .$$

 To construct an isomorphism of log structures, we will use the perspective of \cite[Section 3.1]{borne-vistoli} that a log structure is a monoidal functor from the groupified characteristic monoid to the stack of line bundles. The rubber target is by definition pulled back from  Li's versal deformation spaces, so it suffices to construct our map in that setting. We can
 therefore assume that $S$ is regular and the locus of non-smooth curves is a reduced divisor in $X_S$.
 Since our map will be canonical, we may further shrink $S$ to be atomic\footnote{\cite[Definition 2.2.4]{Abram_wise_birational}.}. Then $\bar M_{X/S}$ is generated by its global sections, and there is a natural isomorphism of sheaves on $X_S$
 $$\phi\colon \bb N^\ell \isom \bar M_{X/S}$$
 where $\ell$ is the number of singular points in the fibre of $C$ over any point of $X_S$ lying over the closed stratum of $S$. Given $1 \le i \le \ell$, write $D_i$ for the Cartier divisor in $X_S$ where the singularity at distance $i$ from the first marking persists. Then $\phi$ sends the $i$th generator  of $\bb N^\ell$ to the section corresponding to the line bundle $\ca O_{X_S}(D_i)$. To build the required map of log structures $$\pi^*x^*M_{X_S} \isom M_{X_S}\, ,$$ we must construct an isomorphism
$$\pi^*x^*\ca O_{X_S}(D_i) \isom \ca O_{X_S}(D_i)\, .$$
Condition (i) of the \cref{def:rubber_target} implies that the underlying point set of $D_i$ is a union of fibres of $X_S/S$. Since $S$ is regular and $X_S$ is smooth over $S$, it follows that
$$D_i = \pi^*x^*D_i$$
giving the required isomorphism. 

The quotient log stack $[R/\bb G_m]$ is a divided tropical line on $X_S$ with divisions coming from the divisors $D_i$. We can identify the underlying tropical line with $\bb G_m^{trop}$ by specifying that the smallest element in the sequence of divisions is mapped to $0$. We have already established that these divisions $D_i$ descend to $S$, hence so does the divided tropical line. 
\end{proof}



After restriction to the locus 
where the infinitesimal automorphisms are trivial, we obtain
 a stable version of \cref{lem:prestable_stack_comparison}. 
Let $\MbarX$ denote the stack of stable maps from $n$-pointed curves to $X$ 
representing  the class $\beta$.
The line bundle $\ca L$ determines 
a map $$\MbarX \to \Picabs\, $$  
and we can pullback $\newRub_{g,A}$ as before.
Let $$\cat{Rub}_{g,A}(X, \ca L) \subset \cat{Rub}_{g,A}^{\pre}(X, \ca L)$$ 
be the locus where the infinitesimal automorphisms are trivial.

\begin{lemma}
The stack $\cat{Rub}_{g,A}(X, \ca L)$ is the fibre product of $\newRub_{g,A}$ over $\Picabs$ with $\MbarX$ along the map given by $\ca L$. 
$$\cat{Rub}_{g,A}(X, \ca L)\,  \stackrel{\sim}{=}\,  \newRub_{g,A} \times _\Picabs \MbarX\, .$$
\end{lemma}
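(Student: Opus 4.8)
The statement follows from \cref{lem:prestable_stack_comparison} by restricting to the open locus where infinitesimal automorphisms vanish on both sides. The plan is to check that the isomorphism
\[\cat{Rub}^{\pre}_{g,A}(X,\ca L)\,\stackrel{\sim}{=}\,\newRub_{g,A}\times_{\Picabs}\frak M(X)\]
constructed in \cref{lem:prestable_stack_comparison} is compatible with the natural maps of both sides to $\frak M(X)$, and then identifies the open substack $\cat{Rub}_{g,A}(X,\ca L)\subset\cat{Rub}^{\pre}_{g,A}(X,\ca L)$ of objects with trivial infinitesimal automorphisms with the preimage of the open substack $\MbarX\subset\frak M(X)$. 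Since $\MbarX=\frak M(X)\times_{\frak M_{g,n}}\Mbar_{g,n}$ is the locus inside $\frak M(X)$ parametrising maps whose source curve is stable, and forming the fibre product with $\newRub_{g,A}$ over $\Picabs$ commutes with this further base change to $\frak M_{g,n}$, one gets
\[\newRub_{g,A}\times_{\Picabs}\MbarX\;=\;\bigl(\newRub_{g,A}\times_{\Picabs}\frak M(X)\bigr)\times_{\frak M(X)}\MbarX\;\stackrel{\sim}{=}\;\cat{Rub}^{\pre}_{g,A}(X,\ca L)\times_{\frak M(X)}\MbarX\, .\]

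First I would make precise the map $\cat{Rub}^{\pre}_{g,A}(X,\ca L)\to\frak M(X)$: it sends an object $(\tau\colon\tilde C\to C_S,\,R/X_S,\,\tilde f\colon\tilde C\to R)$ to the pair $(C_S/S,\,f=\rho\circ\tilde f\circ(\text{section})\colon C_S\to X)$ obtained by descending along the partial stabilisation; this is visibly the map that makes the identification of \cref{lem:prestable_stack_comparison} a morphism over $\frak M(X)$, since in that proof the curve $C$ of the $\newRub_{g,A}$-side is exactly the stabilisation target $C_S$. Second, I would recall that the infinitesimal automorphisms of an object of $\cat{Rub}^{\pre}_{g,A}(X,\ca L)$ come from two sources: automorphisms of the rubber target (the $\bb G_m^{\ell}$ scaling the fibres of the chain of $\bb P(\ca O_X\oplus\ca L)$'s, already quotiented out in the ``rubber'' passage to $[R/\bb G_m]$ and hence contributing nothing extra) and automorphisms of the source curve $\tilde C$ fixing $\tau$, the markings, and $\tilde f$; the partial stabilisation $\tau$ contracts only genus $0$ components with two special points, so an infinitesimal automorphism of $\tilde C$ over $C_S$ fixing $\tilde f$ is precisely an infinitesimal automorphism of $C_S$ with its markings together with the images of the contracted components — equivalently, an infinitesimal automorphism of the stabilisation of $(C_S,p_1,\dots,p_n)$. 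Hence the object has trivial infinitesimal automorphisms if and only if the image curve $C_S$ is stable, i.e. if and only if its image in $\frak M(X)$ lies in $\MbarX$.

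Third, I would observe that on the $\newRub_{g,A}\times_{\Picabs}\frak M(X)$ side the analogous statement is clear: $\newRub_{g,A}\to\ca M^{\mathsf{log}}_{g,n}$ is proper, representable by algebraic spaces and a monomorphism after the log forgetful functor (it is a base change of $\aj\colon\cat{Div}_{g,A}\to\Picabs$, which is a monomorphism by the Proposition in \cref{sec:abel-jacobi-map}, composed with the proper birational map $\cat{Rub}\to\cat{Div}$), so it adds no infinitesimal automorphisms; thus an object of $\newRub_{g,A}\times_{\Picabs}\frak M(X)$ has trivial infinitesimal automorphisms exactly when its image in $\frak M(X)$ does, which is the locus $\MbarX$. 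Combining the two descriptions with the compatibility of \cref{lem:prestable_stack_comparison} over $\frak M(X)$ gives the asserted isomorphism $\cat{Rub}_{g,A}(X,\ca L)\stackrel{\sim}{=}\newRub_{g,A}\times_{\Picabs}\MbarX$.

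I expect the main obstacle to be the bookkeeping in the middle step: verifying cleanly that, under $\tau$, the infinitesimal automorphism functor of a prestable rubber map is canonically the infinitesimal automorphism functor of the stabilised pointed curve, with no hidden contributions from the rubber target beyond the $\bb G_m$ already removed, and that this identification is functorial in $S$. This is essentially a local deformation-theoretic check at contracted rational bridges and at the nodes and markings of $R$, in the spirit of Li's deformation theory \cite{Li2001-Stable} and the predeformability condition; it is routine but needs care to phrase in a base-change-stable way so that it matches the (built-in) deformation theory of the log fibre product $\newRub_{g,A}\times_{\Picabs}\frak M(X)$.
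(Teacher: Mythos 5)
Your overall strategy matches the paper's: the paper states this Lemma with no proof beyond the phrase ``After restriction to the locus where the infinitesimal automorphisms are trivial, we obtain a stable version of \cref{lem:prestable_stack_comparison}'', so restricting the prestable isomorphism along $\MbarX\hra\frak M(X)$ is exactly the intended route, and the display
\[\newRub_{g,A}\times_{\Picabs}\MbarX=\bigl(\newRub_{g,A}\times_{\Picabs}\frak M(X)\bigr)\times_{\frak M(X)}\MbarX\cong\cat{Rub}^{\pre}_{g,A}(X,\ca L)\times_{\frak M(X)}\MbarX\]
is a clean reduction. But the step you yourself flag as the main obstacle --- identifying the trivial-infinitesimal-automorphism locus on the rubber side with the preimage of $\MbarX$ --- is where the writeup goes wrong, in three concrete places.

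First, $\MbarX$ is \emph{not} $\frak M(X)\times_{\frak M_{g,n}}\Mbar_{g,n}$. Stability in $\MbarX$ is stability of the \emph{map} $(C_S,f)$ (genus-zero components contracted by $f$ need at least three special points), not stability of the source curve; a map from a stable curve is a stable map, but not conversely. Your middle paragraph keeps sliding between ``$C_S$ is stable'', ``the stabilisation of $(C_S,p_1,\dots,p_n)$'', and ``the image lies in $\MbarX$'', which are genuinely different conditions. The correct statement you need is: the rubber object has trivial infinitesimal automorphisms iff $(C_S,f)$ is a stable map.

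Second, the description of the automorphism group is off in both directions. In \cref{def:prestable_rubber_maps} the $\bb G_m^{\ell}$ scaling the chain $R$ is \emph{not} quotiented out --- the isomorphisms of objects explicitly include an isomorphism $R'\cong R$ over $X_S$, so the rubber scalings are genuine automorphisms and must be controlled, not waved away. On the other side, an ``automorphism of $\tilde C$ over $C_S$'' acts by the identity on $C_S$ and only moves the contracted bridges; it is not an automorphism of $C_S$ and certainly not one of the stabilisation of $(C_S,p_i)$. What you actually need to check are two things: (a) when $(C_S,f)$ is stable, the relative automorphisms $(\sigma,\mu)$ --- with $\sigma$ trivial off the contracted bridges and $\mu\in\bb G_m^{\ell}$ --- form a finite group, because predeformability, finiteness over $\mathrm{Sing}(R/X_S)$, and the requirement that $\tilde f(\tilde C)$ meet every component of $R$ force $\mu$ to be pinned down by the image of the non-contracted part and then $\sigma$ by $\mu$; and (b) when $(C_S,f)$ is not stable, a $\bb G_m$ of automorphisms scaling a contracted-by-$f$ rational bridge lifts to an automorphism of the log data $(C,\alpha,P,\ca L')$, so the fibre product point genuinely has infinitesimal automorphisms. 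You argue neither direction.

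Third, $\newRub_{g,A}\to\ca M^{\mathsf{log}}_{g,n}$ is not a monomorphism: $\newRub_{g,A}$ is a $\bb G_m$-gerbe over $\newRubrel_{g,A}$ (it is the pullback of $\Picabs\to\Picrel$), so it always carries an extra $\bb G_m$ of automorphisms relative to $\ca M^{\mathsf{log}}_{g,n}$. The fact that does the work is that $\newRub_{g,A}\to\Picabs$ is relatively representable by algebraic spaces (base change of $\aj$ composed with the proper birational map $\cat{Rub}\to\cat{Div}$), so $\newRub_{g,A}\times_{\Picabs}\frak M(X)\to\frak M(X)$ is representable, giving an \emph{injection} of automorphism groups; this proves the easy inclusion (preimage of $\MbarX$ has trivial infinitesimal automorphisms) but not the converse, which again comes from the lifting statement (b) above. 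The plan is fine; the two-sided identification of the loci is the content, and it is missing.
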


Next, we will compare the virtual fundamental classes on these
spaces. We will carry out the comparison on a smaller open locus.  
We define 
\begin{enumerate}
\item[(i)]
$\MbarX'$ is be the open locus of maps
$(C, f\colon C \to X)$ in $\MbarX$ where $H^1(C, f^*\ca L) = 0$.
\item[(ii)]
$\cat{Rub}_{g,A}(X, \ca L)' = \cat{Rub}_{g,A}(X, \ca L) \times_{\MbarX} \MbarX'$. 
\end{enumerate}
In  \cref{sec:projective_target}, we considered the case $X = \bb P^\kk$
and showed that this unobstructed locus is large enough
to control the cycles relevant to Theorem \ref{Thm:main}. For general $X$,
the unobstructed  locus might be very small (and possibly empty).

\subsection{Comparing the virtual classes}\label{sec:comparing_virtual_classes}
\subsubsection{Overview}
We begin by briefly discussing of several spaces which will be relevant in setting up the obstruction theories. Let $$\frak M_{g, n}^{ss}\subset \frak{M}_{g,n}$$
be the semistable locus (where every rational curve has at least two distinguished points).
We write $\cl T$ for the algebraic stack with log structure which parametrises tropical lines with at least one division. 
There are natural maps $$\frak M_{0,2}^{ss} \to \cl T\ \ \ \text{and}\ \ \  \frak M_{0,2}^{ss} \to B\bb G_m\, ,$$
the former defined by dividing $\bb G_m^{\mathsf{trop}}$ at $1$ and at the smoothing parameters of the nodes, and the latter defined by the normal bundle at the first marking. The induced map 
\begin{equation}
\frak M_{0,2}^{ss} \to \cl T\times B\bb G_m
\end{equation}
is an isomorphism by \cite[Proposition 3.3.3]{Abramovich2013Expanded-degene}. 

As $\cat{Rub}^{\rel}$ is the moduli stack of tuples $(C, \alpha\colon C \to \ca P)$ where $\ca P$ is a tropical line and the images of the irreducible components of $C$ are totally ordered, there is a natural map 
\begin{equation}\label{ff923}
\cat{Rub}^{\rel} \to \cl T
\end{equation}
sending $(C, \alpha\colon C \to \ca P)$ to the tropical line $\ca P$ with the division given by the images of the irreducible components of $C$. 

We will construct a map 
 \begin{equation}\label{vrt5}
 \cat{Rub}_{g,A}(X, \ca L)'\to \frak M_{0,2}^{ss}
 \end{equation}
 lifting the  morphism \eqref{ff923} by the following argument. 
A point of $\cat{Rub}_{g,A}(X, \ca L)'$ is a tuple $(C, \alpha\colon C \to \ca P, f\colon C \to X)$ where $f^*\ca L$ lies in the class{\footnote{Here,
$[\ca O_C(\alpha)]$
 is an equivalence class under isomorphisms and tensoring with pullbacks from $S$.
}}
$[\ca O_C(\alpha)]$.  However, as $\ca P$ is divided, there is a unique isomorphism $\ca P \isom \bb G_m^{trop}$ where the smallest division maps to $0$. The universal $\bb G_m$ torsor $\bb G_m^{log} \to \bb G_m^{trop}$ pulls back to a well-defined $\bb G_m$-torsor $\ca O_C^*(\alpha)$ on $C$, and the difference $f^*\ca L^* \otimes_{\ca O^*_C} \ca O^*_C(-\alpha)$ descends to a $\bb G_m$-torsor on $S$ by the construction of $\cat{Rub}_{g,A}(X, \ca L)'$ as a fibre product. The $\bb G_m$-torsor on $S$ induces a map $\cat{Rub}_{g,A}(X, \ca L)' \to B \bb G_m$. Combined with the map $\cat{Rub}_{g,A}(X, \ca L)' \to \cl T$ via \eqref{ff923},
we obtain the
map \eqref{vrt5}.



The space $\cat{Rub}_{g,A}(X, \ca L)'$ carries three virtual fundamental classes by
the following three constructions:
\begin{enumerate}
\item[(i)] The class $\DRop_{g,A}(\phi_\ca L)([\MbarX'])$ obtained by applying 
$\DRop_{g,A}$
to the (virtual) fundamental class of $\MbarX'$ via the map $\phi_\ca L$. 
\item[(ii)] The class obtained from a two-step obstruction theory described by Marcus and Wise \cite{Marcus2017Logarithmic-com} for the map $\cat{Rub}_{g,A}(X, \ca L)' \to \frak M_{g,n} \times \cl T$. 
\item[(iii)]  A class coming from a two-step obstruction theory studied by
Graber and Vakil \cite{Graber2005Relative-virtua} for the map 
$$\cat{Rub}_{g,A}(X, \ca L)' \to \MbarX' \times \frak M_{0,2}^{ss}\, .$$ 
\end{enumerate}
We will prove (i)-(iii) are all equal.





\subsubsection{$\DRop$ and the obstruction theory of Marcus-Wise}
\label{kkkddd3}

The obstruction theory of Marcus-Wise is a \emph{two-step} obstruction theory, a notion which we now recall. Unless otherwise stated, by \emph{perfect obstruction theory} we mean an obstruction theory which is perfect in amplitude $[-1,0]$. 

\begin{definition}A \emph{two-step} obstruction theory for a map $f\colon X \to S$ consists of a factorisation $$X \to Y \to S$$ together with 
prefect relative obstruction theories for $X/Y$ and for $Y/S$. 
\end{definition}
A two-step obstruction theory induces a virtual pullback by composition.{\footnote{A
two-step obstruction theory
also induces a perfect obstruction theory for $X/S$ in amplitude $[-2,0]$, but we
will not use the latter construction.}}
If $S$ has a fundamental class $[S]$, the virtual pullback of $[S]$
is the {\em virtual fundamental class} of $X$ associated to the
two-step obstruction theory.

We first recall the two-step obstruction theory of \cite{Marcus2017Logarithmic-com} in the case when $X$ is a point. We have a diagram
\begin{equation}
 \begin{tikzcd}
  \cat{Rub}_{g,A}(\mathsf{pt}, \ca O)' \arrow[dr] \arrow[r] & \widetilde{\cat{Rub}}_{g,A} \arrow[d]\\
& \frak M_{g,n} \times \cl T. 
\end{tikzcd}
\end{equation}
A perfect relative obstruction theory for the horizontal map is given in \cite[\MWref{Section 5.6.3}\MWvone{Section 5.6.2}]{Marcus2017Logarithmic-com}, and for the vertical map in \cite[Proposition \MWref{5.6.5.3}\MWvone{5.20}]{Marcus2017Logarithmic-com}; while the reader might expect that these arguments apply to $\cat{Rub}_{g,A}$ rather than the root stack $\widetilde{\cat{Rub}}_{g, A}$, Marcus and Wise in fact assume in both constructions the divisibility conditions of Lemma \ref{lem:divisibility}, hence their constructions in fact apply to $\widetilde{\cat{Rub}}_{g, A}$ (and not to $\cat{Rub}_{g, A}$). This two-step obstruction theory
coincides with the rubber theory of Graber-Vakil, as shown in \cite[Section \MWref{5.6.6}\MWvone{5.6.5}]{Marcus2017Logarithmic-com}.
Moreover, the virtual fundamental class obtained
equals the operational class of $\cat{Div}_{g,A}$, see \cite[\MWref{Theorem 5.6.1}\MWvone{Theorem 5.15}]{Marcus2017Logarithmic-com}; again, these results all assume the divisibility condition of Lemma \ref{lem:divisibility}, and hence apply to $\widetilde{\cat{Rub}}_{g,A}$ in place of $\cat{Rub}_{g, A}$. 


Returning to the case of arbitrary $(X,\ca L)$,
we can construct a similar commutative diagram 
\begin{equation}\label{eq:obs_factor_MW}
 \begin{tikzcd}
  \cat{Rub}_{g,A}(X, \ca L)' \arrow[dr] \arrow[r] & \widetilde{\cat{Rub}}_{g,A} \arrow[d]\\
& \frak M_{g,n} \times \cl T\, .
\end{tikzcd}
\end{equation}
The vertical map is unchanged and so again has a perfect relative obstruction theory by \cite[Proposition \MWref{5.6.5.3}\MWvone{5.20}]{Marcus2017Logarithmic-com}; in fact the morphism is a local complete intersection, and the obstruction theory of \cite[Proposition \MWref{5.6.5.3}\MWvone{5.20}]{Marcus2017Logarithmic-com} is just the relative tangent complex. 

We need to supply a perfect obstruction theory for the horizontal map $$\widetilde{\cat{Rub}}_{g,A}(X, \ca L)'  \to  \cat{Rub}_{g,A}\, ,$$
which we can factor as
\begin{equation}
\cat{Rub}_{g,A}(X, \ca L)'  \to \MbarX' \times_{\frak M_{g,n}}\widetilde{\cat{Rub}}_{g,A}  \to \widetilde{\cat{Rub}}_{g,A}\, . 
\end{equation}
The second map is a base change of the unobstructed map $\MbarX' \to \frak M_{g,n}$, hence is unobstructed. For the first map, consider the pullback square
\begin{equation}\label{eq:long_vertical_pullback}
 \begin{tikzcd}
\cat{Rub}_{g,A}(X, \ca L)'  \arrow[d] \arrow[r]   & \MbarX' \arrow[d]\\
\MbarX' \times_{\frak M_{g,n}} \widetilde{\cat{Rub}}_{g,A} \arrow[r]  & \MbarX' \times_{\frak M_{g,n}}\Picabs_{g,n}\, .\\
\end{tikzcd}
\end{equation}
The right vertical arrow is a section of a base change of the smooth morphism $\Picabs_{g,n} \to \frak M_{g,n}$, and as such is lci and has a perfect relative obstruction theory given by the relative tangent complex $R^1\pi_*\ca O_C$. Pullback yields a corresponding perfect obstruction theory for the left vertical arrow. This gives a two-step obstruction theory for the composite map $\cat{Rub}_{g,A}(X, \ca L)'  \to  \widetilde{\cat{Rub}}_{g,A}$, from which we obtain a virtual fundamental class following \cite{Manolache2012Pullbacks}. 

The discussion here
is a very slight generalisation of the obstruction theory constructed in \cite[\MWref{Proposition 5.6.3.1}\MWvone{Proposition 5.16}]{Marcus2017Logarithmic-com}.

\begin{definition}
The two-step obstruction theory for the diagonal map of \eqref{eq:obs_factor_MW},
$$
\cat{Rub}_{g,A}(X, \ca L)' \to \frak M_{g,n} \times \cl T\, $$ is the \emph{Marcus-Wise} obstruction theory.
\end{definition}

\begin{lemma}\label{lem:DRop_eq_MW}
The push forward along
$$\psi\colon \cat{Rub}_{g,A}(X, \ca L)' \to \MbarX'$$
of the virtual fundamental class of the Marcus-Wise theory
on $\cat{Rub}_{g,A}(X, \ca L)'$ 
equals the class $\DRop_{g,A}(\phi_\ca L)([\MbarX'])$
obtained via the map
$$\phi_\ca L\colon \MbarX' \to \Picabs_{g,n}\, .$$
\end{lemma}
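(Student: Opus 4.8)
The plan is to unwind both sides of the claimed equality to a common description in terms of Gysin pullbacks, using the definition of $\DRop_{g,A}$ from \cref{defbivariantclass} and the factorisation of the Marcus-Wise obstruction theory through the pullback square \eqref{eq:long_vertical_pullback}. Recall that $\DRop_{g,A} = \aj^{\mathsf{rub}}_{\mathsf{op}}[\widetilde{\cat{Rub}}_{g,A}]$ (the three definitions agree by \cref{sec:proof_of_equiv_defs}, and passing to $\widetilde{\cat{Rub}}$ changes nothing by \cref{prop:invarianceproperbirat}). So, applying $\DRop_{g,A}$ to $[\MbarX']$ via $\phi_{\ca L}$ means forming the pullback of $\aj^{\mathsf{rub}}\colon \widetilde{\cat{Rub}}_{g,A} \to \Picabs_{g,n,d}$ along $\phi_{\ca L}$ and taking $\psi_*\phi_V^![\,\cdot\,]$ as in \eqref{eq:definingdiagram}. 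By \cref{lem:prestable_stack_comparison} (restricted to stable maps with $H^1(C,f^*\ca L)=0$), this pullback is precisely $\cat{Rub}_{g,A}(X,\ca L)'$, with $\psi$ the projection to $\MbarX'$. Hence the right-hand side $\DRop_{g,A}(\phi_\ca L)([\MbarX'])$ is, by definition, $\psi_* \phi_V^!([\widetilde{\cat{Rub}}'_{g,A}\times \MbarX'])$ for the regular embedding $\phi_V$ coming from the diagonal of $\Picabs_{g,n,d}$.

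The next step is to recognise that this Gysin class is exactly the virtual pullback defining the Marcus-Wise class along the \emph{second} step of the two-step theory. Concretely: the diagonal regular embedding $\Picabs_{g,n,d} \to \Picabs_{g,n,d}\times \MbarX'$ used in \cref{defbivariantclass} has normal bundle $R^1\pi_*\ca O_C$ (since $\Picabs_{g,n,d}\to \frak M_{g,n}$ is smooth with this relative tangent complex), which is precisely the obstruction bundle supplied to the left vertical arrow of \eqref{eq:long_vertical_pullback} by pullback. The Behrend-Fantechi / Manolache formalism identifies the virtual pullback through a section of a smooth morphism with the refined Gysin map attached to the corresponding regular embedding; this is the translation between Gysin pullback and virtual fundamental class recorded in \cite[Example 7.6]{Behrend1997The-intrinsic-n} and used already in \cref{proof1111}. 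Therefore the contribution of the $\widetilde{\cat{Rub}}_{g,A} \to \Picabs_{g,n}$ factor to $\DRop$ matches the ``first map'' piece of the Marcus-Wise obstruction theory exactly.

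It then remains to check that the remaining factor is trivial on both sides. On the Marcus-Wise side, the vertical map of \eqref{eq:obs_factor_MW} to $\frak M_{g,n}\times \cl T$ contributes its own virtual class, but when we compose with the pushforward $\psi$ to $\MbarX'$ and use that $\MbarX' \to \frak M_{g,n}$ and the base change $\MbarX'\times_{\frak M_{g,n}}\widetilde{\cat{Rub}}_{g,A} \to \widetilde{\cat{Rub}}_{g,A}$ are unobstructed (the latter a base change of the former, the former because $H^1(C,f^*T_X)$ is not relevant at this stage — only the curve is being moved), the composite virtual pullback collapses to exactly the Gysin operation described above. So the plan is: (1) identify $\cat{Rub}_{g,A}(X,\ca L)'$ with the fibre product defining $\DRop_{g,A}(\phi_\ca L)$ via \cref{lem:prestable_stack_comparison}; (2) match the normal bundle $R^1\pi_*\ca O_C$ of the diagonal with the obstruction bundle of the section in \eqref{eq:long_vertical_pullback}; (3) invoke the Gysin-versus-virtual-class dictionary of \cite{Behrend1997The-intrinsic-n} together with the functoriality of virtual pullbacks of \cite{Manolache2012Pullbacks} to see the two-step Marcus-Wise class pushes forward to the Gysin expression; (4) conclude $\psi_*$ of the Marcus-Wise class equals $\DRop_{g,A}(\phi_\ca L)([\MbarX'])$.

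The main obstacle I anticipate is step (2)–(3): keeping the bookkeeping of which obstruction theory lives over which base straight, and in particular verifying that the \emph{two-step} virtual pullback (first step perfect relative to $\widetilde{\cat{Rub}}_{g,A}$, second step the lci map to $\frak M_{g,n}\times\cl T$) is genuinely compatible, under $\psi_*$, with the single refined Gysin map produced by \cref{defbivariantclass} — i.e. that the $\cl T$-direction and the curve-moduli direction do not introduce extra virtual contributions after pushforward. This requires carefully citing the base-change and functoriality statements of \cite{Manolache2012Pullbacks} for virtual pullbacks and the compatibility of $\psi_*$ with Gysin maps (which for us is \cref{prop:Gysinproperpush} applied in the DM-type setting), rather than any new geometric input; the geometry has already been done in \cref{lem:prestable_stack_comparison} and in \cite{Marcus2017Logarithmic-com}.
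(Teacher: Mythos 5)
Your overall approach is the same as the paper's: unwind $\DRop_{g,A}(\phi_{\ca L})$ to the Gysin pullback $\psi_*(\phi'_\ca L)^!$, identify $\cat{Rub}_{g,A}(X,\ca L)'$ with the relevant fibre product via \cref{lem:prestable_stack_comparison}, and show the Marcus--Wise two-step virtual pullback degenerates to that same Gysin pullback by using functoriality of virtual pullbacks from \cite{Manolache2012Pullbacks}.

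However, two points in your steps (2)--(3) are stated incorrectly, and they are worth fixing because they are exactly the bookkeeping you flag as the hard part. First, the pushforward $\psi_*$ is \emph{not} what kills the $\cl T$-factor. The paper factors the composite
\[
\cat{Rub}_{g,A}(X, \ca L)'  \stackrel{1}{\to} \MbarX' \times_{\frak M_{g,n}} \widetilde{\cat{Rub}}_{g,A}   \stackrel{2}{\to} \widetilde{\cat{Rub}}_{g,A}   \stackrel{3}{\to} \frak M_{g,n} \times \cl T
\]
and handles each piece \emph{on the level of virtual pullbacks}, before any pushforward: the virtual pullback along (3) is the ordinary lci Gysin pullback (obstruction theory $=$ relative tangent complex) and so gives the fundamental class of $\widetilde{\cat{Rub}}_{g,A}$; the virtual pullback along (2) gives the fundamental class because that map is unobstructed; only the virtual pullback along (1) survives as $(\phi'_{\ca L})^!$. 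The pushforward $\psi_*$ is applied at the very end to compare with $\DRop_{g,A}(\phi_{\ca L})([\MbarX'])$, using \cref{prop:Gysinproperpush}. Second, your parenthetical reason for the unobstructedness of $\MbarX'\to\frak M_{g,n}$ --- that ``$H^1(C,f^*T_X)$ is not relevant at this stage --- only the curve is being moved'' --- is wrong: $R^1\pi_*f^*T_X$ \emph{is} the relative obstruction theory of $\MbarX\to\frak M_{g,n}$. The correct reason is that $H^1(C,f^*T_X)$ \emph{vanishes} on the open locus $\MbarX'$; for $X=\bb P^\kk$ this is precisely the Euler-sequence computation of \cref{lem:fun_eq_vfc}, so it is a genuine vanishing, not irrelevance. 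With those two corrections, your plan coincides with the paper's argument.
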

\begin{proof}
From $\phi_{\ca L}$, we obtain maps
$$\phi'_\ca L\colon  \MbarX' \to \MbarX'\times_{\frak M_{g,n}}\Picabs_{g,n}\, ,$$ $$\phi''_\ca L\colon  \MbarX' \to \MbarX'\times \Picabs_{g,n}\, . $$
Both are lci morphisms because $\Picabs_{g,n}/\frak M_{g,n}$ is smooth. By 
Definition \ref{defbivariantclass} and Section \ref{sec:proof_of_equiv_defs} we have
\begin{align*}
    \DRop_{g,A}(\phi_\ca L)([\MbarX']) &=  \psi_*(\phi''_\ca L)^![\MbarX' \times \widetilde{\cat{Rub}}_{g,A}]\\
    &= \psi_*(\phi'_\ca L)^![\MbarX' \times_{\frak M_{g,n}} \widetilde{\cat{Rub}}_{g,A}]\, .
\end{align*}
The virtual fundamental class of the Marcus-Wise theory is the virtual pullback of the fundamental class of $\frak M_{g,n} \times \cl T$ along the composition
\begin{equation}
\cat{Rub}_{g,A}(X, \ca L)'  \stackrel{1}{\to} \MbarX' \times_{\frak M_{g,n}} \widetilde{\cat{Rub}}_{g,A}   \stackrel{2}{\to} \widetilde{\cat{Rub}}_{g,A}   \stackrel{3}{\to} \frak M_{g,n} \times \cl T\, . 
\end{equation}
The map (3) is lci and the obstruction theory is the relative tangent complex, so the pullback of the fundamental class is the fundamental class of $\widetilde{\cat{Rub}}_{g,A}$. The map (2) is unobstructed, so the (virtual) pullback of the fundamental class is again the fundamental class. The obstruction theory of the map (1) is defined by pulling back the relative tangent complex of the lci morphism $$\MbarX' \to \MbarX' \times_{\frak M_{g,n}}\Picabs_{g, n}$$ via the pullback square
\begin{equation}
\begin{tikzcd}
\cat{Rub}_{g,A}(X, \ca L)' \arrow[r] \arrow[d] & \MbarX' \times_{\frak M_{g,n}} \widetilde{\cat{Rub}}_{g,A}  \arrow[d] \\
\MbarX' \arrow[r, "\phi'_\ca L"] & \MbarX' \times_{\frak M_{g,n}} \Picabs_{g,n}\, , 
\end{tikzcd}
\end{equation}
so the virtual pullback of the fundamental class of $\MbarX' \times_{\frak M_{g,n}} \widetilde{\cat{Rub}}_{g,A}$ is  equal to the Gysin pullback $(\phi'_\ca L)^![\MbarX' \times_{\frak M_{g,n}} \widetilde{\cat{Rub}}_{g,A}]$. 
\end{proof}

\subsubsection{Marcus-Wise and Graber-Vakil}\label{mwgv}

As recalled above, Marcus-Wise define a two-step obstruction theory for the map 
\begin{equation*}
\cat{Rub}_{g,A}(X, \ca L)' \to \frak M_{g,n} \times \cl T\,. 
\end{equation*}
Graber and Vakil consider an obstruction theory for the map 
\begin{equation} \label{vrrt}
\cat{Rub}_{g,A}(X, \ca L)' \to \MbarX' \times \frak M_{0,2}^{ss}\, .
\end{equation}

We wish to show an equality of the corresponding virtual fundamental classes on $\cat{Rub}_{g,A}(X, \ca L)'$. Since $$\frak M_{0,2}^{ss} = \cl T \times B\bb G_m\, ,$$ and that the maps $\MbarX' \to \frak M_{g,n}$ and $B \bb G_m \to \on{Spec} \field$ are unobstructed, we have an unobstructed map
$$ \MbarX' \times \frak M_{0,2}^{ss} \to \frak M_{g,n} \times \frak M_{0,2}^{ss} \to \frak M_{g,n} \times \cl T\, .$$
Our final step is therefore to compare the obstruction theories (and thereby the corresponding virtual pullbacks) between Marcus-Wise and Graber-Vakil \cite{Graber2005Relative-virtua,Li2002A-degeneration-}.
We will match the obstruction spaces when the base $S$ is a point.
The full matching of deformation theories is similar and will be treated in \cite{PW}.
The claims are also required for \cite{Marcus2017Logarithmic-com}.

Suppose we are given the data of a point in $\cat{Rub}_{g,A}(X, \ca L)'(S)$,
\begin{equation}
(\tau\colon \tilde C \to C_S, \R/X_S, \phi\colon \tilde C \to \R, f\colon C_S \to X_S, p_1, \dots, p_n, f^*\ca L \isom \ca O_C(\alpha))\, .
\end{equation}

\begin{proposition}\label{prop:vfc_comparison_summary}
The restriction to $\MbarX'$ of the class $\DR_{g,A}(X, \ca L)$ of \cite{Janda2018Double-ramifica} is equal to the class obtained by letting $\DRop_{g,A}$ act on the fundamental class of $\MbarX'$ via the map induced by $\ca L$. 
\end{proposition}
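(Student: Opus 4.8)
The strategy is to chain together the three virtual fundamental classes on $\cat{Rub}_{g,A}(X, \ca L)'$ discussed in \cref{sec:comparing_virtual_classes}, namely the class coming from $\DRop_{g,A}$ applied to $[\MbarX']$, the Marcus-Wise two-step obstruction theory, and the Graber-Vakil rubber obstruction theory, and then to invoke the definition of $\DR_{g,A}(X, \ca L)$ in \cite{Janda2018Double-ramifica} as the pushforward of the Graber-Vakil rubber virtual class. The equality of the first two was already established in \cref{lem:DRop_eq_MW}: pushing forward the Marcus-Wise virtual class along $\psi\colon \cat{Rub}_{g,A}(X, \ca L)' \to \MbarX'$ yields $\DRop_{g,A}(\phi_{\ca L})([\MbarX'])$. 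So the remaining task is to identify the Marcus-Wise and Graber-Vakil virtual classes on $\cat{Rub}_{g,A}(X, \ca L)'$ itself (not merely their pushforwards), and then observe that $\MbarX'$-pushforward of the Graber-Vakil class is, by construction, the restriction of $\DR_{g,A}(X, \ca L)$.

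\textbf{Key steps.} First I would set up the common base: both obstruction theories ultimately live over $\frak M_{g,n} \times \cl T$, using the identification $\frak M_{0,2}^{ss} = \cl T \times B\bb G_m$ from \cite[Proposition 3.3.3]{Abramovich2013Expanded-degene} and the fact that $\MbarX' \to \frak M_{g,n}$ and $B\bb G_m \to \on{Spec}\field$ are unobstructed, so that the Graber-Vakil map $\cat{Rub}_{g,A}(X, \ca L)' \to \MbarX' \times \frak M_{0,2}^{ss}$ composes with an unobstructed map to give a map to $\frak M_{g,n} \times \cl T$. Second, I would compare the two two-step factorisations term by term: the vertical $\cl T$-direction is handled identically in both (it is the lci map to $\cl T$ with obstruction theory the relative tangent complex, as in \cite[Proposition 5.20]{Marcus2017Logarithmic-com}); the remaining relative obstruction theory over $\cl T$ must be matched. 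Here the content is that the deformation theory of a predeformable rubber map $\tilde f\colon \tilde C \to R$ relative to $X_S$ and a fixed tropical/divided line agrees with the Marcus-Wise relative obstruction theory for $\cat{Rub}_{g,A}(X, \ca L)' \to \MbarX' \times_{\frak M_{g,n}}\widetilde{\cat{Rub}}_{g,A}$, which by the pullback square \eqref{eq:long_vertical_pullback} is the pullback of $R^1\pi_*\ca O_C$, the relative tangent complex of a section of the smooth morphism $\Picabs_{g,n}\to \frak M_{g,n}$. The paper indicates (in the sentence preceding this proposition) that the full matching of deformation theories follows the point-target case treated in \cite{Marcus2017Logarithmic-com} and will appear in \cite{PW}; so I would state the matching of obstruction spaces over a point explicitly and cite \cite{Marcus2017Logarithmic-com, PW} for the deformation-theoretic bookkeeping. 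Third, having matched the virtual classes, I would conclude: the Graber-Vakil virtual class pushes forward along $\psi$ to $\DR_{g,A}(X,\ca L)|_{\MbarX'}$ by definition of the latter in \cite{Janda2018Double-ramifica}, while the Marcus-Wise virtual class pushes forward to $\DRop_{g,A}(\phi_{\ca L})([\MbarX'])$ by \cref{lem:DRop_eq_MW}; equality of the virtual classes gives the result.

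\textbf{Main obstacle.} The crux is the identification of the Marcus-Wise and Graber-Vakil obstruction theories on $\cat{Rub}_{g,A}(X, \ca L)'$ over the common base $\frak M_{g,n}\times \cl T$. The subtlety, already flagged around \cref{lem:divisibility}, is that one must work with the root stack $\widetilde{\cat{Rub}}_{g,A}$ rather than $\cat{Rub}_{g,A}$ for the Marcus-Wise constructions to apply as stated — and the comparison in \cite[Section 5.6.6]{Marcus2017Logarithmic-com} between the Marcus-Wise and Graber-Vakil theories is carried out precisely under this divisibility hypothesis, for the point target. Generalising from $X = \mathsf{pt}$ to arbitrary $X$ (and then restricting to the unobstructed locus $\MbarX'$) is conceptually routine because the only new ingredient is the relative tangent complex of the smooth map $\Picabs_{g,n}\to \frak M_{g,n}$, which is $R^1\pi_*\ca O_C$ and appears symmetrically in both theories; but writing down the comparison carefully at the level of cotangent complexes — rather than just the induced classes — is where the real work lies, and I would, following the authors' own stated approach, handle the obstruction-space matching over a geometric point here and defer the complete deformation-theoretic argument to \cite{PW}.
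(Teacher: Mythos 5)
Your proposal follows essentially the same route as the paper's proof: identify $\frak M_{0,2}^{ss}$ with $\cl T\times B\bb G_m$ to realize both two-step obstruction theories over the common base $\frak M_{g,n}\times\cl T$, match the Graber--Vakil primary obstruction with the $\cl T$-direction (the lci part coming from $\widetilde{\cat{Rub}}_{g,A}\to\frak M_{g,n}\times\cl T$) and the secondary obstruction with the $R^1\pi_*\ca O_C$-part coming from the section of $\Picabs_{g,n}\to\frak M_{g,n}$, verify the matching of obstruction spaces over a geometric point, and defer the full comparison of the deformation theories to \cite{PW}, then chain with \cref{lem:DRop_eq_MW}. The paper's proof is simply the explicit version of your step~3, writing out the primary obstruction $H^0(\tilde C,\phi^{-1}\mathcal Ext^1(\Omega_{\R/X_S}(\log D),\ca O_{\R}))$, the secondary obstruction $H^1(\tilde C,\phi^\dagger T_{\R/X_S})$, and the deformation space $H^0(\tilde C,\phi^\dagger T_{\R/X_S})$, identifying each with the corresponding terms from \cite[Propositions~5.17 and~5.20]{Marcus2017Logarithmic-com}.
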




\begin{proof}
The primary obstruction of Graber-Vakil lies in 
\begin{equation}\label{ggtt2}
H^0(\tilde C, \phi^{-1}\ca{E}xt^1(\Omega_{\R/X_S}(\log D), \ca O_{\R}))\, .
\end{equation}
Here, $D$ is the divisor on $\R$ given by the sum of the two markings $r_0$ and $r_\infty$, and $\Omega_{\R/X_S}(\log D)$ is the sheaf of relative 1-forms on $\R/X_S$ allowed logarithmic poles along $D$ (a coherent sheaf on $\R$). The obstruction space 
\eqref{ggtt2}
is isomorphic to the product of the deformation spaces of the nodes of $\R$
and coincides with the obstruction space
for the map $$\widetilde{\cat{Rub}}_{g,A} \to \frak M_{g,n} \times \cl T$$ the vertical arrow in \eqref{eq:obs_factor_MW}), coming from \cite[Proposition \MWref{5.6.5.3}\MWvone{5.20}]{Marcus2017Logarithmic-com} (where they assume the divisibility conditions of Lemma \ref{lem:divisibility}, hence the results apply to $\widetilde{\cat{Rub}}_{g, A}$ and not to $\cat{Rub}_{g, A}$). 

Suppose that the primary obstruction vanishes. Denote by $$T_{\R/X_S} = \sheafhom_{\ca O_{\R}}(\Omega_{\R/X_S} , \ca O_{\R})$$ the relative tangent sheaf. There is a secondary obstruction in 
\begin{equation}\label{pp39}
H^1(\tilde{C}, \phi^{\dagger}(T_{\R/X_S}))\,, 
\end{equation}
where the $\phi^{\dagger}$ is the torsion-free part of $\phi^*$, see \cite{Graber2005Relative-virtua} and \cite{Marcus2017Logarithmic-com}. The
obstruction space \eqref{pp39}
is the image of the obstruction space $H^1 (C_S, f^* T_\R ) = H^1(C_S, \ca O_{C_S})$ for the map $$\cat{Rub}_{g,A}(X, \ca L)' \to \widetilde{\cat{Rub}}_{g,A}\, $$
the horizontal arrow in \eqref{eq:obs_factor_MW}, coming from \cite[Proposition \MWref{5.6.3.1}\MWvone{5.17}]{Marcus2017Logarithmic-com}.

When both of these obstructions vanish, the deformations are a torsor under $H^0(\tilde{C}, \phi^\dagger T_{\R/X_S} )$, an extension of the first term of the obstruction complex in \cite[Proposition \MWref{5.6.5.3}\MWvone{5.20}]{Marcus2017Logarithmic-com} by the first term of the obstruction complex of \cite[Proposition \MWref{5.6.4.1}\MWvone{5.17}]{Marcus2017Logarithmic-com}. The comparison of the obstruction theories is complete. 
\end{proof}

\section{Invariance properties} \label{Sect:proofInvariance}
\subsection{Overview}
We prove here the invariance properties of the universal twisted double ramification cycle as presented in Section \ref{Sect:introinvariance}. 

We start with an object of $\phi_\ca L\colon \mathcal S \to \Picabs_{g,n,d}$ 
given by
a flat family of prestable $n$-pointed genus $g$ curves together with a line bundle of relative degree $d$,
    \begin{equation}
    \label{eqn:invSobj}    
    \pi:\mathcal{C} \rightarrow \mathcal{S}\, , \ \ \ \ p_1,\ldots,p_n: \mathcal{S} \rightarrow \mathcal{C}\, , \ \ \ \
        \mathcal{L} \rightarrow \mathcal{C}\, .
        \end{equation}
    Let $\mathsf{DR}^{\mathsf{op}}_{g,A,\mathcal{L}}= \phi_\ca L^*\DRop_{g,A}\in \mathsf{CH}^g_{\mathsf{op}}(\mathcal{S})$
    be the twisted double ramification cycle associated to the above
    family \eqref{eqn:invSobj} and
     the vector
    $$A=(a_1,\ldots, a_n)\, , \ \ \ \ d =\sum_{i=1}^n a_i\, .$$


Theorem \ref{Thm:main} asserts that  $\DRop_{g,A}$ is equal to
the tautological class
$$\P^g_{g,A,d} \in \CHop^g(\Picabs_{g,n,d})\, .$$ If we
assume Theorem \ref{Thm:main}, we have a choice of proving the invariance properties either for $\DRop_{g,A}$ or for the formula in tautological classes. In fact, since both
sides of Invariances \invnrn, \invnrA{} and
\invnrIV{} are used {\em in the proof} of Theorem \ref{Thm:main},
we will have to prove these two sides separately in each of these cases. 
In fact, we will do this for all the invariances, as each side yields interesting perspectives. Also, we will show the invariances of Pixton's formula hold
not just for 
the codimension $g$ part $\P^g_{g,A,d}$, but for the full mixed degree class
\[\P^\bullet_{g,A,d} \in \prod_{c=0}^\infty \CHop^c(\Picabs_{g,n,d})\, .\]


\subsection{Proof of Invariance \invnrI{}: {\rm Dualizing}}
We want to show the invariance
        $$\mathsf{DR}^{\mathsf{op}}_{g,-A, \mathcal{L}^*}
        =\epsilon^* \mathsf{DR}^{\mathsf{op}}_{g,A, \mathcal{L}}
        \, , $$
where $\epsilon: \Picabs_{g,n,-d} \rightarrow \Picabs_{g,n,d}$
is the natural map obtained via dualizing the line bundle.  It is enough to show the invariance
\begin{equation}\label{vv99q}
\mathsf{DR}^{\mathsf{op}}_{g,-A} = \epsilon^* \mathsf{DR}^{\mathsf{op}}_{g,A}
\end{equation}
of the universal twisted double ramification cycles.
The invariance \eqref{vv99q} can be deduced by applying \cref{lem:pushpullcommutes} to the following commutative diagram of morphisms, where the horizontal morphisms are the corresponding Abel-Jacobi maps and the vertical morphisms are isomorphisms
\[\begin{tikzcd}
\cat{Div}_{g,A} \arrow[r] \arrow[d, "\widehat \epsilon"] &\Picabs_{g,n,d} \arrow[d,"\epsilon"]\\
\cat{Div}_{g,-A} \arrow[r]  &\, \Picabs_{g,n,-d}\, .
\end{tikzcd}
\]
Here, in the language of Section \ref{sec:log_def}, the morphism $\widehat \epsilon$ is induced by the natural map $\pi_*\bar M_C^{gp}/\bar M_S^{gp} \to \pi_*\bar M_C^{gp}/\bar M_S^{gp}$ given by inversion in $\bar M_C^{gp}$. \qed

We now prove the invariance 
\begin{equation}\label{ww99}
\P_{g,-A,-d}^\bullet = \epsilon^*\P_{g,A,d}^\bullet
\end{equation}
using the formulas for these cycles from \cref{Lem:Pixformulafactorization}. The equality is then implied by the following observations:
\begin{itemize}
\item We write  $\ca L_A = \ca L(- \sum_{i=1}^n a_i p_i)$ for the twisted universal line bundle on the universal curve $\pi: \frak C \to \Picabs_{g,n,d}$, and we use \cref{Lem:vertterm} to obtain
\begin{align*}
    \epsilon^* \left(-\eta + \sum_{i=1}^n 2 a_i \xi_i+ a_i^2 \psi_i \right) &= - \epsilon^* \pi_* c_1(\mathcal{L}_A)^2 = - \pi_* c_1((\mathcal{L}_A)^*)^2\\ &= - \pi_* (-c_1(\mathcal{L}_A))^2= - \pi_* c_1(\mathcal{L}_A)^2\\
    &=-\eta + \sum_{i=1}^n 2 a_i \xi_i+ a_i^2 \psi_i\, .
\end{align*}
\item Given a prestable graph $\Gamma_\delta$ describing a stratum in $\Picabs_{g,n,-d}$, the map $\epsilon$ sends this stratum isomorphically to the stratum of $\Gamma_{-\delta}$ (with an associated commutative diagram of gluing morphisms over $\frak M_{g,n}$). Combined with the equality $c_A(\Gamma_\delta) = c_{-A}(\Gamma_{-\delta})$ for $\Gamma_\delta \in \G_{g,n,d}^\textup{se}$ , we see  that the first line of formula \eqref{eqn:Pixformulafactorization} for $\P_{g,A,d}^\bullet$ has the desired invariance.
\item For the sum over graphs and weightings, we clearly have $h^1(\Gamma_\delta) = h^1(\Gamma_{-\delta})$ and $\Aut(\Gamma_\delta)=\Aut(\Gamma_{-\delta})$. Moreover, we have a natural bijection of the admissible weightings modulo $r$
\[\mathsf{W}_{\Gamma_\delta,r} \to \mathsf{W}_{\Gamma_{-\delta},r}\, , \ \ \ w \mapsto (h \mapsto r-w(h) \ \text{mod}\ r).\]
The  map of weightings leaves the edge terms of the formula \eqref{eqn:Pixformulafactorization} invariant  since they only depend on products $w(h)w(h')$ for and edge $(h,h')$ --  which are of the form $a(r-a)$ and thus sent to $(r-a)a$.
\end{itemize}
Therefore, the formula of Proposition \ref{Lem:Pixformulafactorization} applied to 
the two sides of \eqref{ww99} yields the same result. \qed


\subsection{Proof of Invariance \invnrn{}: {\rm Unweighted markings}}
Assume we have an additional section $p_{n+1}: \mathcal{S} \rightarrow \mathcal{C}$ of $\pi$ which yields an object of $\Picabs_{g,n+1,d}$,
    \begin{equation}
    \label{ff44996}    
    \pi:\mathcal{C} \rightarrow \mathcal{S}\, , \ \ \ \ p_1,\ldots,p_n,p_{n+1}: \mathcal{S} \rightarrow \mathcal{C}\, , \ \ \ \
        \mathcal{L} \rightarrow \mathcal{C}\, .
        \end{equation}
 Then, for the vector $A_0 \in \mathbb{Z}^{n+1}$ obtained by appending $0$ (as the last coefficient) to $A$, we want to show the invariance
 \begin{equation} \label{eqn:n_invar_repeat}
     \mathsf{DR}^{\mathsf{op}}_{g,A_0, \mathcal{L}}
        =\mathsf{DR}^{\mathsf{op}}_{g,A, \mathcal{L}}
        \, .
 \end{equation}

For the map
$\frak M_{g,n+1} \to \frak M_{g,n}$
induced by forgetting the last marking, we have a diagram of cartesian squares
\begin{equation}
 \begin{tikzcd}
  \cat{Div}_{g,A_0} \arrow[r]\arrow[d]  & \cat{Div}_{g,A} \arrow[d]\\
  \Picabs_{g,n+1,d} \arrow[r,"F"] \arrow[d] & \Picabs_{g,n,d} \arrow[d]\\
  \frak M_{g, n+1} \arrow[r] & \frak M_{g,n}\, , 
\end{tikzcd}
\end{equation}
where the morphism $F$ is syntomic. In particular, for $\bd 0 \in \mathbb{Z}^n$ the zero vector, the stack $\cat{Div}_{g, \bd 0}$ can be obtained by pulling back
$\cat{Div}_{g,\emptyset}$ from $\Picabs_{g,0,0}$. 
Then, as a consequence of the above cartesian square and the
definition of the double ramification cycle, \cref{lem:pushpullcommutes} yields
\begin{equation} \label{eqn:DR_forget_pullback}
    F^* \mathsf{DR}^{\mathsf{op}}_{g,A} = \mathsf{DR}^{\mathsf{op}}_{g,A_0}\ .
\end{equation}
Since the morphisms $\ca S \to \Picabs_{g,n,d}$ and $\ca S \to \Picabs_{g,n+1,d}$ used to define $\mathsf{DR}^{\mathsf{op}}_{g,A, \mathcal{L}}$ and $\mathsf{DR}^{\mathsf{op}}_{g,A_0, \mathcal{L}}$ fit in a diagram
\[
\begin{tikzcd}
\ca S \arrow[d] \arrow[dr]& \\
\Picabs_{g,n+1,d} \arrow[r,"F"] & \Picabs_{g,n,d}
\end{tikzcd}
\]
the equation \eqref{eqn:DR_forget_pullback} immediately proves the invariance \eqref{eqn:n_invar_repeat}. \qed


We now prove the corresponding invariance
\begin{equation} \label{eqn:P_forget_pullback}
    F^* \P_{g,A,d}^\bullet = \P_{g,A_0,d}^\bullet\ 
\end{equation}
of Pixton's formula. First, since the map $F$ does not change the curve or the line bundle, we have a Cartesian diagram
\[
\begin{tikzcd}
\frak C_{g,n+1,d} \arrow[r,"\widehat{F}"] \arrow[d,"\pi'"]& \frak C_{g,n,d} \arrow[d,"\pi"]\\
\Picabs_{g,n+1,d} \arrow[r,"{F}"]& \Picabs_{g,n,d}\, .
\end{tikzcd}
\]
The universal line bundles $\ca L_{g,n,d}$ and $\ca L_{g,n+1,d}$ on $\frak C_{g,n,d}$ and $\frak C_{g,n+1,d}$ satisfy  $$\widehat{F}^* \ca L_{g,n,d} = \ca L_{g,n+1,d}\, .$$ Similarly, for the canonical line bundles of $\pi, \pi'$ we have $\widehat{F}^* \omega_\pi = \omega_{\pi'}$. Combining these facts, we see that $F$ pulls back the operational
classes $\eta, \xi_i, \psi_i$ on $\Picabs_{g,n,d}$ 
to the corresponding classes on $\Picabs_{g,n+1,d}$.

Next, given a graph $\Gamma_\delta \in \G_{g,n,d}$, we have a fibre diagram
\begin{equation} \label{eqn:forgetgluefibre}
\begin{tikzcd}
 \coprod_{v \in \V(\Gamma)} \Picabs_{\Gamma_{v,\delta}} \arrow[r,"\coprod_v j_{\Gamma_{v,\delta}}"] \arrow[d] & \Picabs_{g,n+1,d} \arrow[d,"\pi"]\\
 \Picabs_{\Gamma_{\delta}} \arrow[r,"j_{\Gamma_{\delta}}"] & \Picabs_{g,n,d}
\end{tikzcd}
\end{equation}
where, for $v \in \V(\Gamma)$, we denote by $\Gamma_{v,\delta} \in \G_{g,n+1,d}$ the graph obtained from $\Gamma_\delta$ by adding marking $n+1$ at $v$ and leaving the remaining data fixed. 

Using the expression \eqref{eqn:Pixformulafactorization} given in \cref{Lem:Pixformulafactorization}, we conclude the proof of \eqref{eqn:P_forget_pullback} by the following observations:
\begin{itemize}
    \item The invariance of $\eta, \xi_i, \psi_i$ under $F$ implies
    \[F^*(-\eta + \sum_{i=1}^n 2 a_i \xi_i+ a_i^2 \psi_i ) = -\eta + \sum_{i=1}^{n+1} 2 a_i \xi_i+ a_i^2 \psi_i, \]
    where we use $a_{n+1}=0$.
    \item From the fibre diagram \eqref{eqn:forgetgluefibre} and the equality $c_{A}(\Gamma_\delta) = c_{A_0}(\Gamma_{v,\delta})$ for $\Gamma_\delta \in \G_{g,n,d}^\textup{se}$ and any $v \in \V(\Gamma)$, the sum over the terms $c_A(\Gamma_\delta) [\Gamma_\delta]$ pulls back correctly.
    \item For all $\Gamma_\delta \in \G_{g,n,d}$ and $v \in \V(\Gamma)$, $h^1(\Gamma_\delta) = h^1(\Gamma_{v,\delta})$ since the Betti number is independent of the position of the markings. Moreover, the automorphism group $\Aut(\Gamma_\delta)$ acts on $\V(\Gamma_\delta)$ and by the orbit-stabilizer formula, the size of the orbit  $\Aut(\Gamma_\delta) \cdot v$ of $v$ and the size of its stabilizer $\Aut(\Gamma_\delta)_v$ satisfy
    \begin{equation} \label{eqn:orbitstabilizerGamdelt}|\Aut(\Gamma_\delta) \cdot v| = \frac{|\Aut(\Gamma_\delta)|}{|\Aut(\Gamma_\delta)_v|}\, .\end{equation}
    The stabilizer $\Aut(\Gamma_\delta)_v$ is exactly equal to the automorphism group $\Aut(\Gamma_{v,\delta})$ of the graph $\Gamma_{v,\delta}$, since the marking $n+1$ at $v$ forces this vertex to be fixed. 
    \item As $\Gamma_\delta$ runs through $\G_{g,n,d}^\textup{nse}$, the graphs $\Gamma_{v,\delta}$ run through $\G_{g,n+1,d}^\textup{nse}$. The equality \eqref{eqn:orbitstabilizerGamdelt} precisely implies that the corresponding graph sums (weighted by the inverse size of automorphism groups) correspond to each other under pullback by $F$.
    \item Finally, the weightings $\mathsf{W}_{\Gamma_\delta,r}$ and $\mathsf{W}_{\Gamma_{v,\delta},r}$ are naturally bijective.
\end{itemize}
Combining the above observations, we see that also the second line of \eqref{eqn:Pixformulafactorization} transforms under pullback of $F$ as expected. \qed

\subsection{Proof of Invariance \invnrA{}: {\rm Weight translation}}

Let $B=(b_1,\ldots,b_n) \in \bb Z^n$ satisfy $\sum_{i=1}^n b_i=e$, then for the family 
    \begin{equation}
    \label{ff44997}    
    \pi:\mathcal{C} \rightarrow \mathcal{S}\, , \ \ \ \ p_1,\ldots,p_n: \mathcal{S} \rightarrow \mathcal{C}\, , \ \ \ \
        \mathcal{L}\big(\sum_{i=1}^n b_i p_i\big) \rightarrow \mathcal{C}\, .
        \end{equation}
 defining an object of $\Picabs_{g,n,d+e}$ we want to show  the invariance
 \begin{equation} \label{eqn:A_invar_repeat}
 \mathsf{DR}^{\mathsf{op}}_{g,A+B, \mathcal{L}(\sum_i b_i p_i)}
        =\mathsf{DR}^{\mathsf{op}}_{g,A, \mathcal{L}}
        \, .
 \end{equation}
To show this, consider the smooth map 
\begin{equation}
\tau_B\colon \Picabs_{g,n,d} \to \Picabs_{g,n,d+e} \, , \ \ \ 
\ca L \mapsto \ca L \big(\sum_{i=1}^n b_i p_i\big)\, . 
\end{equation}
over $\frak M_{g,n}$. We have  a natural cartesian diagram
\begin{equation}
 \begin{tikzcd}
  \cat{Div}_{g,A} \arrow[r]\arrow[d]  & \cat{Div}_{g,A+B} \arrow[d]\\
  \Picabs_{g,n,d} \arrow[r, "\tau_B"]  & \Picabs_{g,n,d+e}\, .  
\end{tikzcd}
\end{equation}
In particular, for any ramification data
$A$, we can obtain $\cat{Div}_{g,A}$ from $\cat{Div}_{g,\bd 0}$ by such translations. The diagram above together with \cref{lem:pushpullcommutes} implies
\begin{equation} \label{eqn:varying_A}
    \tau_B^* \mathsf{DR}^{\mathsf{op}}_{g,A+B} = \mathsf{DR}^{\mathsf{op}}_{g,A}\ .
\end{equation}
Since the morphisms $\ca S \to \Picabs_{g,n,d}$ and $\ca S \to \Picabs_{g,n,d+e}$ used to define $\mathsf{DR}^{\mathsf{op}}_{g,A, \mathcal{L}}$ and $\mathsf{DR}^{\mathsf{op}}_{g,A+B, \mathcal{L}(\sum_i b_i p_i)}$ fit in a diagram
\[
\begin{tikzcd}
\ca S \arrow[d] \arrow[dr]& \\
\Picabs_{g,n,d} \arrow[r,"\tau_B"] & \Picabs_{g,n,d+e}
\end{tikzcd}
\]
the equation \eqref{eqn:varying_A} immediately proves the invariance \eqref{eqn:A_invar_repeat}. \qed


Now we prove the invariance
\begin{equation} \label{eqn:varying_A_for_P}
    \tau_B^* \P_{g,A+B,d+e}^\bullet = \P_{g,A,d}^\bullet\ .
\end{equation}
for Pixton's formula.
Recall the notation $\ca L_A = \ca L(- \sum_i a_i [p_i])$ for the twisted universal line bundles from  \cref{Lem:vertterm}, observe that in the Cartesian diagram
\[
\begin{tikzcd}
\frak C_{g,n,d} \arrow[r,"\widehat \tau_B"] \arrow[d,"\pi'"] & \frak C_{g,n,d+e} \arrow[d,"\pi"]\\
\Picabs_{g,n,d} \arrow[r,"\tau_B"] & \Picabs_{g,n,d+e}
\end{tikzcd}
\]
we have $\widehat \tau_B^* \ca L_{A+B} = \ca L_{A}$. By \cref{Lem:vertterm}, we have
\begin{align*}
\tau_B^*(-\eta + \sum_{i=1}^n 2 (a_i+b_i) \xi_i+ (a_i+b_i)^2 \psi_i) &= - \tau_B^*\pi_* c_1(\mathcal{L}_{A+B})^2\\
&=- \pi'_* c_1(\mathcal{L}_{A})^2 = -\eta + \sum_{i=1}^n 2 a_i \xi_i+ a_i^2 \psi_i\, ,
\end{align*}
which shows the compatibility of the first part of formula \eqref{eqn:Pixformulafactorization} for $\P_{g,A+B,d+e}^\bullet$.

For the second part, we can combine the exponential of the graph sum over $\G_{g,n,d+e}^\textup{se}$ with the graph sum over $\G_{g,n,d+e}^\textup{nse}$ 
in \eqref{eqn:Pixformulafactorization}
to recover the sum
\begin{equation} \label{eqn:taubfullsum}
    \sum_{
\substack{\Gamma_\delta\in \G_{g,n,d+e} \\
w\in \mathsf{W}_{\Gamma_\delta,r}}
}
\frac{r^{-h^1(\Gamma_\delta)}}{|\Aut(\Gamma_\delta)| }
\;
j_{\Gamma_\delta*}\Bigg[
\prod_{e=(h,h')\in \E(\Gamma_\delta)}
\frac{1-\exp\left(-\frac{w(h)w(h')}2(\psi_h+\psi_{h'})\right)}{\psi_h + \psi_{h'}} \Bigg]
\end{equation}
over all graphs in $\G_{g,n,d+e}$ as in the proof of \cref{Lem:Pixformulafactorization}. It will be more convenient to simply show the compatibility of the full graph sum \eqref{eqn:taubfullsum} under pullback by $\tau_B$.

Given a graph $\Gamma_\delta \in \G_{g,n,d+e}$, denote by $\delta^B : \V(\Gamma) \to \mathbb{Z}$ the map defined by
\[\delta^B(v) = \delta(v) - \sum_{\substack{i\text{ marking}\\\text{at }v}}b_i\, .\]
We have a fibre diagram
\begin{equation} \label{eqn:taubshiftgluing}
\begin{tikzcd}
\Picabs_{\Gamma_{\delta^B}} \arrow[r,"j_{\Gamma_{\delta^B}}"] \arrow[d] & \Picabs_{g,n,d} \arrow[d,"\tau_B"]\\
\Picabs_{\Gamma_{\delta}} \arrow[r,"j_{\Gamma_{\delta}}"] & \Picabs_{g,n,d+e}\, .
\end{tikzcd}    
\end{equation}
The proof that \eqref{eqn:taubfullsum} pulls back under  $\tau_B$ as desired follows from the following observations:
\begin{itemize}
    \item As $\Gamma_\delta$ runs through $\G_{g,n,d+e}$, the graphs $\Gamma_{\delta^B}$ run through $\G_{g,n,d}$. From the definitions, we verify that the conditions defining admissible weightings $w$ mod $r$ for $\Gamma_\delta$ and $\Gamma_{\delta^B}$ are identical (the shift from $\delta$ to $\delta^B$ cancels the shift from $A+B$ to $A$). 
    \item Since the underlying graphs of $\Gamma_\delta$ and $\Gamma_{\delta^B}$ agree, we have $h^1(\Gamma_\delta) = h^1(\Gamma_{\delta^B})$. Concerning the automorphisms,
    they appear to take into account the degree functions $\delta, \delta^B$ on the graphs. But any vertex $v$ such that $\delta(v) \neq \delta^B(v)$ must carry a marking and thus must anyway be fixed under an automorphism. Hence, $\Aut(\Gamma_\delta) = \Aut(\Gamma_{\delta^B})$.
    \item To conclude using the diagram \eqref{eqn:taubshiftgluing}, we observe that the map $$\Picabs_{\Gamma_{\delta^B}} \to \Picabs_{\Gamma_\delta}$$ appearing there is a map over $\frak M_{\Gamma}$ (since only the line bundle is changed). Hence, the 
    classes $\psi_h, \psi_{h'}$ appearing in the edge terms of \eqref{eqn:taubfullsum} are invariant.
\end{itemize}
\qed

\subsection{Proof of Invariance \invnrII{}: {\rm Twisting by pullback}}
Let $\mathcal{B} \rightarrow \mathcal{S}$ be a line bundle on the
base. We obtain a new object of $\Picabs_{g,n,d}$ over $\mathcal{S}$ by tensoring \eqref{eqn:invSobj} with $\pi^*\mathcal{B}$:
\begin{equation}
\label{eqn:invII}    
\pi:\mathcal{C} \rightarrow \mathcal{S}\, , \ \ \ \ p_1,\ldots,p_n: \mathcal{S} \rightarrow \mathcal{C}\, , \ \ \ \
\mathcal{L}\otimes \pi^*\mathcal{B} \rightarrow \mathcal{C}\, .
\end{equation}
We want to show the invariance
$$\mathsf{DR}^{\mathsf{op}}_{g,A, \mathcal{L}\otimes\pi^*\mathcal{B}}
=\mathsf{DR}^{\mathsf{op}}_{g,A, \mathcal{L}}
\, . $$

The universal twisted double ramification cycle $\DRop_{g,A}$ is the class associated to the Abel-Jacobi map $$\aj\colon \cat{Div}_{g,A} \to \Picabs_{g,n}\, ,$$
and the latter is constructed (\cref{def:div}) by pulling back the morphism $$\aj^{\rel}\colon \cat{Div}^{\rel}_{g,A} \to \Picrel_{g,n}\, .$$
Thus by \cref{lem:pushpullcommutes}, the corresponding cycle $\DRop_{g,A}$ is a pullback of $\aj^{\rel}_{\mathsf{op}}[\cat{Div}^{\rel}_{g,A}]$ from $\Picrel_{g,n,d}$.
But twisting the family \eqref{eqn:invSobj} by a line bundle pulled back from the base does not change the map to $\Picrel_{g,n,d}$, and so does not change the resulting operational class, proving the invariance.  \qed


We now prove the invariance
$$\phi_{\ca L\otimes \pi^*\ca B}^*\P_{g,A,d}^\bullet=
\phi_\ca L^*\P_{g,A,d}^\bullet  \,  . $$
Using that $\P_{g,A,d}^\bullet$ is a pullback of $\P_{g,\emptyset,0}^\bullet$ as described in \cref{sec:intro_deg_0}, it suffices to show that the cycle $\P_g^\bullet$ on $\Picabs_{g,0,0}$ pulls back to the same expression under the two maps 
\[\phi_{\ca L_A}\, , \,
\phi_{\ca L_A \otimes \pi^* \ca B}\,  : \ca S \to \Picabs_{g,0,0}\]
induced by $\mathcal{L}_{A} = \mathcal{L}(-\sum_{i=1}^n a_i p_i)$ and $\ca L_{A} \otimes \pi^* \ca B$ respectively.
 We will  use formula \eqref{eqn:P_gformula2}
 for $\P_g^\bullet$   and show that both parts of the formula are invariant separately.

\noindent $\bullet$ For the term $\exp\left(-\frac12 \eta \right)$,  we use \cref{Lem:vertterm} to obtain
\begin{align*}
\phi_{\ca L_A \otimes \pi^* \ca B}^* \eta &= \pi_* c_1( \mathcal{L}_{A} \otimes \pi^* \ca B)^2\\ &= \pi_* \left(c_1( \mathcal{L}_{A})^2 + 2 c_1(\mathcal{L}_{A}) \pi^* c_1(\ca B) + \pi^* c_1(\ca B)^2\right) \\
&= \pi_* \left(c_1( \mathcal{L}_{A})^2\right) + 2 c_1(\ca B) \underbrace{\pi_* c_1(\mathcal{L}_{A})}_{=0}  +  c_1(\ca B)^2 \underbrace{\pi_* 1}_{=0} = \phi_{\ca L_A}^* \eta,
\end{align*}
where 
$\pi_* c_1(\mathcal{L}_{A})$
vanishes since $\mathcal{L}_{A}$ has degree $0$, and $\pi_*1$ vanishes for dimension reasons. The vertex term is therefore
invariant under twisting by $\pi^* \ca B$.

\noindent $\bullet$ For the graph sum 
\begin{equation} \label{eqn:InvIIgraphsum}
\sum_{
\substack{\Gamma_\delta\in \G_{g,0,0} \\
w\in \mathsf{W}_{\Gamma_\delta,r}}
}
\frac{r^{-h^1(\Gamma_\delta)}}{|\Aut(\Gamma_\delta)| }
\;
j_{\Gamma_\delta*}\Bigg[
\prod_{e=(h,h')\in \E(\Gamma_\delta)}
\frac{1-\exp\left(-\frac{w(h)w(h')}2(\psi_h+\psi_{h'})\right)}{\psi_h + \psi_{h'}} \Bigg]    
\end{equation}
in \eqref{eqn:P_gformula2}, we show that it is a pullback under the morphism 
\begin{equation}\label{zz25}
\Picabs_{g,0,0} \to \Picrel_{g,0,0}\, ,
\end{equation}
which finishes the proof since the compositions of $\phi_{\ca L_A}$ and $\phi_{\ca L_A \otimes \pi^* \ca B}$ with the morphism
\eqref{zz25} agree. For a prestable graph $\Gamma$,
we have Cartesian diagrams
\begin{equation}
\begin{tikzcd}
\coprod_\delta \Picabs_{\Gamma_\delta} \arrow[r,"\coprod_\delta j_{\Gamma_\delta}"] \arrow[d] & \Picabs_{g,0,0} \arrow[d]\\
\coprod_\delta \Picrel_{\Gamma_\delta} \arrow[r,"\coprod_\delta j^\textup{rel}_{\Gamma_\delta}"] \arrow[d] & \Picrel_{g,0,0} \arrow[d]\\
\frak M_\Gamma \arrow[r,"j_{\Gamma}"] & \, \frak M_g\, .
\end{tikzcd}
\end{equation}
In formula  \eqref{eqn:InvIIgraphsum}, the edge terms use only the classes $\psi_h, \psi_{h'}$, which are pullbacks from $\frak M_\Gamma$. Therefore, \eqref{eqn:InvIIgraphsum} is the pullback under \eqref{zz25} of the identical formula with $j_{\Gamma_\delta}$ replaced with $j^\textup{rel}_{\Gamma_\delta}$. \qed



\subsection{Proof of Invariance \invnrIII{}: {\rm Vertical twisting}}
Consider the boundary divisor $\Delta$ of $\Picabs_{g,n,d}$ given by the partition 
\begin{equation*}
g_1+g_2=g\, , \ \ \ N_1\sqcup N_2 = \{1,\ldots,n\}\,, \ \ \ d_1+d_2=d\, 
\end{equation*}
which is not symmetric.
In  $\mathfrak{C}_{g,n,d} \to \Picabs_{g,n,d}$, let $\Delta_1$ and $\Delta_2$ be the $(g_1,N_1,d_1)$
and $(g_2,N_2,d_2)$ components respectively
of the universal curve over $\Delta$.
Then we have a morphism
\[\Phi_{\Delta_1} : \Picabs_{g,n,d} \to \Picabs_{g,n,d}\]
associated to the twisted line bundle $\mathcal{L}(\Delta_1)$ on the universal curve $\mathfrak{C}_{g,n,d} \to \Picabs_{g,n,d}$. 

\begin{remark}
The map $\Phi_{\Delta_1}$ is \emph{not} an isomorphism. Indeed, the map is equal to the identity away from $\Delta \subset \Picabs_{g,n,d}$, but it sends the generic point of $\Delta$ to the generic point of the boundary divisor 
$$\widetilde \Delta = \Delta(g_1, N_1, d_1-1|g_2, N_2, d_2+1)\, ,$$ which itself is fixed under $\Phi_{\Delta_1}$. Hence $\Phi_{\Delta_1}$ is not injective, though it is easily seen to be \'etale. 
\end{remark}



We want to show the invariance 
\begin{equation}\label{599e}
\Phi_{\Delta_1}^* \mathsf{DR}^{\mathsf{op}}_{g,A} = \mathsf{DR}^{\mathsf{op}}_{g,A}\, .
\end{equation}
Using the data 
\begin{equation*}
g_1+g_2=g\, , \ \ \ N_1\sqcup N_2 = \{1,\ldots,n\}\,, \ \ \ d_1+d_2=d\, ,
\end{equation*}
we will define a map $$\Phi'\colon\cat{Div}_{g,A}\to\cat{Div}_{g,A}\, .$$
In fact, we will define a map $\cat{Div}^{\rel}_{g,A}\to\cat{Div}^{\rel}_{g,A}$ and then lift it to $\cat{Div}_{g,A}$ by fibre product with $\Picabs$. 
The invariance \eqref{599e} will be deduced from $\Phi'$.

Suppose we are given a map $S \to \cat{Div}^{\rel}_{g,A}$ compatible with $C/S$ defined by a $\bar M_S^{gp}$ torsor $\ca P$ on $S$ and a map $\alpha\colon C \to \ca P$. 
The divisor $\Delta$ determines an element of the characteristic sheaf of the log structure on $\Picabs_{g,n,d}$, which pulls back under the composition $$S \to \cat{Div}^{\rel}_{g,A} \xrightarrow{\aj} \Picabs_{g,n,d}$$
to an element $\delta \in \bar M_S(S)$. All lifts of $\delta$ to $M_S(S)$ generate the same ideal sheaf on $S$, whose closed subscheme is exactly the pullback $\Delta_S$ of $\Delta$.
We write $$\Delta_1', \Delta_2' \subset C$$ for the two components of the universal curve over $\Delta_S \subset S$.



We define a new map $(\ca P', \alpha')\colon S\to \cat{Div}^{\rel}_{g,A}$ as follows. We take the same torsor $\ca P' = \ca P$. On the locus $C_1 \hookrightarrow C$ which is the complement of $\Delta'_1$, we define $\alpha' = \alpha$. On the locus $C_2 \hookrightarrow C$ which is the complement of $\Delta'_2$ we define $\alpha' = \alpha - \delta$. Since $\delta$ vanishes on the overlap $C_1 \cap C_2$, we just have to check that the
defined section extends from $C_1 \cup C_2$ to the whole of $C$ (across the separating node $\Delta'_1 \cap \Delta'_2$). The
extension
can be checked \'etale locally, and then the claim follows from the local description of the log structure.

We have defined a map $\Phi'\colon\cat{Div}_{g,A} \to \cat{Div}_{g,A}$, and we verify easily that the diagram 
\begin{equation}\label{inv_3_diagram}
 \begin{tikzcd}
  \cat{Div}_{g,A} \arrow[r,"\aj"]\arrow[d,"\Phi'"] & \Picabs_{g,n,d}\arrow[d,"\Phi_{\Delta_1}"]\\
    \cat{Div}_{g,A} \arrow[r, "\aj"] & \Picabs_{g,n,d} 
\end{tikzcd}
\end{equation}
commutes. We will prove \cref{inv_3_diagram} is a pullback square
which by \cref{lem:pushpullcommutes} yields the invariance \eqref{599e}.

To prove  \cref{inv_3_diagram} is a pullback square, since the horizontal arrows are monomorphisms, 
we need to show the following: given $(\ca P, \alpha) \in \cat{Div}^{\rel}_{g,A}(S)$, a line bundle $\ca L$ on $C$ and an isomorphism $\aj(\ca P, \alpha) \isom \Phi_{\Delta_1}(\ca L)$, there exists $(\ca P_0, \alpha_0) \in \cat{Div}^{\rel}_{g,A}(S)$ such that $\aj(\ca P_0, \alpha_0) \cong \ca L$.  If the element $(\ca P, \alpha)$ has only one preimage under $\Phi'$, we are done by commutativity. If there are two preimages (the only other case), the  Abel-Jacobi images differ by a twist by $\Delta'_1$, and the bundle $\ca L$ will determine which we choose. More formally, by uniqueness, we may assume $S$ to be strictly henselian local, then $\Phi'$ has exactly one preimage whenever $\Delta(g_1,N_1,d_1-1|g_2,N_2,d_2+1)$ does not meet $S$, and the result is clear as the diagram commutes. If on the other hand $\Delta(g_1,N_1,d_1-1|g_2,N_2,d_2+1)$ does meet $S$, then the two preimages under $\Phi'$ will have multidegrees $(d_1,d_2)$ and $(d_1-1,d_2+1)$, and only one of these can be sent by the Abel-Jacobi map to $\ca L$. \qed


We now prove the invariance 
\[\Phi_{\Delta_1}^* \P_{g,A,d}^\bullet = \P_{g,A,d}^\bullet\, .\]
We will use \cref{Lem:Pixformulafactorization} and prove that the two lines of formula \eqref{eqn:Pixformulafactorization} are separately
invariant.

\noindent $\bullet$ For the exponential term, we must show that the divisor
\[-\eta + \sum_{i=1}^n 2 a_i \xi_i+ a_i^2 \psi_i  + \sum_{\Gamma_\delta \in \G_{g,n,d}^\textup{se}} c_A(\Gamma_\delta) [\Gamma_\delta]\]
is invariant. By \cref{Lem:vertterm}, we see  
\[- \pi_* c_1(\mathcal{L}_A)^2 =
-\eta + \sum_{i=1}^n 2 a_i \xi_i+ a_i^2 \psi_i \, .\]
After pulling back via $\Phi_{\Delta_1}$, we obtain
\begin{align*}
- \pi_* c_1(\mathcal{L}_A(\Delta_1))^2 &= - \pi_* \left( c_1(\mathcal{L}_A)^2 + 2 c_1(\mathcal{L}_A) \Delta_1 + \Delta_1^2\right)\\
&= - \pi_* \left( c_1(\mathcal{L}_A)^2\right)) -2 \mathrm{deg}\left(\mathcal{L}_A|_{\Delta_1}\right) \pi_* \Delta_1 - \pi_* \left( \Delta_1 \cdot (\pi^* \Delta - \Delta_2)\right)\\
&= - \pi_* \left( c_1(\mathcal{L}_A)^2\right)) -2 \left(d_1 - \sum_{i \in N_1} a_i\right) \Delta + \Delta\, .
\end{align*}
We have used $\pi^* \Delta = \Delta_1 + \Delta_2$
and that the intersection of $\Delta_1$ and $\Delta_2$ has degree $1$ over $\Delta$. 


For the pullback of the linear combination 
\[\sum_{\Gamma_\delta \in \G_{g,n,d}^\textup{se}} c_A(\Gamma_\delta) [\Gamma_\delta]\]
of boundary divisors, recall the divisor $\widetilde \Delta = \Delta(g_1, N_1, d_1-1|g_2, N_2, d_2+1)$ which satisfies $$\widetilde \Delta = \Phi_{\Gamma_1}(\widetilde \Delta) = \Phi_{\Gamma_1}(\Delta)\, .$$
We see $\Phi_{\Delta_1}^* [\Gamma_\delta] = [\Gamma_\delta]$ for all boundary divisors $[\Gamma_\delta]$ different from $[\Delta]$ and $[\widetilde \Delta]$. Moreover,
$$\Phi_{\Delta_1}^* [\widetilde \Delta] = [\widetilde \Delta] + [\Delta]\, , \ \ \ \Phi_{\Delta_1}^* [\Delta] = 0\, .$$
Writing $\Gamma_{\Delta}, \Gamma_{\widetilde \Delta}$ for the graphs associated to $\Delta, \widetilde \Delta$, we see
\[\Phi_{\Delta_1}^* \sum_{\Gamma_\delta \in \G_{g,n,d}^\textup{se}} c_A(\Gamma_\delta) [\Gamma_\delta] = \sum_{\Gamma_\delta \in \G_{g,n,d}^\textup{se}} c_A(\Gamma_\delta) [\Gamma_\delta] + \left(c_A(\Gamma_{\widetilde \Delta}) - c_A(\Gamma_\Delta) \right) [\Delta]\,.\]
After expanding the last term further, we obtain the coefficient
\[c_A(\Gamma_{\widetilde \Delta}) - c_A(\Gamma_\Delta) = - (d_1 - 1 - \sum_{i \in I_1}a_i)^2 + (d_1  - \sum_{i \in I_1}a_i)^2 = 2 (d_1 - \sum_{i \in I_1} a_i) -1\,, \]
which exactly balances out the error term we obtained in the pullback of $- \pi_* c_1(\mathcal{L}_A(\Delta_1))^2$.
We have finished the proof of the invariance of the exponential term in \eqref{eqn:P_gformula2}.

\noindent $\bullet$ 
For the invariance of the sum over $\Gamma_\delta\in \G_{g,n,d}^\textup{nse}$, we claim that given any graph $\Gamma_\delta$, we have the diagram
\begin{equation}
\begin{tikzcd}
\coprod_{\sum_v \delta(v)=d} \Picabs_{\Gamma_\delta} \arrow[r, "\coprod_{\delta} j_{\Gamma_\delta}"] \arrow[d, "\Phi_{\Delta_1,\Gamma}"] &\Picabs_{g,n,d} \arrow[d, "\Phi_{\Delta_1}"]\\
\coprod_{\sum_v \delta(v)=d} \Picabs_{\Gamma_\delta} \arrow[r, "\coprod_{\delta} j_{\Gamma_\delta}"] \arrow[d] &\Picabs_{g,n,d} \arrow[d]\\
\frak M_\Gamma \arrow[r,"j_\Gamma"] & \, \frak M_{g,n}\, .
\end{tikzcd}
\end{equation}
The lower and outer diagrams are cartesian as we have seen in Section \ref{Sect:Prestgrwdeg}, thus the upper diagram is also cartesian. While for a general graph $\Gamma_\delta$ the map $\Phi_{\Delta_1,\Gamma}$ induces a nontrivial map on the set of components of $\coprod_{\delta} \Picabs_{\Gamma_\delta}$, for $\Gamma$ having only nonseparating edges, we obtain  $$\Phi_{\Delta_1, \Gamma} : \Picabs_{\Gamma_\delta} \to \Picabs_{\Gamma_\delta}$$ over $\frak M_{\Gamma}$. 
The classes $\psi_h$ (for $h \in H(\Gamma)$) are pullbacks from $\frak M_\Gamma$, in particular $\Phi_{\Delta_1,\Gamma}^* \psi_h = \psi_h$ for all such $h$.
As a result, each term
\[
j_{\Gamma_\delta*}\Bigg[
\prod_{e=(h,h')\in \E(\Gamma_\delta)}
\frac{1-\exp\left(-\frac{w(h)w(h')}2(\psi_h+\psi_{h'})\right)}{\psi_h + \psi_{h'}} \Bigg]
\]
in the sum over $\Gamma_\delta\in \G_{g,n,d}^\textup{nse}$, $w\in \mathsf{W}_{\Gamma_\delta,r}$ in formula 
\eqref{eqn:Pixformulafactorization}
is indeed invariant. \qed

\subsection{Proof of Invariance \invnrIV{}: {\rm Partial stabilization}}\label{sec:proof_inv_IV} 
\subsubsection{The stack $\frak N$}
We begin by introducing a stack $\frak N$ that allows us to reformulate Invariance \invnrIV{} as an equality of operational classes on $\frak N$. The stack  $\frak N$ parametrises data
\[f \colon C' \to C\,,\ \ \mathcal{L}/C\,,\]
where $f$ is a map of prestable genus $g$ curves which is a partial stabilisation (a surjection which contracts some unstable rational components of $C'$) and $\ca L$ is a line bundle on $C$ of degree $0$.  By a small extension of the arguments of \cite{Kresch2013Flattening-stra}, 
$\frak N$ is an algebraic stack 
and comes with two maps to $\Picabs_{g,0,0}$ given by:
\begin{eqnarray*}
\targetmap: \frak N \to \Picabs_{g,0,0}\, ,  & (f \colon C' \, \to\,  C, \mathcal{L}) \mapsto (C,\ca L)\, , \\
\sourcemap: \frak N \to \Picabs_{g,0,0}\, , & \ \,(f \colon C' \to C, \mathcal{L})\,  \mapsto\,  (C',f^*\ca L)\,.
\end{eqnarray*}

\begin{lemma}\label{lem:invariance_N}
\begin{equation} \label{eqn:InvIVreformulation}
    \targetmap^*\DRop_{g,\bd 0} = (\sourcemap)^*\DRop_{g, \bd 0}\, ,\ \ \ \ \targetmap^* \P_{g}^\bullet = (\sourcemap)^* \P_g^\bullet\, .
\end{equation}
\end{lemma}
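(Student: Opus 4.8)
The two asserted equalities should be proven separately, matching the strategy used for the other invariances: one argument for $\DRop$ using the logarithmic/Abel-Jacobi construction together with \Cref{lem:pushpullcommutes}, and a parallel argument for Pixton's formula using the reformulation of \Cref{Lem:Pixformulafactorization}. Recall that $\DRop_{g,\bd 0}$ on $\Picabs_{g,0,0}$ is the class $\aj_{\mathsf{op}}[\cat{Div}_{g,\emptyset}]$ associated to the Abel-Jacobi map, so to apply \Cref{lem:pushpullcommutes} the main point is to exhibit, for each of the maps $\targetmap$ and $\sourcemap$ from $\frak N$ to $\Picabs_{g,0,0}$, a fibre diagram identifying the pulled-back stack $\cat{Div}_{g,\emptyset}$ and to check that $\targetmap$ and $\sourcemap$ are syntomic (flat and lci) with $\frak N$ suitably stratified by quotient stacks. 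Flatness and the lci property of $\sourcemap$ and $\targetmap$ should follow from the description of $\frak N$ via the flattening-stratification techniques of \cite{Kresch2013Flattening-stra}: partial stabilisation morphisms of prestable curves form a smooth stack over $\frak M_{g,0}$, and tensoring/pulling back a line bundle is a $\Picabs$-torsor-type operation.

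First I would treat the $\targetmap$ side. Since $\targetmap(f\colon C'\to C,\ca L)=(C,\ca L)$ simply forgets the destabilisation $C'$, the fibre of $\targetmap$ over a point of $\Picabs_{g,0,0}$ is the (smooth) stack of partial stabilisations of a fixed prestable curve, so $\targetmap$ is smooth; pulling back the Abel-Jacobi map $\cat{Div}_{g,\emptyset}\to\Picabs_{g,0,0}$ along $\targetmap$ and invoking \Cref{lem:pushpullcommutes} gives $\targetmap^*\DRop_{g,\bd 0}=(\targetmap^*\aj)_{\mathsf{op}}[\cat{Div}_{g,\emptyset}\times_{\Picabs_{g,0,0}}\frak N]$. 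The $\sourcemap$ side is the substantive one: here I would use the key geometric input from Invariance \invnrIV{} in the introduction, namely that for the family $C'\to C$ with $f^*\ca L$ the "twisted divisor condition" (\Cref{prop:aj_image}) and indeed the whole log-divisor structure is unchanged under pulling back along a partial stabilisation contracting rational tails/bridges on which the line bundle is trivial. Concretely, the stack $\cat{Div}^{\rel}_{g,\emptyset}$ is defined via $\pi_*\bar M_C^{gp}/\bar M_S^{gp}$, and pulling back a log curve along a partial stabilisation $C'\to C$ induces an isomorphism on these tropical data (contracted rational components carry no new tropical structure, and the slope/weight data at nodes is preserved), giving a canonical identification $\cat{Div}_{g,\emptyset}\times_{\Picabs_{g,0,0},\,\sourcemap}\frak N \stackrel{\sim}{=} \cat{Div}_{g,\emptyset}\times_{\Picabs_{g,0,0},\,\targetmap}\frak N$ compatible with the two Abel-Jacobi maps. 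Combined with flatness and the lci property of $\sourcemap$, \Cref{lem:pushpullcommutes} then yields the first equality in \eqref{eqn:InvIVreformulation}.

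For Pixton's formula, I would pull back the expression \eqref{eqn:P_gformula2} for $\P_g^\bullet$ along $\targetmap$ and $\sourcemap$ and compare term by term, exactly as in the proof of Invariance \invnrII{}. For the vertex term $\exp(-\tfrac12\eta)$: by \Cref{Lem:vertterm}, $\eta=\pi_*c_1(\ca L)^2$, and since $f\colon C'\to C$ is birational with $\ca O_{C'}=f^*\ca O_C$ one has $f^*\ca L$ restricted to the contracted components trivial, so the projection formula and $\pi'_* = \pi_* f_*$ give $(\sourcemap)^*\eta=(\pi')_* c_1(f^*\ca L)^2 = \pi_* f_* f^* c_1(\ca L)^2 = \pi_* c_1(\ca L)^2 = \targetmap^*\eta$. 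For the graph sum: the key point, as in Invariance \invnrII{}, is that the edge terms of \eqref{eqn:P_gformula2} involve only the $\psi$-classes at half-edges, which are pulled back from $\frak M_{g,0}$; one checks the boundary strata of $\Picabs_{g,0,0}$ and their gluing maps, together with the associated $h^1$, automorphism groups and weightings mod $r$, are matched compatibly under both $\targetmap$ and $\sourcemap$ by reducing everything to the common base $\frak M_{g,0}$ (using that $\frak N\to\frak M_{g,0}\times\frak M_{g,0}$ has the appropriate fibre-product structure on strata). This gives the second equality in \eqref{eqn:InvIVreformulation}.

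\textbf{Main obstacle.} The expected difficulty is not in the Pixton-formula side, which is formal, but in verifying cleanly that $\sourcemap\colon\frak N\to\Picabs_{g,0,0}$ is syntomic and that the pulled-back log stack $\cat{Div}_{g,\emptyset}$ is identified correctly — i.e.\ making precise the claim that pulling a logarithmic curve back along a partial stabilisation $C'\to C$ does not change the tropical divisor data $\pi_*\bar M^{gp}/\bar M_S^{gp}$ and is compatible with the Abel-Jacobi map to $\Picabs$. This requires a careful local analysis near the contracted rational components, in the spirit of the local computations in \Cref{prop:aj_image} and in \cite{Marcus2017Logarithmic-com}, and is where the hypothesis that the line bundle is trivial on the contracted components (equivalently $a_i=0$ for markings on the exceptional locus, here automatic as $A=\bd 0$) is genuinely used.
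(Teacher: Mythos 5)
The proposal correctly identifies the overall strategy for the $\DRop$ side: build an isomorphism between the two fibre products $\cat{Div}_{g,\bd 0}\times_{\Picabs,\targetmap}\frak N$ and $\cat{Div}_{g,\bd 0}\times_{\Picabs,\sourcemap}\frak N$ (using that the degree of $f^*\ca L$ vanishes on contracted components to restrict $\alpha'$ along $f$), check $\targetmap$ and $\sourcemap$ are syntomic, and invoke \cref{lem:pushpullcommutes}. One small slip: you claim $\targetmap$ is smooth because its fibres are the stack of partial stabilisations; the paper's Lemma \ref{lem:lflatness} shows $\targetmap$ is only syntomic (its chart over $\Picabs$ factors through the forgetful map $\Mbar_{g,n+m}\to\Mbar_{g,n}$, which is flat and lci but not smooth), while it is $\sourcemap$ that is smooth. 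This does not damage the argument since syntomic suffices for \cref{lem:pushpullcommutes}, but the roles are reversed.

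The real gap is on the Pixton side. You assert that the graph sum in \eqref{eqn:P_gformula2} pulls back compatibly because "the edge terms involve only the $\psi$-classes at half-edges, which are pulled back from $\frak M_{g,0}$," analogizing to Invariance \invnrII{}. That analogy fails here: in Invariance \invnrII{} both morphisms live over a \emph{common} map to $\frak M_g$, whereas $\targetmap$ and $\sourcemap$ send a point $(f\colon C'\to C,\ca L)$ of $\frak N$ to the prestable curves $C$ and $C'$ respectively, which are \emph{different} points of $\frak M_g$. Consequently the $\psi$-classes at nodes do \emph{not} agree under the two pullbacks; the paper's Lemma \ref{lem:pullback_comparison} shows $\targetmap^*\psi_h = (\sourcemap)^*\psi_h - D_h$, with $D_h$ a nontrivial boundary divisor in $\frak N$ parametrizing the locus where a rational bubble is inserted at $h$. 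Matching the two graph sums therefore requires the self-intersection expansion of $(D_h)^2$, the analysis of the connected components of the fibre products $\Picabs'_{\Gamma_\delta}$ (indexed by partial stabilisations $\Gamma_\delta\to\tilde\Gamma_\delta$), and the combinatorial identity summing the positions of an edge in a chain of contracted $(0,2,0)$-vertices, which is exactly the content of the paper's Lemma \ref{lem:N_invariance_P}. Your proposal treats this step as routine, but it is the substantive part of the argument.
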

This lemma will be proven in the next subsection, but first we show that Invariance \invnrIV{} is implied  by the lemma. 

For Invariance \invnrIV, we are given the data
\[C' \xrightarrow{f} C \to \ca S\, ,\ \ \ca L \to C\, ,\ \  p_1\, , \ \ \ldots, p_n : \ca S \to C\, ,\ \ p_1', \ldots, p_n' : \ca S \to C'\]
with $f$ a partial stabilization satisfying $f \circ p_i' = p_i$ and a vector $A \in \mathbb{Z}^n$ satisfying $a_i=0$ if $p_i'$ meets the exceptional locus of $f$. Invariance \invnrIV{} then says that for the maps $$\phi_{\ca L}, \phi_{f^* \ca L} : \ca S \to \Picabs_{g,n,d}$$ induced by
\[C \to \ca S\, , \ \ p_1, \ldots, p_n : \ca S \to C\, , \ \ \ca L \to C\, ,\]
\[C' \to \ca S\, ,\ \  p_1', \ldots, p_n' : \ca S \to C'\, ,\ \  f^*\ca L \to C,\]
respectively, we have $\phi_{\ca L}^* \DRop_{g,A} = \phi_{f^*\ca L}^* \DRop_{g,A}$ and $\phi_{\ca L}^* \P^\bullet_{g,A,d} = \phi_{f^*\ca L}^* \P^\bullet_{g,A,d}$. 

The condition $a_i=0$ if $p_i'$ meets the exceptional locus of $f$ implies the equality
\[f^*\Big(\ca L\big(-\sum_{i=1}^n a_i p_i\big) \Big) = (f^* \ca L)\Big(-\sum_{i=1}^n a_i p_i'\Big)\]
of line bundles on $C'$. Denote by $f:\ca S \to \frak N$ the map associated to the data
\[C' \xrightarrow{f} C \to \ca S\, ,\ \ \ \ca L\big(-\sum_{i=1}^n a_i p_i \big) \to C\, .\]
Writing $\ca L_A = \ca L(-\sum_{i=1}^n a_i p_i)$ and $f^*\ca L_A = (f^*\ca L)(-\sum_{i=1}^n a_i p_i')$,
 we obtain a commutative diagram
\begin{equation} \label{eqn:frakNdiagram}
\begin{tikzcd}
& \ca S\arrow[d,"g"] \arrow[ddl, "\phi_{\ca L_A}", swap] \arrow[ddr, "\phi_{f^*\ca L_A}"] &\\
& \frak N \arrow[dl, "\targetmap"] \arrow[dr, "\sourcemap", swap] &\\
\Picabs_{g,0,0} & & \Picabs_{g,0,0}\, .
\end{tikzcd}    
\end{equation}
From the arguments presented in \cref{sec:intro_deg_0} it follows that
\[\phi_{\ca L}^* \DRop_{g,A} = \phi_{\ca L_A}^* \DRop_{g,\emptyset}\, ,\ \ \  \phi_{f^*\ca L}^* \DRop_{g,A} = \phi_{f^*\ca L_A}^* \DRop_{g,\emptyset}\, ,\]
with parallel equations for $\P^\bullet_{g,A,d}$ and $\P^\bullet_{g,\emptyset,0}$. 
Assuming \eqref{eqn:InvIVreformulation} we have
\begin{equation*}
    \phi_{\ca L}^* \DRop_{g,A} = \phi_{\ca L_A}^* \DRop_{g,\emptyset} = g^* \targetmap^* \DRop_{g,\emptyset}
    = g^* \sourcemap^* \DRop_{g,\emptyset} = \phi_{f^*\ca L_A}^* \DRop_{g,\emptyset} = \phi_{f^*\ca L}^* \DRop_{g,A}\, ,
\end{equation*}
and similarly for $\P^\bullet_{g,A,d}$. Thus \eqref{eqn:InvIVreformulation} implies Invariance \invnrIV. 

\subsubsection{Invariance for $\frak N$}
Here we prove Lemma \ref{lem:invariance_N}; it follows immediately from \eqref{eqn:InvIVreformulation} in Lemmas \ref{lem:N_invariance_DR} and \ref{lem:N_invariance_P} below. We start with a preliminary result.

\begin{lemma}\label{lem:lflatness}
The map
\begin{eqnarray*}
\targetmap: \frak N \to \Picabs_{g,0,0}\, ,  & (f \colon C' \, \to\,  C, \mathcal{L}) \mapsto (C,\ca L)\, , 
\end{eqnarray*}
is syntomic\footnote{Flat and lci. }, and the map 
\begin{eqnarray*}
\sourcemap: \frak N \to \Picabs_{g,0,0}\, , & \ \,(f \colon C' \to C, \mathcal{L})\,  \mapsto\,  (C',f^*\ca L)\,.
\end{eqnarray*}
is smooth. 
\end{lemma}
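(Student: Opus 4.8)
The statement to prove is \cref{lem:lflatness}: that $\targetmap : \frak N \to \Picabs_{g,0,0}$ is syntomic (flat and lci) and $\sourcemap : \frak N \to \Picabs_{g,0,0}$ is smooth. Both claims are fundamentally statements about the forgetful map from the stack of partial stabilisations, so the plan is to first understand the ``non-Picard'' version of the situation and then add line bundles, which is harmless since the line bundle only enters through pullback along proper birational morphisms or as an extra $\mathbb{G}_m$-gerbe factor.

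First I would introduce the stack $\frak M^{\mathrm{stab}}_g$ parametrising partial stabilisations $f : C' \to C$ of prestable genus $g$ curves (no line bundle, no markings), with its two forgetful maps $t, s : \frak M^{\mathrm{stab}}_g \to \frak M_g$ remembering $C$ and $C'$ respectively. The stack $\frak N$ is then simply the fibre product $\frak M^{\mathrm{stab}}_g \times_{\frak M_g, t} \Picabs_{g,0,0}$, where we pull back along the ``remember $C$'' map, and $\targetmap$ is the projection to $\Picabs_{g,0,0}$ while $\sourcemap$ is obtained by composing the projection to $\frak M^{\mathrm{stab}}_g$ with $s$ and then pulling back the line bundle along $f$. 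Since $\Picabs_{g,0,0} \to \frak M_g$ is smooth (it is a $\mathbb{G}_m$-gerbe over a smooth relative Picard space, \cref{sec:pic_stacks}), base change reduces both claims to: (a) $t : \frak M^{\mathrm{stab}}_g \to \frak M_g$ is syntomic, and (b) $s : \frak M^{\mathrm{stab}}_g \to \frak M_g$ is smooth — together with the observation that pulling back $\ca L$ along the \emph{universal} partial stabilisation does not disturb flatness or smoothness, because the relevant diagram of universal curves is commutative and $f$ is proper representable birational (so line bundles pull back fine, and the map on Picard stacks induced by $f^*$ is again smooth as a map of $\mathbb{G}_m$-gerbes over the relative Picard spaces).

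For claim (b), the map $s$ remembers the source curve $C'$; given $C'$, a partial stabilisation of $C'$ is the datum of a choice of a set of ``unstable'' genus-$0$ two-pointed chains/tails to contract, and the stabilisation $C = C'_{\mathrm{stab}}$ along such a choice is determined. Locally this is a finite combinatorial choice plus the smooth deformation theory of $C'$ itself; more precisely $s$ factors through the open-and-closed decomposition by the combinatorial type of the contracted locus, and on each piece it is smooth because it is the universal stabilisation morphism, which is known to be smooth (this is essentially the statement that $\frak M_{g,n} \to \frak M_g$-type contraction maps are smooth, and the argument of \cite{Kresch2013Flattening-stra} which is cited gives the algebraicity and the needed local structure). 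For claim (a), the map $t$ remembers the target $C$; the fibre over $C$ is the space of all prestable curves $C'$ admitting a partial stabilisation map to $C$, i.e. all ways of sprouting trees of rational curves out of $C$. This is lci and flat: over a fixed $C$ the fibre is a disjoint union (indexed by where and how much one sprouts) of products of stacks like $\frak M_{0,k}/(\text{automorphisms})$ bundled over the smooth locus of $C$, all of which are lci over the point; flatness over the base $\frak M_g$ follows because these fibres have constant dimension in each combinatorial stratum and $\frak M^{\mathrm{stab}}_g$ is (locally) a nice lci stack over $\frak M_g$. I expect the cleanest route is to exhibit $t$ étale-locally as a composition of: sections of the universal curve (smooth), followed by ``bubbling'' operations each of which is an lci closed-type modification, making $t$ visibly lci, and then check flatness stratum-by-stratum using that source and target both have pure dimension and $\frak M_g$ is smooth.

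The main obstacle will be claim (a), the syntomicity (in particular the flatness) of $\targetmap$. Smoothness of $\sourcemap$ is morally ``stabilisation is smooth'' and is close to standard, but the map that remembers the \emph{target} and varies the source is less familiar: one must control the deformation theory of the bubbling operation uniformly, show the obstruction space vanishes appropriately (or is free of the right rank) so that the map is lci of the expected relative dimension $0$ on each component, and then upgrade to flatness. I would handle this by an explicit local model: near a point of $\frak N$ the curve $C'$ is obtained from $C$ by gluing a finite tree of $\mathbb{P}^1$'s at a smooth point, and the versal deformation of this picture (fixing $C$, deforming $C'$ and the map $f$) is a product of an affine space — parametrising the node-smoothing and the attaching point — with the deformations of $C$ itself, giving a local presentation of $\targetmap$ as a base change of the smooth map $\frak M_g \to \frak M_g$ composed with affine-space projections, which is syntomic. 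Once this local model is in place, both statements of the lemma follow.
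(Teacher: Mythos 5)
Your overall plan — decouple the combinatorics of partial stabilisation from the line-bundle data, reduce syntomicity of $\targetmap$ to a ``forget the source'' map and smoothness of $\sourcemap$ to a ``forget the target'' map plus a line-bundle pullback — is morally the same reduction the paper performs. But the two steps you label as ``close to standard'' or ``harmless'' are precisely where the paper's proof does its real work, and your sketch leaves both of them unjustified.

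\textbf{Gap 1: flatness of $\targetmap$.} You propose to get flatness ``stratum-by-stratum using that source and target both have pure dimension.'' This does not work: flatness is not local in the stratification of the source, and equidimensionality of the source is not clear a priori (one has to be careful with the negative-dimensional $\frak M_{0,1}$ and $\frak M_{0,2}$ factors contributed by the automorphism groups of the contracted rational tails). The paper avoids this entirely by observing that the stack of partial stabilisations $(f:C'\to C)$ has \emph{smooth} charts by $\Mbar_{g,n+m}$ (where $C'$ is the curve with all $n+m$ markings, $C$ the stabilisation after dropping the $q$'s), and then $\targetmap$ in the chart becomes the composite $\Mbar_{g,n+m} \to \Mbar_{g,n} \to \frak M_g$ followed by the smooth projection from $\Picabs$; here $\Mbar_{g,n+m}\to\Mbar_{g,n}$ is known to be syntomic (a tower of universal curves, flat and lci but not smooth), and $\Mbar_{g,n}\to\frak M_g$ is smooth. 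Syntomicity then descends along the smooth chart. Your ``affine-space projection'' local model also misses the point: near a node the universal curve looks like $\operatorname{Spec} k[x,y,t]/(xy-t)\to\operatorname{Spec} k[t]$, which is flat and lci but not an affine-space projection, and in particular not smooth — this is exactly why $\targetmap$ is syntomic and not smooth.

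\textbf{Gap 2: the pullback map on Picard stacks.} For $\sourcemap$ you write that pulling back $\ca L$ along the universal partial stabilisation ``does not disturb smoothness'' and that ``the map on Picard stacks induced by $f^*$ is again smooth as a map of $\mathbb{G}_m$-gerbes over the relative Picard spaces.'' This is the technical heart of the lemma for $\sourcemap$ and cannot simply be asserted. In the paper's chart language, the relevant map $F:\Mbar_{g,n+m}\times_{\Mbar_{g,n}}\Picabs_{\Mbar_{g,n+1}/\Mbar_{g,n}}\to\Picabs_{\Mbar_{g,n+m+1}/\Mbar_{g,n+m}}$ is shown to be an \emph{open immersion} onto the locus of line bundles trivial on the contracted rational components; the argument is not formal — it is a pointed check of fibrewise isomorphism via the explicit Jacobian of a nodal curve, followed by the fibrewise flatness criterion, unramifiedness, and universal injectivity to conclude it is an isomorphism onto its (open) image. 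The naive expectation is that $f^*$ imposes a degree-$0$ condition, which looks closed; the fact that it is also open (because multidegree is locally constant) is exactly what needs to be established. Without this step, you cannot conclude that $\sourcemap$ is smooth merely because the underlying ``forget the target curve'' map is smooth.

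In summary, your reduction is sound but the two assertions it reduces to are not proved by your outline, and in each case the needed argument is the actual content of the paper's proof.
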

\begin{proof}
The stack of partial stabilisations $(f\colon C' \to C)$ has smooth charts given by $\Mbar_{g,n+m}$, where $(C, p_1, \dots, p_n, q_1, \dots, q_m)$ maps to the contraction map from $C$ to the stabilisation of $(C, p_1, \dots, p_n)$ by forgetting the $q$ markings. Charts for $\frak N$ are then given by
\[\Mbar_{g,n+m} \times_{\Mbar_{g,n}} \Picabs_{\Mbar_{g,n+1}/\Mbar_{g,n}} \xrightarrow{G} \frak N.\]

Charts for the map $\targetmap$ are given by the composition of the top horizontal arrows in the commutative diagram
\begin{equation}
\begin{tikzcd}
\square \arrow[r] \arrow[d] \arrow[rr, bend left,"\targetmap"]& \frak{Pic}_{\Mbar_{g,n+1}/\Mbar_{g,n}} \arrow[r]\arrow[d] & \Picabs_g\arrow[d] \\
\Mbar_{g,n+m} \arrow[r] & \Mbar_{g,n} \arrow[r] & \frak M_g\, .
\end{tikzcd}
\end{equation}
Both squares here are pullbacks, the bottom right horizontal map is smooth, and the bottom left horizontal map is syntomic. Hence $\targetmap$ is syntomic, using that syntomicity is a flat-local property on the target. 

Charts for the map $\sourcemap$ are given by commutative diagrams
\begin{equation}
\begin{tikzcd}
\Mbar_{g,n+m} \times_{\Mbar_{g,n}} \Picabs_{\Mbar_{g,n+1}/\Mbar_{g,n}} \arrow[d,"G"] \arrow[r,"F"] & \Picabs_{\Mbar_{g,n+m+1}/\Mbar_{g,n+m}} \arrow[d] \\
\frak N \arrow[r,"\sourcemap"] & \Picabs_g\, .
\end{tikzcd}
\end{equation}
The right vertical arrow is a base-change of the smooth map $\Mbar_{g,n+m} \to \frak{M}_g$, so once we have shown $F$ to be smooth, we can conclude using that smoothness is a flat-local property on the target. 

In fact, $F$ is an open immersion: $F$ is isomorphic to the inclusion of the locus $$U \hra \Picabs_{\Mbar_{g,n+m+1}/\Mbar_{g,n+m}}$$ of line bundles which are trivial on the contracted rational components. 
We must verify  the induced map
$$F'\colon \Mbar_{g,n+m} \times_{\Mbar_{g,n}} \Picabs_{\Mbar_{g,n+1}/\Mbar_{g,n}} \to U$$
is an isomorphism. The source and target are smooth over $\Mbar_{g, n + m}$. On each geometric fibre over $\Mbar_{g, n + m}$, the map $F'$ is an isomorphism via the explicit description of the Jacobian of a prestable curve. But then $F'$ is  flat (by the fibrewise criterion), is unramified (by a pointwise check), and is universally injective (again by a pointwise check), and hence is an isomorphism. 
\end{proof}

\begin{lemma}\label{lem:N_invariance_DR}
We have $\targetmap^*\DRop_{g,\bd 0} = (\sourcemap)^*\DRop_{g, \bd 0} \, \in\, {\mathsf{CH}}^g_{\mathsf{op}}(\frak N)$.
\end{lemma}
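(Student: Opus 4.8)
The statement to prove is the equality of operational classes
\[
\targetmap^*\DRop_{g,\bd 0} = \sourcemap^*\DRop_{g,\bd 0} \in \CHop^g(\frak N),
\]
where $\targetmap,\sourcemap\colon \frak N \to \Picabs_{g,0,0}$ are the two natural maps. The strategy is to realize both sides as operational classes built, via \cref{defbivariantclass}, from a single stack over $\frak N$, namely the fibre product of $\cat{Div}_{g,\bd 0}$ with $\frak N$, and then invoke \cref{lem:pushpullcommutes}. The crucial input is \cref{lem:lflatness}: the map $\targetmap$ is syntomic and the map $\sourcemap$ is smooth, hence in particular both are flat and lci, so the hypotheses of \cref{lem:pushpullcommutes} are met for each of them.

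First I would treat $\targetmap$. Since $\DRop_{g,\bd 0} = \aj_{\on{op}}[\cat{Div}_{g,\bd 0}]$ is the operational class associated to the proper, Deligne–Mumford-type, representable Abel–Jacobi map $\aj\colon \cat{Div}_{g,\bd 0} \to \Picabs_{g,0,0}$ (here I use \cref{sec:proof_of_equiv_defs} to freely pick whichever of the three models of $\DRop$ is convenient), I form the fibre square
\begin{center}
\begin{tikzcd}
\cat{Div}_{g,\bd 0} \times_{\Picabs_{g,0,0},\,\targetmap} \frak N \arrow[r] \arrow[d, "\widetilde{\aj}"] & \cat{Div}_{g,\bd 0} \arrow[d, "\aj"] \\
\frak N \arrow[r, "\targetmap"] & \Picabs_{g,0,0}\, .
\end{tikzcd}
\end{center}
Because $\targetmap$ is flat and lci (and $\frak N$ is stratified by global quotient stacks by the extension of \cite{Kresch2013Flattening-stra} quoted before \cref{lem:invariance_N}, as it is of Deligne–Mumford type over $\Picabs$ which has affine stabilizers by \cref{Rmk:genus1n0}), \cref{lem:pushpullcommutes} gives
\[
\widetilde{\aj}_{\on{op}}\big[\cat{Div}_{g,\bd 0}\times_{\Picabs_{g,0,0},\,\targetmap}\frak N\big] = \targetmap^*\,\aj_{\on{op}}[\cat{Div}_{g,\bd 0}] = \targetmap^*\DRop_{g,\bd 0}.
\]
Identically, using that $\sourcemap$ is smooth (hence flat and lci), I get
\[
\widetilde{\aj}'_{\on{op}}\big[\cat{Div}_{g,\bd 0}\times_{\Picabs_{g,0,0},\,\sourcemap}\frak N\big] = \sourcemap^*\DRop_{g,\bd 0}.
\]

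It then remains to identify the two fibre products $\cat{Div}_{g,\bd 0}\times_{\targetmap}\frak N$ and $\cat{Div}_{g,\bd 0}\times_{\sourcemap}\frak N$, compatibly with their maps down to $\frak N$. This is where the content lies: an object of $\cat{Div}_{g,\bd 0}\times_{\targetmap}\frak N$ over $S$ is a partial stabilization $f\colon C'\to C$ with a multidegree-$\bd 0$ line bundle $\ca L$ on $C$ together with a log-divisor structure (tropical line $P$ on $S$ and $\alpha\colon C\to P$) with $\aj^{\rel}(C,P,\alpha)=[\ca L]$; an object of $\cat{Div}_{g,\bd 0}\times_{\sourcemap}\frak N$ is the same partial stabilization together with a log-divisor structure on $C'$ whose Abel–Jacobi image is $[f^*\ca L]$. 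The natural equivalence between the two is given by \emph{pullback of logarithmic divisors along $f$}: a tropical line on $S$ is intrinsic to $S$, and a map $\alpha\colon C\to P$ composed with a partial stabilization $f$ (which contracts trees of genus-$0$ two-pointed rational curves, on which any such $\alpha$ is constant) yields $\alpha\circ f\colon C'\to P$; since $f_*\mathcal O_{C'}=\mathcal O_C$ and $Rf_*\mathcal O_{C'}=\mathcal O_C$ (contraction of rational trees), one has $f^*\aj^{\rel}(C,P,\alpha)=\aj^{\rel}(C',P,\alpha\circ f)$, and conversely any $\alpha'$ on $C'$ is pulled back from a unique $\alpha$ on $C$ because it must be constant on the contracted components. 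Checking this is an equivalence of categories over $\frak N$, compatibly with the Abel–Jacobi maps to $\frak N$ (not just to a point), is the main obstacle, and is essentially the statement that the Marcus–Wise log divisor formalism is insensitive to contracting unstable rational bridges — a local computation on $S$ in the characteristic monoid, entirely parallel to the argument proving Invariance \invnrIV{} for $\DRop$ in \cref{sec:proof_of_equiv_defs} via \cref{prop:invarianceproperbirat}. Granting the identification, the two displayed expressions for $\widetilde{\aj}_{\on{op}}[-]$ and $\widetilde{\aj}'_{\on{op}}[-]$ coincide, which yields the claimed equality $\targetmap^*\DRop_{g,\bd 0}=\sourcemap^*\DRop_{g,\bd 0}$. \qed
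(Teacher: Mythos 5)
Your proof takes essentially the same route as the paper's: reduce via Lemma \ref{lem:pushpullcommutes} (using the syntomicity of $\targetmap$ and $\sourcemap$ from Lemma \ref{lem:lflatness}) to constructing an isomorphism of stacks over $\frak N$ between the two fibre products, define the map by precomposing $\alpha$ with the partial stabilization $f$, and show this is an equivalence.

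Where your justification goes wrong is the inverse direction. You assert that "any $\alpha'$ on $C'$ is pulled back from a unique $\alpha$ on $C$ because it must be constant on the contracted components." Taken literally this is false: along a contracted chain of rational curves, $\alpha'$ need not take a constant value; what must be constant is the \emph{slope} of $\alpha'$ along the chain. And that constancy of slope is not automatic — it is forced by the fibre-product constraint $[\ca O_{C'}(\alpha')] = [f^*\ca L]$ in $\Picrel$: since $f^*\ca L$ has degree zero on every $f$-exceptional component, so does $\ca O_{C'}(\alpha')$, which means the outgoing slopes at each interior vertex of the contracted chain cancel, i.e.\ the slope is constant across the chain, and therefore $\alpha'$ descends to a piecewise-linear $\alpha$ on $C$ with an integer slope over the collapsed node. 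Without the degree-zero input, the descent claim simply fails, so this step needs the explicit argument the paper gives. (Two minor points: you describe $\ca L$ as "multidegree-$\bd 0$", but $\frak N$ only constrains the total degree to be $0$; and the paper passes first to $\cat{Div}^{\rel}$ and observes that the $\bb G_m$-gerbe structure makes the $\cat{Div}$-level identification automatic, which is cleaner than handling $\cat{Div}$ directly, though not essential.)
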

\begin{proof}
The maps $\targetmap$ and $\sourcemap$ are syntomic
by Lemma \ref{lem:lflatness}.
It therefore suffices by \cref{lem:pushpullcommutes} to construct an isomorphism $$\phi: \targetmap^*\cat{Div}_{g,\bd 0} \isom (\sourcemap)^*\cat{Div}_{g,\bd 0}$$
of stacks over $\frak N$
and even to construct the isomorphism
on the level of $\cat{Div}^{\rel}$. 


An object of $\targetmap^*\cat{Div}^{\rel}_{g,\bd 0}$ consists of a stabilisation map $f\colon C' \to C$, a line bundle $\ca L$ on $C$, a $\bb G_m^{trop}$ torsor $\ca P$ on $S$, and a map $\alpha \colon C \to \ca P$ such that $\ca O(\alpha )$ and $\ca L$ are isomorphic up to pullback from the base. An object of $(\sourcemap)^*\cat{Div}^{\rel}_{g,\bd 0}$ consists of almost the same data, but $\alpha$ is replaced by any $\alpha'\colon C' \to \ca P$, with $\ca O(\alpha')$ and $f^*\ca L$ isomorphic up to pullback. We then define the map $\phi$ simply by composing, setting $\alpha' = f \circ \alpha$. 

We must show that $\phi$ is an isomorphism.
Suppose we are given the data $f\colon C' \to C$, $\ca L$ on $C$, $\ca P$, $\alpha' \colon C' \to \ca P$, with $\ca O(\alpha') \cong f^*\ca L$ up to pullback. Since the degree of $f^*\ca L$ vanishes on components contracted by $f$, we see that the same is true of the degree of $\ca O(\alpha')$ -- the slopes of the restriction of $\alpha'$ to the contracted graph of the curve are linear on edges. In other words, the restriction is still piecewise linear with integer slopes,  hence we can set $\alpha$ to be the restriction. 
\end{proof}

More work will be required to  prove the second equality
$$\targetmap^* \P_{g}^\bullet = (\sourcemap)^* \P_g^\bullet\, 
.$$
We start by considering the morphism $\targetmap$. For a prestable graph $\Gamma_{\delta}$ of degree $0$, consider the diagram
\begin{equation} \label{eqn:frakNdiagramp}
\begin{tikzcd}
\prod_{v \in \V(\Gamma)}\frak N_{\g(v),\n(v),\delta(v)}  \arrow[d,"\prod_v \targetmap_v"] &\frak N'_{\Gamma_{\delta}} \arrow[l] \arrow[r,"G"] \arrow[d] &\frak N_{\Gamma_{\delta}} \arrow[r,"J_{\Gamma_{\delta}}"] \arrow[d,"\targetmap_{\Gamma_\delta}"] & \frak N \arrow[d,"\targetmap"]\\
\prod_{v \in \V(\Gamma)}\Picabs_{\g(v),\n(v),\delta(v)} & \Picabs_{\Gamma_{\delta}}\arrow[l]&\Picabs_{\Gamma_{\delta}} \ar[equal]{l}\arrow[r, "j_{\Gamma_{\delta}}"] & \Picabs_{g,0,0}
\end{tikzcd}    
\end{equation}
where the left and the right squares are pullbacks and the middle square is commutative:
\begin{enumerate}
\item[$\bullet$] The stacks $\frak N_{\g(v),\n(v),\delta(v)}$ are the natural generalizations of $\frak N$ to the case of marked curves and line bundles of arbitrary total degrees. 
\item[$\bullet$]
The stack $\frak N_{\Gamma_{\delta}}$ parametrizes data 
\[
(C_v)_{v\in \V(\Gamma)}\,, \,\, \ca L/C\,, \,\, f:C'\to C
\]
where $f$ is a partial stabilisation and $\ca L$ is a multidegree $\delta$ line bundle on $C=\sqcup_{v\in \V(\Gamma)} C_v$. 
\item[$\bullet$]
The stack $\frak N'_{\Gamma_{\delta}}$ parametrizes data
\[
(C_v)_{v\in \V(\Gamma)}\,, \,\, \ca L/C\,, \,\, (f_v:C'_v\to C_v)_{v\in \V(\Gamma)}\,.
\]
\item[$\bullet$]
The gluing map $G:\frak N'_{\Gamma_{\delta}} \to \frak N_{\Gamma_{\delta}}$ sending $(f_v:C'_v\to C_v)_{v\in \V(\Gamma)}$ to $$f : \sqcup_v C'_v = C' \to C = \sqcup_v C_v$$ is proper, representable and birational.
\end{enumerate}

Properness of $G$ can be checked using the valuative criterion. 
The difference between $\frak N_{\Gamma_\delta}$ and $\frak N'_{\Gamma_\delta}$ is that, in the first space, we have sections of the non-smooth locus of $\frak C$ for each half-edge (telling us where to cut apart the curve), whereas, for the second, the sections go to the non-smooth locus of $\frak C'$. Fibres of $G$ correspond to choices of lifts of these sections along $$f: \frak C' \to \frak C\, .$$
Existence and uniqueness of such lifts follows from properness of $f$ and of the inclusion of the non-smooth locus. 
Representability of $G$ is a short argument showing $G$ is injective on stabilizer groups. Birationality follows since $G$ is an isomorphism over the dense open locus where $f$ is an isomorphism.
Let $$\widehat J_{\Gamma_\delta} : \frak N'_{\Gamma_\delta} \to \frak N$$ be 
the composition of $G$ and $J_{\Gamma_\delta}$.

In the following Lemma, we compare pullback formulas under $\targetmap$ and $\sourcemap$ for the stacks $\frak N_{g,n,d}$ above. Denote by $\psi_i = \targetmap^*\psi_i$ and $\psi_i' = (\sourcemap)^*\psi_i \in \CHop^1(\frak N_{g,n,d})$ the pullbacks of $\psi$-classes under $\targetmap$ and  $\sourcemap$
respectively.

\begin{lemma}\label{lem:pullback_comparison} 
We have
\begin{enumerate}[label=(\arabic*)]
    \item[(i)] $\targetmap^*\eta = (\sourcemap)^*\eta$,
    \item[(ii)]  $\psi_i = \psi_i' - D_i$ where $D_i$ is the class in $\CHop^1(\frak N_{g,n,d})$ associated to the boundary divisor of $\frak N_{g,n,d}$ generically parametrizing a partial stabilisation
\[
\begin{tikzpicture}[scale=0.7, baseline=-3pt,label distance=0.3cm,thick,
    virtnode/.style={circle,draw,scale=0.5}, 
    nonvirt node/.style={circle,draw,fill=black,scale=0.5} ]
    \node [nonvirt node,label=below:$(0{,}0)$] (A) {};
    \node at (2,0) [nonvirt node,label=below:$(g{,}d)$] (B) {};
    \draw [-] (A) to (B);
    \node at (-.7,.5) (n1) {$i$};
    \draw [-] (A) to (n1);
    \node at (2.7,.5) (m1) {};
    \node at (2.7,-.5) (m2) {};
    \draw [-] (B) to (m1);
    \draw [-] (B) to (m2);    
    \node at (2.7,0.2) (m3) {$\vdots$};
    \end{tikzpicture} 
\to 
\begin{tikzpicture}[scale=0.7, baseline=-3pt,label distance=0.3cm,thick,
    virtnode/.style={circle,draw,scale=0.5}, 
    nonvirt node/.style={circle,draw,fill=black,scale=0.5} ]
    \node [nonvirt node,label=below:$(g{,}d)$] (A) {};
    \node at (-.7,.5) (n1) {$i$};
    \draw [-] (A) to (n1);
    \node at (.7,.5) (m1) {};
    \node at (.7,-.5) (m2) {};
    \draw [-] (A) to (m1);
    \draw [-] (A) to (m2);    
    \node at (.7,0.2) (m3) {$\vdots$};
    \end{tikzpicture}\ \ .
\]
\end{enumerate}
\end{lemma}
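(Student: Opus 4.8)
\emph{Proof proposal.}
The plan is to unwind both $\targetmap$ and $\sourcemap$ on the level of universal families. Pulling back the universal curve over $\Picabs_{g,n,d}$ along $\targetmap$ gives a family $\pi\colon C\to\frak N_{g,n,d}$ with sections $p_i$, relative dualizing sheaf $\omega_\pi$ and universal line bundle $\ca L$; pulling back along $\sourcemap$ gives $\pi'\colon C'\to\frak N_{g,n,d}$ with sections $p_i'$, relative dualizing sheaf $\omega_{\pi'}$ and universal line bundle $f^*\ca L$, where $f\colon C'\to C$ is the universal partial stabilisation. Here $f$ is proper, representable and birational, $\pi'=\pi\circ f$ and $p_i=f\circ p_i'$; moreover the universal line bundle pulled back along $\sourcemap$ equals $f^*$ of the one pulled back along $\targetmap$, and only the relative dualizing sheaves fail to be related by $f^*$. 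Both statements are equalities of operational classes and can be proved directly from these universal families (alternatively, after pulling back to the smooth charts $\Mbar_{g,n+m}\times_{\Mbar_{g,n}}\Picabs$ used in the proof of \cref{lem:lflatness}).

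For (i), the definition $\eta=\pi_*(\xi^2)$ and compatibility of $\pi_*$ with pullback give $\targetmap^*\eta=\pi_*c_1(\ca L)^2$ and $\sourcemap^*\eta=\pi'_*c_1(f^*\ca L)^2=\pi_*f_*f^*c_1(\ca L)^2$. Since $f$ is proper, representable and birational, $f_*f^*$ is the identity on operational classes --- this is the projection formula together with \cref{prop:invarianceproperbirat}, exactly the manipulation used for the gluing map in the proof of \cref{Lem:Pixformulafactorization}. Hence $\sourcemap^*\eta=\pi_*c_1(\ca L)^2=\targetmap^*\eta$; I expect this part to be routine.

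For (ii), from $\psi_i=c_1(p_i^*\omega_\pi)$, $\psi_i'=c_1((p_i')^*\omega_{\pi'})$ and $p_i=f\circ p_i'$ I would write
\[
\psi_i'-\psi_i=c_1\!\left((p_i')^*\bigl(\omega_{\pi'}\otimes f^*\omega_\pi^{-1}\bigr)\right).
\]
The line bundle $\omega_{\pi'}\otimes f^*\omega_\pi^{-1}$ on $C'$ is trivial away from the $f$-exceptional locus, so its pullback along $p_i'$ is a line bundle on $\frak N_{g,n,d}$ supported on the closed set where $p_i'$ meets a contracted component. Any contracted component through $p_i'$ is rational with at most two special points, one of which is the marking $i$, hence is a one-nodal rational tail; the locus of such configurations has a unique codimension-one component, which is exactly the divisor $D_i$ of the statement (the contracted tail carrying only the marking $i$, whence the labels $(0,0)$ and $(g,d)$). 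Therefore $(p_i')^*\bigl(\omega_{\pi'}\otimes f^*\omega_\pi^{-1}\bigr)\cong\ca O_{\frak N_{g,n,d}}(c\,D_i)$ for some $c\in\bb Z$. To determine $c$, I would restrict along a map from a trait (or a smooth curve) meeting $D_i$ transversally at its generic point: there $C'$ is a smooth surface over the base and $f$ contracts a single $(-1)$-curve $R$, so the blow-down formula gives $\omega_{\pi'}=f^*\omega_\pi(R)$, and $R$ meets the section $p_i'$ transversally in one point, forcing $c=1$. This yields $\psi_i=\psi_i'-D_i$. (Routing the argument instead through the charts $\Mbar_{g,n+m}$ and iterating the classical one-point comparison $\psi_i=\mathrm{pr}^*\psi_i+D_{i,\star}$ reduces to the same local computation.)

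The main obstacle will be the bookkeeping in (ii): pinning down the correction divisor as precisely the reduced divisor $D_i$ of the statement --- including the claim that no other codimension-one boundary stratum of $\frak N_{g,n,d}$ contributes --- and computing the multiplicity to be $1$ via the $(-1)$-curve picture and the transversality of $p_i'$ to the contracted tail. Part (i), together with the reductions of the first paragraph, is formal.
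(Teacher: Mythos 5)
Your proof is correct and matches the paper's argument in substance. For (i) both arguments invoke birational invariance: the paper writes $\pi_*(c_1(\frak L)^2) = \pi_*(c_1(\frak L)^2\,f_*[\frak C'])=\pi'_*(c_1(f^*\frak L)^2)$ while you phrase it as $f_*f^*$ being the identity on operational classes; these are the same manipulation. For (ii) the paper records the global discrepancy $f^*\omega_\pi = \omega_{\pi'}(-D'_0 - \sum_{j=1}^n D'_j)$, where $D'_0$ is the locus of contracted unmarked tails and $D'_j$ of contracted tails carrying marking $j$, and then pulls back along the section $\sigma'_i$, which meets only $D'_i$; you instead observe directly that $\omega_{\pi'}\otimes f^*\omega_\pi^{-1}$ is supported on the exceptional locus, identify $D_i$ as the only boundary stratum through which $p_i'$ can meet a contracted component, and fix the coefficient by the $(-1)$-curve computation over a trait. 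This is the same localization, just with the coefficient extracted at the end rather than read off from a global formula; no gap.
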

\begin{proof} 

\noindent(i) There are two pairs of universal curves with line bundle $(\frak C,\frak L)$ and $(\frak C', \frak L')$
\begin{center}
\begin{tikzcd}
\frak C' \arrow[dr,"\pi'"] \arrow[rr, "f"]
&~
& \frak C \arrow[dl,"\pi"] \\
~
&\frak N_{g,n,d}
&~
\end{tikzcd}
\end{center}
over the stack $\frak{N}_{g,n,d}$ with sections $\sigma_i:\frak N_{g,n,d}\to\frak C$ and $\sigma_i':\frak N_{g,n,d}\to \frak C'$. Because $f_*[\frak C'] = [\frak C]$, we have
\[
\targetmap^*\eta = \pi_*(c_1(\frak L)^2) = \pi_*(c_1(\frak L)^2f_*[\frak C']) = \pi'_*(c_1(f^*\frak L)^2) = (\sourcemap)^*\eta\, .
\]

\noindent (ii) Let $D'_0$ be the divisor
\[
\begin{tikzpicture}[scale=0.7, baseline=-3pt,label distance=0.3cm,thick,
    virtnode/.style={circle,draw,scale=0.5}, 
    nonvirt node/.style={circle,draw,fill=black,scale=0.5} ]
    \node [nonvirt node,label=below:$(0{,}0)$] (A) {};
    \node at (2,0) [nonvirt node,label=below:$(g{,}d)$] (B) {};
    \draw [-] (A) to (B);
    \node at (0,.7) (n2) {$\downarrow$};
    \node at (2.7,.5) (m1) {};
    \node at (2.7,-.5) (m2) {};
    \draw [-] (B) to (m1);
    \draw [-] (B) to (m2);    
    \node at (2.7,0.2) (m3) {$\vdots$};
    \end{tikzpicture} 
\to 
\begin{tikzpicture}[scale=0.7, baseline=-3pt,label distance=0.3cm,thick,
    virtnode/.style={circle,draw,scale=0.5}, 
    nonvirt node/.style={circle,draw,fill=black,scale=0.5} ]
    \node [nonvirt node,label=below:$(g{,}d)$] (A) {};
    \node at (-.7,.5) (n1) {$i$};
    \draw [-] (A) to (n1);
    \node at (.7,.5) (m1) {};
    \node at (.7,-.5) (m2) {};
    \draw [-] (A) to (m1);
    \draw [-] (A) to (m2);    
    \node at (.7,0.2) (m3) {$\vdots$};
    \end{tikzpicture}
\]
in $\frak C'$, and let $D'_i$ be the divisor
\[
\begin{tikzpicture}[scale=0.7, baseline=-3pt,label distance=0.3cm,thick,
    virtnode/.style={circle,draw,scale=0.5}, 
    nonvirt node/.style={circle,draw,fill=black,scale=0.5} ]
    \node [nonvirt node,label=below:$(0{,}0)$] (A) {};
    \node at (2,0) [nonvirt node,label=below:$(g{,}d)$] (B) {};
    \draw [-] (A) to (B);
    \node at (-.7,.5) (n1) {$i$};
    \draw [-] (A) to (n1);
    \node at (0,.7) (n2) {$\downarrow$};
    \node at (2.7,.5) (m1) {};
    \node at (2.7,-.5) (m2) {};
    \draw [-] (B) to (m1);
    \draw [-] (B) to (m2);    
    \node at (2.7,0.2) (m3) {$\vdots$};
    \end{tikzpicture} 
\to 
\begin{tikzpicture}[scale=0.7, baseline=-3pt,label distance=0.3cm,thick,
    virtnode/.style={circle,draw,scale=0.5}, 
    nonvirt node/.style={circle,draw,fill=black,scale=0.5} ]
    \node [nonvirt node,label=below:$(g{,}d)$] (A) {};
    \node at (-.7,.5) (n1) {$i$};
    \draw [-] (A) to (n1);
    \node at (.7,.5) (m1) {};
    \node at (.7,-.5) (m2) {};
    \draw [-] (A) to (m1);
    \draw [-] (A) to (m2);    
    \node at (.7,0.2) (m3) {$\vdots$};
    \end{tikzpicture}
\]
in $\frak C'$. Here, the arrows pointing to the vertices with genus and degree $0$ indicate which component of the universal curve over the corresponding boundary divisor in $\frak N_{g,n,d}$ we take. The divisors $D_0', D_1', \ldots, D_n'$ are precisely the divisorial loci in $\frak C'$ which are contracted by the map $f: \frak C' \to \frak C$.
Then 
\[
\targetmap^*\psi_i = c_1(\sigma_i^*\omega_\pi)= ({\sigma}_i')^*c_1(f^*\omega_\pi) = ({\sigma}_i')^* c_1(\omega_{\pi'}(-D'_0-\sum_{i=1}^n D'_i)) = (\sourcemap)^*\psi_i - D_i\, , 
\] 
where the sections are denoted by $\sigma_i$.
\end{proof}

For the morphism $\sourcemap$, form the  fibre diagram 
\begin{equation} \label{eqn:frakNdiagrampprime}
\begin{tikzcd}
  \Picabs'_{\Gamma_{\delta}}\arrow[d]\arrow[r,"J'_{\Gamma_{\delta}}"] & \frak N\arrow[d,"\sourcemap"]\\
\Picabs_{\Gamma_{\delta}}\ar[r,"j_{\Gamma_{\delta}}"] & \Picabs_{g,0,0}. 
\end{tikzcd}
\end{equation}
By definition, the fibre product $\Picabs'_{\Gamma_{\delta}}$ parametrizes data
\[(C_v')_{v \in \V(\Gamma)}\, , \,\, \ca L'/C'=\sqcup_v C_v'\, , \,\, f: C' \to C\, , \,\, \ca L/C\, , \,\, f^* \ca L \xrightarrow{\sim} \ca L'\]
which simplifies to
\[(C_v')_{v \in \V(\Gamma)}\, , \,\, f: \sqcup_v C_v' = C' \to C\, , \,\, \ca L/C\, .\]
On the other hand, the stack $\frak N_{\Gamma_\delta}'$   parametrizes
\[
(C_v)_{v\in \V(\Gamma)}\, , \,\, \ca L/C\, , \,\, (f_v:C'_v\to C_v)_{v\in \V(\Gamma)}\, .
\]
There is a subtle difference here. For $\Picabs'_{\Gamma_{\delta}}$, the map $f$ is allowed to contract entire components $C_v'$ to points, whereas in the second case the target $C_v$ is always still 1-dimensional.

Our next step is to show 
\begin{equation} \label{eqn:Picabsprimeiso}
\Picabs'_{\Gamma_{\delta}} \cong \sqcup_{\Gamma_{\delta}\to \tilde{\Gamma}_{\delta}} \frak N_{\Gamma_{\delta}\to \tilde{\Gamma}_{\delta}}\, .\end{equation}
More precisely,
the connected components of $\Picabs'_{\Gamma_{\delta}}$ are in bijective correspondence to partial stabilizations $$\Gamma_{\delta}\to\tilde{\Gamma}_{\delta}\, .$$
We will prove, given a vertex $v \in \V(\Gamma)$ which can be contracted (with $g(v)=0$, $n(v) \leq 2$, $\delta(v)=0$), that
the locus of points in $\Picabs'_{\Gamma_{\delta}}$ where $f: C' \to C$ contracts $C_v'$ is open and closed.

For a vertex $v$ with $n(v)=2$, the universal curve $\frak C_v' \to \Picabs'_{\Gamma_{\delta}}$ has two sections (corresponding to the half-edges at $v$), and the locus where $C_v'$ is contracted equals the locus where the sections coincide, which is closed since $\frak C_v' \to \Picabs'_{\Gamma_{\delta}}$ is separated. 

To show closedness of the locus where $C_v'$ is \emph{not} contracted, assume we have a family 
\[(C_{v,S}')_{v \in \V(\Gamma)}\, , \,\, f: \sqcup_v C_{v,S}' = C_S' \to C_S\, , \,\, \ca L/C_S\, \]
of $\Picabs'_{\Gamma_{\delta}}$ over the spectrum $S$ of a strictly henselian DVR such that the fibre $C'_{v,\eta}$ of $C'_{v,S}$ over the generic point $\eta$ of $S$ is not contracted by $f$. We want to show that then also the fibre $C'_{v,L}$ over the closed point $L$ of $S$ is not contracted. By assumption,  $C'_{v,\eta}$ maps to a union $C_{v,\eta}$ of components of the fibre $C_\eta$ of $C_S$ over $\eta$. Then $C_{v,\eta}$ specializes to a union $C_{v,L}$ of components of $C_L$. Since $f$ is proper, $f$ maps the closure $C'_{v,S}$ of $C'_{v,\eta}$ to the closure of its image $C_{v,\eta}$. Since $C_{v,L}$ is still positive-dimensional, the curve $C'_{v,L}$ is indeed not contracted. For related arguments, see the proof of \cite[Proposition 2.2]{Kresch2013Flattening-stra}. 


For a vertex $v$ with $n(v)=1$, the universal curve $\frak C_v' \to \Picabs'_{\Gamma_{\delta}}$ has a single section $\sigma_h$. On the one hand, the locus where $C_v'$ is contracted is exactly the locus where $f : \frak C' \to \frak C$ maps $\sigma_h$ to the smooth locus of $\frak C \to \Picabs'_{\Gamma_{\delta}}$, thus it is open. On the other hand, it is also the preimage under $\sigma_h$ of the exceptional locus of $\frak C' \to \frak C$, and thus closed.

We have proven that the connected components of $\Picabs'_{\Gamma_{\delta}}$ are in bijective correspondence to partial stabilizations $\Gamma_{\delta}\to\tilde{\Gamma}_{\delta}$. But a point 
\[(C_v')_{v \in \V(\Gamma)}\, , \,\, f: \sqcup_v C_v' = C' \to C\, , \,\, \ca L/C\]
on the corresponding component is equivalent to the data of any collection of curves $C_v'$ for $v' \in \V(\Gamma) \setminus \V(\tilde \Gamma)$, which are contracted by $f$, together with a point
\[(C_v')_{v \in \V(\tilde \Gamma)}\, , \,\, f: (C_v' \to C_v)\, , \,\, \ca L/C = \sqcup_{v \in V(\tilde \Gamma)} C_v\]
of $\frak N'_{\Tilde{\Gamma}_{\delta}}$. Hence, we have a isomorphism
\[\frak N_{\Gamma_{\delta}\to \tilde{\Gamma}_{\delta}} \cong \frak N'_{\Tilde{\Gamma}_{\delta}}\times \prod_{v\in \V(\Gamma)\setminus \V(\Tilde{\Gamma})} \frak M_{0,\n(v)}\, ,\]
where, in the last expression, $n(v)$ is necessarily $1$ or $2$.




For each partial stabilisation $\Gamma_{\delta}\to \tilde{\Gamma}_{\delta}$, we denote by $$J_{\Gamma_{\delta}\to\tilde{\Gamma}_{\delta}}: \frak N_{\Gamma_{\delta}\to\tilde{\Gamma}_{\delta}}\to \frak N$$ the restriction of $J'_{\Gamma_{\delta}}$ to $\frak N_{\Gamma_{\delta}\to \tilde{\Gamma}_{\delta}}$.  
\begin{lemma}\label{lem:N_invariance_P}
We have $\targetmap^* \P_{g}^\bullet = (\sourcemap)^* \P_g^\bullet
\, \in\, \prod_{c=0}^\infty {\mathsf{CH}}^c_{\mathsf{op}}(\frak N)$
for all $c \geq 0$. 
\end{lemma}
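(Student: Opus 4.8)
The plan is to prove the two pullback identities $\targetmap^*\P_g^\bullet = (\sourcemap)^*\P_g^\bullet$ by expanding both sides using the factorised form of Pixton's formula from \cref{Lem:Pixformulafactorization} (specialised to $n=0$, $d=0$ as in formula \eqref{eqn:P_gformula2}) and matching terms. The essential combinatorial input is the analysis of $\Picabs'_{\Gamma_\delta}$ carried out above: the fibre product $\Picabs'_{\Gamma_\delta}$ over $\sourcemap$ decomposes as $\sqcup_{\Gamma_\delta \to \tilde\Gamma_\delta} \frak N_{\Gamma_\delta \to \tilde\Gamma_\delta}$ indexed by partial stabilisations of the prestable graph, whereas the fibre product $\frak N_{\Gamma_\delta}$ over $\targetmap$ comes with the proper, representable, birational gluing map $G\colon \frak N'_{\Gamma_\delta} \to \frak N_{\Gamma_\delta}$. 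So both pullbacks can be written as graph sums over $\G_{g,0,0}$ decorated with $\psi$-classes at half-edges, and the proof amounts to checking these two graph sums are equal after using the comparison of $\psi$-classes in \cref{lem:pullback_comparison}(ii), namely $\targetmap^*\psi_i = (\sourcemap)^*\psi_i - D_i$, together with the invariance $\targetmap^*\eta = (\sourcemap)^*\eta$ from \cref{lem:pullback_comparison}(i).

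First I would handle the vertex term $\exp(-\tfrac12\eta)$: by \cref{lem:pullback_comparison}(i) this pulls back identically under both maps, since $f_*[\frak C'] = [\frak C]$ forces $\pi_* c_1(\frak L)^2 = \pi'_* c_1(f^*\frak L)^2$. Next I would attack the graph sum in \eqref{eqn:P_gformula2}. On the $\targetmap$ side, using the diagram \eqref{eqn:frakNdiagramp} and the projection formula together with \cref{prop:invarianceproperbirat} applied to the birational gluing map $G$, each term $j_{\Gamma_\delta *}[\,\cdots\,]$ pulls back to $\widehat J_{\Gamma_\delta *}$ of the corresponding edge-decorated class on $\frak N'_{\Gamma_\delta}$, where the $\psi_h$ on a half-edge $h$ at a vertex $v$ becomes the pullback of the $\psi$-class from the factor $\frak N_{\g(v),\n(v),\delta(v)}$. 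On the $\sourcemap$ side, using \eqref{eqn:frakNdiagrampprime} and the decomposition \eqref{eqn:Picabsprimeiso}, each term $j_{\Gamma_\delta *}[\,\cdots\,]$ pulls back to a sum over partial stabilisations $\Gamma_\delta \to \tilde\Gamma_\delta$ of $J_{\Gamma_\delta \to \tilde\Gamma_\delta *}$ of the analogous class, now involving the $(\sourcemap)^*\psi_h$. The matching is then the statement that summing over $(\Gamma_\delta, w)$ on the $\targetmap$ side, after substituting $\targetmap^*\psi_h = (\sourcemap)^*\psi_h - D_h$ and expanding the boundary corrections $D_h$, reproduces exactly the sum over $(\tilde\Gamma_\delta, \tilde w, \Gamma_\delta \to \tilde\Gamma_\delta)$ on the $\sourcemap$ side. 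Concretely, the terms $D_h$ introduced on the $\targetmap$ side correspond to sprouting an extra genus-$0$, degree-$0$ rational bubble, which is precisely the extra combinatorial data recorded by a partial stabilisation $\Gamma_\delta \to \tilde\Gamma_\delta$ in the $\sourcemap$ decomposition; the edge-term power series $\Phi_{w(h)w(h')}$ is unaffected since on such a bubble the relevant weightings and half-edge $\psi$-classes match up.

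The main obstacle I expect is the bookkeeping in the last matching step: one must carefully check that the combinatorial expansion of $\prod_e \Phi_{w(h)w(h')}(\targetmap^*\psi_h + \targetmap^*\psi_{h'})$ in terms of the $(\sourcemap)^*\psi$-classes and the boundary divisors $D_h$, weighted by $r^{-h^1}/|\Aut|$ and summed over weightings, organises itself into exactly the graph sum appearing in $(\sourcemap)^*\P_g^\bullet$ — in particular that the automorphism factors and the multiplicities of rational bubbles work out (an orbit--stabiliser computation analogous to \eqref{eqn:orbitstabilizerGamdelt} in the proof of Invariance \invnrn, together with the $\frak M_{0,\n(v)}$-factors appearing in $\frak N_{\Gamma_\delta \to \tilde\Gamma_\delta} \cong \frak N'_{\tilde\Gamma_\delta} \times \prod_v \frak M_{0,\n(v)}$, which are $\psi$-trivial in low dimension and hence contribute only their fundamental class). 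Once this identification of the two graph sums is in place, combined with the vertex-term invariance, the equality $\targetmap^*\P_g^\bullet = (\sourcemap)^*\P_g^\bullet$ follows, completing the proof of \cref{lem:invariance_N} and hence of Invariance \invnrIV{} for Pixton's formula. \qed
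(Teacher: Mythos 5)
The proposal follows exactly the same strategy as the paper: dispose of the $\exp(-\eta/2)$ factor via \cref{lem:pullback_comparison}(i), rewrite $(\sourcemap)^*$ of the graph sum using the decomposition \eqref{eqn:Picabsprimeiso} into partial stabilisations, rewrite $\targetmap^*$ using the birational gluing map $G$, and then match terms by substituting $\psi_h + \psi_{h'} = \psi'_h - D_h + \psi'_{h'} - D_{h'}$. The setup — all the reductions, the diagrams, the identification of which stacks and which $\psi$-classes appear — is correct and is precisely what the paper does.

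However, there is a genuine gap at the core of the argument. You defer the actual verification that the expansion of $\prod_e \Phi_{w(h)w(h')}(\psi'_h - D_h + \psi'_{h'} - D_{h'})$ reorganises into the partial-stabilisation sum as ``bookkeeping,'' supported by the heuristic that $D_h$ corresponds to sprouting a genus-$0$, degree-$0$ bubble and that ``the edge-term power series $\Phi_{w(h)w(h')}$ is unaffected since on such a bubble the relevant weightings and half-edge $\psi$-classes match up.'' This last clause is precisely the nontrivial claim and it is not obviously true: iterating the self-intersection formula on $(-D_h)$ and $(-D_{h'})$ produces \emph{chains} of rational bubbles of arbitrary length $L$ decorated with $\psi$-powers, with an alternating sign $(-1)^{L-1}$ from excess intersection, a multinomial coefficient $\binom{m}{e_1+1,\dots,a,b,\dots,e_L+1}$ from expanding the $m$th power of $\psi'_h - D_h + \psi'_{h'} - D_{h'}$, and a factor $1/(m+1)!$ from $\Phi_{ww'}$. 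One must then resum over the splitting $a+b=e_\ell$ and over the $L$ possible positions of the original edge in the chain, using $m+1 = \sum_\ell(e_\ell+1)$, to recognise the product $\prod_j (\text{coeff.\ of } x^{e_j} \text{ in } \Phi_{ww'}(x))$. This telescoping is where the proof actually lives; without carrying it out, the equality of graph sums is not established. The paper spends the bulk of the proof on exactly this computation, so while your outline correctly identifies every structural ingredient, it stops short of proving the statement.
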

\begin{proof} 
We will use  formula \eqref{eqn:P_gformula2}
for $\P_g^\bullet$. By \cref{lem:pullback_comparison}, the terms $\exp(- \eta/2)$ have identical pullback under $\targetmap$ and $\sourcemap$. We can therefore
focus on the sum over graphs and weighting mod $r$. 

We start with a few remarks about the combinatorial factors in $\P_g^\bullet$ which will arise in the proof. Let $\Gamma_{\delta}\to\tilde{\Gamma}_{\delta}$ be a partial stabilization, then the Betti numbers agree,
$$h^1(\Gamma_\delta) = h^1(\tilde \Gamma_\delta)\, .$$
Given $r$, the map $W_{\Gamma_\delta,r} \to W_{\tilde \Gamma_\delta, r}$ of admissible weightings mod $r$ (induced by the inclusion $H(\tilde \Gamma) \to H(\Gamma)$ of half-edge sets) is a bijection. 

Moreover, if the map $\Gamma_{\delta}\to\tilde{\Gamma}_{\delta}$ only contracts components with $$(\g(v),\n(v),\delta(v))=(0,2,0)\,,$$ there is a canonical isomorphism $\Aut(\Gamma_\delta) \cong \Aut(\tilde \Gamma_\delta)$. On the other hand, in the formula for $\P_g^\bullet$, every term such that $\Gamma_\delta$ has a vertex with $$(\g(v), \n(v), \delta(v))=(0,1,0)$$ necessarily vanishes. Indeed, the half-edge $h$ at this vertex must have $w(h)=0$ such that the corresponding edge term vanishes. 

To keep the notation concise, we write $\Phi_{a}$ for the power-series 
\begin{align*} 
\Phi_{a}(x)&= \frac{1-\exp({-\frac{a}{2}}x)}{x}\\ &= \sum_{m=0}^\infty (-1)^m (\frac{a}{2})^{m+1} \frac{1}{(m+1)!}x^m = \frac{a}{2} - \frac{a^2}{8} x + \ldots 
\end{align*}
appearing in the edge terms of $\P_g^\bullet$. Moreover, given a graph $\tilde \Gamma_\delta$ with a half-edge $h$ incident to a vertex $v$, denote by $\psi_h, \psi_h'$ the classes on $\frak N'_{\tilde \Gamma_\delta}$ pulled back from $\frak N_{\g(v),\n(v),\delta(v)}$ in the diagram \eqref{eqn:frakNdiagramp}. Similarly, given a partial stabilization $\Gamma_\delta \to \tilde \Gamma_\delta$, the space $\frak N_{\Gamma_\delta \to \tilde \Gamma_\delta}$ contains $\frak N'_{\tilde \Gamma_\delta}$ as a factor, hence the notation also makes sense on $\frak N_{\Gamma_\delta \to \tilde \Gamma_\delta}$ (provided $h$ is a half-edge of $\tilde \Gamma_\delta$).

Let us first compute the pullback of the graph sum in $\P_g^{\bullet,r}$ via $\sourcemap : \frak N \to \Picabs_{g,0,0}$. Using the diagram \eqref{eqn:frakNdiagrampprime} and the decomposition \eqref{eqn:Picabsprimeiso}, we see
\begin{align} 
&(\sourcemap)^* \sum_{\Gamma_\delta, w}
\frac{r^{-h^1( \Gamma_\delta)}}{|\Aut(\Gamma_\delta)| }
\;
j_{\Gamma_\delta*}\Bigg[
\prod_{e=(h,h')\in \E(\Gamma_\delta)}
\Phi_{w(h)w(h')}(\psi_h + \psi_{h'}) \Bigg] \nonumber\\
=&\sum_{\Gamma_\delta \to \tilde \Gamma_\delta, w} \label{eqn:pprimepullb}
\frac{r^{-h^1( \Gamma_\delta)}}{|\Aut(\Gamma_\delta)| }
\;
J_{\Gamma_\delta \to \tilde \Gamma_\delta*}\Bigg[
\prod_{e=(h,h')\in \E(\Gamma_\delta)}
\Phi_{w(h)w(h')}(\psi'_h + \psi'_{h'}) \Bigg]\, .
\end{align}
In the second line, the sum is over all partial stabilizations $\Gamma_\delta \to \tilde \Gamma_\delta$.

Second, we compute the pullback of the graph sum in $\P_g^{\bullet,r}$ via $\targetmap : \frak N \to \Picabs_{g,0,0}$
\begin{align} 
&\targetmap^* \sum_{\tilde \Gamma_\delta, w}
\frac{r^{-h^1(\tilde  \Gamma_\delta)}}{|\Aut(\tilde \Gamma_\delta)| }
\;
j_{\tilde \Gamma_\delta*}\Bigg[
\prod_{e=(h,h')\in \E(\tilde \Gamma_\delta)}
\Phi_{w(h)w(h')}(\psi_h + \psi_{h'}) \Bigg]\nonumber \\
=&\sum_{\tilde \Gamma_\delta, w}\label{eqn:ppullb}
\frac{r^{-h^1(\tilde \Gamma_\delta)}}{|\Aut(\tilde \Gamma_\delta)| }
\;
\widehat J_{\tilde \Gamma_\delta*}\Bigg[
\prod_{e=(h,h')\in \E(\tilde \Gamma_\delta)}
\Phi_{w(h)w(h')}(\psi_h + \psi_{h'}) \Bigg]\, .
\end{align}
We use here the right fibre diagram in \eqref{eqn:frakNdiagramp} together with the fact that $G$ is proper, representable, and birational.  So by Proposition \ref{prop:invarianceproperbirat},
the pushforward of fundamental classes under $J_{\tilde \Gamma_\delta}$ and $\widehat J_{\tilde \Gamma_\delta}$ agree.

To compare to the formula for the pullback under $\sourcemap$, we use 
$$\psi_h + \psi_{h'} = \psi_h' - D_h + \psi_{h'}'- D_{h'}$$ on $\frak N'_{\tilde \Gamma_\delta}$ by \cref{lem:pullback_comparison}.
The next step of the proof is to use the self-intersection formula for $D_h, D_{h'}$ (similar to the formula described in \cite[Appendix A]{GraberPandharipande}) to expand the edge term
\[\Phi_{w\,w'}(\psi_h' - D_h + \psi_{h'}'- D_{h'}) = \sum_{m=0}^\infty (-1)^m (\frac{w\,w'}{2})^{m+1} \frac{1}{(m+1)!}(\psi_h' - D_h + \psi_{h'}'- D_{h'})^m.\] For example, $(D_h)^2$ is equal to
\begin{equation*}
-
    \begin{tikzpicture}[scale=1.2, baseline=-3pt,label distance=0.3cm,thick,
    virtnode/.style={circle,draw,scale=0.5}, 
    nonvirt node/.style={circle,draw,fill=black,scale=0.5} ]
    \node at (0,0) [nonvirt node,label=below:$(g_{v}{,}\delta(v))$] (A) {};
    \node at (1.5,0) [nonvirt node,label=below:$(0{,}0)$] (C) {};    
    \node at (3,0) [nonvirt node,label=below:$(g_{v'}{,}\delta(v'))$] (B) {};
    \node at (.2,-.2) {$h_1$};
    \node at (1.3,-.2) {$h'_1$}; 
    \node at (1.7,-.2) {$h$};
    \node at (2.8,-.2) {$h'$};
    \draw [-] (A) to (C);
    \draw [-] (B) to (C);
    \node at (.7,.4) {$(\psi_{h_1}+\psi_{h'_1})$};
    \node at (3.7,.5) (m1) {};
    \node at (3.7,-.5) (m2) {};
    \draw [-] (B) to (m1);
    \draw [-] (B) to (m2);    
    \node at (3.7,0.1) (m3) {$\vdots$};
    \node at (-0.7,.5) (n1) {};
    \node at (-0.7,-.5) (n2) {};
    \draw [-] (A) to (n1);
    \draw [-] (A) to (n2);    
    \node at (-.7,0.1) (n3) {$\vdots$};    
    \end{tikzpicture}
+ \,2
    \begin{tikzpicture}[scale=1.2, baseline=-3pt,label distance=0.3cm,thick,
    virtnode/.style={circle,draw,scale=0.5}, 
    nonvirt node/.style={circle,draw,fill=black,scale=0.5} ]
    \node at (0,0) [nonvirt node,label=below:$(g_{v}{,}\delta(v))$] (A) {};
    \node at (1.5,0) [nonvirt node,label=below:$(0{,}0)$] (C) {};   
    \node at (3,0) [nonvirt node,label=below:$(0{,}0)$] (D) {}; 
    \node at (4.5,0) [nonvirt node,label=below:$(g_{v'}{,}\delta(v'))$] (B) {};
    \node at (3.2,-.2) {$h$};
    \node at (4.3,-.2) {$h'$};
    \draw [-] (A) to (C);
    \draw [-] (C) to (D);
    \draw [-] (D) to (B);
    \node at (5.2,.5) (m1) {};
    \node at (5.2,-.5) (m2) {};
    \draw [-] (B) to (m1);
    \draw [-] (B) to (m2);    
    \node at (5.2,0.1) (m3) {$\vdots$};
    \node at (-0.7,.5) (n1) {};
    \node at (-0.7,-.5) (n2) {};
    \draw [-] (A) to (n1);
    \draw [-] (A) to (n2);    
    \node at (-.7,0.1) (n3) {$\vdots$};    
    \end{tikzpicture}
\end{equation*}
and similarly for $(D_{h'})^2$.

The result will be a linear combination of terms
\begin{equation}
\begin{tikzpicture}[scale=1.2, baseline=-3pt,label distance=0.3cm,thick,
    virtnode/.style={circle,draw,scale=0.5}, 
    nonvirt node/.style={circle,draw,fill=black,scale=0.5} ]
    \node at (-6,0) [nonvirt node,label=below:$(g_{v}{,}\delta(v))$] (F) {};
    \node at (-4,0) [nonvirt node,label=below:$(0{,}0)$] (E) {};
    \node at (-3,0) [nonvirt node,label=below:$(0{,}0)$] (D) {};
    \node at (-1,0) [nonvirt node,label=below:$(0{,}0)$] (C) {};
    \node [nonvirt node,label=below:$(0{,}0)$] (A) {};
    \node at (2,0) [nonvirt node,label=below:$(g_{v'}{,}\delta(v'))$] (B) {};
    \draw [-] (A) to (B);
    \draw [-, dotted] (C) to (A);
    \draw [-] (D) to (C);
    \draw [-, dotted] (E) to (D);
    \draw [-] (F) to (E);
    \node at (1,.4) {$(\psi_{h_L}+\psi_{h'_L})^{e_L}$};
    \node at (-5,.4) {$(\psi_{h_1}+\psi_{h'_1})^{e_1}$};
    \node at (-2.7,.4) {$\psi_h^a$};
    \node at (-1.3,.4) {$\psi_{h'}^b$};
    \node at (2.7,.5) (m1) {};
    \node at (2.7,-.5) (m2) {};
    \draw [-] (B) to (m1);
    \draw [-] (B) to (m2);    
    \node at (2.7,0.1) (m3) {$\vdots$};
    \node at (-6.7,.5) (n1) {};
    \node at (-6.7,-.5) (n2) {};
    \draw [-] (F) to (n1);
    \draw [-] (F) to (n2);    
    \node at (-6.7,0.1) (n3) {$\vdots$};    
    \end{tikzpicture}
\end{equation}
where the edge $(h,h')$ is at position $\ell$ in the above chain ($1 \leq \ell \leq L$). The total degree of this term (before the pushforward by $\widehat J_{\tilde \Gamma_\delta}$) is
\[m = \sum_{j \neq \ell} (e_j+1) + a + b\, .\]
The total coefficient of this particular term in $\Phi_{w\,w'}(\psi_h' - D_h + \psi_{h'}'- D_{h'})$ is then
\[
\underbrace{(-1)^m (\frac{w\,w'}{2})^{m+1} \frac{1}{(m+1)!}}_{\text{coeff in $\Phi_{w\,w'}$}}\cdot \binom{m}{e_1+1, \ldots, a,b,\ldots, e_L+1} \underbrace{(-1)^{L-1}}_{\substack{\text{excess intersection}\\\text{of $-D_h, -D_{h'}$}}} 
\]
where the multinomial coefficient comes from the expansion of $$(\psi_h' - D_h + \psi_{h'}'- D_{h'})^m\, .$$
Writing $e_\ell=a+b$, we can simplify to obtain
\[
\frac{1}{m+1}
\left(\prod_{j=1}^L (-1)^{e_j} (\frac{w\,w'}{2})^{e_j+1} \frac{1}{(e_j+1)!} \right) \binom{e_\ell}{a} (e_\ell+1)\, .
\]
Summing over all choices $a+b=e_\ell$ for fixed $e_\ell$, the coefficient of the term
\begin{equation}
\begin{tikzpicture}[scale=1.2, baseline=-3pt,label distance=0.3cm,thick,
    virtnode/.style={circle,draw,scale=0.5}, 
    nonvirt node/.style={circle,draw,fill=black,scale=0.5} ]
    \node at (-6,0) [nonvirt node,label=below:$(g_{v}{,}\delta(v))$] (F) {};
    \node at (-4,0) [nonvirt node,label=below:$(0{,}0)$] (E) {};
    \node at (-3,0) [nonvirt node,label=below:$(0{,}0)$] (D) {};
    \node at (-1,0) [nonvirt node,label=below:$(0{,}0)$] (C) {};
    \node [nonvirt node,label=below:$(0{,}0)$] (A) {};
    \node at (2,0) [nonvirt node,label=below:$(g_{v'}{,}\delta(v'))$] (B) {};
    \draw [-] (A) to (B);
    \draw [-, dotted] (C) to (A);
    \draw [-] (D) to (C);
    \draw [-, dotted] (E) to (D);
    \draw [-] (F) to (E);
    \node at (1,.4) {$(\psi_{h_L}+\psi_{h'_L})^{e_L}$};
    \node at (-5,.4) {$(\psi_{h_1}+\psi_{h'_1})^{e_1}$};
    \node at (-2,.4) {$(\psi_{h}+\psi_{h'})^{e_\ell}$};
    \node at (2.7,.5) (m1) {};
    \node at (2.7,-.5) (m2) {};
    \draw [-] (B) to (m1);
    \draw [-] (B) to (m2);    
    \node at (2.7,0.2) (m3) {$\vdots$};
    \node at (-6.7,.5) (n1) {};
    \node at (-6.7,-.5) (n2) {};
    \draw [-] (F) to (n1);
    \draw [-] (F) to (n2);    
    \node at (-6.7,0.2) (n3) {$\vdots$};    
    \end{tikzpicture}
\end{equation}
is exactly equal to
\[
\frac{e_\ell +1}{m+1}
\left(\prod_{j=1}^L (-1)^{e_j} (\frac{w\,w'}{2})^{e_j+1} \frac{1}{(e_j+1)!} \right).
\]
Pushing forward by $\widehat J_{\tilde \Gamma_\delta}$, we forget where in the chain above the edge $(h,h')$ has been. Summing over the $L$ possible positions and using $m+1 = \sum_{\ell=1}^L (e_\ell +1)$, we obtain the coefficient
\[\prod_{j=1}^L \underbrace{(-1)^{e_j} (\frac{w\,w'}{2})^{e_j+1} \frac{1}{(e_j+1)!}}_{\text{coefficient of $x^{e_j}$ in $\Phi_{w\,w'}(x)$}}\, .\]
From the above discussion, we see that \eqref{eqn:ppullb} equals
\[\sum_{\Gamma_\delta \to \tilde \Gamma_\delta, w}
\frac{r^{-h^1( \tilde \Gamma_\delta)}}{|\Aut(\tilde \Gamma_\delta)| }
\;
J_{\Gamma_\delta \to \tilde \Gamma_\delta*}\Bigg[
\prod_{e=(h,h')\in \E(\Gamma_\delta)}
\Phi_{w(h)w'(h)}(\psi'_h + \psi'_{h'}) \Bigg]\, .\]
The sum goes over stabilizations $\Gamma_\delta \to \tilde \Gamma_\delta$ contracting chains of curves with $(g,n,d)=(0,2,0)$.
By the previous remarks concerning the combinatorial factors, we have
\[h^1( \tilde \Gamma_\delta) = h^1(\Gamma_\delta)\, , \ \ \ |\Aut(\tilde \Gamma_\delta)| = |\Aut(\Gamma_\delta)|\, . \]
The sum does not change if we allow arbitrary stabilizations $\Gamma_\delta \to \tilde \Gamma_\delta$, since for $\Gamma_\delta$ having a vertex with $(g,n,d)=(0,1,0)$, the summand automatically vanishes. Thus the sum above equals the term computed in \eqref{eqn:pprimepullb}. 
\end{proof}

\section{Applications}\label{sec:applications}
\subsection{Proofs of Theorem \ref{FPA1} and Conjecture A} \label{Sect:ProofConjA}
We start by recalling notions presented in Section \ref{Sect:Twistdiffintro}, but now in the more general setting of $k$-differentials.
Let $A=(a_1,\ldots,a_n)$ be a vector of zero and pole multiplicities
satisfying
$$\sum_{i=1}^n a_i= k(2g-2)\, .$$
Let $\HH^k_g(A) \subset \mathcal{M}_{g,n}$ be the closed (generally non-proper) locus 
of pointed curves $(C,p_1,\ldots,p_n)$ satisfying the condition
$$\mathcal{O}_C\Big(\sum_{i=1}^n a_i p_i\Big) \simeq \omega_C^{\otimes k}\, .$$
In other words, $\HH^k_g(A)$ is the locus of (possibly) meromorphic $k$-differentials
with zero and pole multiplicities prescribed by $A$.
In \cite{Farkas2016The-moduli-spac}, a compact moduli space of twisted $k$-canonical divisors
$$\widetilde{\HH}^k_g(A) \subset \overline{\mathcal{M}}_{g,n}$$
is constructed extending $\HH^k_g(A) = \widetilde{\HH}^k_g(A) \cap \mathcal{M}_{g,n} $ to the boundary of $\overline{\mathcal{M}}_{g,n}$.

For $k \geq 1$ and $A$ not of the form $A = k \cdot A'$ with a vector $A'$ of nonnegative integers, the locus
$\widetilde{\HH}^k_g(A)$ is of pure codimension $g$ in
$\overline{\mathcal{M}}_{g,n}$ by \cite[Theorem 3]{Farkas2016The-moduli-spac} (for $k=1$) and \cite[Theorem 1.1]{Schmitt2016Dimension-theor} (for $k>1$). 
A weighted fundamental cycle of $\widetilde{\HH}^k_g(A)$, 
\begin{equation}\label{wwww4k}
\mathsf{H}^k_{g,A}\in \mathsf{CH}_{2g-3+n}(\oM_{g,n})\ ,
\end{equation}
is constructed in
\cite[Appendix A]{Farkas2016The-moduli-spac} and \cite[Section 3.1]{Schmitt2016Dimension-theor} with explicit nontrivial 
weights on the irreducible components.
The closure
$$\overline{\HH}^k_{g,A}\subset \oM_{g,n}$$
contributes to the weighted fundamental class $\mathsf{H}^k_{g,A}$ with
multiplicity 1, but there are additional boundary contributions, as described in the references above.

The weighted fundamental class 
$\mathsf{H}^k_{g,A}$
was conjectured in \cite{Farkas2016The-moduli-spac, Schmitt2016Dimension-theor}
to equal  the class given by Pixton's formula for the double ramification cycle. To state the conjecture, consider the shifted\footnote{The shift is needed since Pixton's original formula worked with powers of the log-canonical line bundle $\omega_C^{\text{log}} = \omega_C(\sum_{i=1}^n p_i)$ instead of $\omega_C$.} vector $\widetilde A = (a_1 + k, \ldots, a_n +k)$.

\begin{conjectureA}
For $k \geq 1$ and $A$ not of the form $A = k \cdot A'$ with a vector $A'$ of nonnegative integers, we have an equality
\[\mathsf{H}^k_{g,A} = 2^{-g} P_g^{g,k}(\widetilde A)\, ,\]
where $P_g^{g,k}(\widetilde A)$ is Pixton's cycle class defined in \cite[Section 1.1]{Janda2016Double-ramifica}.
\end{conjectureA}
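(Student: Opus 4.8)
\textbf{Proof strategy for Conjecture A (Theorem \ref{FPA1} and its $k$-differential generalization).}

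The plan is to deduce the statement from Theorem \ref{Thm:main} together with the comparison result of \cite{Holmes2019Infinitesimal-s}. First I would set up the universal family. For the vector $A$ with $\sum_i a_i = k(2g-2)$, the data $\pi : \mathcal{C}_{g,n} \to \oM_{g,n}$ together with $\omega_\pi^k \to \mathcal{C}_{g,n}$ determines a morphism $\phi_{\omega_\pi^k} : \oM_{g,n} \to \Picabs_{g,n,k(2g-2)}$. The core claim to be verified is the chain of equalities
\[
\mathsf{H}^k_{g,A} \;=\; \DRop_{g,A}(\phi_{\omega_\pi^k})\big([\oM_{g,n}]\big) \;=\; \P^g_{g,A,k(2g-2)}(\phi_{\omega_\pi^k})\big([\oM_{g,n}]\big) \;=\; 2^{-g} P_g^{g,k}(\widetilde A)\, .
\]
The middle equality is exactly Theorem \ref{Thm:main} applied to the morphism $\phi_{\omega_\pi^k}$ (pulling back the operational identity $\DRop_{g,A} = \P^g_{g,A,d}$ and evaluating on the fundamental class). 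The last equality is Proposition \ref{Prop:PixtonPullbackCompatibility} with $c = g$, which identifies $\P^g_{g,A,k(2g-2)}(\phi_{\omega_\pi^k})([\oM_{g,n}])$ with $2^{-g} P_g^{g,k}(\widetilde A)$. So the only remaining input is the first equality.

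For the first equality I would invoke the following two facts in sequence. By Theorem \ref{cccc} (proved in Section \ref{proof1111}), the action of $\DRop_{g,A}$ on $[\oM_{g,n}]$ via $\phi_{\omega_\pi^k}$ agrees with the classically defined canonically $k$-twisted double ramification cycle $\mathsf{DR}_{g,A,\omega^k} \in \mathsf{CH}_{2g-3+n}(\oM_{g,n})$ of \cite{Guere2016A-generalizatio, Holmes2017Extending-the-d, Holmes2017Jacobian-extens, Marcus2017Logarithmic-com}. Then, by the comparison theorem of \cite{Holmes2019Infinitesimal-s}, in the strictly meromorphic case (equivalently, when $A$ is not of the form $k \cdot A'$ with $A'$ having nonnegative entries, so that $\widetilde{\HH}^k_g(A)$ has pure codimension $g$ by \cite[Theorem 3]{Farkas2016The-moduli-spac} for $k=1$ and \cite[Theorem 1.1]{Schmitt2016Dimension-theor} for $k > 1$), the class $\mathsf{DR}_{g,A,\omega^k}$ equals the weighted fundamental cycle $\mathsf{H}^k_{g,A}$ of the moduli space of twisted $k$-canonical divisors. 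Combining these gives $\mathsf{H}^k_{g,A} = \DRop_{g,A}(\phi_{\omega_\pi^k})([\oM_{g,n}])$, completing the argument. The case $k = 1$ is Theorem \ref{FPA1}, and the general $k \geq 1$ case follows by the identical chain of reasoning, using the $k$-differential versions of each cited input.

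The main obstacle is not in the logical assembly, which is routine once Theorem \ref{Thm:main} is in hand, but in confirming that the comparison of \cite{Holmes2019Infinitesimal-s} applies verbatim in the form stated: one must check that the definition of $\mathsf{DR}_{g,A,\omega^k}$ used there (via log geometry, or via the b-Chow/Jacobian-extension constructions, all equivalent by \cite{Holmes2017Extending-the-d, Holmes2017Jacobian-extens}) is the same class that Theorem \ref{cccc} produces from $\DRop_{g,A}$, and that the weights on the boundary components match those appearing in the definition of $\mathsf{H}^k_{g,A}$ in \cite[Appendix A]{Farkas2016The-moduli-spac} and \cite[Section 3.1]{Schmitt2016Dimension-theor}. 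For $k > 1$ one additionally needs the dimension-theoretic result \cite[Theorem 1.1]{Schmitt2016Dimension-theor} ensuring $\widetilde{\HH}^k_g(A)$ is pure of the expected codimension, so that the weighted fundamental cycle is well-defined in the relevant degree. Once these compatibilities are in place, there is nothing further to do. \qed
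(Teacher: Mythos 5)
Your proposal is correct and follows essentially the same chain of equalities as the paper's proof of Theorem \ref{FPA}: it invokes Theorem 1.1 of \cite{Holmes2019Infinitesimal-s}, Theorem \ref{cccc}, Theorem \ref{Thm:main}, and Proposition \ref{Prop:PixtonPullbackCompatibility} in the same logical roles, with the same caveats about matching definitions across the cited references. There is no substantive difference from the argument in Section \ref{Sect:ProofConjA}.
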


\noindent By combining Theorem \ref{Thm:main}  with previous results of \cite{Holmes2019Infinitesimal-s}, we can now prove the conjecture.

\begin{theorem}\label{FPA}
Conjecture A is true.
\end{theorem}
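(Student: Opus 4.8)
The statement of Theorem \ref{FPA} is that Conjecture A holds, i.e.
\[\mathsf{H}^k_{g,A} = 2^{-g} P_g^{g,k}(\widetilde A) \in \mathsf{CH}_{2g-3+n}(\oM_{g,n})\]
for $k \geq 1$ and $A$ not of the form $k \cdot A'$ with $A'$ a vector of nonnegative integers (the strictly "meromorphic or $k$-divisible-obstructed" case), together with the parallel higher-differential statement. The plan is to assemble this from three ingredients already available: Theorem \ref{Thm:main} (Pixton's formula computes $\DRop_{g,A}$ on the Picard stack), Proposition \ref{Prop:PixtonPullbackCompatibility} (the pullback of $\P^g_{g,A,k(2g-2)}$ along $\phi_{\omega_\pi^k}$ recovers $2^{-g}P_g^{g,k}(\widetilde A)$), and the comparison result of \cite{Holmes2019Infinitesimal-s} together with the compatibility of Theorem \ref{cccc}, which identifies the action of $\DRop_{g,A}$ on $[\oM_{g,n}]$ (via the family $(\pi : \mathcal{C}_{g,n} \to \oM_{g,n},\ \omega_\pi^k)$) with the weighted fundamental class $\mathsf{H}^k_{g,A}$ of the twisted $k$-canonical divisor locus.

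\textbf{Key steps.} First I would record that, by Theorem \ref{cccc}, the class $\mathsf{DR}_{g,A,\omega^k}\in \mathsf{CH}_{2g-3+n}(\oM_{g,n})$ obtained from the logarithmic/Abel--Jacobi construction agrees with $\DRop_{g,A}(\phi_{\omega_\pi^k})([\oM_{g,n}])$. Second, I would invoke \cite{Holmes2019Infinitesimal-s} (as already quoted in Section \ref{Sect:Twistdiffintro}): in the relevant strictly meromorphic case the logarithmic double ramification cycle with canonical $k$-twist is exactly the weighted fundamental cycle $\mathsf{H}^k_{g,A}$ of $\widetilde{\HH}^k_g(A)$ with the Farkas--Pandharipande/Schmitt weights; this gives $\mathsf{DR}_{g,A,\omega^k} = \mathsf{H}^k_{g,A}$. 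Third, I would apply Theorem \ref{Thm:main}, $\DRop_{g,A} = \P^g_{g,A,k(2g-2)}$ in $\CHop^g(\Picabs_{g,n,k(2g-2)})$, and pull back along $\phi_{\omega_\pi^k}$ to get $\DRop_{g,A}(\phi_{\omega_\pi^k})([\oM_{g,n}]) = \P^g_{g,A,k(2g-2)}(\phi_{\omega_\pi^k})([\oM_{g,n}])$. Fourth, Proposition \ref{Prop:PixtonPullbackCompatibility} identifies the right-hand side with $2^{-g}P_g^{g,k}(\widetilde A)$. Chaining these four equalities yields $\mathsf{H}^k_{g,A} = 2^{-g}P_g^{g,k}(\widetilde A)$, which is Conjecture A; the $k=1$ case is Theorem \ref{FPA1} and the $k>1$ case uses the dimension result \cite[Theorem 1.1]{Schmitt2016Dimension-theor} exactly where \cite[Theorem 3]{Farkas2016The-moduli-spac} was used for $k=1$, so the argument is uniform.

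\textbf{Main obstacle.} Since Theorem \ref{Thm:main} and Propositions \ref{cccc}, \ref{Prop:PixtonPullbackCompatibility} are available, the only genuine input is the identification $\mathsf{DR}_{g,A,\omega^k} = \mathsf{H}^k_{g,A}$, i.e. the matching of the logarithmic DR cycle (with $k$-canonical twist) to the explicitly weighted fundamental class of twisted $k$-canonical divisors. This is precisely the content of \cite{Holmes2019Infinitesimal-s}, and the main delicacy is to ensure the hypothesis of that comparison is met: one must check that we are genuinely in the case where $\widetilde{\HH}^k_g(A)$ has pure codimension $g$ (which fails exactly when $A = k\cdot A'$ with $A'$ nonnegative), so that the comparison of \cite{Holmes2019Infinitesimal-s} applies with multiplicity $1$ on the main component and the prescribed boundary weights. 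I expect the write-up to be short: it is essentially a citation-chaining argument, with the care concentrated in stating the excluded (holomorphic, $k$-divisible) locus correctly and in noting that the weights appearing in \cite[Appendix A]{Farkas2016The-moduli-spac} and \cite[Section 3.1]{Schmitt2016Dimension-theor} are literally the ones produced by the logarithmic construction. I would conclude by remarking that, via the procedure of \cite[Appendix]{Farkas2016The-moduli-spac} and \cite[Section 3.4]{Schmitt2016Dimension-theor}, this also pins down the classes of the closures $\overline{\HH}^k_g(A)$ and shows they are tautological on the Picard stack (hence on $\oM_{g,n}$) for all $k$.
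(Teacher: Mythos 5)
Your proposal is correct and follows essentially the same approach as the paper: it chains the identification $\mathsf{H}^k_{g,A}=\mathsf{DR}_{g,A,\omega^k}$ from \cite{Holmes2019Infinitesimal-s}, the compatibility of Theorem \ref{cccc}, the main Theorem \ref{Thm:main}, and the pullback compatibility of Proposition \ref{Prop:PixtonPullbackCompatibility} in exactly the order the paper uses. Your remark about verifying the strictly meromorphic/non-$k$-divisible hypothesis is a sensible point of care, and is the same hypothesis the paper records when invoking the pure-codimension-$g$ results of \cite{Farkas2016The-moduli-spac,Schmitt2016Dimension-theor}.
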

\begin{proof}
By Theorem 1.1 of \cite{Holmes2019Infinitesimal-s}, the weighted fundamental class $\mathsf{H}^k_{g,A}$ is equal to the double ramification cycle $\mathsf{DR}_{g,A, \omega^k}$ constructed in \cite{Holmes2017Extending-the-d}. By Theorem \ref{cccc},
$\mathsf{DR}_{g,A, \omega^k}$
is in turn given by the action of $\mathsf{DR}^{\mathsf{op}}_{g,A}$ on the fundamental class
of $\overline{\mathcal{M}}_{g,n}$ via the morphism $\phi_{\omega_\pi^k}: \overline{\mathcal{M}}_{g,n} \to \Picabs_{g,n,k(2g-2)}$ associated to the family
\begin{equation*}
\pi: \mathcal{C}_{g,n} \rightarrow \overline{\cM}_{g,n}\, , \ \ \ \ \omega_\pi^{k} \rightarrow \mathcal{C}_{g,n}\, .
\end{equation*}
By Theorem \ref{Thm:main}, the class $\mathsf{DR}^{\mathsf{op}}_{g,A}$ is computed by the tautological class
$$\P_{g,A,d}^g \in {\mathsf{CH}}^g_{\mathsf{op}}(\Picabs_{g,n,k(2g-2)})\, . $$ By Proposition \ref{Prop:PixtonPullbackCompatibility}, the action of $\P_{g,A,d}^g$ on $[\overline{\mathcal{M}}_{g,n}]$ is indeed given by Pixton's original formula $ 2^{-g} P_g^{g,k}(\widetilde A)$, finishing the proof. \end{proof}

The steps of the proof of Theorem \ref{FPA} are summarized as follows:
\begin{align*}
    \mathsf{H}^k_{g,A}&=\mathsf{DR}_{g,A, \omega^k} &&\cite[\text{Theorem }1.1]{Holmes2019Infinitesimal-s}\\
    &=\mathsf{DR}^{\mathsf{op}}_{g,A}(\phi_{\omega_\pi^k})([\overline{\mathcal{M}}_{g,n}]) &&\text{Theorem }\ref{cccc}\\
    &=\P_{g,A,d}^g(\phi_{\omega_\pi^k})([\overline{\mathcal{M}}_{g,n}]) &&\text{Theorem }\ref{Thm:main}\\
    &=2^{-g} P_g^{g,k}(\tilde A) &&\text{Proposition }\ref{Prop:PixtonPullbackCompatibility}\, . 
\end{align*}
The result provides a completely geometric representative
of Pixton's cycle in terms of twisted $k$-differentials. 
Theorem \ref{FPA1} of Section \ref{Sect:Twistdiffintro} is the $k=1$ case of 
Theorem \ref{FPA}.



\subsection{Closures}\label{clozzz}
Let $A=(a_1,\ldots, a_n)$ be a vector of integers satisfying
$$\sum_{i=1}^n a_i =k(2g-2)\, .$$
A careful investigation of the closure
$$\HH^k_g(A) \subset \overline{\HH}^k_g(A) \subset \oM_{g,n}$$
is carried out in \cite{Bainbridge2018Compactificatio, Bainbridge2019Strata-of-k-dif}.
By a simple method presented in \cite[Appendix A]{Farkas2016The-moduli-spac} and \cite[Section 3.4]{Schmitt2016Dimension-theor},
Theorem \ref{FPA} easily determines the cycle classes of the
closures
$$ [\overline{\HH}^k_g(A)] \in \mathsf{CH}_{*} (\oM_{g,n})\, .$$
for the cases 
\begin{itemize}
    \item $k=1$ and all $a_i$ nonnegative, when $\overline{\HH}^k_g(A)$ has pure codimension $g-1$ and
    \item $k\geq 1$ and $A$ not of the form $A = k \cdot A'$ with a vector $A'$ of nonnegative integers, when $\overline{\HH}^k_g(A)$ has pure codimension $g$. 
\end{itemize}
In particular, from the recursive formula for $[\overline{\HH}^k_g(A)]$ and the fact that Pixton's cycle on $\oM_{g,n}$ is tautological, the following is immediate.
\begin{corollary}\label{kk333}
The cycles $[\overline{\HH}^k_g(A)]$ are tautological classes in $\mathsf{CH}_*(\oM_{g,n})$.
\end{corollary}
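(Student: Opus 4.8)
\textbf{Proof proposal for Corollary \ref{kk333}.} The plan is to show that the right-hand sides of the recursive formulas expressing $[\overline{\HH}^k_g(A)]$ in terms of the weighted fundamental classes $\mathsf{H}^k_{g,A}$ are built entirely out of tautological classes, and that these formulas do indeed compute $[\overline{\HH}^k_g(A)]$. The first ingredient is Theorem \ref{FPA}: it identifies each weighted fundamental class $\mathsf{H}^k_{g,A}$ (in the two listed cases) with $2^{-g}P_g^{g,k}(\widetilde A)$, which by construction lies in the tautological ring of $\oM_{g,n}$ --- indeed it is a $\mathbb{Q}$-linear combination of decorated boundary strata classes pulled back from $\D\G_{g,n,d}$ via the morphism $\phi_{\omega_\pi^k}$, which is manifestly tautological by the standard analysis of \cite{GraberPandharipande}. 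So the base case of the recursion is tautological.

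Next I would recall the recursive procedure of \cite[Appendix A]{Farkas2016The-moduli-spac} (for $k=1$ with nonnegative parts, where $\overline{\HH}^1_g(A)$ has codimension $g-1$) and \cite[Section 3.4]{Schmitt2016Dimension-theor} (for general $k$ with $A$ not of the form $k\cdot A'$ with $A'$ nonnegative, codimension $g$). In both cases, the weighted fundamental class $\mathsf{H}^k_{g,A}$ decomposes as the class of the closure $[\overline{\HH}^k_g(A)]$ (with multiplicity $1$) plus an explicit sum of contributions supported on the boundary divisors of $\oM_{g,n}$, where each boundary contribution is itself expressed via pushforward along gluing maps of (products of) analogous weighted fundamental classes $\mathsf{H}^{k}_{g',A'}$ on lower-dimensional moduli spaces, decorated by monomials in $\psi$-classes. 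Solving for $[\overline{\HH}^k_g(A)]$ gives
\[
[\overline{\HH}^k_g(A)] = \mathsf{H}^k_{g,A} - (\text{boundary correction terms}),
\]
and the correction terms are, by induction on $\dim \oM_{g,n}$ (equivalently on $3g-3+n$), linear combinations of pushforwards of tautological classes along the tautological gluing morphisms --- hence tautological. Since $\mathsf{H}^k_{g,A}$ is tautological by the previous paragraph, so is $[\overline{\HH}^k_g(A)]$.

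The one point requiring a little care is the base of the induction and the bookkeeping of which strata appear: the recursion of \cite{Schmitt2016Dimension-theor, Farkas2016The-moduli-spac} involves, at a boundary stratum indexed by a prestable (in fact stable) graph $\Gamma$, a product of twisted-differential loci on the vertex moduli spaces with twisting parameters governed by the combinatorics of $\Gamma$; one must check that every such vertex locus again falls under one of the two cases covered by Theorem \ref{FPA}, or is itself an entire moduli space (the exceptional case $A = k\cdot A'$ with $A'$ nonnegative), whose fundamental class is trivially tautological. This is exactly the content of the dimension-theoretic analysis already carried out in the cited references, so no new argument is needed here --- the statement follows by feeding Theorem \ref{FPA} into their recursion. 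The main (though modest) obstacle is simply to confirm that the class of \emph{every} stratum appearing in the recursive expansion is tautological; this reduces, via the projection formula and the fact that the tautological rings are closed under pushforward along gluing and forgetful maps, to the tautologicality of Pixton's formula, which is built into its definition. $\qed$
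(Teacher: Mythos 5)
Your proposal is correct in outline and follows essentially the same logic as the paper, which dispatches the corollary in a single sentence by invoking the recursive formulas of Farkas--Pandharipande (Appendix A) and Schmitt (Section 3.4) together with the fact that Pixton's class is tautological. Your expansion of this argument --- expressing $[\overline{\HH}^k_g(A)]$ as $\mathsf{H}^k_{g,A}$ minus boundary correction terms, arguing by induction on $\dim\oM_{g,n}$, and observing that each correction term is a pushforward of products of lower-dimensional weighted cycles and $\psi$-monomials along gluing maps --- is the right unpacking of the paper's terse reference.

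One parenthetical claim should be corrected: in the exceptional case $A = k\cdot A'$ with $A'$ a vector of nonnegative integers, the locus $\overline{\HH}^k_g(A)$ is \emph{not} ``itself an entire moduli space.'' For $k=1$ this is the stratum of holomorphic canonical divisors, of codimension $g-1$; for $k\geq 2$ it is a union of components of mixed codimensions $g-1$ and $g$, handled in \cite[Section 3.4]{Schmitt2016Dimension-theor} by splitting off the sublocus of $k$-th powers of abelian differentials. The fundamental class is not ``trivially tautological'' in the sense of being $[\oM_{g,n}]$ --- rather, tautologicality in these exceptional vertex cases is itself established by a further layer of the same recursion. Since you explicitly defer to the dimension-theoretic analysis in the cited references at precisely this juncture, the global argument is unaffected, but the parenthetical mischaracterizes what that analysis shows. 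With that repaired, your proof is sound and matches the paper's intent.
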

In the case $k=1$, Corollary \ref{kk333} was known by work of Sauvaget  \cite{Sauvaget2017Cohomology-clas}, who gave a different approach to $[\overline{\HH}^1_g(A)]$ in terms of tautological classes.
The recursive formulas for $[\overline{\HH}^k_g(A)]$ from
Corollary \ref{kk333} have been implemented\footnote{In the ongoing project \cite{CMZ20}, the authors study formulas for Euler characteristics of strata of differentials in terms of intersection numbers on the compactification of these strata constructed in \cite{BCGGM3}. The
implementation of $[\overline{\HH}^1_g(A)]$ has played
a role in corroborating their formulas.}
in the software \cite{admcycles} for computations in the tautological ring of $\Mbar_{g,n}$.

Another application of Conjecture A is presented in the recent paper \cite{sauvaget_volumes} by Sauvaget. The paper studies moduli spaces of flat surfaces of genus $g$ with conical singularities at marked points $p_1, ..., p_n$. The singularities have fixed cone angles $2 \pi \alpha_i$, for $\alpha_1, \ldots, \alpha_n \in \mathbb{R}$, summing to $2g-2+n$. 
If all $\alpha_i$ are rational,  
the spaces of flat surfaces naturally contain $\mathcal{H}_g^k(k A )$,
for  $$A=(\alpha_i-1)_{i=1}^n\, ,$$
as closed subsets (for $k$ sufficiently divisibly). These subsets equidistribute (with respect to natural measures) as $k \to \infty$. Using the
equidistribution, 
Sauvaget is able to apply the recursive expression for $\overline{\HH}_g^k(k A)$ from Conjecture A to derive an explicit recursion for the volumes of the moduli spaces of flat surfaces.

\subsection{\texorpdfstring{$k$}{k}-twisted DR cycles with targets} \label{Sect:ktwistedDR}
We define  here  {\em $k$-twisted double ramification cycles with targets} via the class $\DRop_{g,A}$.

Let $X$ be a nonsingular projective variety with line bundle $\ca L$ and
an effective curve class
$\beta \in H_2(X, \mathbb{Z})$. Let
$$d_\beta= \int_\beta c_1(\ca L)\, .$$
Let $k\in \mathbb{Z}$ and $A=(a_1,\ldots, a_n)\in \mathbb{Z}^n$ satisfy
\begin{equation*}
d_\beta + k(2g-2+n) = \sum_{i=1}^{n}a_i\, .
\end{equation*}
Consider the morphism
\[
\phi\colon \MbarX \to \Picabs_{g,n,d_\beta+k(2g-2+n)}
\]
defined by the universal data
\begin{equation}\label{fftt666774}
\pi: \mathcal{C}_{g,n,\beta} \rightarrow \overline{\cM}_{g,n}(X,\beta)\, , \ \ \ \ 
f^*\ca L\otimes \omega^{\otimes k}_{log}
\rightarrow \mathcal{C}_{g,n,\beta}\, ,
\end{equation}
where
$f: \mathcal{C}_{g,n,\beta} \to X$
is the universal map.

\begin{definition}
The \textit{$k$-twisted $X$-valued double ramification cycle} is defined by
\[
\DR_{g,n,\beta}^{k}(X,\ca L)=\mathsf{DR}^{\mathsf{op}}_{g,A} (\phi) ( [\MbarX]^\textup{vir}) \in \Chow_{\vdim(g,n,\beta)-g}(\MbarX)\,.
\]
\end{definition}
In the notation of \cite[Section 0.4]{Janda2018Double-ramifica}, let $\P^{c,k,r}_{g,A,\beta}(X,\ca L)$ be the codimension $c$ part of the following expression
\begin{multline*}
\hspace{-10pt}\sum_{
\substack{\Gamma\in \G_{g,n,\beta}(X) \\
w\in \mathsf{W}_{\Gamma,r,k}(X)}
}
\frac{r^{-h^1(\Gamma)}}{|\Aut(\Gamma)| }
\;
j_{\Gamma_\delta*}\Bigg[
\prod_{i=1}^n \exp\left(\frac12 a_i^2 \psi_i + a_i \xi_i \right)\\ \hspace{+20pt}
\prod_{v \in \V(\Gamma)} \exp\left(-\frac12 \eta(v) - k \eta_{1,1}(v) - \frac{k^2}{2} \eta_{0,2}(v)    \right)
\\ \hspace{+10pt}
\prod_{e=(h,h')\in \E(\Gamma)}
\frac{1-\exp\left(-\frac{w(h)w(h')}2(\psi_h+\psi_{h'})\right)}{\psi_h + \psi_{h'}} \Bigg]\,.
\end{multline*} 
The definition of the admissible $k$-weightings $w\in \mathsf{W}_{\Gamma,r,k}(X)$ is similar to that in \cref{Ssec:weightings} but with the condition (iii) replaced by
\[
k(2g(v)-2+n(v)) + \int_{\beta(v)} c_1(\ca L) = \sum_{v(h)=v}w(h)\,\text{ for }v \in \V(\Gamma)\, .
\]
As in the case $k=0$ discussed in \cite[Proposition 1]{Janda2018Double-ramifica}, the class $\P^{c,k,r}_{g,A,\beta}(X,\ca L)$ is polynomial in $r$ for all sufficiently large $r$. 
Denote by $\P^{c,k}_{g,A,\beta}(X,\ca L)$ the value at $r=0$ of this polynomial. 

By \cref{Thm:main} and a slight generalization of the procedure for pulling back Chow cohomology classes from $\Picabs_{g,n,d_\beta+k(2g-2+n)}$ to $\MbarX$ described in \cite[Section 1.5]{Janda2018Double-ramifica}, we have 
\begin{eqnarray*}
    \DR_{g,n,\beta}^{k}(X,\ca L)&= &\mathsf{DR}^{\mathsf{op}}_{g,A}(\phi)( [\MbarX]^\textup{vir})\\
    &=&\P_{g,A,d_\beta+k(2g-2+n)}^g (\phi)( [\MbarX]^\textup{vir})\\
    &=&\P^{g,k}_{g,A,\beta}(X,\ca L)\, .
\end{eqnarray*}

\subsection{Proof of Theorem \ref{Thm:mainvan}}
For all $c>g$, we will prove 
\begin{equation} \label{eqn:PgAdc_vanishing}
    \P_{g,A,d}^c\ = 0 \
\in {\mathsf{CH}}^c_{\mathsf{op}}(\Picabs_{g,n,d})\, .
\end{equation}
The path is parallel to the proof of Theorem \ref{Thm:main}.

By definition, the claim is
equivalent to showing that the map 
\begin{equation}\label{ff55ff3}
 \P_{g,A,d}^c(\phi) : \Chow_*(B) \to \Chow_{*-c}(B)
 \end{equation}
is zero for every
morphism $\phi:B \to \Picabs_{g,n,d}$ from an (irreducible) finite type scheme $B$ 
corresponding to the data
$$C \to B\, , \ \ \  \ca L \to C\,.$$
Retracing the steps of \cref{sec:proof_of_main_theorem} (and using the invariance \cref{lem:N_invariance_P}  for the codimension $c$ part $\P_{g,A,d}^c$ of Pixton's formula), we can reduce to the situation where $\ca L$ on $C$ is relatively sufficiently
positive with respect to $C \to B$. As in \cref{sec:positive_line_bundle},
we can then find $$\psi: U_l \to B$$
such that $\psi^*$ is injective on Chow groups and such that the composition 
$$U_l \to B \to \Picabs_{g,n,d}$$ 
factors through $\overline{\mathcal{M}}_{g,n}(\PP^l, d)'$. By Theorem 3.2
of \cite{Bae}, 
we have the vanishing $$\P_{g,A,d}^c (\PP^l,\ca O(1)) = 0
\, \in \, \mathsf{CH}_{\textup{vdim}(g,n,d)-c}(\overline{\mathcal{M}}_{g,n}(\PP^l, d))\, .$$
The same combination of \cref{lem:op_eq_ch_for_smooth} and the injectivity of $\psi^*$ then shows the desired vanishing of the map \eqref{ff55ff3}. \qed



\subsection{Connections to past and future results}\label{pasfut}
The relations of Theorem \ref{Thm:mainvan}
 generalize several previous results.
For $g=0$ and $c=1$, the vanishing \eqref{eqn:PgAdc_vanishing}
was observed in \cite[Proposition 1.2]{deJongStarr}. 
In fact, in genus $0$, there are many connections to past equations, see \cite[Section 4]{Bae}
for a full discussion with many examples including classical
equations and 
the relations of \cite{LeePand}.


Randal-Williams \cite{RandalWilliams2012}  proves
a vanishing result in  cohomology with integral coefficients on the locus $\Picabs_{g,0,d}^\textup{sm}$ of smooth curves
for every $d \in \mathbb{Z}$.
We can recover a version of Randal-William's
vanishing in operational Chow  with $\mathbb{Q}$-coefficients which extends to all of $\Picabs_{g,0,d}$.  By \cref{Lem:Pixformulafactorization} and \cref{Lem:vertterm},  Pixton formula's on the locus $\Picabs_{g,0,0}^\textup{sm}$ takes the simple form
\[\P_{g,\emptyset,0}^c = \frac{1}{c!} (\P_{g,\emptyset,0}^1)^c\, ,\ \ \  \P_{g,\emptyset,0}^1 = - \frac{1}{2} \pi_* (c_1(\ca L)^2)\]
for the universal curve and the universal line bundle
$$\pi: \frak C \to \Picabs_{g,0,0}\, , \ \ \ \ca L \to \frak C\, .$$
We claim, up to scaling, the relation 
$$\Omega^{g+1}=0$$ of
\cite[Theorem A]{RandalWilliams2012} is exactly the restriction of the pullback of the relation $$(\P_{g,\emptyset,0}^1)^{g+1} = (g+1)! \P_{g,\emptyset,0}^{g+1} = 0 
\in {\mathsf{CH}}^{g+1}_{\mathsf{op}}(\Picabs_{g,0,0})$$
under the morphism
\[\Picabs_{g,0,d} \to \Picabs_{g,0,0}\, ,\ \ \ (C, \ca L) \mapsto (C, \ca L^{\otimes 2g-2} \otimes \omega_C^{\otimes (-d)})\, .\]
Indeed, over the locus of smooth curves, the pullback of $\P_{g,\emptyset,0}^1$ is given by
\begin{multline*} 
- \frac{1}{2} \pi_* (c_1(\ca L^{\otimes 2g-2} \otimes \omega_C^{\otimes (-d)})^2)=\\ -\frac{1}{2} \left((2g-2)^2 \pi_*(c_1(\ca L)^2) - 2d(2g-2)\pi_*(c_1(\ca L ) c_1(\omega_\pi)) + d^2 \pi_*(c_1(\omega_\pi)^2) \right)\, ,\end{multline*}
which matches the definition of $\Omega$ given in \cite[Theorem A]{RandalWilliams2012}
up to scalars.

In Gromov-Witten theory,  pulling back \eqref{eqn:PgAdc_vanishing} under the morphisms $$\MbarX \to \Picabs_{g,n,d}$$ described in  \cref{Sect:ktwistedDR} and capping with the virtual class $[\MbarX]^\textup{vir}$ simply recovers the known vanishing 
\begin{equation} \label{eqn:PXbetavanishing}
    \P_{g,A,\beta}^{c,k}(X,\ca L) = 0 \in \Chow_{\vdim(g,n,\beta)-c}(\MbarX)
\end{equation}
for $c>g$ proven in \cite{Bae}. However, there are new applications for \emph{reduced} Gromov-Witten theory. Indeed, for a target $X$ having a nondegenerate holomorphic $2$-form, the virtual class of $\MbarX$ vanishes when $\beta \neq 0$.
To define invariants for such targets, the reduced class
\[[\MbarX]^\textup{red} \in \Chow_{*}(\MbarX)\]
is used instead, see \cite{bryanleung, maulikpandharipande}. By 
pulling back \eqref{eqn:PgAdc_vanishing} and capping with $[\MbarX]^\textup{red}$, we obtain new relations among reduced Gromov-Witten invariants. An application to the Gromov-Witten theory of K3 surfaces will appear in \cite{BaeBuelles} related to 
conjectures of \cite{ObPand}.



 

\bibliographystyle{abbrv} 

\def\cprime{$'$}

\noindent {Department of Mathematics, ETH Z\"urich} \\
\noindent {younghan.bae@math.ethz.ch}\\

\noindent {Mathematisch Instituut, Universiteit Leiden} \\
\noindent {holmesdst@math.leidenuniv.nl}\\

\noindent {Department of Mathematics, ETH Z\"urich} \\
\noindent {rahul.pandharipande@math.ethz.ch}\\

\noindent {Mathematical Institute, University of Bonn} \\
\noindent {schmitt@math.uni-bonn.de}\\

\noindent {Mathematisch Instituut, Universiteit Leiden} \\
\noindent {r.m.schwarz@math.leidenuniv.nl}\\
\end{document}